\documentclass[10pt,onecolumn]{article}
\usepackage{latexsym}
\usepackage{amssymb, graphicx, amsmath}
\usepackage{booktabs}
\usepackage{cases}
\usepackage[normalem]{ulem}
\usepackage{amsmath,amsthm,amssymb,amsfonts}
\usepackage{thmtools}
\usepackage{graphicx}
\usepackage{epstopdf}
\usepackage{multirow}
\usepackage{subcaption}
\usepackage{url}
\usepackage{listings}
\usepackage{float}
\usepackage{ulem}

\usepackage{empheq}
\usepackage{hyperref}
\usepackage{cleveref}
\usepackage{titlesec}

\usepackage{enumitem}

\usepackage{bbm}

\titlespacing*{\section}{0pt}{1.6ex}{1ex}
\titlespacing*{\subsection}{0pt}{1.2ex}{0.8ex}
\titlespacing*{\subsubsection}{0pt}{1.0ex}{0.6ex}

\usepackage{titlesec}

\titleformat{\section}       
{\normalfont\large\bfseries} 
{\thesection}{1em}{}
\titlespacing*{\section}
{0pt}{2.5ex plus 0.5ex minus 0.2ex}{1.2ex plus 0.2ex}

\titleformat{\subsection}    
{\normalfont\normalsize\bfseries} 
{\thesubsection}{1em}{}
\titlespacing*{\subsection}
{0pt}{2ex plus 0.5ex minus 0.2ex}{0.8ex plus 0.2ex}

\titleformat{\subsubsection} 
{\normalfont\normalsize\bfseries} 
{\thesubsubsection}{1em}{}
\titlespacing*{\subsubsection}
{0pt}{1.5ex plus 0.5ex minus 0.2ex}{0.5ex plus 0.2ex}

\usepackage[dvipsnames]{xcolor}
\definecolor{sintefblue}{RGB/cmyk}{0,60,101/1,.57,0,.4}

\definecolor{sintefgrey}{RGB/cmyk}{235,235,230/0,0,0,.1}
\colorlet{sintefgray}{sintefgrey}

\definecolorset{RGB/cmyk}{sintef}{}{lightgreen, 205,250,225/.23, 0,.20, 0;%
                                    green,       20,185,120/.73, 0,.67, 0;%
                                    darkgreen,    0, 70, 40/.93,.43,.92,.52}

\definecolorset{RGB/cmyk}{sintef}{}{yellow, 200,155,20/20, 36,98, 8;%
                                    red,    190, 60,55/19, 86,77, 8;%
                                    lilla,  120,  0,80/48,100,27,31}

\definecolorset{HTML}{sintef}{}{cyan,      22A7E5;%
                                magenta,   EC008C;%
                                lightgrey, D8D0C7}
\colorlet{sinteflightgray}{sinteflightgrey}

\usepackage{threeparttable}

\newcommand{\RNum}[1]{\uppercase\expandafter{\romannumeral #1\relax}}

\def \[{\begin{equation}}
\def \]{\end{equation}}
\def \]{\end{equation}}

\newtheorem{theorem}{Theorem}[section]

\newtheorem{lemma}[theorem]{Lemma}

\newtheorem{proposition}[theorem]{Proposition}

\newtheorem{assumption}[theorem]{Assumption}

\newtheorem{remark}[theorem]{Remark}

\theoremstyle{definition}
\newtheorem{definition}[theorem]{Definition}

\crefname{assumption}{assumption}{assumptions}

\makeatletter
\@addtoreset{equation}{section}
\@addtoreset{figure}{section}
\@addtoreset{table}{section}
\makeatother

\usepackage[dvipsnames]{xcolor}
\usepackage{caption}
\usepackage{subcaption}
\usepackage{booktabs}
\usepackage{multirow}

\usepackage{tikz}
\usepackage{anyfontsize}
\usetikzlibrary{positioning,calc}
\tikzset{global scale/.style={
		scale=#1,
		every node/.append style={scale=#1}
	}
}
\tikzset{eaxis/.style={->,>=stealth}}
\tikzset{elegant/.style={smooth,thick,samples=50,cyan}}
\newcommand*{\num}{pi}

\numberwithin{equation}{section}

\newcommand{\bR}{\mathbb{R}}
\newcommand{\cQ}{\mathcal{Q}}
\newcommand{\cS}{\mathcal{S}}
\newcommand{\cA}{\mathcal{A}}
\newcommand{\cG}{\mathcal{G}}

\newcommand{\cF}{\mathcal{F}}
\newcommand{\cK}{\mathcal{K}}
\newcommand{\bT}{\mathbb{T}}
\newcommand{\bZ}{\mathbb{Z}}
\newcommand{\cE}{\mathcal{E}}

\newcommand{\cT}{\mathcal{T}}
\newcommand{\cN}{\mathcal{N}}

\DeclareMathOperator{\glimsup}{g-lim\,sup}
\newcommand{\glimsupn}{\underset{n \to \infty}{\glimsup\,}}
\DeclareMathOperator{\gliminf}{g-lim\,inf}
\newcommand{\gliminfn}{\underset{n \to \infty}{\gliminf\,}}
\newcommand{\hatsetd}{ d \hat{\kern -.15em l }}
\newcommand{\setd}{{ d \kern -.15em l}}

\definecolor{myblue}{rgb}{0,0,0.5}
\definecolor{mygreen}{rgb}{0,0.5,0}
\definecolor{myred}{rgb}{0.5,0,0}
\definecolor{newred}{rgb}{0.59, 0.0, 0.09}
\definecolor{newblue}{rgb}{0.03, 0.27, 0.49}
\usepackage{hyperref}
\hypersetup{
	colorlinks,
	citecolor=sintefblue,
	filecolor=newblue,
	linkcolor=sintefred,
	urlcolor=newblue
}

\begin{document}

\begin{center}
\renewcommand{\thefootnote}{\fnsymbol{footnote}}

{\Large  \bf Learning to Control: The iUzawa-Net for Nonsmooth Optimal Control of Linear PDEs\footnote{February 13, 2026.}}\\

\bigskip
\medskip

 {\bf Yongcun Song}\footnote{\parbox[t]{16cm}{
 Division of Mathematical Sciences, School of Physical and Mathematical Sciences, Nanyang Technological University, 21
Nanyang Link, 637371, Singapore. This
author was supported by the NTU Start-Up Grant. Email: {\color{newblue}yongcun.song@ntu.edu.sg}}\vspace{0.5em}}
\quad
{\bf Xiaoming Yuan}\footnote{\parbox[t]{16cm}{
 Department of Mathematics, The University of Hong Kong, Pok Fu Lam, Hong Kong SAR, China. This author was partially supported by the Croucher Senior Fellowship. Email: {\color{newblue}xmyuan@hku.hk}
}\vspace{0.5em}}
\quad
{\bf Hangrui Yue}\footnote{\parbox[t]{16cm}{
School of Mathematical Sciences, Nankai University, Tianjin 300071, China. This author was supported by the National Natural Science Foundation of China (No. 12301399).  Email: {\color{newblue}yuehangrui@gmail.com}}\vspace{0.5em}}
\quad
{\bf Tianyou Zeng}\footnote{\parbox[t]{16cm}{
Department of Mathematics, The University of Hong Kong, Pok Fu Lam, Hong Kong SAR, China. This author was supported by Hong Kong PhD Fellowship Scheme.  Email: {\color{newblue}logic@connect.hku.hk}
}}
\renewcommand{\thefootnote}{\arabic{footnote}}
\setcounter{footnote}{0}

\medskip


\end{center}

\bigskip

{\small

\parbox{0.95\hsize}{

\hrule

\medskip

{\bf Abstract.}
We propose an optimization-informed deep neural network approach, named iUzawa-Net, aiming for the first solver that enables real-time solutions for a class of nonsmooth optimal control problems of linear partial differential equations (PDEs). The iUzawa-Net unrolls an inexact Uzawa method for saddle point problems, replacing classical preconditioners and PDE solvers with specifically designed learnable neural networks. We prove universal approximation properties and establish the asymptotic $\varepsilon$-optimality for the iUzawa-Net, and validate its promising numerical efficiency through nonsmooth elliptic and parabolic optimal control problems. Our techniques offer a versatile framework for designing and analyzing various optimization-informed deep learning approaches to optimal control and other PDE-constrained optimization problems. The proposed learning-to-control approach synergizes model-based optimization algorithms and data-driven deep learning techniques, inheriting the merits of both methodologies.
\medskip

\noindent {\bf Keywords}: Optimal control, PDE, operator learning, deep neural network, universal approximation theorem, nonsmooth optimization, infinite-dimensional optimization, inexact Uzawa method
\medskip
 
\noindent {\bf AMS subject classifications}: 49M41, 35Q93, 35Q90, 68T07, 65K05, 41A30
\medskip

\hrule

}}

\section{Introduction}

Optimal control models governed by partial differential equations (PDEs) are crucial for studying complex systems in various areas. 
Typical examples include optimal design in aerodynamics \cite{martins2022aerodynamic}, precision control in chemical processes \cite{christofides2002nonlinear}, the development of targeted drug delivery systems in medicine \cite{chakrabarty2005optimal}, performance optimization in digital twins \cite{antil2024mathematical}, data assimilation in weather forecasting \cite{fisher2009data}, and stable control in robotic swarms \cite{zhang2005pde}, just to name a few. 
In practice, nonsmoothness naturally arises in optimal control models, see e.g. \cite{hinze2009optimization,lions1971optimal,manzoni2021optimal,troltzsch2010optimal}, as constraints and nonsmooth regularizations are often imposed on the control variables to ensure certain desired properties, such as boundedness \cite{lions1971optimal}, sparsity \cite{stadler2009elliptic}, and discontinuity \cite{song2024admm}.
The nonsmooth structure and PDE constraints preclude direct application of standard optimization algorithms, such as conjugate gradient and quasi-Newton methods.
Furthermore, emerging applications increasingly demand efficient or even real-time solutions.
In this context, classical numerical methods struggle due to the high-dimensional and ill-conditioned algebraic systems arising from mesh-based discretizations, which are computationally expensive to solve.
These challenges necessitate carefully designed and problem-specific numerical methods for nonsmooth optimal control problems, especially in real-time scenarios.

\subsection{Model}
Let $\Omega\subset\mathbb{R}^d$  with $d \geq 1$ be a bounded domain with Lipschitz boundary.
We consider the following general optimal control problem initiated in \cite{lions1971optimal} with a linear PDE constraint:
\begin{equation}\label{eq:opt-ctrl}
    \begin{aligned}
         \min_{u \in U, \ y \in Y} ~ \frac{1}{2} \left\| y - y_d \right\|_Y^2 + \frac{1}{2} \left\langle u, Nu \right\rangle_U + \theta(u),
        \quad \ \ \mathrm{s.t.} ~ y = S(u+f),
    \end{aligned}
\end{equation}
where $U$ and $Y$ are Hilbert spaces; $u \in U$ and $y \in Y$ are the control and state variables, respectively; and $N: U \to U$ is a linear, bounded, self-adjoint, and positive definite operator with $\langle u, Nu \rangle_U \geq c_0\|u\|_U^2$ for some constant $c_0 > 0$. 
The constraint $y = S(u + f)$ represents a well-posed linear PDE defined on $\Omega$ with $S: U \to Y$ being the corresponding linear solution operator and $f \in U$ being an uncontrollable source term\footnote{We adopt the conventional formulation in which $u$ and $f$ are treated as distinct variables, representing the controllable and uncontrollable parts of the PDE parameter, rather than absorbing $f$ into $u$ through a change of variables.
}.
We denote by  $S^*: Y \to U$ the adjoint operator of $S$, which is assumed to be well-defined.
The function $y_d \in Y$ represents a given desired state, and the functional $\theta: U \to \bR \cup \{+\infty\}$ is assumed to be convex, lower semicontinuous, proper but \emph{nonsmooth}, which imposes a constraint or regularization on the control variable. Throughout the remainder of this work, we fix $U = L^2(\Omega)$ and $Y = L^2(\Omega)$ for simplicity of presentation.

We treat $y_d$ and $f$ as \emph{parameters} of the problem \eqref{eq:opt-ctrl}, and focus on the challenging scenario where \eqref{eq:opt-ctrl} needs to be solved \emph{repeatedly} for \emph{many different} parameter instances, and thus real-time solution of each case of \eqref{eq:opt-ctrl} with fixed $(y_d, f)^\top$ is required\footnote{For simplicity, we focus on the parameters $y_d$ and $f$. Our methodology can be easily generalized if other parameters in \eqref{eq:opt-ctrl} are considered, e.g., the boundary values and initial values.}.
This scenario arises in numerous important applications, including predictive control of PDEs \cite{dubljevic2006predictive,dubljevic2005predictive}, real-time optimal control \cite{behrens2014real,biegler2007real}, and numerical studies of optimal control problems~\cite{berggren1996computational,glowinski1994exact}.
In particular, emerging applications such as digital twins \cite{antil2024mathematical,grieves2016digital,grieves2023digital,xiu2025computational} impose stringent real-time requirements on the solution of the underlying virtual models, of which~\eqref{eq:opt-ctrl} constitutes a representative example.
Clearly, achieving such demands is typically beyond the capacity of traditional numerical methods, whose common aim is to solve \eqref{eq:opt-ctrl} with a fixed $(y_d, f)^\top$ via a sequence of iterations while each one usually costs considerable time.
This necessitates the development of efficient algorithms capable of solving~\eqref{eq:opt-ctrl} in real time for varying problem parameters $(y_d, f)^\top$.

\subsection{Classical Numerical Methods for Problem (\ref{eq:opt-ctrl})}

In the literature, numerous numerical methods have been developed for solving various specific cases of \eqref{eq:opt-ctrl}.
Algorithms like semismooth Newton (SSN) methods \cite{ulbrich2002semismooth}, primal-dual active set methods \cite{kunisch2002primal}, and interior point methods \cite{ulbrich2009primal} generally achieve superlinear convergence rate, but entail solving complex subproblems at each iteration, which are often computationally expensive.
More recently, first-order algorithms have been applied to solving \eqref{eq:opt-ctrl}, including the alternating direction method of multipliers (ADMM) \cite{glowinski2022application}, the primal-dual method \cite{song2023accelerated}, the proximal gradient method \cite{schindele2017proximal,schindele2016proximal}, the Peaceman–Rachford method \cite{glowinski1994exact}, and the inexact Uzawa method \cite{song2019inexact}.
These algorithms typically incur lower computational cost per iteration, which makes them attractive for large-scale problems. Note that their convergence rate is at most linear, meaning that achieving high-accuracy solutions may require a large number of iterations.

All the above algorithms rely heavily on mesh-based discretization schemes for implementation, e.g., finite element methods (FEMs) and finite difference methods (FDMs). Such discretizations give rise to large-scale algebraic systems or optimization subproblems at each iteration, which typically require iterative solvers, resulting in nested (multi-layer) iterations. Moreover, the condition numbers of the resulting subproblems often deteriorate as the mesh is refined, making their numerical solutions computationally demanding or even prohibitive. 
To mitigate the aforementioned difficulties, various preconditioned numerical methods have been developed in the literature, see, e.g., \cite{herzog2010preconditioned,pearson2012regularization,porcelli2015preconditioning,schiela2014operator,song2019inexact,stoll2014one,stoll2012preconditioning}. The effectiveness and efficiency of these algorithms critically depend on the appropriate choice of preconditioners. In particular, the design of effective preconditioners requires a delicate balance between computational cost and convergence enhancement. It must account for the specific mathematical structure of the problem at hand, often necessitating case-by-case analysis and sophisticated problem-dependent design. 

It is noteworthy that classical numerical methods are capable of computing highly accurate solutions. However, they are typically designed for the problem \eqref{eq:opt-ctrl} with fixed parameters $y_d$ and $f$, and any change in the parameters necessitates solving \eqref{eq:opt-ctrl} from scratch. In such cases, classical methods become computationally expensive and are unable to produce real-time solutions.

\subsection{Physics-Informed Neural Networks and Operator Learning for Problem (\ref{eq:opt-ctrl})}

Several deep learning approaches applicable to solving \eqref{eq:opt-ctrl} have been proposed, including physics-informed neural networks (PINNs) \cite{gao2025prox,lai2025hard,song2024admm} and operator-learning-based methods \cite{song2023accelerated,song2024operator}, which differ from classical numerical methods in that they are generally mesh-free, easy to implement, and flexible to adapt to various concrete examples of \eqref{eq:opt-ctrl}.
In \cite{lai2025hard}, the hard-constraint PINNs were proposed for solving control-constrained optimal control problems with interfaces.
In \cite{song2024admm}, PINNs were incorporated within the ADMM for solving the resulting optimal control subproblem in each ADMM iteration.
In \cite{gao2025prox}, a Prox-PINNs framework was proposed for solving general elliptic variational inequalities, which covers \eqref{eq:opt-ctrl} as a special case.
Several operator-learning-enhanced primal-dual approaches were proposed in \cite{song2023accelerated,song2024operator} for solving \eqref{eq:opt-ctrl} with $N = \alpha I$ for some $\alpha > 0$, with \cite{song2023accelerated} focusing on linear PDE constraints and \cite{song2024operator} addressing nonlinear PDE constraints.
They embed deep-learning-based PDE solvers within traditional primal-dual algorithms, accelerating the solution of PDE subproblems by replacing classical iterative PDE solvers with forward passes of deep neural networks.
The combination of computational efficiency, accuracy, and generalization ability makes operator learning particularly suitable for tasks requiring frequent, computationally intensive evaluations.

Notably, the PINNs-based methods discussed above require training a neural network from scratch for solving \eqref{eq:opt-ctrl} with each different problem parameter $(y_d, f)^\top$.
For operator learning methods, current approaches embed neural surrogate models as subroutines for solving PDE subproblems into traditional optimization algorithms, which still require manual design of the stepsizes and preconditioners.
The resulting algorithms usually require a large number of iterations to converge, as indicated in \cite{lai2025hard,song2024operator}. Additionally, 
operator learning methods for PDEs, such as Fourier Neural Operators (FNOs) \cite{li2021fourier}, could in principle be used to learn the mapping from the problem parameters \( (y_d, f)^\top \) to the optimal control $u^*$.  However, these operator learning methods are purely data-driven and do not exploit the mathematical structure of \eqref{eq:opt-ctrl}. As a result, they often lack interpretability and achieve only limited accuracy in solving \eqref{eq:opt-ctrl}, as validated in \Cref{sec:num-exp}.

\subsection{Optimization-Informed Neural Networks}

Optimization-informed neural networks, also known as algorithm unrolling for finite-dimensional models, originated in \cite{gregor2010learning} as data-driven methods for solving optimization problems.
The core idea of algorithm unrolling is to design a neural network where each layer mirrors an iteration of a traditional optimization algorithm.
The resulting neural network can be viewed as being unrolled from traditional optimization algorithms; hence, the interpretability and robustness ot its architecture can be increased.
In particular, all the hyperparameters in the blueprint optimization algorithms, such as stepsizes and preconditioners, are treated as learnable parameters in the neural networks and are automatically optimized during the training process.

Various algorithm-unrolled neural networks have been proposed for finite-dimensional optimization problems, including the learned iterative shrinkage-thresholding algorithm (LISTA) \cite{gregor2010learning}, unrolled proximal gradient descent methods \cite{hosseini2020dense}, unrolled ADMM \cite{sun2016deep,yang2018admm}, and unrolled primal-dual methods \cite{adler2018learned,li2024pdhg}.
Notably, in these neural networks, only a few layers are required to achieve satisfactory numerical accuracy, in contrast to their blueprint optimization algorithms that often require relatively many iterations.
We refer to \cite{hauptmann2025learned,shlezinger2023modelbook,shlezinger2023model} for a comprehensive review of the recent advances in algorithm unrolling for finite-dimensional problems and its various applications.

A straightforward application of the above works to \eqref{eq:opt-ctrl} requires discretizing the underlying infinite-dimensional input and output function spaces into finite-dimensional ones. However, neural network models trained on a specific discretization generally do not generalize well to other discretizations, which can significantly degrade the numerical accuracy. Consequently, developing optimization-informed neural networks that are capable of operating directly in infinite-dimensional settings with PDE constraints becomes a problem of significant interest and importance, yet limited work has been done in this direction. 
In \cite{kratsios2025generative}, the authors proposed a generative neural operator for approximating mappings from the space of convex and G\^{a}teaux differentiable functions to Hilbert spaces, which can be applied to solve parameterized unconstrained convex optimization problems.
However, this method requires evaluating the objective functional and performing adaptive sampling in each forward pass.
In \eqref{eq:opt-ctrl}, evaluating the objective functional entails solving PDEs, which makes such forward passes prohibitively expensive and renders this approach impractical.
To the best of our knowledge, there is currently no optimization-informed neural network architecture that is practically effective for solving the PDE-constrained optimal control problem~\eqref{eq:opt-ctrl}.

\subsection{Methodology}\label{se:method}

We propose an optimization-informed neural network architecture, termed \texttt{iUzawa-Net}, for efficiently solving the optimal control problem \eqref{eq:opt-ctrl}. Structurally, the \texttt{iUzawa-Net} is composed by stacking layers that mirror an inexact Uzawa method for solving \eqref{eq:opt-ctrl}. This approach generalizes the unrolling technique introduced in~\cite{gregor2010learning} to infinite-dimensional settings, while incorporating recent advances in deep learning for approximating mappings between function spaces. As a result, the \texttt{iUzawa-Net} serves as a neural surrogate model for the \emph{solution operator} of \eqref{eq:opt-ctrl}, defined as the parameter-to-control mapping
\begin{equation}\label{eq:param-to-sol-oper}
	T: Y \times U \to U, \quad (y_d, f)^\top \mapsto u^*,
\end{equation}
where $u^*$ is the optimal control of \eqref{eq:opt-ctrl} corresponding to the problem parameter $(y_d, f)^\top$.
After training, we perform an efficient forward pass of the trained surrogate model to approximate $T(y_d, f)$ for each input parameters $y_d$ and $f$, hence numerically solving \eqref{eq:opt-ctrl} in real-time.
In the following, we briefly outline the design of the \texttt{iUzawa-Net} and detail its implementation in \Cref{sec:iuzawa-net}.

\subsubsection{An inexact Uzawa method for (\ref{eq:opt-ctrl})}
We start with the saddle-point problem reformulation of \eqref{eq:opt-ctrl}:
\begin{equation}\label{eq:opt-ctrl-pd}
	\min_{u \in U} \, \max_{p \in Y} ~ \frac{1}{2} \left\langle u, Nu \right\rangle_U + \theta(u) + \left\langle p, Su \right\rangle_Y - \frac{1}{2} \left\| p \right\|^2_Y - \left\langle p, y_d - Sf \right\rangle_Y,
\end{equation}
where $p \in Y$ is the associated Fenchel dual variable.
It is well-known that, for any given $(y_d, f)^\top \in Y \times U$, \eqref{eq:opt-ctrl-pd} admits a solution $(u^*, p^*) \in U \times Y$ by the Fenchel-Rockafellar duality \cite[Section 15.3]{bauschke2017convex}, and the solution is unique due to the strict convexity and concavity of \eqref{eq:opt-ctrl-pd} with respect to $u$ and $p$, respectively. Hence,  the solution operator $T$ in \eqref{eq:param-to-sol-oper} is well-defined.
Furthermore, \eqref{eq:opt-ctrl-pd} admits the following necessary and sufficient optimality condition:
\begin{equation}\label{eq:opt-ctrl-opt-cond}
	\begin{aligned}
		 0 \in Nu^* + \partial \theta(u^*) + S^* p^*, \quad
		 0 = Su^* - p^* - (y_d - Sf). 
	\end{aligned}
\end{equation}
Inspired by \cite{bramble1997analysis,elman1994inexact,song2019inexact}, we consider the following inexact Uzawa algorithm for \eqref{eq:opt-ctrl}:
\begin{equation}\label{eq:inexact-uzawa}
	\left\{
	\begin{aligned}
		& u^{k+1} = \left(Q_A + \partial \theta\right)^{-1} \left( \left( Q_A - N \right)  u^k - S^* p^k\right), \\
		& p^{k+1} = p^k + Q_S^{-1} (S u^{k+1} - p^k + Sf - y_d),
	\end{aligned}
	\right.
\end{equation}
where $Q_A: U \to U$ and $Q_S: Y \to Y$ are two self-adjoint positive definite operators preconditioning on $N$ and $SN^{-1}S^* + I$, respectively.
The convergence of \eqref{eq:inexact-uzawa} is guaranteed for any choice of
preconditioners satisfying \( Q_A \succeq N \) and
\( Q_S \succeq S N^{-1} S^* + I \); this follows from a straightforward
extension of the analysis in~\cite{song2019inexact}.

To implement \eqref{eq:inexact-uzawa}, one needs to construct the preconditioners \( Q_A \) and \( Q_S \) and solve two PDEs $z^k = S^*p^k$ and $y^{k+1} = Su^{k+1}$ at each iteration. With suitable choices
of \( Q_A \) and \( Q_S \), the inexact Uzawa method~\eqref{eq:inexact-uzawa}
efficiently solves \eqref{eq:opt-ctrl} with fixed \( y_d \)
and \( f \).
In practice, the scheme \eqref{eq:inexact-uzawa} involves a
trade-off between the algebraic measurement of convergence speed in terms of the outer iterations and the computational cost of each inner iteration. Balancing these effects relies on carefully designing the preconditioners \( Q_A \) and \( Q_S \), which
typically requires problem-dependent tuning.

\subsubsection{iUzawa-Net: an overview}
The \texttt{iUzawa-Net} is designed by converting the iterative structure of \eqref{eq:inexact-uzawa} into a multi-layer neural network, which allows the numerically challenging operators $S^*$ and $S$, as well as the preconditioners $Q_A$ and $Q_S$, to be parameterized as trainable neural sub-networks.
To this end, we specify $Q_A = N + \tau I$ with some $\tau \geq 0$, and the iterative scheme \eqref{eq:inexact-uzawa} reduces to
\begin{equation}\label{eq:inexact-uzawa-qa-spec}
	\left\{
	\begin{aligned}
		& u^{k+1} = \left(N + \tau I + \partial \theta\right)^{-1} \left(\tau u^k - S^* p^k\right), \\
		& p^{k+1} = p^k + Q_S^{-1} (S u^{k+1} - p^k + Sf - y_d).
	\end{aligned}
	\right.
\end{equation}
Then, we employ two learnable neural networks $\cS^k: U \to Y$ and $\cA^k: Y \to U$ to approximate the solution operator $S$ and its adjoint $S^*$, respectively, in the $k$-th iteration of \eqref{eq:inexact-uzawa-qa-spec}. 
Simultaneously, we construct two learnable deep surrogate models $\cQ_A^k: U \to U$ and $\cQ_S^k: Y \to Y$ to approximate the operators $(N + \tau I + \partial \theta)^{-1}$ and $Q_S^{-1}$.
By treating the $k$-th iteration of \eqref{eq:inexact-uzawa-qa-spec} with the surrogate operators as the $k$-th layer of the \texttt{iUzawa-Net}, and stacking the first $L$ layers together, we obtain the following neural network architecture:
\begin{subequations}\label{eq:inexact-uzawa-duf}
	\begin{empheq}[left=\empheqlbrace]{align}
		& \cT(y_d, f; \theta_{\cT}) = u^L, \\
        & \left\{\begin{aligned}
        & u^{k+1} = \cQ_{A}^k (\tau u^k - \cA^k p^k), \\
        & p^{k+1} = p^k + \cQ_S^k (\cS^k (u^{k+1} + f) - p^k - y_d), 
        \end{aligned}\right. \quad \forall k = 0, \ldots, L-1, \label{eq:inexact-uzawa-duf-iter} \\
        & u^0 = 0, \quad p^0 = 0.
	\end{empheq}
\end{subequations}
Here, $\cT(y_d, f; \theta_{\cT})$ denotes the proposed \texttt{iUzawa-Net}; $\theta_{\cT}$ denotes the collection of trainable parameters in $\cS$, $\cA$, $\cQ_A$ and $\cQ_S$; and $u^L$ provides an approximation to the optimal control $u^*$ associated with the parameters $y_d$ and $f$.
For notational simplicity, we denote by $u^{k+1}$ and $p^{k+1}$
both the iterates generated by the inexact Uzawa method in \eqref{eq:inexact-uzawa-qa-spec} and the outputs of the 
$k$-th layer of the \texttt{iUzawa-Net} in \eqref{eq:inexact-uzawa-duf}. The abuse of notation should not cause confusion, as the meaning is clear from the context.

In implementation, the parameter $\tau>0$ can be chosen arbitrarily by users. We employ neural networks in operator learning for PDEs (e.g., FNO or DeepONet \cite{lu2021learning}) as the architectures for the PDE solution operators $\mathcal S^k$ and $\mathcal A^k$. The module $\mathcal Q_S^k$ is specifically constructed to approximate a symmetric positive definite preconditioner. For $\mathcal Q_A^k$, we consider the important case in which $(N+\tau I+\partial\theta)^{-1}$ acts pointwise on its input\footnote{This setting covers a broad range of applications. In many of these cases, closed-form expressions of $(N+\tau I+\partial\theta)^{-1}$ are not available, which motivates the use of a learned neural network to approximate it.}, and we design the architecture of $\mathcal Q_A^k$ accordingly to capture this structural property. The specific architectures of all modules are described in detail in Section~\ref{sec:iuzawa-net}.
A schematic of the full network architecture of the \texttt{iUzawa-Net} is provided in \Cref{fig:iuzawa-net}.

\subsubsection{Theoretical results of the iUzawa-Net}

We first follow the established literature on neural network approximation theory (e.g., \cite{kovachki2021universal,pinkus1999approximation}) and show that, under mild conditions, the solution operator 
$T$ defined in \eqref{eq:param-to-sol-oper} can be approximated to arbitrary accuracy on any compact set by an \texttt{iUzawa-Net} with two layers.
This result implies that the \texttt{iUzawa-Net} serves as a \emph{universal approximator} for the class of solution operators of \eqref{eq:opt-ctrl}.
Nevertheless, it does not guarantee two properties that are expected to be satisfied by optimization-informed neural networks, namely \emph{algorithm tracking} and \emph{weight tying}, whose definitions will be detailed in \Cref{def:alg-track,def:weight-tying}.
Informally, algorithm tracking implies that the layer outputs of the \texttt{iUzawa-Net} can be interpreted as inexact iterates of the inexact Uzawa method \eqref{eq:inexact-uzawa}, and
weight tying means that the layers in \eqref{eq:inexact-uzawa-duf} share identical parameters. Both properties are naturally expected in an optimization-informed neural network based on \eqref{eq:inexact-uzawa} with fixed stepsize and preconditioners.
The lack of these properties reveals that the above approximation result does not guarantee that the network preserves the underlying optimization structure.

To bridge this gap, we develop an analysis framework that unifies tools from optimization and neural network approximation theory, and establish the following results.
First, we demonstrate that for any layer number $L \in \mathbb{N}$, there exists an \texttt{iUzawa-Net} that is algorithm tracking.
Based on this result and from an optimization perspective, we establish the \emph{asymptotic $\varepsilon$-optimality} of the class of algorithm tracking \texttt{iUzawa-Net}s in the sense that, for any $\varepsilon>0$, there exists $L\in\mathbb{N}$ such that, for all $k\ge L$, $(u^{k+1},p^{k+1})^\top$ remains in an $\varepsilon$-neighbourhood of the optimal pair $(u^\ast,p^\ast)^\top$.
This, in turn, yields a universal approximation result in terms of the depth of the class of \texttt{iUzawa-Net}s that are algorithm tracking.
We further show that under some additional regularity assumptions on \eqref{eq:opt-ctrl}, for any \emph{bounded} set, there exists an algorithm tracking and weight tying \texttt{iUzawa-Net} that attains asymptotic $\varepsilon$-optimality on it. This result provides theoretical justification for the module-sharing design adopted in implementation, which substantially reduces model complexity while still achieving satisfactory numerical accuracy.
Notably, this result holds without requiring compactness, which is of crucial advantage as compactness assumptions are restrictive in infinite-dimensional spaces.

\subsection{Contributions}

The \texttt{iUzawa-Net} demonstrated in \Cref{se:method} provides a unified framework that integrates classical iterative methods with modern deep learning techniques for solving \eqref{eq:opt-ctrl}.
Computationally:
\begin{itemize}[noitemsep,topsep=0.5pt]
    \item The \texttt{iUzawa-Net} involves only a single offline training phase; during inference, obtaining a numerical solution of \eqref{eq:opt-ctrl} for a new set of input parameters requires only a forward pass of the neural network. This facilitates real-time solutions for \eqref{eq:opt-ctrl} by avoiding nested iterations in traditional numerical methods.
    \item The \texttt{iUzawa-Net} enables data-driven preconditioner design within a single end-to-end trainable manner, which offers accelerated convergence and improved adaptability across different problem settings.
    Notably, this brings the additional benefit that given sufficient data, the solution operator of \eqref{eq:opt-ctrl} can be learned even without the explicit knowledge of $S$ and $\theta$.
\end{itemize}
Theoretically:
\begin{itemize}[noitemsep,topsep=0.5pt]
    \item We prove several universal approximation theorems of the proposed \texttt{iUzawa-Net} under mild assumptions. In particular, under mild regularity assumptions, one of the results allows approximating the solution operator of \eqref{eq:opt-ctrl} to arbitrary accuracy with shared layer parameters on bounded sets, avoiding the compactness requirement in conventional universal approximation results.
    \item Compared with existing works, our analysis framework unifies tools from optimization and neural network approximation theory, which is of independent interest and provides a framework for designing and analyzing optimization-informed neural networks in infinite-dimensional spaces.
\end{itemize}
In summary, our approach differs from existing methods for \eqref{eq:opt-ctrl} by simultaneously achieving numerical efficiency, ease of implementation, structural interpretability, and rigorous theoretical guarantees.

\subsection{Organization}

The rest of this paper is organized as follows.
In \Cref{sec:iuzawa-net}, we present the design of the neural network architectures for the learnable modules in the \texttt{iUzawa-Net}.
We then prove a universal approximation theorem for the \texttt{iUzawa-Net} in \Cref{sec:universal-approx}.
In \Cref{sec:stability-analysis}, we show that the outputs of the \texttt{iUzawa-Net} approximate the optimal control of \eqref{eq:opt-ctrl} with guaranteed accuracy after passing through sufficiently many layers, which results in a new universal approximation result for the \texttt{iUzawa-Net}. In \Cref{se:result_regularity}, some new approximation results under additional regularity assumptions are presented.
Several numerical experiments are presented in \Cref{sec:num-exp} to demonstrate the efficiency, accuracy, and generalization ability of the proposed \texttt{iUzawa-Net}.
In particular, we compare the results of the \texttt{iUzawa-Net} with the reference ones obtained by FEM-based traditional algorithms and other deep learning methods.
We conclude in \Cref{sec:conclusion}.

\section{iUzawa-Net: Modules Design and Training}\label{sec:iuzawa-net}

In this section, we detail the implementation of the \texttt{iUzawa-Net} defined in \eqref{eq:inexact-uzawa-duf}. We first elaborate on the constructions of neural network architectures for the learnable modules $\cS^k$, $\cA^k$, $\cQ_S^k$, and $\cQ_A^k$ in \eqref{eq:inexact-uzawa-duf}. Subsequently, we present the full architecture and a training framework for the \texttt{iUzawa-Net}.

\subsection{Neural Network Architecture for \texorpdfstring{$\cS^k$}{Sk} and \texorpdfstring{$\cA^k$}{Ak}}\label{se:Ak}

For illustration, we apply the FNO \cite{li2021fourier} as the structure of the neural networks $\cS^k$ and $\cA^k$, which is an operator learning framework designed for approximating PDE solution operators.
For completeness, we provide a brief overview of the specific FNO architecture adopted in this work; we refer the reader to~\cite{kovachki2023neural,li2021fourier} for a comprehensive discussion.

Recall that the optimal control problem \eqref{eq:opt-ctrl} is defined on a bounded domain $\Omega \subset \bR^d$.
Without loss of generality, we assume that the closure $\overline{\Omega} \subset (0, 2\pi)^d$, hence $\overline{\Omega}$ can be embedded into the interior of the $d$-dimensional torus $\bT^d$.
We identify $\Omega$ as a subset of $\bT^d$ through this embedding.
The architecture for $\cS^k$ is defined as:
\begin{equation}\label{eq:fno}
    \cS^k (u) = \left.(\mathcal{Q} \circ \cK_{\text{FNO}} \circ \mathcal{P})(\mathcal{E}(u))\right|_\Omega, 
\end{equation}
where $\mathcal{E}: L^2(\Omega) \to L^2(\bT^d)$ is a linear and bounded extension operator satisfying $\mathcal{E}(u)|_\Omega = u$ for all $u \in L^2(\Omega)$; $\mathcal{P}: L^2(\bT^d) \to L^2(\bT^d; \bR^{m_p})$ is a pointwise lifting operator defined by $\mathcal{P}(v)(x) = P v(x)$ with a lifting dimension $m_p$ and a learnable linear transformation $P: \bR \to \bR^{m_p}$.
Similarly, $\mathcal{Q}: L^2(\bT^d; \bR^{m_p}) \to L^2(\bT^d)$ is a pointwise projection operator satisfying $\mathcal{Q}(v)(x) = Q v(x)$ for some learnable linear transformation $Q: \bR^{m_p} \to \bR$.
The operator $\cK_{\text{FNO}}: L^2(\bT^d; \bR^{m_p}) \to L^2(\bT^d; \bR^{m_p})$ represents a series of Fourier layers defined by
\begin{equation}\label{eq:fno-layers}
\left\{
\begin{aligned}
    &\cK_{\text{FNO}}(v) = v^{(L)}, \\
    &v^{(\ell + 1)} = \sigma(\mathcal{F}^{-1} \left( \mathcal{R}_\ell \cdot\mathcal{F}_{k_\text{max}} (v^{(\ell)}) \right) + W_\ell v^{(\ell)}), \quad \forall \, \ell = 0, \ldots, L-1, \\
    &v^{(0)} = v,
\end{aligned}
\right.
\end{equation}
where $L$ is the number of the Fourier layers.
Here, $\mathcal{F}_{k_\text{max}}$ denotes the Fourier transform truncated to the lowest $k_\text{max}$ Fourier modes in each coordinate; $\mathcal{F}^{-1}$ denotes the inverse Fourier transform; $\mathcal{R}_\ell: (\mathbb{Z} \cap [-k_\text{max}, k_\text{max}])^d \to \mathbb{C}^{m_p \times m_p}$ is a learnable complex-valued weights on the Fourier domain;  $W_\ell \in \mathbb{R}^{m_p \times m_p}$ is a learnable pointwise linear transformation acting on the original domain; and $\sigma$ is a nonlinear activation function acting pointwise.
The composition $\cQ \circ \cK_{\text{FNO}} \circ \mathcal{P}: L^2(\bT^d) \to L^2(\bT^d)$ is referred to as an FNO in the literature.
In implementation, if the input $u$ is discretized on a uniform grid $\mathcal{D}$ within the domain $\Omega$, both $\mathcal{F}_{k_\text{max}}$ and $\mathcal{F}^{-1}$ can be computed efficiently using the fast Fourier transform (FFT).
We employ an identical neural network architecture for $\mathcal{A}^k$ with a distinct set of learnable parameters.

The FNO architecture incorporates both global transformations in the frequency domain and local, pointwise transformations in the physical domain. This structure is particularly capable of capturing the nonlocal dependencies inherent in PDE solution operators $S$ and $S^*$.
Furthermore, it has been shown that FNOs are universal approximators for the class of continuous operators \cite{kovachki2021universal} and possess the discretization invariance property \cite{kovachki2023neural}, allowing the trained model to generalize across different mesh resolutions

\subsection{Neural Network Architecture for \texorpdfstring{$\cQ_S^k$}{QSk}}\label{se:QSk}

Since the convergence of \eqref{eq:inexact-uzawa} merely requires $Q_S$ to be self-adjoint and the condition $Q_S \succeq SN^{-1}S^* + I$, the choice of $Q_S$ is generally not unique.
Leveraging such flexibility, we design $\cQ_S^k$, the approximator of $Q_S$, with a single-layer, purely linear structure to reduce the parameter complexity of the \texttt{iUzawa-Net}.  Despite its simplicity, this design
guarantees that $\cQ_S^k$ is self-adjoint and positive definite, which are essential properties for the asymptotic $\varepsilon$-optimality of the \texttt{iUzawa-Net} discussed in \Cref{sec:stability-analysis}.

We assume that $\overline{\Omega} \subset (0, 2\pi)^d$ and identify the bounded domain $\Omega$ as a subset of $\bT^d$ as in \Cref{se:Ak}.
Our design relies on the following result.

\begin{proposition}\label{cor:spd-oper-fourier}
    Let $\kappa \in L^2(\bT^d; \bR^{m \times m})$ and $W \in \bR^{m \times m}$ for some positive integer $m$.
    For any $v \in L^2(\bT^d; \bR^m)$, define the linear operator $\cK: L^2(\bT^d; \bR^m) \to L^2(\bT^d; \bR^m)$ by
    \begin{equation}\label{eq:spd-oper-fourier}
        \cK(v)(x) = W v(x) + \int_{\bT^d} \kappa(x-y) v(y) dy, \quad \forall v \in L^2(\bT^d; \bR^m), \quad \text{~for~a.e.~} x \in \Omega.
    \end{equation}
    If $W = V^\top V$ for some $V \in \bR^{m \times m}$ and $\kappa$ takes the form
    \begin{equation*}
        \kappa(y) = \int_{\bT^d} \phi(z - y)^\top \phi(z) \, dz
    \end{equation*}
    for some $\phi \in L^2(\bT^d; \bR^{m \times m})$, then $\cK$ is bounded, self-adjoint, and positive semidefinite.
\end{proposition}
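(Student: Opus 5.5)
The plan is to split $\cK = \cK_W + \cK_\kappa$, where $\cK_W v(x) = W v(x)$ is the pointwise part and $\cK_\kappa v = \kappa * v$ is the convolution part. I would prove boundedness, self-adjointness and positive semidefiniteness for each piece separately; all three properties are preserved under sums. Throughout I read \eqref{eq:spd-oper-fourier} as holding for a.e.\ $x\in\bT^d$; the ``$x\in\Omega$'' in the statement appears to be a typo, since $\cK$ maps into $L^2(\bT^d;\bR^m)$.

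The pointwise part is immediate. We have $\|Wv\|_{L^2}\le |W|\,\|v\|_{L^2}$, and $W=V^\top V$ is symmetric, which gives self-adjointness. Positivity follows from
\[
\langle v,Wv\rangle = \int_{\bT^d}|Vv(x)|^2\,dx\ge 0 .
\]

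For the convolution part I would avoid working with $\kappa$ directly. Instead I would define the operator $\Phi: L^2(\bT^d;\bR^m)\to L^2(\bT^d;\bR^m)$ by
\[
(\Phi v)(z)=\int_{\bT^d}\phi(z-y)v(y)\,dy .
\]
The goal is to show $\cK_\kappa=\Phi^*\Phi$, from which all three properties follow at once: boundedness from that of $\Phi$, self-adjointness because $(\Phi^*\Phi)^*=\Phi^*\Phi$, and positivity because $\langle v,\Phi^*\Phi v\rangle=\|\Phi v\|^2\ge0$. This identification is where the specific form of $\kappa$ is used, and it is the main step.

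To carry it out, I first need $\Phi$ to be bounded. Young's inequality on the compact group $\bT^d$ gives $\|\phi * v\|_{L^2}\le \|\phi\|_{L^1}\|v\|_{L^2}$, and $\|\phi\|_{L^1}\le (2\pi)^{d/2}\|\phi\|_{L^2}$ because the torus has finite measure. Alternatively, Fourier coefficients give $\widehat{\Phi v}(\xi)=\hat\phi(\xi)\hat v(\xi)$ with $\sup_\xi|\hat\phi(\xi)|<\infty$. Next I compute the adjoint: Fubini gives
\[
(\Phi^* w)(y)=\int_{\bT^d}\phi(z-y)^\top w(z)\,dz .
\]
Substituting $w=\Phi v$ yields
\[
(\Phi^*\Phi v)(y)=\int\!\!\int \phi(z-y)^\top\phi(z-x)\,v(x)\,dx\,dz .
\]
Swapping the order of integration and shifting $z\mapsto z+x$ turns the inner $z$-integral into
\[
\int\phi(z'+x-y)^\top\phi(z')\,dz'=\kappa(y-x).
\]
Hence $\Phi^*\Phi v=\kappa*v=\cK_\kappa v$, as required.

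The only delicate point is justifying Fubini in this computation. For $v\in L^2$, the integrand $|\phi(z-y)|\,|\phi(z-x)|\,|v(x)|$ is integrable in $(x,z)$ for a.e.\ $y$, by Cauchy–Schwarz together with the finiteness of the torus measure. Alternatively, I could verify the identity on trigonometric polynomials and extend by density, using that both sides are bounded operators. The Fourier route gives the same identity cleanly, since $\hat\kappa(\xi)=\hat\phi(\xi)^*\hat\phi(\xi)\succeq 0$ up to normalization constants.
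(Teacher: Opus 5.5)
Your proof is correct and takes essentially the paper's approach. The paper gives no separate proof, but the identity $\kappa = \phi^\top_{-} \ast \phi$ it records right after the statement, together with its Fourier form $\mathcal{F}(\phi^\top_{-})(k) = \mathcal{F}(\phi)(k)^*$, is exactly your factorization of the convolution part as $\Phi^*\Phi$, since the adjoint of convolution by $\phi$ is convolution by $\phi^\top_{-}(x) = \phi(-x)^\top$; your reading of ``a.e.\ $x \in \Omega$'' as ``a.e.\ $x \in \mathbb{T}^d$'' also matches the later definition \eqref{eq:qsk-def-pre}.
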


For notational convenience, we define the following convolution operation for matrix-valued functions on $\bT^d$:
\begin{equation}\label{eq:conv-vecfunc-def}
    (v_1 \ast v_2)(x) := \int_{\bT^d} v_1(x-y) v_2(y) \, dy, \quad \forall v_1 \in L^2(\bT^d; \bR^{m_1 \times m_2}), v_2 \in L^2(\bT^d; \bR^{m_2 \times m_3}),
\end{equation}
where the product in the integration is the standard matrix product.
It then follows that
\begin{equation}\label{eq:kappa-conv-repre}
    \kappa (x-y) = \int_{\bT^d} \phi(z-x)^\top \phi(z-y) dz = (\phi^\top_{-} \ast \phi) (x-y),
\end{equation}
where $\phi^\top_{-}(x) := \phi(-x)^\top$.
Note that the operation $\ast$ defined in \eqref{eq:conv-vecfunc-def} is associative.
Motivated by \Cref{cor:spd-oper-fourier} and \eqref{eq:kappa-conv-repre}, we apply $\cK$ in \eqref{eq:spd-oper-fourier} to define the following architecture for $\cQ_S^k$:
\begin{equation}\label{eq:qsk-def-pre}
\begin{aligned}
    &\cQ_S^k(u) = \left. \left( \mathcal{P}^\top \circ \cK \circ \mathcal{P} \right) \left( \mathcal{E}(u) \right) \right|_{\Omega} + \gamma u, \quad \forall u \in L^2(\Omega), \\
    &\cK(v)(x) = 
    \left( V^\top V v(x) + (\phi^\top_{-} \ast \phi \ast v)(x)\right), \quad \forall v \in L^2(\bT^d; \bR^{m_p}), \quad \text{for~a.e.~} x \in \bT^d,
\end{aligned}
\end{equation}
where $\mathcal{E}: L^2(\Omega) \to L^2(\bT^d)$ is a linear and bounded extension operator satisfying $\mathcal{E}(u)|_\Omega = u$ for all $u \in L^2(\Omega)$;
$\mathcal{P}: L^2(\bT^d) \to L^2(\bT^d; \bR^{m_p})$ is a pointwise lifting operator satisfying $\mathcal{P}(v)(x) = P v(x)$ for some learnable linear transformation $P: \bR \to \bR^{m_p}$; $\mathcal{P}^\top: L^2(\bT^d; \bR^{m_p}) \to L^2(\bT^d)$ is defined by $\mathcal{P}^\top(v)(x) = P^\top v(x)$; $\phi \in L^2(\bT^d; \bR^{m_p \times m_p})$ and $V \in \bR^{m_p \times m_p}$ are learnable parameters.\footnote{Note that \eqref{eq:spd-oper-fourier} is similar to a single Fourier layer in FNO with a different parameterization strategy that ensures positive definite.}
Utilizing \Cref{cor:spd-oper-fourier}, it can be verified that $\cQ_S^k$ defined in \eqref{eq:qsk-def-pre} is linear, bounded, self-adjoint, and positive definite for all $\gamma > 0$.
We remark that the term $\gamma u$ with hyperparameter $\gamma > 0$ not only ensures that $\cQ_S^k$ is strictly positive definite, but also serves a role similar to a residual connection \cite{he2016deep} in the neural network.

Note that $\mathcal{F}(\phi)(k)$ is nonzero only if $k \in \bZ^d$.
Also, the convolution theorem applies to the definition \eqref{eq:conv-vecfunc-def}, i.e.,
\begin{equation*}
    \cF(v_1 \ast v_2)(k) = \cF(v_1)(k) \, \cF(v_2)(k), \quad \forall v_1 \in L^2(\bT^d; \bR^{m_1 \times m_2}), \, v_2 \in L^2(\bT^d; \bR^{m_2 \times m_3}), \, k \in \mathbb{Z}^d,
\end{equation*}
where the product on the right-hand side is complex-valued matrix multiplication.
We are thus motivated to parameterize the convolution kernel
$\phi$ directly in the Fourier domain via $\Phi := \mathcal{F}(\phi)$.
In particular, we have $\cF(\phi^\top_{-} \ast \phi \ast v) = \cF(\phi^\top_{-}) \cdot \cF(\phi) \cdot \cF(v)$, where the dot symbol denotes pointwise product of functions; and
\begin{equation*}
    \begin{aligned}
        \cF\left({\phi^\top_{-}}\right)(k) = \int_{\bT^d} \phi(-x)^\top e^{- i \langle k, x \rangle} \, dx = \int_{\bT^d} \phi(-x)^\top \overline{e^{- i \langle k, -x \rangle}} \, dx 
        = \int_{\bT^d} \overline{\phi(x)^\top e^{- i \langle k, x \rangle}} \, dx
        = \Phi(k)^*, \quad \forall k \in \bZ^d,
    \end{aligned}
\end{equation*}
where the star in the last equality denotes the Hermitian transpose.
Substituting this into \eqref{eq:qsk-def-pre} and denoting $\tilde{u} = \mathcal{E}(u)$, we obtain the following explicit expression for $\cQ_S^k$:
\begin{equation}\label{eq:qsk-def}
    \begin{aligned}
        \cQ_S^k(u)(x) = \left. P^\top V^\top V P u(x) + \left. \cF^{-1} \bigg( \Phi^* \cdot \Phi \cdot \cF \left(P \tilde{u}(\cdot) \right)\bigg) \right) \right|_{\Omega} (x) + \gamma u(x), \quad \forall u \in L^2(\Omega), \quad \text{for~a.e.~} x \in \Omega,
    \end{aligned}
\end{equation}
where $P$, $V$, and $\Phi$ and trainable parameters.
We illustrate the architecture of $\cQ_S^k$ in \Cref{fig:qsk}.

In implementation, we first discretize the torus $\bT^d$ as a finite set $\mathcal{D}$ with cardinality $|\mathcal{D}|$.
The input $u \in L^2(\Omega)$ is extended periodically as a function in $L^2(\bT^d)$, and then represented as a vector in For the extension operation, zero-padding is sufficient for computational purposes, while theoretical analysis may require $\mathcal{E}$ to preserve Sobolev regularity, see e.g., \Cref{prop:qsk-univ-approx}.
We adopt a finite-dimensional parameterization of $\Phi$ by truncating it to a maximal number of modes $k_\text{max}$ in each coordinate.
To ensure that $\cQ_S^k u$ is real-valued, we impose the constraint $\Phi(k) = \overline{\Phi(-k)}$ for all $k \in \bZ^d$.
Hence, we directly parameterize $\Phi$ as a complex-valued tensor in $\mathbb{C}^{(k_{\text{max}} + 1)^d \times m_p \times m_p}$.
The Fourier transforms in \eqref{eq:qsk-def} are correspondingly implemented as discrete Fourier transforms.
In particular, the FFT is applicable when $\mathcal{D}$ is a uniform grid, which reduces computational complexity; see e.g., \cite{li2021fourier}.

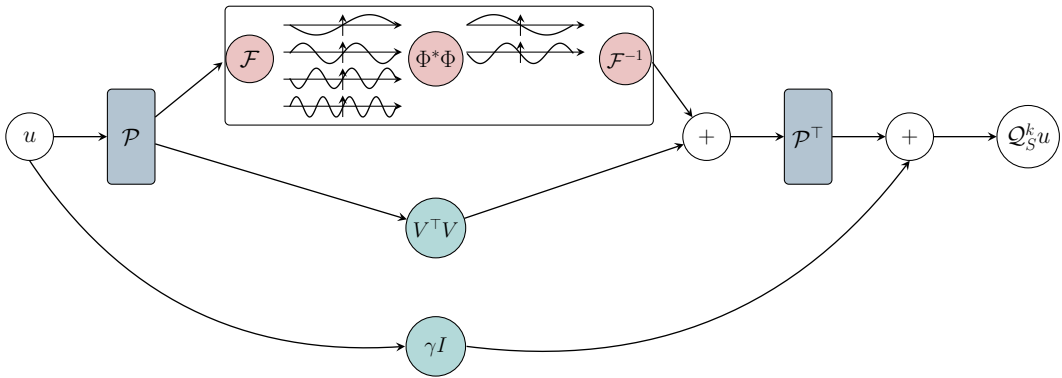
\begin{figure}[!ht]
    \centering
\begin{tikzpicture}[global scale=0.45]
\centering
\node at(-14,0.7)   (2) [circle,draw=black,minimum width =40pt, minimum height =40pt,font=\fontsize{20}{20}\selectfont]{$u$};

\node at (-11, 0.7)  (1) [rounded corners=0.6mm, minimum width =40pt, minimum height =80pt, inner sep=3pt,draw=black,font=\fontsize{20}{20}\selectfont,fill=sintefblue!30]  {$\mathcal{P}$};

\node at(-2,-2)   (3) [circle,draw=black,minimum width =30pt, minimum height =30pt,font=\fontsize{18}{18}\selectfont,fill=teal!30]{$V^\top V$};

\node at(6,0.7)   (4) [circle,draw=black,minimum width =40pt, minimum height =40pt,font=\fontsize{20}{20}\selectfont]{$+$};

\node at(15.5,0.7)   (5) [circle,draw=black,minimum width =40pt, minimum height =40pt,font=\fontsize{20}{20}\selectfont]{$\cQ_S^k u$};

\node at(-1.9,2.8)      (6)[rounded corners=0.6mm ,minimum width =360pt, minimum height =100pt, inner sep=5pt,draw=black,font=\fontsize{20}{20}\selectfont]  {};
\node at(-7.5,3)   (15) [circle,draw=black,minimum width =40pt, minimum height =40pt,font=\fontsize{20}{20}\selectfont,fill=sintefred!30]{$\mathcal{F}$};
\node at(-2,3)   (16) [circle,draw=black,minimum width =40pt, minimum height =40pt,font=\fontsize{20}{20}\selectfont,fill=sintefred!30]{$\Phi^* \Phi$};
\node at(3.6,3)   (17) [circle,draw=black,minimum width =40pt, minimum height =40pt,font=\fontsize{18}{18}\selectfont,fill=sintefred!30]{$\mathcal{F}^{-1}$};
\node at (9, 0.7)  (18)  [rounded corners=0.6mm,draw=black,minimum width =40pt, minimum height =80pt,font=\fontsize{20}{20}\selectfont,fill=sintefblue!30]{$\mathcal{P}^\top$};
\node at(12,0.7)   (19) [circle,draw=black,minimum width =40pt, minimum height =40pt,font=\fontsize{20}{20}\selectfont]{$+$};
\node at(-2,-5.5)   (20) [circle,draw=black,minimum width =50pt, minimum height =30pt,font=\fontsize{18}{18}\selectfont,fill=teal!30]{$\gamma I$};

\coordinate (21) at (-13.5,0.7);
\coordinate [left=0.01cm of 20.west] (22);
\coordinate [right=0.01cm of 20.east] (23);

\draw[eaxis,xscale=0.5] (-13,1.6) -- (-6,1.6);
\draw[eaxis,xscale=0.5] (-13,2.4) -- (-6,2.4);
\draw[eaxis,xscale=0.5] (-13,3.2) -- (-6,3.2);
\draw[eaxis,xscale=0.5] (-13,4) -- (-6,4);

\draw[eaxis,xscale=0.5] (-2.2,4) -- (4.8,4);
\draw[eaxis,xscale=0.5] (-2.2,3.2) -- (4.8,3.2);

\draw[eaxis,yscale=0.4] (-4.75,3.2) -- (-4.75,4.8);
\draw[eaxis,yscale=0.4] (-4.75,5.2) -- (-4.75,6.8);
\draw[eaxis,yscale=0.4] (-4.75,7.2) -- (-4.75,8.8);
\draw[eaxis,yscale=0.4] (-4.75,9.2) -- (-4.75,10.8);

\draw[eaxis,yscale=0.4] (0.5,7.2) -- (0.5,8.8);
\draw[eaxis,yscale=0.4] (0.5,9.2) -- (0.5,10.8);

\draw[elegant,black,line width=0.5,domain=-\num:\num,xscale=0.5,yscale=0.3,xshift=-270pt,yshift=380pt] plot(\x,{sin(\x r)});
\draw[elegant,black,line width=0.5,domain=-\num:\num,xscale=0.5,yscale=0.3,xshift=-270pt,yshift=300pt] plot(\x,{sin(2*\x r)});
\draw[elegant,black,line width=0.5,domain=-\num:\num,xscale=0.5,yscale=0.3,xshift=-270pt,yshift=230pt] plot(\x,{sin(3*\x r)});
\draw[elegant,black,line width=0.5,domain=-\num:\num,xscale=0.5,yscale=0.3,xshift=-270pt,yshift=150pt] plot(\x,{sin(4*\x r)});

\draw[elegant,black,line width=0.5,domain=-\num:\num,xscale=0.5,yscale=0.3,xshift=28pt,yshift=380pt] plot(\x,{-sin(\x r)});
\draw[elegant,black,line width=0.5,domain=-\num:\num,xscale=0.5,yscale=0.3,xshift=28pt,yshift=300pt] plot(\x,{-sin(2*\x r)});

\draw[-stealth][line width=0.5pt] (2) --(1);
\draw[-stealth][line width=0.5pt] (1) --(-8.3,2.9);
\draw[-stealth][line width=0.5pt] (4.4,2.9) --(4);
\draw[-stealth][line width=0.5pt] (4) --(18);
\draw[-stealth][line width=0.5pt] (18) --(19);
\draw[-stealth][line width=0.5pt] (1) --(3);
\draw[-stealth][line width=0.5pt] (3) --(4);
\draw[-stealth][line width=0.5pt] (19)--(5);

\draw[-stealth, bend right, line width=0.5pt] (2.south) to (22);
\draw[-stealth, bend right, line width=0.5pt] (23) to (19.south);

\end{tikzpicture}
\caption{The neural network architecture of $\cQ_S^k$.}
\label{fig:qsk}
\end{figure}

\subsection{Neural Network Architecture for \texorpdfstring{$\cQ_A^k$}{QAk}}\label{sec:qa-special}

We focus on the specific case where $N$ and $\theta$ satisfy the following pointwise assumption.

\begin{assumption}\label{assump:ptwise}
    \noindent We make the following assumptions:
    \begin{enumerate}[label=(\roman*)]
    \item The operator $N: L^2(\Omega) \to L^2(\Omega)$ acts as a multiplication operator:
    \begin{equation*}
        N(u)(x) = \lambda(x) u(x) \quad \text{~a.e.~in~} \Omega,
    \end{equation*}
    where $\lambda \in L^\infty(\Omega)$ satisfies $\lambda(x) \geq c_0 > 0$ a.e. in $\Omega$. \label{assump:n-ptwise}
    \item There exist an integer $m \geq 1$, a closed set $E \subset \bR^m$ and a function $\psi_\xi: \bR^{m} \to \bR \cup \{+\infty\}$ parameterized by $\xi \in E$, such that $\psi_\xi$ is convex, lower semicontinuous, and proper for all $\xi \in E$.
    The functional $\theta: L^2(\Omega) \to \bR \cup \{ +\infty \}$ is given by
    \begin{equation*}
        \theta(u) = \int_\Omega \psi_{\mu(x)}(u(x)) \, dx, \quad \forall u \in L^2(\Omega),
    \end{equation*}
    for some $\mu \in L^\infty(\Omega; \bR^m)$ satisfying $\mu(x) \in E$ a.e. in $\Omega$.
    Here, the integral is interpreted as $+\infty$ if the integrand equals $+\infty$ over a subset of $\Omega$ with positive measure. \label{assump:theta-ptwise}
    \end{enumerate}
\end{assumption}

Though formulated abstractly, \Cref{assump:ptwise} covers many practical scenarios.
Notable examples include:
\begin{enumerate}[label=(\roman*)]
    \item \emph{Control-constrained optimal control problems}: $N = \lambda I$ for some $\lambda > 0$ and $\theta(u) = I_{U_{ad}}(u)$, where
    \begin{equation}\label{eq:uad-def}
        U_{ad} = \{ u \in L^2(\Omega) \mid u_a(x) \leq u(x) \leq u_b(x) ~\text{a.e.~in} ~\Omega \}
    \end{equation}
    for some $u_a, u_b \in L^\infty(\Omega)$, and $I_{U_{ad}}(\cdot)$ is the indicator functional of the set $U_{ad}$.
    In this case, \Cref{assump:ptwise} \ref{assump:n-ptwise} holds with $\lambda(x) = \lambda$, and \Cref{assump:ptwise} \ref{assump:theta-ptwise} holds with 
    \begin{equation*}
        \begin{aligned}
            m = 2, \quad E = \{\xi := (\xi_1, \xi_2) \in \bR^2 \, : \, \xi_1 \leq \xi_2\}, \quad
            \psi_\xi(r) = I_{[\xi_1, \xi_2]}(r), \quad \mu(x) = (u_a(x), u_b(x))^\top.
        \end{aligned}
    \end{equation*}
    \item \emph{Sparse optimal control problems}: $N = \lambda I$ for some $\lambda > 0$ and $\theta(u) = \beta \| u \|_{L^1(\Omega)} + I_{U_{ad}}(u)$ for some $\beta \geq 0$ and $U_{ad}$ defined in \eqref{eq:uad-def}.
    In this case, \Cref{assump:ptwise} \ref{assump:n-ptwise} holds with $\lambda(x) = \lambda$, and \Cref{assump:ptwise} \ref{assump:theta-ptwise} holds with
    \begin{equation*}
        \begin{aligned}
            &m = 3, \quad E = \{\xi := (\xi_1, \xi_2, \xi_3) \in \bR^3 \, : \, \xi_1 \leq \xi_2, \, \xi_3 \geq 0\}, \\
            &\psi_\xi(r) = \xi_3 |r| + I_{[\xi_1, \xi_2]}(r), \quad \mu(x) = (u_a(x), u_b(x), \beta)^\top.
        \end{aligned}
    \end{equation*}
\end{enumerate}
In these cases, it suffices to apply a pointwise operator to approximate $(N + \tau I + \partial \theta)^{-1}$; this observation will be generalized in \Cref{lem:nresol-ptwise}.
To motivate a reasonable network architecture, denoting $\sigma$ as the ReLU activation function, we observe that
\begin{equation}\label{eq:prox-oper-expand-ind}
    \begin{aligned}
        \left((\lambda + \tau) I + \partial I_{U_{ad}}\right)^{-1}(u)(x) &= \mathrm{proj}_{[u_a(x), u_b(x)]} \left( \frac{1}{\lambda + \tau} u(x) \right) \\
        &= u_b(x) - \sigma \left( u_b(x) - u_a(x) - \sigma \left(\frac{1}{\lambda + \tau} u(x) - u_a(x) \right) \right),
    \end{aligned}
\end{equation}
\begin{equation}\label{eq:prox-oper-expand-ind-plus-l1}
    \begin{aligned}
        &\left((\lambda + \tau) I + \partial \left( \beta \|\cdot\|_{L^1(\Omega)} + I_{U_{ad}} \right) \right)^{-1}(u)(x) = \mathrm{proj}_{[u_a(x), u_b(x)]} \left( \left[ \frac{|u(x)| - \beta}{\lambda + \tau} \right]_+ \mathrm{sgn} \left( \frac{1}{\lambda + \tau} u(x) \right) \right)  \\
        &\quad =  u_b(x) - \sigma \left( u_b(x) - u_a(x) - \sigma \left( \sigma\left(\frac{u(x) - \beta}{\lambda + \tau}\right) - \sigma \left(\frac{-u(x) - \beta}{\lambda + \tau} \right) - u_a(x) \right) \right), \\
        &\quad = u_b(x) - \sigma \left( u_b(x) - u_a(x) - \sigma \left( \begin{pmatrix}
            1 & -1
        \end{pmatrix} \sigma \left( \frac{1}{\lambda + \tau} \begin{pmatrix}
            1 \\ -1
        \end{pmatrix} u(x) - \frac{\beta}{\lambda + \tau} \begin{pmatrix}
            1 \\ 1
        \end{pmatrix} \right) - u_a(x) \right) \right),
    \end{aligned}
\end{equation}
both of which can be parameterized as several layers of neural networks with input $(u(x), \mu(x))^\top$, ReLU activation function, and concatenated skip connections \cite{huang2017densely}.
Generalizing this observation to the cases where $\lambda(x)$ is non-constant, we propose the following neural network architecture for $\cQ_A^k$:
\begin{equation}\label{eq:qak-ptwise}
    \left(\cQ_A^k (u)\right)(x) = \mathcal{N}(u(x), \mu(x), \lambda(x)) \quad \text{~a.e.~in}~\Omega,
\end{equation}
where $\mathcal{N}: \bR^{m+2} \to \bR$ is a neural network defined by
\begin{equation}\label{eq:nn-rec-structure}
    \left\{
    \begin{aligned}
        &\mathcal{N}(r, \xi, \eta) = W_L \cdot (v^{(L-1)}, r, \xi, \eta)^\top + b_L, \\
        &v^{(l)} = \sigma \left( W_l \cdot (v^{(l-1)}, r, \xi, \eta)^\top + b_l \right), \quad l = 1, 2, \cdots, L-1, \\
        &v^{(0)} = W_0 \cdot (r, r, \xi, \eta)^\top.
    \end{aligned}
    \right.
\end{equation}
Here, $r \in \bR$, $\xi \in \bR^m$, $\eta \in \bR$, $L$ is the number of layers, $W_l$ and $b_l$ are the weight matrices and bias vectors of the $l$-th layer, respectively, and $\sigma$ is the ReLU activation function.
We allow the vector $(r, \xi, \eta)^\top$ to be fed into each hidden layer, thereby facilitating concatenated skip connections from the input to every hidden layer.
The architecture of $\cQ_A^k$ is illustrated in \Cref{fig:qak-ptwise}.

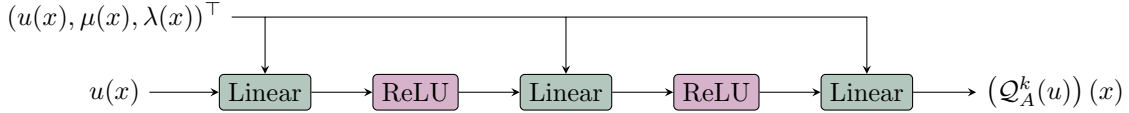
\begin{figure}[!ht]
    \centering
    \begin{tikzpicture}
        \node   (u)         {$u(x)$};
        \node   (linear1)   at ([xshift=2cm]u)     [draw, rounded corners=0.6mm, fill=sintefdarkgreen!30]  {Linear};
        \node   (relu1)    at ([xshift=2cm]linear1)     [draw, rounded corners=0.6mm, fill=sinteflilla!30]     {ReLU};
        \node   (linear2) at ([xshift=2cm]relu1)   [draw, rounded corners=0.6mm, fill=sintefdarkgreen!30]  {Linear};
        \node   (relu2)    at ([xshift=2cm]linear2)     [draw, rounded corners=0.6mm,,fill=sinteflilla!30]     {ReLU};
        \node   (linear3) at ([xshift=2cm]relu2)   [draw, rounded corners=0.6mm,fill=sintefdarkgreen!30]  {Linear};
        \node   (v)         at ([xshift=2.5cm]linear3)      {$\left(\cQ_A^k (u)\right)(x)$};
        \node   [align=center]   (uab)       at ([yshift=1cm]u)     {$(u(x), \mu(x), \lambda(x))^\top$};

        \coordinate (coord1)  at ([yshift=1cm]linear1);
        \coordinate (coord2)  at ([yshift=1cm]linear2);

        \draw[-stealth]   (u) -- (linear1);
        \draw[-stealth]   (linear1) -- (relu1);
        \draw[-stealth]   (relu1) -- (linear2);
        \draw[-stealth]   (linear2) -- (relu2);
        \draw[-stealth]   (relu2) -- (linear3);
        \draw[-stealth]   (linear3) -- (v);
        \draw[-stealth]   (uab) -| (linear1);
        \draw[-stealth]   (coord1) -| (linear2);
        \draw[-stealth]   (coord2) -| (linear3);
    \end{tikzpicture}
    \caption{The neural network architecture of $\cQ_A^k$.}
    \label{fig:qak-ptwise}
\end{figure}

In implementation, if $\lambda(x)$ or $\mu(x)$ is constant over $\Omega$, we simplify $\cQ_A^k$ by omitting the constant parameters from the input. 
For instance, if $\lambda$ is constant, the reduced architecture reads as
\begin{equation}\label{eq:qak-ptwise-reduced}
    \left(\cQ_A^k (u)\right)(x) = \mathcal{N}(u(x), \mu(x)), \quad \text{~a.e.~} x \in \Omega,
\end{equation}
where $\mathcal{N}: \bR^{m+1} \to \bR$ is a neural network defined by
\begin{equation}\label{eq:nn-rec-structure-reduced}
    \left\{
    \begin{aligned}
        &\mathcal{N}(r, \xi) = W_L \cdot (v^{(L-1)}, r, \xi)^\top + b_L, \\
        &v^{(l)} = \sigma \left( W_l \cdot (v^{(l-1)}, r, \xi)^\top + b_l \right), \quad l = 1, 2, \cdots, L-1, \\
        &v^{(0)} = W_0 \cdot (r, r, \xi)^\top,
    \end{aligned}
    \right.
\end{equation}
for some weights $\{W_\ell\}_{\ell=0}^{L}$ and biases $\{b_\ell\}_{\ell=1}^L$.

\subsection{Full Architecture and Training Framework of the iUzawa-Net}

With the constructions of $\cS^k$, $\cA^k$, $\cQ_S^k$, and $\cQ_A^k$ detailed in Sections \ref{se:Ak}-\ref{sec:qa-special}, the \texttt{iUzawa-Net} defined in \eqref{eq:inexact-uzawa-duf} becomes fully implementable. These neural modules collectively specify all learnable parameters in the \texttt{iUzawa-Net}, replacing the preconditioners and PDE solution operators required by the inexact Uzawa iteration \eqref{eq:inexact-uzawa-qa-spec}. Our design ensures that each layer of the \texttt{iUzawa-Net} is optimization-informed and structurally mirrors one iteration of  \eqref{eq:inexact-uzawa-qa-spec}.
A schematic illustration of the \texttt{iUzawa-Net}, highlighting the interaction among $\cS^k$, $\cA^k$, $\cQ_S^k$, and $\cQ_A^k$ within each layer, is provided in Figure~\ref{fig:iuzawa-net}.
The terms $u^0$ and $p^0$ correspond to the initialization of \eqref{eq:inexact-uzawa-qa-spec}.
For simplicity, we fix $u^0 = 0$ and $p^0 = 0$ throughout this work.
 
\begin{figure}[!ht]
	\centering
	\vspace{0.5em}
	\begin{subfigure}{\textwidth}
		\centering
		\begin{tikzpicture}
			\node  (u0)                        {$u_0$};
			\node  (p0)       [below=0.16cm of u0]   {$p_0$};
			\coordinate(u0_p0_mid) at ($(u0)!0.5!(p0)$);
			\node  (layer1)   [draw, rounded corners=0.6mm, minimum width=1cm, minimum height=1.0cm, right=0.7cm of u0_p0_mid,fill=teal!30]   {Layer $1$};
            \coordinate[below=0.6cm of layer1.south] (layer1_below);
            \node  (f) [left=2.8cm of layer1_below] {$f$};
            \node  (yd) [above=0.2cm of f] {$y_d$};
            \coordinate[right=0.5cm of f] (f_right);
			\path let \p1=($(layer1.east) + (1cm, 0)$), \p2=(u0) in node (u1) at (\x1, \y2) {$u^1$};
			\path let \p1=($(layer1.east) + (1cm, 0)$), \p2=(p0) in node (p1) at (\x1, \y2) {$p^1$};
			\coordinate(u1_p1_mid) at ($(u1)!0.5!(p1)$);
			\node  (layer2)   [draw, rounded corners=0.6mm, minimum width=1cm, minimum height=1.0cm, right=of u1_p1_mid,fill=teal!30]   {Layer $2$};
			\path let \p1=($(layer2.east) + (1cm, 0)$), \p2=(u1) in node (u2) at (\x1, \y2) {$u^2$};
			\path let \p1=($(layer2.east) + (1cm, 0)$), \p2=(p1) in node (p2) at (\x1, \y2) {$p^2$};
			\node  (dots1)    [right=0.5cm of u2]   {$\cdots$};
			\node  (dots2)    [right=0.5cm of p2]   {$\cdots$};
			\node  (uk-1)       [right=0.5cm of dots1]   {$u^{L-1}$};
			\node  (pk-1)       [right=0.5cm of dots2]   {$p^{L-1}$};
			\coordinate(uk-1_pk-1_mid) at ($(uk-1)!0.5!(pk-1)$);
			\node  (layerk)   [draw, rounded corners=0.6mm, minimum width=1cm, minimum height=1.0cm, right=of uk-1_pk-1_mid,fill=teal!30]   {Layer $L$};
			\path let \p1=($(layerk.east) + (1.5cm, 0)$), \p2=(uk-1) in node (uk) at (\x1, \y2) {$u^*$};
			\path let \p1=($(layerk.east) + (1.5cm, 0)$), \p2=(pk-1) in node (pk) at (\x1, \y2) {$p^*$};
			\node   (lammu)     at ([yshift=1.2cm,xshift=-1.0cm]u0) {$\lambda, \, \mu$};
			\node   (dots3)  at ([yshift=1.2cm]dots1) {$\cdots$};
            \coordinate[below=0.6cm of layer2.south] (layer2_below);
            \coordinate[below=0.6cm of layerk.south] (layerk_below);
            \node   (dots4) at ([yshift=-1.428cm]dots1) {$\cdots$};
			
			\draw[-stealth] (u0) -- (u0 -| layer1.west);
			\draw[-stealth] (p0) -- (p0 -| layer1.west);
			\draw[-stealth] (u1 -| layer1.east) -- (u1);
			\draw[-stealth] (p1 -| layer1.east) -- (p1);
			\draw[-stealth] (u1) -- (u1 -| layer2.west);
			\draw[-stealth] (p1) -- (p1 -| layer2.west);
			\draw[-stealth] (u2 -| layer2.east) -- (u2);
			\draw[-stealth] (p2 -| layer2.east) -- (p2);
			\draw[-stealth] (u2) -- (dots1);
			\draw[-stealth] (p2) -- (dots2);
			\draw[-stealth] (dots1) -- (uk-1);
			\draw[-stealth] (dots2) -- (pk-1);
			\draw[-stealth] (uk-1) -- (uk-1 -| layerk.west);
			\draw[-stealth] (pk-1) -- (pk-1 -| layerk.west);
			\draw[-stealth] (uk -| layerk.east) -- (uk);
			\draw[-stealth] (pk -| layerk.east) -- (pk);
			\draw[-stealth, dashed] (lammu) -| (layer1.north);
			\draw[-stealth, dashed] (lammu) -| (layer2.north);
			\draw[-stealth, dashed] (lammu) -- (dots3);
			\draw[-stealth, dashed] (dots3) -| (layerk.north);
            \draw[-stealth] (f.east) -| (layer1);
            \draw[-] (yd) -| (f_right);
            \draw[-stealth] (layer1_below) -| (layer2);
            \draw[-stealth] (layer2_below) -- (dots4);
            \draw[-stealth] (dots4) -| (layerk);

            \draw[line width=0.4mm, dotted, rounded corners=10pt] ($(layer1.north west) + (-1.2cm, 0.5cm)$) rectangle ($(layerk.south east) + (0.5cm, -1.0cm)$);
		\end{tikzpicture}
		\subcaption{An overview of the \texttt{iUzawa-Net}.}
	\end{subfigure}
	
	\hfill
	\vspace{0.5em}
	
	\begin{subfigure}{\textwidth}
		\centering
		\begin{tikzpicture}
			\node   (uk)                        {$u^k$};
			\node   (pk)        [below=0.5cm of uk]   {$p^k$};
			\node   (cA)        [draw, rounded corners=0.6mm, minimum width=1cm, right=of pk,fill=sintefblue!30]   {$-\cA^k$};
			\node   (delta)     [draw, rounded corners=0.6mm, minimum width=1cm,fill=sintefblue!30] at (uk -| cA)   {$\tau I$};
			\node   (sum_u1)    [draw, circle, right=of cA,fill=sintefblue!30]   {$+$};
			\node   (Qa)        [draw, rounded corners=0.6mm, minimum width=1cm, right=of sum_u1,fill=sintefblue!30]   {$\cQ_A^k$};
            \node   (lammu)     [above=1.8cm of Qa]  {$\lambda, \, \mu$};
			\node   (cS)        [draw, rounded corners=0.6mm, minimum width=1cm, below=1cm of cA,fill=sintefred!30]   {$\cS^k$};
			\node   (sum_p1)    [draw, circle,fill=sintefred!30] at (cS -| sum_u1)   {$+$};
			\node   (Qs)        [draw, rounded corners=0.6mm, minimum width=1cm,fill=sintefred!30] at (sum_p1 -| Qa)   {$\cQ_S^k$};
			\node   (sum_p2)    [draw, circle, right=of Qs,fill=sintefred!30]    {$+$};
			\node   (pk1)       [right=of sum_p2]   {$p^{k+1}$};
			\node   (uk1)       at (Qa -| pk1)   {$u^{k+1}$};
			\node   (sum_p3)    [draw, circle, below=0.5cm of sum_p1,fill=sintefred!30]   {$+$};
			\node   (yd)        [left=4cm of sum_p3]   {$y_d$};
			\node   (f)         at (yd |- cS)   {$f$};
			\node   (label)     [above=0.5cm of uk1]   {Layer $k$};
			
			\draw[-stealth, line width=0.4mm] (pk) -- (cA);
			\draw[-stealth, line width=0.4mm] (cA) --  (sum_u1);
			\draw[-stealth] (uk) -- (delta);
			\draw[-stealth] (delta.east) -| (sum_u1.north);
			\draw[-stealth, line width=0.4mm] (sum_u1) -- (Qa);
			\draw[-stealth, line width=0.4mm] (Qa) -- (uk1);
			
			\draw[-stealth, line width=0.4mm] (uk1) -- ($(uk1.south) + (0,-0.5cm)$) node[anchor=west] {} -| (cS);
			\draw[-stealth, line width=0.4mm] ([yshift=2pt] cS.east) -- ([yshift=2pt] sum_p1.west);
			\draw[-stealth] ([yshift=-2pt] cS.east) -- ([yshift=-2pt] sum_p1.west);
			\draw[-stealth] (pk) -- ($(pk.south) + (0,-3.5cm)$) node[anchor=east] {} -| (sum_p3);
			\draw[-stealth] (yd) -- (sum_p3);
			\draw[-stealth] (f) -- (cS);
			\draw[-stealth] (sum_p3) -- node[right] {$-$} (sum_p1);
			\draw[-stealth, line width=0.4mm] (sum_p1) -- (Qs);
			\draw[-stealth, line width=0.4mm] (Qs) -- (sum_p2);
			\draw[-stealth, line width=0.4mm] (sum_p2) -- (pk1);
			\draw[-stealth] (pk) -- ($(pk.south) + (0,-3.5cm)$) node[anchor=east] {} -| (sum_p2);
			
			\draw[-stealth] ($(uk.west) + (-0.7cm,0)$) -- (uk);
			\draw[-stealth] ($(pk.west) + (-0.7cm,0)$) -- (pk);
			\draw[-stealth] (uk1) -- ($(uk1.east) + (1.0cm,0)$);
			\draw[-stealth] (pk1) -- ($(pk1.east) + (1.0cm,0)$);
            \draw[-stealth, dashed] (lammu) -- (Qa);
			
			\draw[line width=0.4mm, dotted, rounded corners=10pt] ($(cA.north west) + (-0.5cm, 1.5cm)$) rectangle ($(pk1.south east) + (0.3cm, -1.6cm)$);
		\end{tikzpicture}
		\subcaption{A single layer of the \texttt{iUzawa-Net}.}
	\end{subfigure}
	\caption{An overview of the of the \texttt{iUzawa-Net} \eqref{eq:inexact-uzawa-duf}.}
	\label{fig:iuzawa-net}
\end{figure}

To train the \texttt{iUzawa-Net}, we generate a dataset $\{((y_d)_i, f_i)^\top, u^*_i\}_{i=1}^N$, where $(y_d)_i, f_i$ are the parameters of \eqref{eq:opt-ctrl} and $u^*_i$ is the corresponding optimal control. 
Each $u_i^*$ can be obtained from real-world observations or computed using traditional numerical solvers or deep learning-based methods.
The \texttt{iUzawa-Net} is trained by minimizing the following relative $L^2$-error loss:
\begin{equation}\label{eq:relative-l2-err-def}
    \mathcal{L}(\theta_{\cT}) = \frac{1}{N} \sum_{i=1}^N \frac{ \left\| \mathcal{T}\left((y_d)_i, f_i; \theta_{\mathcal{T}} \right) - u^*_i \right\|_{L^2(\Omega)}^2 }{ \max \left\{ \left\| u^*_i(x) \right\|_{L^2(\Omega)}^2, \, \varepsilon_L \right\}},
\end{equation}
where $\varepsilon_L > 0$ is a small constant to ensure stability.
In implementation, the $L^2$-norms in \eqref{eq:relative-l2-err-def} are approximated discretely, which will be detailed by the numerical experiments in \Cref{sec:num-exp}.
This end-to-end training approach facilitates data-driven preconditioner training mechanism and enables real-time inference of the optimal control for new problem parameters $y_d$ and $f$, effectively compensating for the initial computational cost in data generation and training.

\section{Universal Approximation of the iUzawa-Net}\label{sec:universal-approx}

In this section, we prove that the \texttt{iUzawa-Net} \eqref{eq:inexact-uzawa-duf} admits the universal approximation property for the class of solution operators of \eqref{eq:opt-ctrl} under \Cref{assump:ptwise}. 
Specifically, we show that 
the solution operator $T$ defined in \eqref{eq:param-to-sol-oper} can be approximated to arbitrary accuracy on any compact set by an \texttt{iUzawa-Net} with two layers.
For simplicity, we assume that $\Omega$ is a spatial domain, and remark that the subsequent analysis extends directly to spatio-temporal domains.

\subsection{Preliminaries}
In this subsection, we present some preliminary results related to \eqref{eq:opt-ctrl} and its optimality system \eqref{eq:opt-ctrl-opt-cond}, which will be utilized in the subsequent analysis. 

\begin{lemma}\label{lem:nresolv-lip}
    Let $N: U \to U$ and $\theta: U \to \bR \cup \{ +\infty \}$ be defined as in \eqref{eq:opt-ctrl}.
    Then the operator $(N + \partial \theta)^{-1}: U \rightrightarrows U$ is single-valued.
    Furthermore, for any $v_1, v_2 \in U$, let $u_1 = (N + \partial \theta)^{-1}(v_1)$ and $u_2 = (N + \partial \theta)^{-1}(v_2)$.
    Then
    \begin{equation}\label{eq:nfirm-nonexp}
        \left\langle v_1 - v_2, u_1 - u_2 \right\rangle_U \geq c_0 \left\| u_1 - u_2 \right\|_U^2,
    \end{equation}
    where $c_0 > 0$ is the coercivity constant of $N$.
    Consequently, $(N + \partial \theta)^{-1}$ is Lipschitz continuous with Lipschitz constant $c_0^{-1}$.
\end{lemma}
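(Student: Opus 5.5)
The plan is to work directly from the definition of the inverse of a set-valued operator and the monotonicity of $\partial\theta$. By definition, $u \in (N + \partial\theta)^{-1}(v)$ if and only if $v - Nu \in \partial\theta(u)$. The proof then splits into three parts: nonemptiness (so the operator is defined on all of $U$), the strong monotonicity inequality \eqref{eq:nfirm-nonexp}, and the deductions of single-valuedness and Lipschitz continuity from that inequality.

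\textbf{Monotonicity inequality.} Take $u_i \in (N+\partial\theta)^{-1}(v_i)$, so that $v_i - Nu_i \in \partial\theta(u_i)$ for $i=1,2$. Since $\theta$ is convex, $\partial\theta$ is monotone:
\begin{equation*}
\langle (v_1 - Nu_1) - (v_2 - Nu_2), u_1 - u_2\rangle_U \ge 0 .
\end{equation*}
Rearranging and using linearity and coercivity of $N$ gives
\begin{equation*}
\langle v_1 - v_2, u_1-u_2\rangle_U \ge \langle N(u_1-u_2), u_1-u_2\rangle_U \ge c_0\|u_1-u_2\|_U^2 ,
\end{equation*}
which is \eqref{eq:nfirm-nonexp}.

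\textbf{Single-valuedness and Lipschitz continuity.} Setting $v_1=v_2$ in \eqref{eq:nfirm-nonexp} forces $u_1=u_2$, so the operator is at most single-valued. For general $v_1,v_2$, apply Cauchy--Schwarz to the left-hand side of \eqref{eq:nfirm-nonexp}:
\begin{equation*}
c_0\|u_1-u_2\|_U^2 \le \|v_1-v_2\|_U\,\|u_1-u_2\|_U .
\end{equation*}
Dividing by $\|u_1-u_2\|_U$ when it is nonzero yields the Lipschitz constant $c_0^{-1}$.

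\textbf{Nonemptiness (main step).} It remains to show that $(N+\partial\theta)^{-1}(v)$ is nonempty for every $v\in U$. The natural route is variational. Consider
\begin{equation*}
J(u)=\tfrac12\langle u,Nu\rangle_U+\theta(u)-\langle v,u\rangle_U .
\end{equation*}
This functional is proper and lower semicontinuous. It is strongly convex because $N$ is self-adjoint and coercive. It is also coercive, since the proper convex lsc function $\theta$ is bounded below by a continuous affine minorant. Hence $J$ has a (unique) minimizer $\bar u$, and Fermat's rule gives $0\in\partial J(\bar u)$. The subtle point is the sum rule $\partial J = N + \partial\theta - v$. It holds because $u\mapsto \tfrac12\langle u,Nu\rangle_U$ is continuous and Fréchet differentiable on all of $U$ with gradient $Nu$, so the Moreau--Rockafellar sum rule applies with no further qualification condition. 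Therefore $v\in N\bar u+\partial\theta(\bar u)$, as required.

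Alternatively, one could write $N+\partial\theta = (N - c_0 I) + (c_0 I + \partial\theta)$ and invoke Minty's theorem together with maximal monotonicity of sums. The variational argument above is more self-contained, so I would use it.
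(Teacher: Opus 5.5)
Your proof is correct. The strong-monotonicity inequality \eqref{eq:nfirm-nonexp} and the Lipschitz estimate are derived exactly as in the paper: monotonicity of $\partial\theta$, then coercivity of $N$, then Cauchy--Schwarz.

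The difference is in the first claim. The paper does not prove single-valuedness itself. It cites results on generalized resolvents, \cite[Lemma~1]{eckstein1993nonlinear} and \cite[Theorem~2.3]{bauschke2008general}, which deliver full domain and uniqueness together. You split the claim into two pieces:
\begin{itemize}
\item \emph{Uniqueness.} You observe that it follows from \eqref{eq:nfirm-nonexp} with $v_1=v_2$. Because you prove the inequality for arbitrary selections $u_i$, uniqueness becomes a corollary rather than an input.
\item \emph{Surjectivity of $N+\partial\theta$.} You obtain this by the direct method applied to the strongly convex, coercive, proper lsc functional $J$. You then use Fermat's rule and the Moreau--Rockafellar sum rule. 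The sum rule applies because the quadratic part is continuous and differentiable on all of $U$, with gradient $Nu$ thanks to the self-adjointness of $N$.
\end{itemize}

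What each route buys: yours is self-contained and shows that the only nontrivial ingredient is the range condition $\mathrm{ran}(N+\partial\theta)=U$. The paper's argument is shorter but leaves that point inside the cited framework.
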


\begin{proof}
    Since $\theta$ is convex, lower semicontinuous, and proper, and $N$ is coercive, it follows from \cite[Lemma 1]{eckstein1993nonlinear} and \cite[Theorem 2.3]{bauschke2008general} that $(N + \partial \theta)^{-1}$ is single-valued.
    Moreover, the operator $\partial \theta$ is maximally monotone \cite[Theorem 20.25]{bauschke2017convex}.
    Hence, for any $v_1, v_2 \in U$, if $u_1 = (N + \partial \theta)^{-1}(v_1)$, and $u_2 = (N + \partial \theta)^{-1}(v_2)$, the monotonicity of $\partial \theta$ implies
    \begin{equation}\label{eq:nresolv-nfirm-nonexp}
        \left\langle v_1- v_2, u_1 - u_2 \right\rangle _U \geq \left\langle N(u_1 - u_2), u_1 - u_2 \right\rangle _U.
    \end{equation}
    Applying the coercivity of $N$ to the right hand side and the Cauchy-Schwarz inequality to the left hand side, we obtain
    \begin{equation*}
        \left\|v_1 - v_2\right\|_U \left\|u_1 - u_2 \right\|_U \geq \left\langle v_1 - v_2, u_1 - u_2 \right\rangle _U \geq \left\langle N(u_1 - u_2), u_1 - u_2 \right\rangle _U \geq c_0 \| u_1 - u_2 \|_U^2.
    \end{equation*}
    This establishes \eqref{eq:nfirm-nonexp}.
    The Lipschitz continuity follows immediately by dividing both sides by $c_0 \left\|u - \tilde{u}\right\|_U$ (assuming $u \neq \tilde{u}$, as the case $u = \tilde{u}$ is trivial).
\end{proof}

\begin{lemma}\label{lem:param-to-ctrl-lip}
    The solution operator $T$ of \eqref{eq:opt-ctrl} is a mapping of $Sf - y_d$, i.e.,
    \begin{equation*}
        T(y_d, f) = T_0(Sf - y_d)
    \end{equation*}
    for some operator $T_0: Y \to U$.
    Moreover, $T_0$ is Lipschitz continuous with Lipschitz constant $c_0^{-1} \| S \|$.
\end{lemma}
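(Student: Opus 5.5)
The plan is to eliminate $p^*$ from the optimality system \eqref{eq:opt-ctrl-opt-cond} and then read off both claims from \Cref{lem:nresolv-lip}. Write $w := Sf - y_d \in Y$. The second relation in \eqref{eq:opt-ctrl-opt-cond} gives $p^* = Su^* + Sf - y_d = Su^* + w$. Substituting this into the first relation gives
\begin{equation*}
0 \in Nu^* + S^*Su^* + \partial\theta(u^*) + S^* w .
\end{equation*}
Since the saddle point $(u^*,p^*)$ exists and is unique, and \eqref{eq:opt-ctrl-opt-cond} is necessary and sufficient, $u^*$ is characterized as the unique solution of this inclusion. The inclusion involves the parameters only through $w$. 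Define $T_0(w)$ as the unique $u$ with $-S^*w \in (N+S^*S)u + \partial\theta(u)$; then $T(y_d,f) = T_0(Sf-y_d)$, and $T_0$ is well defined on all of $Y$.

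To get the Lipschitz bound, set $\tilde N := N + S^*S$. This operator is linear, bounded and self-adjoint, and it is coercive with the same constant, since $\langle u, \tilde N u\rangle_U = \langle u, Nu\rangle_U + \|Su\|_Y^2 \ge c_0\|u\|_U^2$. Hence \Cref{lem:nresolv-lip}, applied with $\tilde N$ in place of $N$ (its proof uses only self-adjointness, boundedness and coercivity), shows that $(\tilde N + \partial\theta)^{-1}$ is single-valued and $c_0^{-1}$-Lipschitz. Since $T_0(w) = (\tilde N+\partial\theta)^{-1}(-S^*w)$, we obtain
\begin{equation*}
\|T_0(w_1)-T_0(w_2)\|_U \le c_0^{-1}\|S^*(w_1-w_2)\|_U \le c_0^{-1}\|S\|\,\|w_1-w_2\|_Y ,
\end{equation*}
using $\|S^*\| = \|S\|$.

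The only delicate point is justifying that \Cref{lem:nresolv-lip} applies to $\tilde N$, i.e. that single-valuedness and coercivity survive the perturbation by $S^*S$. This is immediate because $S^*S$ is bounded, self-adjoint and positive semidefinite. If one prefers not to reinvoke the lemma, the estimate can be obtained directly. Subtract the two inclusions, pair the difference with $u_1-u_2$, and use monotonicity of $\partial\theta$. This gives $c_0\|u_1-u_2\|^2 + \|S(u_1-u_2)\|^2 \le \langle S^*(w_2-w_1), u_1-u_2\rangle$. Dropping the nonnegative $\|S(u_1-u_2)\|^2$ term and applying Cauchy--Schwarz yields the same bound.
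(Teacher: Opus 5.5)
Your proof is correct and gives the stated constant $c_0^{-1}\|S\|$, but it takes a different route from the paper.

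You eliminate the dual variable. Substituting $p^* = Su^* + w$ into the first inclusion gives the reduced primal inclusion $-S^*w \in (N+S^*S)u^* + \partial\theta(u^*)$. One application of \Cref{lem:nresolv-lip} to the operator $N+S^*S$, which is coercive with the same constant $c_0$, then yields $T_0 = (N+S^*S+\partial\theta)^{-1}\circ(-S^*)$ together with the Lipschitz bound.

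The paper eliminates the primal variable instead:
\begin{itemize}
\item It writes $u_i^* = (N+\partial\theta)^{-1}(-S^*p_i^*)$ and turns the second relation of \eqref{eq:opt-ctrl-opt-cond} into the dual equation $(S\circ(N+\partial\theta)^{-1}\circ S^* + I)(-p_i^*) + z_i = 0$.
\item It uses \eqref{eq:nfirm-nonexp} to show $\|p_1^*-p_2^*\|_Y \le \|z_1-z_2\|_Y$.
\item Only then does it apply \Cref{lem:nresolv-lip}, for $N$ itself, to pass from $p^*$ to $u^*$.
\end{itemize}

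Your version is shorter. It also shows $T_0$ explicitly as the resolvent of the reduced (state-eliminated) problem composed with $-S^*$, with no intermediate estimate. The paper's version needs an extra monotonicity step in $Y$. In return it gives a by-product: the dual solution map $z \mapsto p^*$ is nonexpansive. That dual-elimination argument is exactly what the paper reuses, with perturbations, in the proof of \Cref{lem:phi-metric-subreg}.
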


\begin{proof}
    The existence of $T_0$ is a direct consequence of the optimality condition \eqref{eq:opt-ctrl-opt-cond}, whose solution $(u^*, p^*)^\top$ depends on $(y_d, f)$ only through $Sf - y_d$.
    Let $z_1, z_2 \in Y$ and denote $u_i^* = T_0(z_i)$ for $i=1,2$.
    By \eqref{eq:opt-ctrl-opt-cond}, there exists $p_i^* \in Y$ such that
    \begin{equation}\label{eq:lptcl-pf-aux-1}
        \left\{
        \begin{aligned}
            & 0 \in Nu_i^* + \partial \theta(u_i^*) + S^* p_i^*, \\
            & 0 = Su_i^* - p_i^* + z_i. 
        \end{aligned}
        \right.
    \end{equation}
    By \Cref{lem:nresolv-lip}, $(N+\partial \theta)^{-1}$ is single-valued.
    Hence \eqref{eq:lptcl-pf-aux-1} is equivalent to
    \begin{equation}\label{eq:lptcl-pf-aux-2}
        (S \circ (N+\partial \theta)^{-1} \circ S^* + I)(-p_i^*) + z_i = 0,
    \end{equation}
    where $I: Y \to Y$ denotes the identity operator.
    It follows from \eqref{eq:lptcl-pf-aux-2} and \eqref{eq:nfirm-nonexp} that
    \begin{equation*}
        \begin{aligned}
            &\left\langle z_1 - z_2, p_1^* - p_2^* \right\rangle _Y \\
            = &\left\| p_1^* - p_2^* \right\|_Y^2 - \left\langle (S \circ (N+\partial \theta)^{-1} \circ S^*)(-p_1^*) - (S \circ (N+\partial \theta)^{-1} \circ S^*)(-p_2^*), \ p_1^* - p_2^* \right\rangle _Y \\
            = &\left\| p_1^* - p_2^* \right\|_Y^2 + \left\langle (N+\partial \theta)^{-1} (-S^* p_1^*) - (N+\partial \theta)^{-1}(-S^* p_2^*), \ -S^*(p_1^* - p_2^*) \right\rangle_U \\
            \geq &\left\| p_1^* - p_2^* \right\|_Y^2 + c_0 \left\| (N+\partial \theta)^{-1} (-S^* p_1^*) - (N+\partial \theta)^{-1}(-S^* p_2^*) \right\|_U^2 \\
            \geq &\left\| p_1^* - p_2^* \right\|_Y^2.
        \end{aligned}
    \end{equation*}
    Hence $\left\| p_1^* - p_2^* \right\|_Y \leq \left\| z_1 - z_2 \right\|_Y$.
    Applying \Cref{lem:nresolv-lip} again yields
    \begin{equation*}
        \begin{aligned}
            \left\| u_1^* - u_2^* \right\|_U &= \| (N+\partial \theta)^{-1}(-S^* p_1^*) - (N+\partial \theta)^{-1}(-S^* p_2^*) \|_U \\
            &\leq c_0^{-1} \| S^* \| \| p_1^* - p_2^* \|_Y \\
            &\leq c_0^{-1} \| S^* \| \| z_1 - z_2 \|_Y.
        \end{aligned}
    \end{equation*}
\end{proof}

\begin{remark}
    Though $T$ is a mapping of $Sf - y_d$, we insist on designing the \texttt{iUzawa-Net} as an operator with input $(y_d, f)^\top$ rather than $Sf - y_d$, since evaluating $Sf$ requires an additional PDE solve.
\end{remark}

We now state, without proof, the following universal approximation theorems for fully-connected neural networks (FCNNs) and FNOs.
Note that \Cref{prop:fc-univ-approx} follows directly from \cite[Theorem 3.1]{pinkus1999approximation}, the Tietze extension theorem, and the fact that uniform convergence of an $\bR^n$-valued function is equivalent to uniform convergence in each of its coordinates.
The compactness of the image in \Cref{prop:fno-univ-approx-general} is guaranteed by the fact that FNOs maps $H^s(\Omega)$ continuously into $H^s(\Omega)$ for any $s \geq 0$, which holds by noticing the followings: the extension operator $\mathcal{E}$ in \Cref{prop:fno-univ-approx-general} and the restriction operator $H^s(\bT^d) \to H^s(\Omega)$ are continuous; all the pointwise operations in \eqref{eq:fno}--\eqref{eq:fno-layers} are Lipschitz; and the Fourier transforms in \eqref{eq:fno-layers} are truncated to finitely many Fourier modes.

\begin{proposition}[c.f. {\cite[Theorem 3.1]{pinkus1999approximation}}]\label{prop:fc-univ-approx}
    Let $F \subset \bR^m$ be a closed set and $g : F \to \bR^n$ be a continuous function.
    If $\sigma: \bR \to \bR$ is continuous and not a polynomial, then for any compact set $K \subset F$ and any $\varepsilon > 0$, there exists a fully connected neural network $\mathcal{N}_{\text{FC}}: \bR^m \to \bR^n$ with activation function $\sigma$, such that
    \begin{equation*}
        \sup_{x \in K} \left\| g(x) - \mathcal{N}_{\text{FC}}(x) \right\| < \varepsilon.
    \end{equation*}
\end{proposition}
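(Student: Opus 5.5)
The plan is to reduce this to the classical Pinkus theorem in three moves: extend $g$ to all of $\bR^m$, apply \cite[Theorem 3.1]{pinkus1999approximation} to each scalar component, and stack the resulting networks. Only $K$ and $F$ need care, so we first remove the dependence on $F$.

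\textbf{Step 1 (extension).} Since $F\subset\bR^m$ is closed and $g=(g_1,\dots,g_n)$ is continuous on $F$, the Tietze extension theorem, applied to each $g_i$, gives continuous functions $\tilde g_i:\bR^m\to\bR$ with $\tilde g_i|_F=g_i$. Here $\bR^m$ is metric, hence normal, and Tietze applies to unbounded real-valued functions. Set $\tilde g=(\tilde g_1,\dots,\tilde g_n)$. On $K\subset F$ we have $\tilde g=g$, so it suffices to approximate $\tilde g$ uniformly on $K$.

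\textbf{Step 2 (componentwise approximation).} By \cite[Theorem 3.1]{pinkus1999approximation}, the following holds whenever $\sigma$ is continuous and not a polynomial: the span of the ridge functions $x\mapsto\sigma(w^\top x+b)$, with $w\in\bR^m$ and $b\in\bR$, is dense in $C(\bR^m)$ in the topology of uniform convergence on compacta. Fix $\varepsilon>0$ and set $\delta=\varepsilon/(2\sqrt n)$. For each $i$ we obtain a one-hidden-layer network
\[
\mathcal N_i(x)=\sum_{j=1}^{r_i} c_{ij}\,\sigma(w_{ij}^\top x+b_{ij})
\]
with $\sup_{x\in K}|\tilde g_i(x)-\mathcal N_i(x)|<\delta$.

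\textbf{Step 3 (assembly).} Concatenate the hidden units of $\mathcal N_1,\dots,\mathcal N_n$ into one hidden layer of width $\sum_i r_i$. Take the output layer to be the block matrix whose $i$-th row carries the coefficients $c_{ij}$ on the units of $\mathcal N_i$ and zeros elsewhere. This gives a fully connected network $\mathcal N_{\text{FC}}:\bR^m\to\bR^n$ whose $i$-th output is $\mathcal N_i$. Then for every $x\in K$,
\[
\|g(x)-\mathcal N_{\text{FC}}(x)\|\le\sqrt n\,\delta<\varepsilon,
\]
using the Euclidean norm; any other norm on $\bR^n$ only changes the constant. Taking the supremum over $K$ gives the claim, and since $K$ is compact the supremum is attained and remains strictly below $\varepsilon$.

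None of the steps is a genuine obstacle. The only points needing care are that Tietze must be applied to possibly unbounded functions, which is allowed for real-valued continuous functions on closed subsets of metric spaces, and that the compactness of $K$ is exactly what Pinkus' density statement requires.
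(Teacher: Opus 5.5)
Your proposal is correct and takes essentially the same route as the paper. The paper justifies this proposition by Pinkus' Theorem 3.1, the Tietze extension theorem, and the equivalence of uniform convergence of an $\bR^n$-valued map with uniform convergence of its coordinates; your proof is a careful expansion of those three steps, with the componentwise-to-vector step made explicit through stacking the hidden units.
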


\begin{proposition}[c.f. {\cite[Theorem 9]{kovachki2021universal}}]\label{prop:fno-univ-approx-general}
    Let $s_1, s_2 \geq 0$ and $\Omega \subset \bR^d$ be a bounded domain with Lipschitz boundary satisfying $\overline{\Omega} \subset (0, 2\pi)^d$.
    Let $G: H^{s_1}(\Omega) \to H^{s_2}(\Omega)$ be a continuous operator and $K \subset H^{s_1}(\Omega)$ be a compact set.
    Then for any $\varepsilon > 0$, there exist a continuous embedding $\mathcal{E}: H^{s_1}(\Omega) \to H^{s_1}(\bT^d)$ and an FNO $\mathcal{G}: H^{s_1}(\bT^d) \to H^{s_2}(\bT^d)$, such that
    \begin{equation*}
        \sup_{v \in K} \left\| G(v) - \left(\mathcal{G} \circ \mathcal{E} (v)\right)|_{\Omega} \right\|_{H^{s_2}(\Omega)} < \varepsilon.
    \end{equation*}
    Furthermore, if $s_2 \leq s_1$, the image $(\mathcal{G} \circ \cE)(K)|_{\Omega}$ is compact in $H^{s_2}(\Omega)$.
\end{proposition}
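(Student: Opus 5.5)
The proof reduces to the periodic setting of \cite[Theorem 9]{kovachki2021universal} via an extension--restriction argument, and then deduces compactness of the image from continuity.

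\emph{Step 1: extension operators.} Since $\Omega$ is a bounded Lipschitz domain, Stein's extension theorem gives a bounded linear extension $H^{s}(\bR^d)$-valued operator for every $s \geq 0$. Because $\overline{\Omega} \subset (0,2\pi)^d$, I would multiply by a smooth cutoff $\chi$ with $\chi \equiv 1$ near $\overline{\Omega}$ and $\operatorname{supp}\chi \subset (0,2\pi)^d$, and then periodize. This yields bounded linear operators $\cE_{s}: H^{s}(\Omega) \to H^{s}(\bT^d)$ with $\cE_s(v)|_\Omega = v$. The restriction $R_s: H^{s}(\bT^d) \to H^{s}(\Omega)$ is bounded. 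I take $\cE := \cE_{s_1}$ as the embedding in the statement.

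\emph{Step 2: transfer to the torus.} Define
\begin{equation*}
\tilde G: H^{s_1}(\bT^d) \to H^{s_2}(\bT^d), \qquad \tilde G(w) := \cE_{s_2}\bigl(G(R_{s_1} w)\bigr),
\end{equation*}
which is continuous as a composition of continuous maps. The set $K' := \cE(K)$ is compact in $H^{s_1}(\bT^d)$. By \cite[Theorem 9]{kovachki2021universal}, applied on the torus, there is an FNO $\mathcal{G}$ with
\begin{equation*}
\sup_{w \in K'} \|\tilde G(w) - \mathcal{G}(w)\|_{H^{s_2}(\bT^d)} < \frac{\varepsilon}{\|R_{s_2}\| + 1}.
\end{equation*}
For $v \in K$, put $w = \cE v$. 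Then $R_{s_2}\tilde G(w) = G(v)$, so
\begin{equation*}
\|G(v) - (\mathcal{G}\circ\cE(v))|_\Omega\|_{H^{s_2}(\Omega)} \leq \|R_{s_2}\|\,\|\tilde G(w) - \mathcal{G}(w)\|_{H^{s_2}(\bT^d)} < \varepsilon,
\end{equation*}
which is the approximation claim.

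\emph{Step 3: compactness when $s_2 \leq s_1$.} I would show that $\mathcal{G}$ maps $H^{s_1}(\bT^d)$ continuously into itself, and then compose with the continuous embedding $H^{s_1}(\bT^d) \hookrightarrow H^{s_2}(\bT^d)$ and with $R_{s_2}$. Then $(\mathcal{G}\circ\cE)(K)|_\Omega$ is the continuous image of a compact set, hence compact in $H^{s_2}(\Omega)$. Continuity of $\mathcal{G}$ is checked layer by layer:
\begin{itemize}[noitemsep,topsep=0.5pt]
\item the lifting $\mathcal{P}$, the projection $\mathcal{Q}$ and the maps $v \mapsto W_\ell v$ are constant-matrix multiplications, hence bounded on $H^{s}$;
\item the term $\cF^{-1}(\mathcal{R}_\ell\cdot\cF_{k_{\max}} v)$ is a finite-rank map into trigonometric polynomials, hence bounded into every $H^{s}$;
\item the remaining ingredient is the Nemytskii map $v \mapsto \sigma(v)$.
\end{itemize}

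\emph{Main obstacle.} The Nemytskii step is where I expect the difficulty. For $0 \leq s_1 \leq 1$, a Lipschitz $\sigma$ suffices: it maps $H^{s}$ into $H^{s}$ continuously, with continuity for $s = 1$ via the chain rule and dominated convergence. For larger $s_1$ one needs $\sigma$ to be smooth, which is the setting of the FNOs in \cite{kovachki2021universal}. In that case I would invoke the standard composition (Moser-type) estimates for $\sigma \circ v$ in $H^s(\bT^d)$ and their continuity on bounded sets. If necessary I would use them in the form valid for $s > d/2$, and handle intermediate $s$ by interpolation.
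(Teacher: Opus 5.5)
Your Steps 1--2 are correct. They re-derive \cite[Theorem 9]{kovachki2021universal} from its periodic counterpart, which is the result you actually need in Step 2, whereas the paper simply cites Theorem 9. For compactness you follow the same layer-by-layer route as the paper's remark before the proposition. Your caution about the Nemytskii step is justified: the paper's assertion that Lipschitz pointwise maps preserve $H^s$ for every $s\ge 0$ fails beyond $s=1$ (for instance, $\max\{\sin x,0\}\notin H^{3/2}(\mathbb{T})$).

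The gap is your last sentence, ``handle intermediate $s$ by interpolation''. The superposition operator $v\mapsto\sigma(v)$ is nonlinear, so its mapping properties at intermediate $s$ cannot be obtained by interpolating between $s\le 1$ and $s>d/2$. Moreover, the property you want is actually false there. By Dahlberg's degeneracy theorem and its fractional-order extensions, for $3/2<s<d/2$ (a range that is nonempty once $d\ge 4$) only linear functions $\sigma$ map $H^s$ into itself, however smooth $\sigma$ is. A layer $\sigma\bigl(W_\ell v+\cF^{-1}(\mathcal{R}_\ell\cdot\cF_{k_{\max}}v)\bigr)$ with $W_\ell\neq 0$ feeds a general $H^{s}$ function into $\sigma$. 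Hence a generic FNO does not map $H^{s_1}(\bT^d)$ into itself for such $s_1$, and Step 3 cannot be completed as designed.

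There are two repairs.
\begin{enumerate}
\item[(a)] Compose in the other order. Use $s_2\le s_1$ to pass the compact set $\mathcal{E}(K)$ into $H^{s_2}(\bT^d)$ first, where it remains compact. Then you only need continuity of $\mathcal{G}$ on $H^{s_2}(\bT^d)$. Your Lipschitz argument gives this whenever $s_2\le 1$, which covers every use of the compactness claim in the paper (it is only ever needed with $s_2=0$).
\item[(b)] For general $s_1,s_2$, do not argue for an arbitrary FNO. Use the specific FNO supplied by the periodic theorem of \cite{kovachki2021universal}, which is continuous as an operator $H^{s_1}(\bT^d)\to H^{s_2}(\bT^d)$: its construction reduces the input to finitely many scalars depending continuously on it. Then $(\mathcal{G}\circ\mathcal{E})(K)|_\Omega$ is a continuous image of the compact set $K$. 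With this route the hypothesis $s_2\le s_1$ is not even needed.
\end{enumerate}
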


As a direct consequence of \Cref{prop:fc-univ-approx}, the universal approximation property also holds for the neural network architecture defined in \eqref{eq:nn-rec-structure}, since the architecture in \eqref{eq:nn-rec-structure} encompasses standard FCNNs (e.g., by setting the weights of the concatenated skip connections to zero).

\begin{lemma}\label{lem:nn-rec-univ-approx}
    Let $m$ be any positive integer, $F \subset \bR^{m+2}$ be a closed set, $g: F \to \bR$ be a continuous function, and $\sigma(x) = \max\{0, x\}$ be the ReLU activation function.
    Then for any compact set $K \subset F$ and any $\varepsilon > 0$, there exists a neural network $\mathcal{N}: \bR^{m+2} \to \bR$ with the architecture defined in \eqref{eq:nn-rec-structure}, such that
    \begin{equation*}
        \sup_{(r, \xi, \eta)^\top \in K} \left\| g(r, \xi, \eta) - \mathcal{N}(r, \xi, \eta) \right\| < \varepsilon.
    \end{equation*}
\end{lemma}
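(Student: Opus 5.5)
The plan is to show that the architecture \eqref{eq:nn-rec-structure} contains every standard fully connected ReLU network $\bR^{m+2}\to\bR$, and then to apply \Cref{prop:fc-univ-approx}. ReLU is continuous and not a polynomial, so \Cref{prop:fc-univ-approx} (with $n=1$) gives, for the compact $K\subset F$ and the continuous $g$, a fully connected network $\mathcal{N}_{\text{FC}}$ satisfying $\sup_{K}|g-\mathcal{N}_{\text{FC}}|<\varepsilon$. If $\mathcal{N}_{\text{FC}}$ has only one hidden layer, I first lengthen it: compose its hidden layer with an identity layer written as $\sigma(t)-\sigma(-t)$. This adds a ReLU layer without changing the function, so I may assume the network has at least one hidden layer in the recursive form below.

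Write the standard network as
\begin{equation*}
h^{(1)}=\sigma(A_1 z+c_1),\quad h^{(l)}=\sigma(A_l h^{(l-1)}+c_l)\ (2\le l\le L-1),\quad \mathcal{N}_{\text{FC}}(z)=A_L h^{(L-1)}+c_L,
\end{equation*}
with $z=(r,\xi,\eta)^\top$. I then match the parameters of \eqref{eq:nn-rec-structure} to these.

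The first layer is where care is needed, because the input to \eqref{eq:nn-rec-structure} enters through $v^{(0)}=W_0(r,r,\xi,\eta)^\top$. I handle this in one of two equivalent ways:
\begin{itemize}
\item choose $W_0$ so that $v^{(0)}=z$ (for example, identity on the last $m+2$ coordinates and zero weight on the first copy of $r$), and then put the original $A_1$ on the $v^{(0)}$ block of $W_1$ with zero skip weights;
\item or set $W_0=0$ and place $A_1$ on the skip-connection block of $W_1$.
\end{itemize}
Either way, $v^{(1)}=\sigma(A_1z+c_1)=h^{(1)}$ with $b_1=c_1$.

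For each later layer $l\ge2$, including the output layer $L$, I take $W_l=[A_l\;\,0]$, so the concatenated skip inputs $(r,\xi,\eta)$ receive zero weight, and set $b_l=c_l$. A short induction on $l$ then gives $v^{(l)}=h^{(l)}$ for all $l$, and hence $\mathcal{N}=\mathcal{N}_{\text{FC}}$ on all of $\bR^{m+2}$. The uniform bound on $K$ follows immediately from the bound for $\mathcal{N}_{\text{FC}}$.

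None of these steps is a real obstacle. The only point requiring care is the bookkeeping at the first layer, namely the duplicated $r$ in $v^{(0)}$ and the fact that $v^{(0)}$ is pre-activation, which is resolved by either choice above. A secondary check is that the depth convention of \Cref{prop:fc-univ-approx} matches, which the identity-padding trick settles.
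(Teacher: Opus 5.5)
Your proposal is correct and is the paper's argument: the paper obtains this lemma in one line, as a direct consequence of \Cref{prop:fc-univ-approx}, because \eqref{eq:nn-rec-structure} contains every standard ReLU FCNN once the concatenated skip-connection weights are set to zero. Your handling of $v^{(0)}=W_0(r,r,\xi,\eta)^\top$ fills in that bookkeeping; the identity-padding step is harmless but unnecessary, since a one-hidden-layer network already fits \eqref{eq:nn-rec-structure} with $L=2$.
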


For the neural network $\cQ_S^k$ constructed in \eqref{eq:qsk-def}, we establish the following properties regarding its approximation capabilities and regularity.

\begin{proposition}\label{prop:qsk-univ-approx}
    For any hyperparameter $\gamma > 0$, the following assertions hold:
    \begin{enumerate}[label=(\roman*)]
        \item For any $\tilde{\gamma} \geq \gamma$, there exists a neural network $\cQ_S$ of the form \eqref{eq:qsk-def} such that $\cQ_S = \tilde{\gamma} I$.
        \item For any linear, bounded, self-adjoint, and positive definite operator $Q_S$, there exists $\cQ_S$ of the architecture \eqref{eq:qsk-def} such that the difference $\cQ_S - Q_S$ is linear, bounded, self-adjoint, and positive.
        \item For any $s \geq 0$, there exists an bounded linear extension operator $\mathcal{E}_s: L^2(\Omega) \to L^2(\bT^d)$, such that if $\cQ_S$ is defined by \eqref{eq:qsk-def} with $\mathcal{E} = \mathcal{E}_s$ and $\Phi$ is supported on finitely many truncated Fourier modes, then $\cQ_S$ maps $H^s(\Omega)$ continuously into $H^s(\Omega)$.
    \end{enumerate}
\end{proposition}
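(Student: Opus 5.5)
Each part follows from a concrete choice of the learnable parameters $P$, $V$, $\Phi$ in \eqref{eq:qsk-def}, together with, for (iii), a suitable choice of the extension operator.

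For (i), we switch off the convolution branch by setting $\Phi \equiv 0$. The remaining pointwise term $P^\top V^\top V P$ is then a nonnegative scalar that we are free to choose. Take $V = I_{m_p}$ and $P = \sqrt{\tilde{\gamma} - \gamma}\, e_1 \in \bR^{m_p}$, where $e_1$ is the first standard basis vector; this is admissible because $\tilde\gamma \geq \gamma$. Then $P^\top V^\top V P = \tilde{\gamma} - \gamma$, and \eqref{eq:qsk-def} reduces to $\cQ_S u = (\tilde{\gamma}-\gamma)u + \gamma u = \tilde{\gamma} u$ for every $u \in L^2(\Omega)$. The extension $\mathcal{E}$ plays no role here, since the only term that used it vanishes.

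For (ii), we apply (i) with a sufficiently large constant. Since $Q_S$ is bounded, we may pick $\tilde{\gamma} > \max\{\gamma, \|Q_S\|\}$ and let $\cQ_S = \tilde{\gamma} I$ as given by (i). The difference $\tilde{\gamma} I - Q_S$ is linear, bounded and self-adjoint, because both terms are. Moreover,
\[
\langle (\tilde{\gamma} I - Q_S)v, v\rangle \geq (\tilde{\gamma} - \|Q_S\|)\|v\|^2 ,
\]
so the difference is in fact strictly positive definite. This is the sense in which the architecture can dominate any prescribed preconditioner, as required by the condition $Q_S \succeq SN^{-1}S^*+I$ in \eqref{eq:inexact-uzawa}.

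For (iii), we construct $\mathcal{E}_s$ and then bound the three terms of \eqref{eq:qsk-def} separately in $H^s$. We take $\mathcal{E}_s = M_\chi \circ E$, defined as follows.
\begin{enumerate}[label=(\alph*)]
\item $E$ is a universal extension operator for Lipschitz domains (Stein's extension, or Rychkov's version for fractional $s$). It is bounded simultaneously from $L^2(\Omega)$ to $L^2(\bR^d)$ and from $H^s(\Omega)$ to $H^s(\bR^d)$.
\item $\chi \in C_c^\infty((0,2\pi)^d)$ is a cutoff with $\chi \equiv 1$ on $\overline{\Omega}$, and $M_\chi$ denotes multiplication by $\chi$ followed by periodization to $\bT^d$.
\end{enumerate}
Multiplication by a smooth compactly supported function is bounded on $H^s$. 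Hence $\mathcal{E}_s$ is bounded $L^2(\Omega)\to L^2(\bT^d)$ and $H^s(\Omega)\to H^s(\bT^d)$, and $\mathcal{E}_s(u)|_\Omega = u$.

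We now bound the terms of \eqref{eq:qsk-def}.
\begin{enumerate}[label=(\alph*)]
\item The terms $P^\top V^\top V P\, u$ and $\gamma u$ are constant multiples of $u$, so they are trivially bounded on $H^s(\Omega)$.
\item For the convolution term, let $\Phi$ be supported on $|k|_\infty \leq k_{\max}$. Then the multiplier satisfies
\[
\big\|\cF^{-1}(\Phi^*\Phi\,\cF(P\tilde u))\big\|_{H^s(\bT^d)} \leq (1+d\,k_{\max}^2)^{s/2} \max_k|\Phi(k)|^2\, |P|\, \|\tilde u\|_{L^2(\bT^d)} .
\]
This already gives a bound in terms of the $L^2$ norm of $\tilde u$, and $\|\tilde u\|_{L^2(\bT^d)} \lesssim \|u\|_{L^2(\Omega)} \leq \|u\|_{H^s(\Omega)}$.
\item Restriction from $H^s(\bT^d)$ to $H^s(\Omega)$ is bounded.
\end{enumerate}
Combining these bounds, $\cQ_S$ is a bounded linear, hence continuous, map from $H^s(\Omega)$ into $H^s(\Omega)$.

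The only delicate step is (iii)(a): we need a single extension operator that is bounded both on $L^2$ and on $H^s$ for possibly non-integer $s$. This is exactly what the universal (Stein/Rychkov) extension on Lipschitz domains supplies.
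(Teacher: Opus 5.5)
Your proposal is correct. Parts (i) and (ii) follow the paper's argument. The only difference in (i) is where the scale $\sqrt{\tilde\gamma-\gamma}$ is placed: the paper puts it in $V$ with $P=e_1$, you put it in $P$ with $V=I$. In (ii) you obtain a strictly positive definite gap, whereas the paper only records semidefiniteness.

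Part (iii) has the same skeleton as the paper (extend, apply the layer, restrict), but the key ingredient differs, and your choice is the more careful one.

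\begin{itemize}
\item \emph{Extension operator.} The paper takes an extension $\bar{\mathcal E}_s\colon H^s(\Omega)\to H^s(\bT^d)$ from Lemma 41 of \cite{kovachki2021universal} and extends it to $L^2(\Omega)$ by density. That step actually requires the estimate $\|\bar{\mathcal E}_s u\|_{L^2(\bT^d)}\lesssim\|u\|_{L^2(\Omega)}$ for $u\in H^s(\Omega)$. This does not follow from $H^s$-boundedness plus the continuous embedding $H^s\hookrightarrow L^2$. Your $\mathcal E_s=M_\chi\circ E$, built from a universal Stein/Rychkov extension, is bounded on $L^2$ and on $H^s$ simultaneously by construction, so it closes exactly this point.
\item \emph{Fourier truncation.} You make its role explicit. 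The convolution branch maps $L^2(\bT^d)$ into trigonometric polynomials, so it needs only $L^2$ control of $\tilde u$. The remaining terms are scalar multiples of $u$. The paper's appeal to ``all pointwise operations are Lipschitz'' is less specific about why the finitely-many-modes hypothesis matters.
\end{itemize}

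One cosmetic fix: in \eqref{eq:qsk-def-pre} the convolution output is projected back by $P^\top$, so your constant should carry $|P|^2$, together with whatever normalization the Fourier transform uses. This does not affect the conclusion.
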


\begin{proof}
    To prove (i), let $P = e_1 \in \bR^{m_p}$ be the first standard basis vector, $V = \sqrt{\tilde{\gamma}-\gamma} I$, and $\Phi = 0$.
    Then for any $u \in L^2(\Omega)$,
    \begin{equation*}
        \cQ_S^k(u)(x) = P^\top V^\top V P ((\mathcal{E} u)(x)) + \gamma u(x) = e_1^\top (\tilde{\gamma}-\gamma) I e_1 u(x) + \gamma u(x) = (\tilde{\gamma}-\gamma)u(x) + \gamma u(x) = \tilde{\gamma} u(x), \quad \text{~a.e.~in~} \Omega.
    \end{equation*}
    For (ii), since $Q_S$ is bounded and self-adjoint, taking sufficiently large $\tilde{\gamma}$ in (i) guarantees that $\cQ_S - Q_S$ is bounded, self-adjoint, and positive semidefinite.

    We now establish (iii).
    By \cite[Lemma 41]{kovachki2021universal}, for any $s \geq 0$, there exists a bounded linear extension operator $\bar{\mathcal{E}}_s: H^s(\Omega) \to H^s(\bT^d)$.
    Since $H^s(\Omega)$ is dense in $L^2(\Omega)$, and the embedding $H^s(\bT^d) \hookrightarrow L^2(\bT^d)$ is dense and continuous, $\bar{\mathcal{E}}_s$ extends uniquely to a bounded linear mapping $\mathcal{E}_s: L^2(\Omega) \to L^2(\bT^d)$, which maps $H^s(\Omega)$ continuously into $H^s(\Omega)$ since $\mathcal{E}_s|_{H^s(\Omega)} = \bar{\mathcal{E}}_s$.
    Now assume that $\cQ_S$ is defined by the architecture in $\eqref{eq:qsk-def}$ with $\mathcal{E} = \mathcal{E}_s$ and $\Phi$ being supported on finitely many Fourier modes.
    Note also that the restriction operator $H^s(\bT^d) \to H^s(\Omega)$ is continuous and all the pointwise operations in \eqref{eq:qsk-def} are Lipschitz.
    It thus follows that $\cQ_S^k$ maps $H^s(\Omega)$ into $H^s(\Omega)$ continuously.
\end{proof}

\subsection{Universal Approximation of the iUzawa-Net}

We now establish a universal approximation property of the \texttt{iUzawa-Net}, highlighting its capacity to approximate the solution operator of \eqref{eq:opt-ctrl} with theoretical guarantees.

\begin{theorem}\label{thm:iuzawa-univ-approx}
    Let $T: Y \times U \to U$ be the solution operator of \eqref{eq:opt-ctrl}.
    For any compact set $K \subset Y \times U$ and $\varepsilon > 0$, there exists a two-layer \texttt{iUzawa-Net} $\mathcal{T}(y_d, f; \theta_{\cT})$ defined in \eqref{eq:inexact-uzawa-duf}, such that
    \begin{equation*}
        \sup_{(y_d, \, f)^\top \in K} \left\| T(y_d, f) - \mathcal{T}(y_d, f; \theta_{\mathcal{T}}) \right\|_U \leq \varepsilon.
    \end{equation*}
\end{theorem}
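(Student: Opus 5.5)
Throughout, the compact set $K$ is taken under \Cref{assump:ptwise}. The plan is to exploit the optimality system \eqref{eq:opt-ctrl-opt-cond} directly, using only two layers. Starting from $u^0=0$, $p^0=0$, the first layer should produce the exact adjoint state $p^1 \approx p^* = Su^* + Sf - y_d$. The second layer should then recover $u^2 \approx (N+\partial\theta)^{-1}(-S^*p^1) = u^*$.

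\textbf{First layer.} With $u^0=p^0=0$, we have $u^1=\cQ_A^0(0)$. If $\cQ_A^0$ is chosen to output the zero function (all weights zero), then $u^1=0$. In that case $p^1=\cQ_S^0(\cS^0 f - y_d)$. Taking $\cQ_S^0=\tilde\gamma I$ with $\tilde\gamma=\gamma$ via \Cref{prop:qsk-univ-approx}(i), the first layer computes $p^1=\gamma(\cS^0 f - y_d)$. The target is $p^* = Su^*+Sf-y_d$, and by \Cref{lem:param-to-ctrl-lip} it is a continuous function of $(y_d,f)$. The obstruction is that $\cS^0$ acts only on $f$, while $y_d$ enters linearly and cannot be modified, so the first layer cannot reproduce the nonlinear part $Su^*$.

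\textbf{Alternative division of labour.} To avoid this, I would shift the nonlinear work into the second layer. Let $\cS^0$ approximate $S$ and take $\cQ_S^0=I$ (this requires $\gamma\le 1$; otherwise rescale through $\cA^1$). Then $p^1 \approx Sf-y_d =: z$, which by \Cref{lem:param-to-ctrl-lip} determines $u^*=T_0(z)$. In the second layer, $\cA^1$ is chosen so that $-\cA^1 p^1 \approx$ a suitable argument, and $\cQ_A^1$ then outputs $u^*$. The natural factorisation is $u^* = (N+\tau I+\partial\theta)^{-1}(\tau u^* - S^*p^*)$, and with $u^1=0$ the argument to match is $w:=\tau u^*-S^*p^*$.

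\textbf{Approximation steps.} First, the map $z\mapsto w(z)$ is continuous $Y\to U$ by \Cref{lem:param-to-ctrl-lip}. The set $Z=\{Sf-y_d\}$ is compact as the continuous image of $K$, and I would approximate $z\mapsto -w(z)$ on a compact neighbourhood of $Z$ by an FNO $\cA^1$ using \Cref{prop:fno-univ-approx-general} with $s_1=s_2=0$. Second, $\cQ_A^1$ must approximate the Lipschitz resolvent $R=(N+\tau I+\partial\theta)^{-1}$; by \Cref{lem:nresolv-lip}, $R$ has Lipschitz constant at most $c_0^{-1}$. Under \Cref{assump:ptwise}, I expect $R$ to act pointwise through a continuous $g(r,\mu(x),\lambda(x))$ (this is the content of the announced \Cref{lem:nresol-ptwise}), with $g$ Lipschitz in $r$.

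\textbf{Main obstacle.} The hardest step is approximating $R$ in $L^2$ on the compact image set $W=\cA^1(\cdots)$ using a pointwise network. \Cref{lem:nn-rec-univ-approx} gives only uniform approximation of $g$ on compact subsets of $\bR^{m+2}$, while functions in $W$ need not be bounded. I would try the following: truncate $r$ to $[-M,M]$ and use the uniform $L^2$-integrability of the compact set $W$ to make $\|w\,\mathbbm{1}_{|w|>M}\|$ small uniformly over $W$. Since $(\mu,\lambda)$ are essentially bounded and take values in a closed set, the argument $(r,\mu(x),\lambda(x))$ then lies in a compact set. Composing $g$ with a clipping function keeps it Lipschitz, and the clipping can itself be realised by ReLU layers. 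The tail contribution is controlled by the linear growth of $g$ in $r$ together with a matching growth bound for the network; to make that bound available, I would choose $\mathcal N$ to be the ReLU approximant of $g\circ\mathrm{clip}$, which is bounded.

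\textbf{Assembling the estimate.} Combining the pieces gives
\[
\|u^2-u^*\| \le \|\cQ_A^1(w')-R(w')\| + c_0^{-1}\|w'-w\|,
\]
where $w'$ is the argument actually produced by the network. The second term is controlled using the Lipschitz continuity of $T_0$, of $w$, and of $R$, together with the FNO error from $\cS^0$, which propagates through $\cA^1$ by uniform continuity on compacts. Choosing each tolerance of order $\varepsilon/3$ yields the claimed bound.
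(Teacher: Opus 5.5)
Your first layer matches the paper's construction ($\cQ_A^0\equiv 0$, $\cQ_S^0=I$, $\cS^0\approx S$, so $p^1\approx Sf-y_d$). The gap is in the second layer, where you make $\cQ_A^1$ approximate the resolvent $R=(N+\tau I+\partial\theta)^{-1}$. Your truncation argument needs $R(v)(x)=g(v(x),\mu(x),\lambda(x))$ with $g$ \emph{jointly continuous}, so that \Cref{lem:nn-rec-univ-approx} applies on $[-M,M]$ times the compact range of $(\mu,\lambda)$. You attribute this continuity to \Cref{assump:ptwise}, but that assumption imposes no regularity on $\xi\mapsto\psi_\xi$. For example, take $E=\bR$, $\psi_\xi=I_{[0,1]}$ for $\xi\le 0$ and $\psi_\xi=I_{[2,3]}$ for $\xi>0$. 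Every $\psi_\xi$ is convex, lower semicontinuous and proper, yet $g(r,\xi,\eta)$ jumps across $\xi=0$. The paper's \Cref{lem:nresol-ptwise} gives joint continuity only under the extra graphical-continuity hypothesis \Cref{assump:subgrad-gcont}, which this theorem does not assume. So the uniform approximation your estimate of $\|\cQ_A^1(w')-R(w')\|$ rests on is unavailable, and as written you prove the statement only under an additional assumption.

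The missing idea is that no module has to reproduce $R$ at all. Since $T_0$ is Lipschitz by \Cref{lem:param-to-ctrl-lip}, let the FNO $\cA^1$ approximate $z\mapsto -T_0(z)$ on the compact set $\{\cS^0 f-y_d:(y_d,f)^\top\in K\}$. Then take $\cQ_A^1$ to be the identity, which \eqref{eq:nn-rec-structure} realises exactly through the skip connection into the output layer ($\mathcal N(r,\xi,\eta)=r$). This gives $u^2=-\cA^1(p^1)\approx T_0(p^1)$, and $\|T_0(p^1)-T_0(Sf-y_d)\|\le c_0^{-1}\|S\|\,\|\cS^0 f-Sf\|$. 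This is essentially the paper's construction, and it uses no continuity of $\xi\mapsto\psi_\xi$.

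A smaller technical point: compact subsets of the infinite-dimensional space $Y$ have empty interior, so there is no ``compact neighbourhood of $Z$''. Either approximate on the compact image $\{\cS^0 f-y_d\}$ after fixing $\cS^0$, or fix $\cA^1$ on $Z$ first and use its continuity near the compact set $Z$ to choose the accuracy of $\cS^0$.
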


\begin{proof}
    For any two-layer \texttt{iUzawa-Net} $\mathcal{T}$, we have $\mathcal{T}(y_d, f; \theta_{\mathcal{T}}) = u^2$, where $u^2$ is the second-layer output defined in \eqref{eq:inexact-uzawa-duf}.
    It follows that
    \begin{equation*}
        u^2 = \cQ_A^1 \left( -\cA^1 \left( -\cQ_S^0 \left( \cS^0 \left( \cQ_A^0 \left( - \cA^0 \left(p^0\right) \right) + f \right) - y_d\right)\right) + \tau \cQ_A^0 \left( - \cA^0 \left(p^0\right) \right) \right).
    \end{equation*}
    We take an FNO $\cA^0: Y \to U$ with all-zero parameter (hence $\cA^0 = 0$).
    Following \Cref{lem:nn-rec-univ-approx}, we choose a neural network $\mathcal{N}: \bR \to \bR$ of the form \eqref{eq:nn-rec-structure} to approximate the zero mapping on the compact set $\{0\}$ with error $\varepsilon_1 / |\Omega|^{1/2}$.
    Let $\cQ_A$ be defined as in \eqref{eq:qak-ptwise} with the constructed $\mathcal{N}$.
    It follows that
    \begin{equation}\label{eq:no-qa0-approx-target}
        \left\| \cQ_A^0 \left( - \cA^0 \left(p^0\right) \right) \right\|_U < \varepsilon_1.
    \end{equation}
    The compactness of $K$ implies that its projections onto $U$ and $Y$, 
denoted by $K_U$ and $K_Y$, are compact.
    Define $K_0 = \left\{\cQ_A^0 \left( - \cA^0 \left(p^0\right)\right)\right\}$ and $K_1 = K_U + K_0$. Then both $K_0$ and $K_1$ are compact.
    By \Cref{prop:fno-univ-approx-general}, there exists an FNO $\cS^0$ such that
    \begin{equation*}
        \sup_{v \in K_1} \left\| Sv - \cS^0(v) \right\|_Y < \varepsilon_2,
    \end{equation*}
    and hence for any $f \in K_U$,
    \begin{equation}\label{eq:tiua-aux-1}
        \begin{aligned}
            \left\| Sf - \cS^0\left(\cQ_A^0 \left( - \cA^0 \left(p^0\right) \right) + f \right) \right\|_Y &\leq \left\| Sf - S\left(\cQ_A^0 \left( - \cA^0 \left(p^0\right) \right) + f \right) \right\|_Y \\
            &\qquad + \left\| S\left(\cQ_A^0 \left( - \cA^0 \left(p^0\right) \right) + f \right) - \cS^0\left(\cQ_A^0 \left( - \cA^0 \left(p^0\right) \right) + f \right) \right\|_Y \\
            & < \|S\|\varepsilon_1 + \varepsilon_2.
        \end{aligned}
    \end{equation}
    By \Cref{lem:param-to-ctrl-lip}, $T(y_d, f) = T_0(Sf - y_d)$ for some continuous $T_0: Y \to U$.
    Thus \Cref{prop:fno-univ-approx-general} guarantees an FNO $\cA^1$ such that
    \begin{equation}\label{eq:no-a-approx-target}
        \sup_{z \in K_2} \left\| -T_0(-z) - \cA^1(z) \right\|_U < \varepsilon_3,
    \end{equation}
    where $K_2 := -\cS^0 K_1 + K_Y$ is compact by \Cref{prop:fno-univ-approx-general}.
    From \Cref{prop:qsk-univ-approx}, we take any $\gamma < 1$ in \eqref{eq:qsk-def} and choose $\cQ_S^0 = I$.
    It follows from \Cref{lem:param-to-ctrl-lip}, \eqref{eq:tiua-aux-1} and \eqref{eq:no-a-approx-target} that
    \begin{equation*}
        \begin{aligned}
            & \quad \left\| T_0(Sf - y_d) - \left((-\cA^1) \circ (-\cQ_S^0)\right) \left( \cS^0 \left( \cQ_A^0 \left( - \cA^0 \left(p^0\right) \right) + f \right) - y_d\right) \right\|_U \\
            & \leq \left\| T_0(Sf - y_d) - T_0\left( \cS^0 \left( \cQ_A^0 \left( - \cA^0 \left(p^0\right) \right) + f \right) - y_d\right) \right\|_U \\
            & \qquad + \left\| T_0\left( \cS^0 \left( \cQ_A^0 \left( - \cA^0 \left(p^0\right) \right) + f \right) - y_d\right) + \cA^1 \left( - \left( \cS^0 \left( \cQ_A^0 \left( - \cA^0 \left(p^0\right) \right) + f \right) - y_d\right) \right) \right\|_U \\
            & < c_0^{-1} \|S\| (\|S\|\varepsilon_1 + \varepsilon_2) + \varepsilon_3. 
        \end{aligned}
    \end{equation*}
    Finally, we take $\cQ_A^1$ to approximate the identity operator over the compact set
    \begin{equation*}
        K_3 := -\cA^1(K_2) + \tau K_0,
    \end{equation*}
    with accuracy
    \begin{equation*}
        \sup_{v \in K_3} \| v - \cQ_A^1 v \|_U < \varepsilon_4.
    \end{equation*}
    Let $\bar{v}=\left((-\cA^1) \circ (-\cQ_S^0)\right) \left( \cS^0 \left( \cQ_A^0 \left( - \cA^0 \left(p^0\right) \right) + f \right) - y_d\right)$, then $\bar{v}\in -\cA^1(K_2)$ and thus $\bar{v}+\tau \cQ_A^0 \left( - \cA^0 \left(p^0\right)\right)\in K_3$.
    As a result, we have
    \begin{equation*}
    \begin{aligned}
    &\|u^2 - T(y_d, f) \|_U=\|\cQ_A^1(\bar{v})+\tau \cQ_A^0 \left( - \cA^0 \left(p^0\right)\right)-T_0(Sf-y_d))\|\\
    \leq& \|\cQ_A^1(\bar{v})-\bar{v}\|+\|\bar{v}-T_0(Sf-y_d))\|+\|\tau \cQ_A^0 \left( - \cA^0 \left(p^0\right)\right)\|\\
    <&\left(c_0^{-1} \|S\|^2 + \tau \right) \varepsilon_1 + c_0^{-1} \|S\| \varepsilon_2 + \varepsilon_3 + \varepsilon_4 =: \varepsilon,
    \end{aligned}
    \end{equation*}
    for any $(y_d, f) \in K$.
    The result follows immediately by choosing $\varepsilon_1, \varepsilon_2, \varepsilon_3, \varepsilon_4$ sufficiently small.
\end{proof}

It is important to note that \Cref{thm:iuzawa-univ-approx} does not imply universal approximation for arbitrary mappings between Hilbert spaces, since the approximable operators are restricted to those depending only on $Sf - y_d$. While the design of the \texttt{iUzawa-Net} restricts the class of approximable operators, it embeds the intrinsic mathematical structure of the optimal control problem \eqref{eq:opt-ctrl} into the network. Consequently, the \texttt{iUzawa-Net} fundamentally distinguishes itself from standard black-box operator learning approaches, leveraging its optimization-informed nature to enhance both interpretability and robustness.

The proof of \Cref{thm:iuzawa-univ-approx} constructs a network where modules like $\cA^1$ and $\cQ_A^0$ do not correspond to their counterparts $S^*$ or $(N+\tau I + \partial \theta)^{-1}$ in the blueprint optimization algorithm \eqref{eq:inexact-uzawa}, but rather serve as generic approximators.
Such a configuration does not adhere to the optimization-informed design principle of the \texttt{iUzawa-Net}.
To address this, we provide stronger approximation results in \Cref{sec:stability-analysis,se:result_regularity}, which guarantees that the learnable modules approximate their respective target operators and, more importantly, ensures that the layer outputs of the \texttt{iUzawa-Net} faithfully track the iterates of the inexact Uzawa method.

\section{Asymptotic \texorpdfstring{$\varepsilon$}{epsilon}-Optimality of the iUzawa-Net Layer Outputs}\label{sec:stability-analysis}

In this section, we prove the existence of an \texttt{iUzawa-Net} $\cT(y_d, f; \theta_\cT)$ whose layer outputs closely approximate inexact iterates of \eqref{eq:inexact-uzawa}. We formally define this property as \emph{algorithm tracking}.

\begin{definition}\label{def:alg-track}
    Let $Q_A = N + \tau I$ for some $\tau \geq 0$ and $Q_S$ be a fixed linear, bounded, self-adjoint, and positive definite operator.
    For any $K \subset Y \times U$ and $\delta > 0$, we say that an \texttt{iUzawa-Net} is \emph{algorithm tracking with respect to $K$ and $\delta$}, or simply \emph{$(K, \delta)$-tracking}, if for any input $(y_d, f)^\top \in K$ and initialization $(p^0, u^0)^\top \in K \cup \{0\}$, its $k$-th layer output $u^k$ and $p^k$ satisfy
    \begin{subequations}\label{eq:inexact-inexact-uzawa} 
    \begin{empheq}[left=\empheqlbrace]{align}
        & \bar{u}^{k+1} = \left(Q_A + \partial \theta\right)^{-1} \left(\tau u^k - S^* p^k\right), \label{eq:inexact-inexact-uzawa-bar-u}\\
        & \bar{p}^{k+1} = p^k + Q_S^{-1} (S \bar{u}^{k+1} - p^k + Sf - y_d), \label{eq:inexact-inexact-uzawa-bar-p}\\
        & u^{k+1} \in \left\{ u \in U \, : \, \left\|u - \bar{u}^{k+1} \right\|_U \leq \delta \right\}, \label{eq:inexact-inexact-uzawa-u}\\
        & p^{k+1} \in \left\{ p \in Y \, : \, \left\|p - \bar{p}^{k+1} \right\|_Y \leq \delta \right\}, \label{eq:inexact-inexact-uzawa-p}
    \end{empheq}
    \end{subequations}
    for any $k = 0, \ldots, L-1$.
\end{definition}
\noindent We require $(p^0, u^0)^\top \in K \cup \{0\}$ since we are mainly interested in those \texttt{iUzawa-Net} with $u^0 = p^0 = 0$.
Utilizing \eqref{eq:inexact-inexact-uzawa}, which defines an inexact algorithm of \eqref{eq:inexact-uzawa}, we show that the layer outputs of $\cT$ enter and stay in a neighborhood of the optimal control $u^*$ after sufficiently many layers.
In particular, \Cref{thm:iuzawa-net-univ-approx-rec} establishes an asymptotic $\varepsilon$-optimality result for the layer outputs of the \texttt{iUzawa-Net}. As a corollary, this yields a new universal approximation theorem for the \texttt{iUzawa-Net} under the algorithm tracking requirement.
Unlike the general result in \Cref{thm:iuzawa-univ-approx}, this theorem ensures theoretical consistency with the optimization-informed architecture, as the network is shown to track the optimization trajectory. 

Our analysis proceeds in two steps.
First, we analyze the optimization algorithm \eqref{eq:inexact-inexact-uzawa} itself, proving that its sequence eventually enters and remains within a neighborhood of the optimal control $u^*$.
Second, we show that for any compact $K \subset U \times Y$ and $\delta > 0$, there exists an \texttt{iUzawa-Net} that is $(K, \delta)$-tracking, by showing that its internal neural networks $\cS^k$, $\cA^k$, $\cQ_A^k$ and $\cQ_S^k$ approximate their corresponding operators with sufficient accuracy on appropriately constructed compact sets. 
Combining these results together yields the asymptotic $\varepsilon$-optimality of the \texttt{iUzawa-Net} layer outputs.

\subsection{Inexact Iterations of \texorpdfstring{\eqref{eq:inexact-uzawa}}{the inexact Uzawa method} and Asymptotic \texorpdfstring{$\varepsilon$-Optimality}{eps-Optimality}}

In the remaining part of this subsection, we fix $Q_A = N + \tau I$ and $Q_S$ such that the operators
\begin{equation}\label{eq:inuzawa-conv-cond}
    Q_A - N, \quad Q_S - (I + SN^{-1}S^*) \quad \text{are positive semidefinite,}
\end{equation}
and consider \emph{any} inexact iterates $\{u^k\}_{k=0}^{\infty}, \{p^k\}_{k=0}^\infty$ that satisfy \eqref{eq:inexact-inexact-uzawa} for all $k \geq 0$ (note that $\{u^k\}_{k=0}^{\infty}$ and $\{p^k\}_{k=0}^\infty$ are not necessarily layer outputs of an \texttt{iUzawa-Net}).
For convenience, we denote $w, \bar{w} \in U \times Y$ and two bounded linear self-adjoint operators $Q: U \times Y \to U \times Y$ and $R: U \times Y \to U \times Y$ as
\begin{equation*}    
    w = \begin{pmatrix}
        u \\
        p
    \end{pmatrix}, \quad \bar{w} = \begin{pmatrix}
        \bar{u} \\
        \bar{p}
    \end{pmatrix}, \quad Q(u, p) = \begin{pmatrix}
        (Q_A - N) \, u \\
        Q_S \, p
    \end{pmatrix}, \quad R(u, p) = \begin{pmatrix}
        (Q_A - N) \, u \\
        (Q_S - \frac{1}{2}(I + SN^{-1}S^*)) \, p
    \end{pmatrix}.
\end{equation*}
We take the inner product $\langle \cdot, \, \cdot \rangle_{U \times Y}$ on $U \times Y$ as the canonical one induced by the Cartesian product.
It is clear that $Q$ and $R$ are positive semidefinite, which introduces the following seminorms on $U \times Y$:
\begin{equation*}
    \left\| w \right\|_Q^2 := \left\langle Q w, w \right\rangle _{U \times Y}, \quad \left\| w \right\|_R^2 := \left\langle R w, w \right\rangle _{U \times Y}.
\end{equation*}
Similar notations to the above will be used to denote the seminorms induced by other positive semidefinite operators.
We use $\sigma_0(\cdot)$ to denote the infimum of the spectrum modulus of a given operator.

We begin by showing that $\bar{w}^{k+1}$ is a contractive iterate with respect to $w^k$ for all $k \geq 0$.
The following \Cref{lem:next-exact-iter-contract} is a direct extension of \cite[Theorem 4.4]{he2014convergence} and \cite[Theorem 3.3]{song2019inexact} to the infinite-dimensional setting.
We provide a detailed proof for completeness.

\begin{lemma}\label{lem:next-exact-iter-contract}
    Let $\{w^k\}_{k=0}^\infty$ and $\{\bar{w}^k\}_{k=1}^\infty$ satisfy \eqref{eq:inexact-inexact-uzawa}.
    Let $w^* = (u^*, p^*)^\top$ be the unique solution to \eqref{eq:opt-ctrl}.
    For any $k \geq 0$, it holds that
    \begin{equation}
        \| \bar{w}^{k+1} - w^* \|_Q^2 \leq \| w^k - w^* \|_Q^2 - \| \bar{w}^{k+1} - w^k \|_R^2.
    \end{equation}
\end{lemma}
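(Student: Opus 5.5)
The plan is the standard variational-inequality argument for Uzawa-type schemes: compare the optimality condition \eqref{eq:opt-ctrl-opt-cond} satisfied by $w^*$ with the conditions satisfied by the exact step $\bar w^{k+1}$ from $w^k$ in \eqref{eq:inexact-inexact-uzawa-bar-u}--\eqref{eq:inexact-inexact-uzawa-bar-p}. Then use the polarization identity
\begin{equation*}
\|w^k-w^*\|_Q^2-\|\bar w^{k+1}-w^*\|_Q^2=\|w^k-\bar w^{k+1}\|_Q^2+2\langle Q(w^k-\bar w^{k+1}),\bar w^{k+1}-w^*\rangle_{U\times Y}.
\end{equation*}
This identity holds because $Q$ is bounded, self-adjoint and positive semidefinite. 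With it, the claim reduces to the lower bound
\begin{equation*}
\langle Q(w^k-\bar w^{k+1}),\bar w^{k+1}-w^*\rangle\ \ge\ -\tfrac14\|p^k-\bar p^{k+1}\|^2_{I+SN^{-1}S^*}.
\end{equation*}
Given this bound, $\|w^k-\bar w^{k+1}\|_Q^2-\tfrac12\|p^k-\bar p^{k+1}\|^2_{I+SN^{-1}S^*}$ is exactly $\|\bar w^{k+1}-w^k\|_R^2$ by the definition of $R$.

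\textbf{Step 1 (primal inequality).} Set $e_u=\bar u^{k+1}-u^*$, $e=p^k-p^*$ and $d=p^k-\bar p^{k+1}$. Since $Q_A=N+\tau I$, \eqref{eq:inexact-inexact-uzawa-bar-u} reads $(Q_A-N)(u^k-\bar u^{k+1})-S^*p^k-N\bar u^{k+1}\in\partial\theta(\bar u^{k+1})$. The first relation of \eqref{eq:opt-ctrl-opt-cond} gives $-Nu^*-S^*p^*\in\partial\theta(u^*)$. Subtracting and using monotonicity of $\partial\theta$ yields
\begin{equation*}
\langle (Q_A-N)(u^k-\bar u^{k+1}),e_u\rangle_U\ \ge\ \langle Ne_u,e_u\rangle_U+\langle S^*e,e_u\rangle_U .
\end{equation*}

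\textbf{Step 2 (dual identity).} Subtracting the second relation of \eqref{eq:opt-ctrl-opt-cond} from \eqref{eq:inexact-inexact-uzawa-bar-p} gives $Q_S(p^k-\bar p^{k+1})=e-Se_u$. Pair this with $\bar p^{k+1}-p^*=e-d$. The terms $\pm\langle Se_u,e\rangle$ then cancel against the Step 1 term, and the sum of the two pieces becomes
\begin{equation*}
\langle Q(w^k-\bar w^{k+1}),\bar w^{k+1}-w^*\rangle\ \ge\ \big(\langle Ne_u,e_u\rangle+\langle e_u,S^*d\rangle\big)+\big(\|e\|^2-\langle e,d\rangle\big).
\end{equation*}

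\textbf{Step 3 (completing squares).} Each bracket is bounded below by completing the square. The second satisfies $\|e\|^2-\langle e,d\rangle\ge-\tfrac14\|d\|^2$. For the first, write $\langle Ne_u,e_u\rangle+\langle e_u,S^*d\rangle=\|N^{1/2}e_u+\tfrac12N^{-1/2}S^*d\|^2-\tfrac14\langle S^*d,N^{-1}S^*d\rangle$, which gives the bound $-\tfrac14\|d\|^2_{SN^{-1}S^*}$. This uses that $N$ is self-adjoint and coercive, so $N^{\pm1/2}$ exist as bounded operators. Adding the two bounds gives the required estimate.

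\textbf{Main obstacle.} The delicate point is the bookkeeping in Step 2. One must write $\bar p^{k+1}-p^*$ as $e-d$ rather than keep it as is, so that the indefinite cross terms $\langle Se_u,e\rangle$ cancel exactly and only terms in $d$ survive. The infinite-dimensional setting adds only routine justifications: monotonicity of $\partial\theta$ on $U$ (maximal monotonicity is already used in Lemma~\ref{lem:nresolv-lip}), and the existence of the bounded square roots $N^{\pm1/2}$ by functional calculus. The condition \eqref{eq:inuzawa-conv-cond} is not needed for the inequality itself. It only guarantees that $\|\cdot\|_Q$ and $\|\cdot\|_R$ are genuine seminorms, so that the inequality expresses contraction.
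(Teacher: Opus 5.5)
Your proposal is correct and follows essentially the same route as the paper. Both arguments use monotonicity of $\partial\theta$ against the optimality system, substitute $Q_S(p^k-\bar p^{k+1})=(p^k-p^*)-S(\bar u^{k+1}-u^*)$ to cancel the cross terms, bound the remainder below by $-\tfrac14\|p^k-\bar p^{k+1}\|^2_{I+SN^{-1}S^*}$ (your completing of squares is exactly the paper's Cauchy--Schwarz step with $\omega=1$), and finish with the three-term expansion of $\|w^k-w^*\|_Q^2$.
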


\begin{proof}
    By \eqref{eq:inexact-inexact-uzawa}, it is easy to verify that
    \begin{equation}
        \left\{
        \begin{aligned}
            & Q_A(u^k - \bar{u}^{k+1}) - Nu^k - S^* p^k \in \partial \theta(\bar{u}^{k+1}), \\
            & Q_S(p^k - \bar{p}^{k+1}) + S \bar{u}^{k+1} - p^k + (Sf - y_d) = 0.
        \end{aligned}
        \right.
    \end{equation}
    Since $w^*$ solves \eqref{eq:opt-ctrl}, it follows from the optimality condition \eqref{eq:opt-ctrl-opt-cond} that
    \begin{equation}
        \left\{
        \begin{aligned}
            & - Nu^* - S^* p^* \in \partial \theta(u^*), \\
            & S u^* - p^* + (Sf - y_d) = 0.
        \end{aligned}
        \right.
    \end{equation}
    From the monotonicity of $\partial \theta$, we have
    \begin{equation*}
        \begin{aligned}
            & \langle (Q_A - N)(u^k - \bar{u}^{k+1}) - N(\bar{u}^{k+1} - u^*) - S^* (p^k - p^*), \ \bar{u}^{k+1} - u^* \rangle_U \geq 0, \\
            & Q_S (p^k - \bar{p}^{k+1}) + S(\bar{u}^{k+1} - u^*) - (p^k - p^*) = 0.
        \end{aligned}
    \end{equation*}
    It follows that
    \begin{equation}\label{eq:inner-prod-lower-bound}
        \begin{aligned}
            &\left\langle Q(w^k - \bar{w}^{k+1}), \bar{w}^{k+1} - w^* \right\rangle _{U \times Y} \\
            = &\left\langle(Q_A - N) (u^k - \bar{u}^{k+1}), \bar{u}^{k+1} - u^* \right\rangle _U + \left\langle Q_S(p^k - \bar{p}^{k+1}), \bar{p}^{k+1} - p^k + p^k - p^* \right\rangle _Y \\
            \geq &\left\langle(Q_A - N) (u^k - \bar{u}^{k+1}), \bar{u}^{k+1} - u^* \right\rangle_U + \left\langle -S(\bar{u}^{k+1} - u^*) + (p^k - p^*), \bar{p}^{k+1} - p^k + p^k - p^* \right\rangle_Y \\
            \geq &\left\langle N(\bar{u}^{k+1} - u^*) + S^* (p^k - p^*), \ \bar{u}^{k+1} - u^* \right\rangle _U - \left\| p^k - \bar{p}^{k+1} \right\|_{Q_S}^2 \\
            & \qquad - \left\langle S(\bar{u}^{k+1} - u^*), p^k - p^* \right\rangle _Y + \left\| p^k - p^* \right\|_Y^2 \\
            = &\left\langle N(\bar{u}^{k+1} - u^*), \ \bar{u}^{k+1} - u^* \right\rangle _U + \left\langle S(\bar{u}^{k+1} - u^*), \ p^k - \bar{p}^{k+1} \right\rangle _Y \\
            & \qquad - \left\langle p^k - p^*, \ p^k - \bar{p}^{k+1} \right\rangle _Y + \left\| p^k - p^* \right\|_Y^2.
        \end{aligned}
    \end{equation} 
    Since $N$ is bounded, self-adjoint, and positive definite over the Hilbert space $U$, it admits a bounded, self-adjoint, and positive definite square root.
    Utilizing this square root and applying the Cauchy-Schwarz inequality, we have for all $\omega > 0$ that
    \begin{equation}\label{eq:cauchy-schwarz-inequal}
        \begin{aligned}
            &\left\langle S(\bar{u}^{k+1} - u^*), \ p^k - \bar{p}^{k+1} \right\rangle _Y \geq - \omega \left\| \bar{u}^{k+1} - u^* \right\|_N^2 - \frac{1}{4\omega} \left\| p^k - \bar{p}^{k+1} \right\|_{SN^{-1}S^*}^2, \\
            &\left\langle p^k - p^*, \ p^k - \bar{p}^{k+1} \right\rangle _Y \leq \omega \left\| p^k - p^* \right\|_Y^2 + \frac{1}{4\omega} \left\| p^k - \bar{p}^{k+1} \right\|_Y^2.
        \end{aligned}
    \end{equation}
    Take $\omega = 1$ in \eqref{eq:cauchy-schwarz-inequal}. Together with \eqref{eq:inner-prod-lower-bound}, we obtain
    \begin{equation*}
        \begin{aligned}
            \left\langle Q(w^k - \bar{w}^{k+1}), \bar{w}^{k+1} - w^* \right\rangle _{U \times Y}
            \geq - \frac{1}{4} \left\| p^k - \bar{p}^{k+1} \right\|_{SN^{-1}S^*}^2 - \frac{1}{4} \left\| p^k - \bar{p}^{k+1} \right\|_Y^2 = -\frac{1}{4} \| p^k - \bar{p}^{k+1} \|_{I + SN^{-1}S^*}.
        \end{aligned} 
    \end{equation*}
    Therefore, we have
    \begin{equation*}
        \begin{aligned}
            \left\| w^k - w^* \right\|_Q^2 &= \left\| w^k - \bar{w}^{k+1} \right\|_Q^2 + \left\| \bar{w}^{k+1} - w^* \right\|_Q^2 + 2 \left\langle Q(w^k - \bar{w}^{k+1}), \bar{w}^{k+1} - w^* \right\rangle _{U \times Y} \\
            &\geq  \left\| w^k - \bar{w}^{k+1} \right\|_Q^2 + \left\| \bar{w}^{k+1} - w^* \right\|_Q^2 - \frac{1}{2} \left\| p^k - \bar{p}^{k+1} \right\|_{SN^{-1}S^* + I}^2 \\
            &= \left\| \bar{w}^{k+1} - w^* \right\|_Q^2 + \left\|w^k - \bar{w}^{k+1} \right\|_R^2.
        \end{aligned}
    \end{equation*}
\end{proof}

Next, we bound $\|\bar{w}^{k} - w^*\|_{U \times Y}$ using the properties of the following set-valued mapping:
\begin{equation}\label{eq:phi-oper-def}
    \varphi: U \times Y \rightrightarrows U \times Y, \quad \varphi(w) = \begin{pmatrix}
        \varphi_1(w) \\
        \varphi_2(w)
    \end{pmatrix} := \begin{pmatrix}
        Nu + \partial \theta(u) + S^* p \\
        -Su + p - (Sf - y_d)
    \end{pmatrix}.
\end{equation}
Note that $0 \in \varphi(w^*)$ by the optimality conditions \eqref{eq:opt-ctrl-opt-cond}.

\begin{lemma}\label{lem:phi-metric-subreg}
    There exists some $\kappa_0 > 0$ such that for any $w_1, w_2 \in U \times Y$, it holds that
    \begin{equation}\label{eq:phi-metric-subreg}
        \kappa_0 \cdot \mathrm{dist}\left(\varphi(w_1), \varphi(w_2)\right) \geq \left\| w_1 - w_2 \right\|_{U \times Y}.
    \end{equation}
\end{lemma}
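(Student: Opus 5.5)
The plan is to prove the inequality for arbitrary selections and then take the infimum. Let $w_i = (u_i, p_i)^\top$ for $i = 1, 2$, and interpret $\mathrm{dist}(\varphi(w_1), \varphi(w_2))$ as $\inf\{\|v_1 - v_2\|_{U \times Y} : v_i \in \varphi(w_i)\}$.

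If either set is empty (i.e., $\partial\theta(u_i) = \emptyset$), the distance is $+\infty$ and there is nothing to prove. Otherwise fix arbitrary $v_i = (a_i, b_i)^\top \in \varphi(w_i)$. It suffices to show
\[
\|w_1 - w_2\|_{U\times Y} \le \kappa_0 \|v_1 - v_2\|_{U\times Y}
\]
with $\kappa_0$ independent of the selections, and then pass to the infimum.

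Write $\Delta u = u_1 - u_2$, $\Delta p = p_1 - p_2$, $\Delta a = a_1 - a_2$ and $\Delta b = b_1 - b_2$. By definition of $\varphi$ in \eqref{eq:phi-oper-def},
\[
a_i - N u_i - S^* p_i \in \partial\theta(u_i), \qquad b_i = -S u_i + p_i - (Sf - y_d).
\]
The affine term $Sf - y_d$ cancels on subtraction, so $\Delta p = \Delta b + S\Delta u$.

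The monotonicity of $\partial\theta$ (as in the proof of \Cref{lem:nresolv-lip}) gives
\[
\langle \Delta a - N\Delta u - S^*\Delta p, \Delta u\rangle_U \ge 0 .
\]
Substituting $\Delta p = \Delta b + S\Delta u$ and using the coercivity of $N$ yields
\[
c_0\|\Delta u\|_U^2 + \|S\Delta u\|_Y^2 \le \langle \Delta a, \Delta u\rangle_U - \langle \Delta b, S\Delta u\rangle_Y .
\]
The key point is that the cross term $S\Delta u$ appears with a favorable sign, which is the step to get right. Dropping $\|S\Delta u\|_Y^2$ and applying Cauchy–Schwarz gives
\[
\|\Delta u\|_U \le c_0^{-1}\bigl(\|\Delta a\|_U + \|S\|\,\|\Delta b\|_Y\bigr).
\]

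Next, the identity $\Delta p = \Delta b + S\Delta u$ gives
\[
\|\Delta p\|_Y \le \|\Delta b\|_Y + \|S\|\,\|\Delta u\|_U \le \|S\| c_0^{-1}\|\Delta a\|_U + \bigl(1 + c_0^{-1}\|S\|^2\bigr)\|\Delta b\|_Y .
\]
Combining the two bounds, and using $\|\Delta a\|_U + \|\Delta b\|_Y \le \sqrt{2}\,\|v_1 - v_2\|_{U\times Y}$, gives the desired inequality with an explicit constant such as
\[
\kappa_0 = \sqrt{2}\,\bigl(1 + c_0^{-1}(1 + \|S\|)^2\bigr),
\]
which depends only on $c_0$ and $\|S\|$.

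The only potential obstacle is the careful handling of the set-valued distance: the empty-set case and taking the infimum over selections. Both are routine because the bound holds uniformly over all selections.
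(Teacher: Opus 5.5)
Your proof is correct, but it eliminates the opposite variable from the paper's argument.

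\textbf{The paper's route.} It eliminates the primal variable. Using the single-valuedness and Lipschitz continuity from \Cref{lem:nresolv-lip}, it writes $u_i=(N+\partial\theta)^{-1}(\xi_i-S^*p_i)$, which turns the second component of $\varphi$ into an equation in $p$ alone. It then inserts an auxiliary perturbation $r$ so that both equations are evaluated at the same $\xi_1$. Next it reuses the Schur-complement monotonicity argument from the proof of \Cref{lem:param-to-ctrl-lip}, where the identity term in $S(N+\partial\theta)^{-1}S^*+I$ supplies the strong monotonicity, to bound $\|p_1-p_2\|_Y$ first. Only afterwards does it recover $\|u_1-u_2\|_U$ from the Lipschitz resolvent.

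\textbf{Your route.} You eliminate the dual variable through the affine relation $\Delta p=\Delta b+S\Delta u$ and apply the monotonicity of $\partial\theta$ directly. Here the coercivity of $N$ supplies the strong monotonicity, so $u$ is estimated first. The term $\|S\Delta u\|_Y^2$ comes with the favorable sign, and you simply discard it.

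\textbf{Comparison.} Your version is shorter and more elementary. It needs neither the resolvent nor the perturbation $r$, only monotonicity of $\partial\theta$ and $\langle u,Nu\rangle_U\ge c_0\|u\|_U^2$. The only trade-off is in the constants: whichever variable is estimated first gets the sharper bound. The paper's estimate $\|\Delta p\|_Y\le\|\Delta b\|_Y+c_0^{-1}\|S\|\,\|\Delta a\|_U$ is sharper than yours. Your $\|\Delta u\|_U\le c_0^{-1}(\|\Delta a\|_U+\|S\|\,\|\Delta b\|_Y)$ avoids the $c_0^{-2}\|S\|^2$ term that appears in the paper's $u$-estimate. This difference is immaterial, since the lemma only asserts the existence of some $\kappa_0$.
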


\begin{proof}
    The inequality trivially holds if $\varphi(w_1)$ or $\varphi(w_2)$ is empty, where the distance in \eqref{eq:phi-metric-subreg} is infinity.
    Otherwise, take any $\zeta_1 := (\xi_1, \eta_1)^\top \in \varphi(w_1)$ and $\zeta_2 := (\xi_2, \eta_2)^\top \in \varphi(w_2)$.
    It is easy to verify that
    \begin{equation}\label{eq:lpms-aux-1}
        -S (N + \partial \theta)^{-1} (\xi_i -S^* p_i) + p_i - (Sf - y_d) = \eta_i, \quad i = 1, 2.
    \end{equation}
    Rewrite \eqref{eq:lpms-aux-1} with $i=2$ as
    \begin{equation}\label{eq:lpms-aux-2}
        -S (N + \partial \theta)^{-1} (\xi_1 -S^* p_2) + p_2 - (Sf - y_d) = \eta_2 + r,
    \end{equation}
    where $r \in Y$ is defined by
    \begin{equation*}
        r = S (N + \partial \theta)^{-1} (\xi_2 -S^* p_2) -S (N + \partial \theta)^{-1} (\xi_1 -S^* p_2).
    \end{equation*}
    By the Cauchy-Schwarz inequality, \eqref{eq:lpms-aux-1} with $i = 1$, \eqref{eq:lpms-aux-2}, and a similar argument to the proof of \Cref{lem:param-to-ctrl-lip}, we have
    \begin{equation}\label{eq:lpms-aux-3}
        \begin{aligned}
            \left\| \eta_1 - (\eta_2 + r) \right\|_Y \left\| p_1 - p_2 \right\|_Y \geq  \left\langle \eta_1 - (\eta_2 + r), p_1 - p_2 \right\rangle _Y \geq \left\| p_1 - p_2 \right\|_Y^2.
        \end{aligned}
    \end{equation}
    By \Cref{lem:nresolv-lip} and the boundedness of $S$, we have
    \begin{equation*}
        \left\| r \right\|_Y \leq c_0^{-1} \left\| S \right\| \left\|\xi_1 - \xi_2  \right\|_{U},
    \end{equation*}
    and hence
    \begin{equation*}
        \left\| \eta_1 - (\eta_2 + r) \right\|_Y \leq \left\| \eta_1 - \eta_2 \right\|_Y + c_0^{-1} \left\| S \right\| \left\|\xi_1 - \xi_2  \right\|_{U}.
    \end{equation*}
    In view of \eqref{eq:lpms-aux-3}, we obtain
    \begin{equation}\label{eq:lpms-aux-4}
        \left\| p_1 - p_2 \right\|_Y \leq \left\| \eta_1 - \eta_2 \right\|_Y + c_0^{-1} \left\| S \right\| \left\|\xi_1 - \xi_2  \right\|_{U}.
    \end{equation}
    Next, observe that
    \begin{equation*}
        \begin{aligned}
            u_i = (N + \partial \theta)^{-1} (\xi_i - S^* p_i), \quad i = 1, 2.
        \end{aligned}
    \end{equation*}
    Again, similar to the proof of \Cref{lem:param-to-ctrl-lip}, it follows from \eqref{eq:lpms-aux-4} that
    \begin{equation}\label{eq:lpms-aux-5}
        \left\| u_1 - u_2 \right\|_U \leq c_0^{-1} \left( \left( 1 + c_0^{-1} \left\|S\right\|^2 \right) \left\|\xi_1 - \xi_2 \right\|_U + \left\|S\right\| \left\|\eta_1 - \eta_2 \right\|_Y \right).
    \end{equation}
    Hence, from \eqref{eq:lpms-aux-4} and \eqref{eq:lpms-aux-5} we have
    \begin{equation}\label{eq:lpms-aux-6}
    \begin{aligned}
        \left\| w_1 - w_2 \right\|_{U \times Y} &= \left( \left\| u_1 - u_2 \right\|_U^2 + \left\| p_1 - p_2 \right\|_Y^2 \right)^{1/2} \\
        &\leq \left( 2 \left( \left( c_0^{-1} \left( 1 + c_0^{-1} \left\|S\right\|^2 \right) \right)^2 + \left( c_0^{-1} \left\| S \right\| \right)^2 \right) \left\| \xi_1 - \xi_2 \right\|_U^2 + 2 \left( \left( c_0^{-1} \|S\| \right)^2 + 1 \right) \left\| \eta_1 - \eta_2 \right\|_Y^2 \right)^{1/2} \\
        &\leq \left( 2 \max \left\{  \left( c_0^{-1} \left( 1 + c_0^{-1} \left\|S\right\|^2 \right) \right)^2 + \left( c_0^{-1} \left\| S \right\| \right)^2, \, \left( c_0^{-1} \|S\| \right)^2 + 1 \right\} \right)^{1/2} \left\| \zeta_1 - \zeta_2 \right\|_{U \times Y} \\
        &=: \kappa_0 \left\| \zeta_1 - \zeta_2 \right\|_{U \times Y}.
    \end{aligned}
    \end{equation}
    Since $\zeta_1 \in \varphi(w_1)$ and $\zeta_2 \in \varphi(w_2)$ are taken arbitrarily, we conclude that
    \begin{equation*}
        \left\| w_1 - w_2 \right\|_{U \times Y} \leq \kappa_0 \cdot \mathrm{dist}\left(\varphi(w_1), \varphi(w_2)\right).
    \end{equation*}
\end{proof}

\begin{lemma}\label{lem:ppa-iter-err-bd}
    Let $\{w^k\}_{k=0}^\infty$ and $\{\bar{w}^{k+1}\}_{k=0}^\infty$ satisfy \eqref{eq:inexact-inexact-uzawa}.
    Let $w^* = (u^*, p^*)^\top$ be the unique solution to \eqref{eq:opt-ctrl}.
    Then for any $k \geq 0$, it holds that
    \begin{equation}
        \left\| \bar{w}^{k+1} - w^* \right\|_Q \leq \kappa \| \bar{w}^{k+1} - w^k \|_R
    \end{equation}
    for some constant $\kappa > 0$.
\end{lemma}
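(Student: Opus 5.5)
The plan is to combine the error bound of \Cref{lem:phi-metric-subreg} with the optimality relations satisfied by the exact step $\bar{w}^{k+1}$. The first step is to exhibit an explicit element of $\varphi(\bar{w}^{k+1})$ that is controlled by $\bar{w}^{k+1}-w^k$. From \eqref{eq:inexact-inexact-uzawa-bar-u}--\eqref{eq:inexact-inexact-uzawa-bar-p}, exactly as at the start of the proof of \Cref{lem:next-exact-iter-contract}, we have
\begin{equation*}
(Q_A-N)(u^k-\bar{u}^{k+1}) - N\bar{u}^{k+1} - S^*p^k \in \partial\theta(\bar{u}^{k+1}),
\end{equation*}
and
\begin{equation*}
Q_S(p^k-\bar{p}^{k+1}) + S\bar{u}^{k+1} - p^k + (Sf-y_d) = 0.
\end{equation*}
Adding $N\bar{u}^{k+1}+S^*\bar{p}^{k+1}$ to the first relation and rearranging the second, we obtain
\begin{equation*}
\zeta^{k+1} := \begin{pmatrix} (Q_A-N)(u^k-\bar{u}^{k+1}) + S^*(\bar{p}^{k+1}-p^k) \\ (Q_S - I)(\bar{p}^{k+1}-p^k) \end{pmatrix} \in \varphi(\bar{w}^{k+1}).
\end{equation*}
Since $0\in\varphi(w^*)$, \Cref{lem:phi-metric-subreg} gives $\|\bar{w}^{k+1}-w^*\|_{U\times Y}\le \kappa_0\|\zeta^{k+1}\|_{U\times Y}$.

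The second step bounds $\|\zeta^{k+1}\|$ by the $R$-seminorm of $\bar{w}^{k+1}-w^k$. The main obstacle is that $R$ is only positive semidefinite in the $u$-component, because $Q_A-N=\tau I$ and $\tau$ may be $0$. The $u$-part, however, only involves $(Q_A-N)(u^k-\bar{u}^{k+1})$, and for a positive semidefinite bounded operator $B$ one has $\|Bx\|\le\|B\|^{1/2}\|x\|_B$. This gives $\|(Q_A-N)(u^k-\bar{u}^{k+1})\|_U\le \tau^{1/2}\|u^k-\bar{u}^{k+1}\|_{Q_A-N}$. For the $p$-part, note that $Q_S-\tfrac12(I+SN^{-1}S^*) \succeq \tfrac12(I+SN^{-1}S^*)\succeq \tfrac12 I$ by \eqref{eq:inuzawa-conv-cond}. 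Hence $\|\bar{p}^{k+1}-p^k\|_Y\le \sqrt{2}\,\|\bar{p}^{k+1}-p^k\|_{Q_S-\frac12(I+SN^{-1}S^*)}$. Bounding the terms $S^*(\cdot)$ and $(Q_S-I)(\cdot)$ by operator norms then yields $\|\zeta^{k+1}\|\le c_1\|\bar{w}^{k+1}-w^k\|_R$ with $c_1$ depending only on $\tau$, $\|S\|$ and $\|Q_S\|$.

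Finally, $\|\bar{w}^{k+1}-w^*\|_Q\le \|Q\|^{1/2}\|\bar{w}^{k+1}-w^*\|_{U\times Y}$. Chaining the three inequalities gives the claim with $\kappa=\|Q\|^{1/2}\kappa_0c_1$, a constant independent of $k$ and of the particular inexact iterates.
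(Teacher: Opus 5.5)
Your proof is correct and follows the paper's argument: you feed the same element of $\varphi(\bar{w}^{k+1})$ into \Cref{lem:phi-metric-subreg} using $0\in\varphi(w^*)$, then bound it by the $R$-seminorm. The only difference is cosmetic: you control the $p$-terms via $Q_S-\tfrac12(I+SN^{-1}S^*)\succeq\tfrac12 I$ and operator norms, whereas the paper uses $2\bigl(Q_S-\tfrac12(I+SN^{-1}S^*)\bigr)\succeq Q_S-I$ and $2\bigl(Q_S-\tfrac12(I+SN^{-1}S^*)\bigr)\succeq\|N\|^{-1}SS^*$.

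One small slip: the second component of $\zeta^{k+1}$ should be $(Q_S-I)(p^k-\bar{p}^{k+1})$, not $(Q_S-I)(\bar{p}^{k+1}-p^k)$. This is harmless, since flipping the sign of one component does not change $\|\zeta^{k+1}\|_{U\times Y}$, which is all that enters your bound.
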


\begin{proof}
    Note that \eqref{eq:inexact-inexact-uzawa} implies
    \begin{equation}\label{eq:next-exact-iter-oper-form}
        Q\left(w^k - \bar{w}^{k+1}\right) - \begin{pmatrix}
            S^*(p^k - \bar{p}^{k+1}) \\
            p^k - \bar{p}^{k+1}
        \end{pmatrix} \in \varphi(\bar{w}^{k+1}).
    \end{equation}
    By \eqref{eq:next-exact-iter-oper-form}, \Cref{lem:phi-metric-subreg}, and the fact that $0 \in \varphi(w^*)$, we have
    \begin{equation}\label{eq:pieb-bound-step-1}
        \begin{aligned}
            \left\|\bar{w}^{k+1} - w^*\right\|_Q^2 &\leq \left\| Q \right\| \left\|\bar{w}^{k+1} - w^* \right\|_{U \times Y}^2 \\
            &\leq \kappa_0^2 \left\| Q \right\| \cdot \mathrm{dist}^2\left( \varphi(w^*), \varphi(\bar{w}^{k+1}) \right) \\
            &\leq \kappa_0^2 \left\| Q \right\| \cdot \mathrm{dist}^2\left( 0, \varphi(\bar{w}^{k+1}) \right) \\
            &\leq \kappa_0^2 \left\| Q \right\| \left\| Q(w^k - \bar{w}^{k+1}) - \begin{pmatrix}
                S^*(p^k - \bar{p}^{k+1}) \\
                p^k - \bar{p}^{k+1}
            \end{pmatrix} \right\|_{U \times Y}^2 \\
            &\leq \kappa_0^2 \left\| Q \right\| \left( \left\| (Q_A - N)(u^k - \bar{u}^{k+1}) - S^*(p^k - \bar{p}^{k+1}) \right\|_{U}^2 + \left\| (Q_S - I) (p^k - \bar{p}^{k+1}) \right\|_{Y}^2 \right) \\
            &\leq \kappa_0^2 \left\|Q\right\| \biggl( 2 \left\|Q_A - N\right\| \left\| u^k - \bar{u}^{k+1} \right\|_{Q_A - N}^2 + 2 \left\|p^k - \bar{p}^{k+1}\right\|_{SS^*}^2 \\
            & \qquad + \left\|Q_S - I\right\| \left\|p^k - \bar{p}^{k+1} \right\|_{Q_S - I}^2 \biggr).
        \end{aligned}
    \end{equation}
    Since $N$ is bounded and continuously invertible, by \eqref{eq:inuzawa-conv-cond} we have
    \begin{equation}\label{eq:pieb-bd-step-2}
        \begin{aligned}
            2\left( Q_S - \frac{1}{2}I - \frac{1}{2} SN^{-1}S^* \right) &\succeq \left( Q_S - \frac{1}{2}I - \frac{1}{2} SN^{-1}S^* \right) + \left( \frac{1}{2}I + \frac{1}{2} SN^{-1}S^* \right) = Q_S  \\
            &\succeq Q_S - I, \\
            2\left( Q_S - \frac{1}{2}I - \frac{1}{2} SN^{-1}S^* \right) &\succeq 2 \left(\frac{1}{2}I + \frac{1}{2}SN^{-1}S^*\right) \succeq \sigma_0(N^{-1}) SS^* = \|N\|^{-1} SS^*,
        \end{aligned}
    \end{equation}
    where the notation $\succeq$ means that the operator on its left hand side minus the operator on its right hand side is positive semidefinite.
    Combining \eqref{eq:pieb-bound-step-1} and \eqref{eq:pieb-bd-step-2}, we conclude that
    \begin{equation*}
        \begin{aligned}
            \left\|\bar{w}^{k+1} - w^*\right\|_Q^2 &\leq 2 \kappa_0^2 \left\|Q\right\| \max\bigg\{ \left\|Q_A - N\right\|, \, 2 \|N\|+ \left\|Q_S - I\right\| \bigg\} \left\| \bar{w}^{k+1} - w^k \right\|_R^2,
        \end{aligned}
    \end{equation*}
    and thus complete the proof with 
    \begin{equation*}
    \begin{aligned}
        \kappa &:= 2\kappa_0^2 \left\|Q\right\| \max\big\{ \left\|Q_A - N\right\|, \, 2 \|N\|+ \left\|Q_S - I\right\| \big\} \\
        &= 2 \kappa_0^2 \cdot \max\big\{\left\|Q_A - N\right\|, \, \left\| Q_S \right\|\big\} \cdot \max\big\{ \left\|Q_A - N\right\|, \, 2 \|N\|+ \left\|Q_S - I\right\| \big\}.
    \end{aligned}
    \end{equation*}
\end{proof}

With the above lemmas, we are now in a position to establish the asymptotic $\varepsilon$-optimality of \eqref{eq:inexact-inexact-uzawa}. 

\begin{proposition}\label{prop:inexact-inexact-uzawa-stability}
    Let $\{w^k\}_{k=0}^\infty$ and $\{\bar{w}^k\}_{k=1}^\infty$ satisfy \eqref{eq:inexact-inexact-uzawa}.
    Let $w^* = (u^*, p^*)^\top$ be the unique solution to \eqref{eq:opt-ctrl}.
    There exist constants $0 < \rho < 1$ and $c > 0$ independent of $\delta$, such that
    \begin{equation}\label{eq:perturbed-lin-conv-rate}
        \left\|w^{k+1} - w^* \right\|_Q \leq \rho \left\|w^k - w^* \right\|_Q + c \delta.
    \end{equation}
    In particular, for any $\varepsilon > 0$, there exist $\delta_0 = C \varepsilon$ with $C > 0$ independent of $\varepsilon$ and $L = O(\log(1/\varepsilon))$, such that for any $\delta < \delta_0$ and $k \geq L$, the iterates $\{w^k\}_{k=0}^\infty$ of $\eqref{eq:inexact-inexact-uzawa}$ satisfy
    \begin{equation}\label{eq:inexact-uzawa-conv-to-eps}
        \left\|w^{k} - w^* \right\|_{U \times Y} < \varepsilon.
    \end{equation}
\end{proposition}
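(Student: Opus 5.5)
The plan is to combine \Cref{lem:next-exact-iter-contract} and \Cref{lem:ppa-iter-err-bd} into a linear contraction for the exact step $w^k \mapsto \bar{w}^{k+1}$. Then the inexactness \eqref{eq:inexact-inexact-uzawa-u}--\eqref{eq:inexact-inexact-uzawa-p} is absorbed as an additive perturbation. Finally, the resulting recursion is unrolled.

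\textbf{Step 1: contraction of the exact step.} By \Cref{lem:ppa-iter-err-bd} we have $\|\bar{w}^{k+1}-w^*\|_Q^2 \le \kappa^2\|\bar{w}^{k+1}-w^k\|_R^2$. (If the constant there is meant for squared norms, replace $\kappa^2$ by $\kappa$; only positivity matters.) Substituting $-\|\bar{w}^{k+1}-w^k\|_R^2 \le -\kappa^{-2}\|\bar{w}^{k+1}-w^*\|_Q^2$ into \Cref{lem:next-exact-iter-contract} gives
\[
(1+\kappa^{-2})\,\|\bar{w}^{k+1}-w^*\|_Q^2 \le \|w^k-w^*\|_Q^2,
\]
hence $\|\bar{w}^{k+1}-w^*\|_Q \le \rho\|w^k-w^*\|_Q$ with $\rho := (1+\kappa^{-2})^{-1/2}\in(0,1)$. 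This constant is independent of $\delta$ and of the data $(y_d,f)$, since $\kappa$ depends only on $c_0$, $\|S\|$, $\|N\|$, $Q_A$ and $Q_S$.

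\textbf{Step 2: perturbation.} By the triangle inequality for the seminorm,
\[
\|w^{k+1}-w^*\|_Q \le \|\bar{w}^{k+1}-w^*\|_Q + \|w^{k+1}-\bar{w}^{k+1}\|_Q \le \rho\|w^k-w^*\|_Q + \|Q\|^{1/2}\sqrt{2}\,\delta,
\]
where we used $\|u^{k+1}-\bar u^{k+1}\|_U\le\delta$ and $\|p^{k+1}-\bar p^{k+1}\|_Y\le\delta$. This gives \eqref{eq:perturbed-lin-conv-rate} with $c=\sqrt{2\|Q\|}$.

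\textbf{Step 3: unrolling.} Iterating the recursion yields
\[
\|w^k-w^*\|_Q \le \rho^k\|w^0-w^*\|_Q + \frac{c\delta}{1-\rho}.
\]

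\textbf{The main obstacle: passing from $\|\cdot\|_Q$ to $\|\cdot\|_{U\times Y}$.} This is where the argument needs care, because $Q$ is only assumed positive semidefinite: with $\tau=0$ the $u$-block $Q_A-N$ vanishes. The $p$-component is fine, since $Q_S \succeq I$ gives $\|p\|_Y\le\|w\|_Q$. To control the $u$-component, I would go through the exact iterate. By \Cref{lem:phi-metric-subreg} applied at $\bar{w}^{k+1}$ (as in the proof of \Cref{lem:ppa-iter-err-bd}), $\|\bar{w}^{k+1}-w^*\|_{U\times Y}$ is bounded by a constant times $\|\bar{w}^{k+1}-w^k\|_R$. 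By \Cref{lem:next-exact-iter-contract}, $\|\bar{w}^{k+1}-w^k\|_R \le \|w^k-w^*\|_Q$. Therefore
\[
\|w^{k+1}-w^*\|_{U\times Y} \le C_1\|w^k-w^*\|_Q + \sqrt{2}\,\delta.
\]
Combining this with Step 3 gives
\[
\|w^{k}-w^*\|_{U\times Y} \le C_1\big(\rho^{k-1}\|w^0-w^*\|_Q + c\delta/(1-\rho)\big) + \sqrt2\,\delta
\]
for $k\ge1$.

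\textbf{Choosing $\delta_0$ and $L$.} Choose $\delta_0 = C\varepsilon$ with $C$ small enough that the $\delta$-terms are below $\varepsilon/2$. Choose $L$ with $C_1\rho^{L-1}\|w^0-w^*\|_Q<\varepsilon/2$, so that $L = O(\log(1/\varepsilon))$. The constant hidden in the $O$ depends on $\|w^0-w^*\|_Q$, which is finite for the given data and initialization; it is uniform over bounded data sets, because $w^*$ depends Lipschitz-continuously on the data by \Cref{lem:param-to-ctrl-lip}.
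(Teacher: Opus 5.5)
Your proof is correct. Steps 1 and 2 match the paper's argument, with the same $\rho=(\kappa^2/(1+\kappa^2))^{1/2}$ and $c=(2\|Q\|)^{1/2}$. The second half takes a different route in two places.

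First, you sum the geometric series to get $\|w^k-w^*\|_Q\le\rho^k\|w^0-w^*\|_Q+c\delta/(1-\rho)$. The paper instead fixes $\varepsilon_1>(\frac{2}{1-\rho}+1)c\delta$ and runs a two-regime argument. At or above the level $\varepsilon_1-c\delta$, the $Q$-error contracts at the inflated rate $\tilde\rho=\rho+c\delta/(\varepsilon_1-c\delta)<\frac{1+\rho}{2}$. Below that level, the next iterate lands in the $Q$-ball of radius $\varepsilon_1$, so this ball is invariant. Your unrolling reaches the same conclusion more directly and makes $C$ and $L$ explicit.

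Second, consider the passage from $\|\cdot\|_Q$ to $\|\cdot\|_{U\times Y}$, which you rightly flag as delicate because the $u$-block of $Q$ vanishes when $\tau=0$. The paper also goes through the exact iterate. It bounds $\|\bar u^{k+1}-u^*\|_U$ using the $\sigma_0(Q_A)^{-1}$-Lipschitz continuity of $(Q_A+\partial\theta)^{-1}$, with $u^*=(Q_A+\partial\theta)^{-1}((Q_A-N)u^*-S^*p^*)$. It controls the $p$-components through $\sigma_0(Q_S)\ge 1$. You instead reuse the error-bound chain inside the proof of \Cref{lem:ppa-iter-err-bd}, which rests on \Cref{lem:phi-metric-subreg}. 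You combine it with $\|\bar w^{k+1}-w^k\|_R\le\|w^k-w^*\|_Q$ from \Cref{lem:next-exact-iter-contract}.

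Your route needs no new estimate. The paper's route is more elementary and gives a conversion constant depending only on $c_0$, $\|S\|$ and $\|Q_A-N\|$, whereas yours also carries $\kappa_0$, $\|N\|$ and $\|Q_S-I\|$.

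Finally, you note that $L$ depends on $\|w^0-w^*\|_Q$ and is uniform over bounded data sets. This holds because $(u^*,p^*)$ depends Lipschitz-continuously on $Sf-y_d$; the proof of \Cref{lem:param-to-ctrl-lip} gives this for $p^*$ as well. That observation is more precise than \Cref{rem:bigo-const-dep}, and it is exactly the uniformity that \Cref{thm:iuzawa-net-univ-approx-rec} relies on.
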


\begin{proof}
    Combining \Cref{lem:next-exact-iter-contract} and \Cref{lem:ppa-iter-err-bd} yields
    \begin{equation}\label{eq:iius-pf-aux-1}
        \left\| \bar{w}^{k+1} - w^* \right\|_Q^2 \leq \frac{\kappa^2}{1 + \kappa^2} \left\| w^k - w^* \right\|_Q^2.
    \end{equation}
    By \eqref{eq:iius-pf-aux-1}, \eqref{eq:inexact-inexact-uzawa-u} and \eqref{eq:inexact-inexact-uzawa-p}, we have
    \begin{equation}\label{eq:inexact-uzawa-stability-bd}
    \begin{aligned}
        \left\| w^{k+1} - w^* \right\|_Q &\leq \left\| \bar{w}^{k+1} - w^* \right\|_Q + \left\| w^{k+1} - \bar{w}^{k+1} \right\|_Q \leq \rho \left\| w^k - w^* \right\|_Q + (2 \|Q\|)^{1/2} \delta \\
        &= \rho \left\| w^k - w^* \right\|_Q + c \delta,
    \end{aligned}
    \end{equation}
    where
    \begin{equation*}
        \rho = \left(\kappa^2 / (1 + \kappa^2)\right)^{1/2} \in (0, 1), \quad c = \left(2 \|Q \| \right)^{1/2} = \left(2 \max \left\{ \left\|Q_A - N \right\|, \left\| Q_S \right\| \right\} \right)^{1/2} > 0.
    \end{equation*}
    Note that both $\rho$ and $c$ are independent of $\delta$.
    
    Now we fix some $\delta > 0$. Consider any 
    \begin{equation}\label{eq:tilde-eps1-def}
        \varepsilon_1 > \left(\frac{2}{1-\rho} + 1 \right) c \delta,
    \end{equation}
    and define
    \begin{equation*}
        \tilde{\rho} = \rho + \frac{c \delta}{\varepsilon_1 - c \delta}.
    \end{equation*}
    Note that $\rho < \tilde{\rho} < \frac{1 + \rho}{2} \in (0, 1)$.
    For each iterate $w^k$, by \eqref{eq:inexact-uzawa-stability-bd} and the definitions of $\varepsilon_1$ and $\tilde{\rho}$, it is easy to verify the followings:
    \begin{itemize}
        \item If $\| w^k - w^* \|_Q \geq \varepsilon_1 - c \delta$, then $\| w^{k+1} - w^* \|_Q \leq \tilde{\rho} \|w^k - w^* \|_Q$;
        \item If $\| w^k - w^* \|_Q < \varepsilon_1 - c \delta$, then $\| w^{k+1} - w^* \|_Q < \varepsilon_1$.
    \end{itemize}
    Hence, all the iterates $\{w^k: k \geq L - 1\}$ with
    \begin{equation}\label{eq:K-def}
        L = \left\lceil \log_{\tilde{\rho}} \frac{\varepsilon_1 - c \delta}{\left\| w^0 - w^* \right\|_Q} \right\rceil + 2 \leq \left\lceil \left( \log \frac{1 + \rho}{2} \right)^{-1} \log \frac{\varepsilon_1}{(2 - \rho) \left\| w^0 - w^* \right\|_Q} \right\rceil + 2 = O\left(\log\left(\frac{1}{\varepsilon_1}\right)\right)
    \end{equation}
    satisfy $\left\| w^k - w^* \right\|_Q < \varepsilon_1$.
    Since $\rho$ does not depend on $\delta$, the constant in $O\left(\log\left(1 / \varepsilon_1\right)\right)$ does not depend on $\delta$.
    
    Next, we bound $\left\| w^k - w^* \right\|_{U \times Y}$ from $\left\| w^{k-1} - w^* \right\|_Q$ for any $\varepsilon_1$ satisfying \eqref{eq:tilde-eps1-def}.
    Note that $Q_S \succeq I + SN^{-1}S^*$ and hence $\sigma_0(Q_S) \geq 1$, where $\sigma_0(Q_S)$ is the infimum of the spectrum of $Q_S$.
    Clearly, $Q_S$ is linear, bounded, self-adjoint, and coercive, hence it admits a bounded inverse $Q_S^{-1}$ with $\|Q_S^{-1}\| = \sigma_0(Q_S)^{-1}$.
    By the above discussions, there exists $L = O(\log(1/\varepsilon_1))$ defined in \eqref{eq:K-def} such that for all $k \geq L - 1$,
    \begin{equation}\label{eq:pf-epsopt-aux-1}
        \begin{aligned}
            &\left\| u^k - u^* \right\|_{Q_A - N} \leq \left\| w^k - w^* \right\|_{Q} < \varepsilon_1, \\
            &\left\|p^k - p^* \right\|_Y \leq \sigma_0(Q_S)^{-1/2} \left\| p^k - p^* \right\|_{Q_S} \leq \sigma_0(Q_S)^{-1/2} \left\|w^k - w^*\right\|_Q < \sigma_0(Q_S)^{-1/2} \varepsilon_1.
        \end{aligned}
    \end{equation}
    Since $Q_A - N$ is positive semidefinite, $Q_A$ is positive definite.
    Applying \Cref{lem:nresolv-lip} to $Q_A$ shows that $(Q_A + \partial \theta)^{-1}$ is Lipschitz with constant $\sigma_0(Q_A)^{-1} \leq c_0^{-1}$.
    Thus, from \eqref{eq:inexact-inexact-uzawa-bar-u} we have
    \begin{equation}\label{eq:pf-epsopt-aux-2}
        \begin{aligned}
            \left\|\bar{u}^{k+1} - u^*\right\|_U &\leq \sigma_0(Q_A)^{-1} \left( \left\|(Q_A - N) (u^k - u^*)\right\|_U + \left\|S^*(p^k - p^*)\right\|_Y \right) \\
            &\leq \sigma_0(Q_A)^{-1} \left( \left\|Q_A - N\right\|^{1/2} \left\|u^k - u^*\right\|_{Q_A - N} + \left\|S\right\| \left\|p^k - p^*\right\|_Y \right) \\
            &< \sigma_0(Q_A)^{-1} \left( \left\|Q_A - N\right\|^{1/2} + \sigma_0(Q_S)^{-1/2} \left\|S\right\| \right) \varepsilon_1.
        \end{aligned}
    \end{equation}
    Therefore, for any $k \geq L$, it follows from \eqref{eq:pf-epsopt-aux-1}, \eqref{eq:pf-epsopt-aux-2} and \eqref{eq:inexact-inexact-uzawa-u}--\eqref{eq:inexact-inexact-uzawa-p} that
    \begin{equation}\label{eq:final-iter-err-bd}
        \begin{aligned}
            \left\| w^k - w^* \right\|_{U \times Y} &\leq \left\| \bar{w}^k - w^* \right\|_{U \times Y} + \left\| \bar{w}^k - w^k \right\|_{U \times Y} \\
            &= \left( \left\| \bar{u}^k - u^* \right\|_{U}^2 + \left\| \bar{p}^k - p^* \right\|_{Y}^2 \right)^{1/2} + \left( \left\| \bar{u}^k - u^k \right\|_{U}^2 + \left\| \bar{p}^k - p^k \right\|_{Y}^2 \right)^{1/2} \\
            &< \left( \sigma_0(Q_A)^{-2} \left( \left\|Q_A - N\right\|^{1/2} + \sigma_0(Q_S)^{-1/2} \left\|S\right\|  \right)^2 +  \sigma_0(Q_S)^{-1}  \right)^{1/2} \varepsilon_1 + \sqrt{2} \delta \\
            &< \left( \left( \sigma_0(Q_A)^{-2} \left( \left\|Q_A - N\right\|^{1/2} + \sigma_0(Q_S)^{-1/2} \left\|S\right\|  \right)^2 +  \sigma_0(Q_S)^{-1}  \right)^{1/2} + \frac{\sqrt{2}(1-\rho)}{(3-\rho) c} \right) \varepsilon_1 \\
            &\leq \left( \left( c_0^{-2} \left( \left\|Q_A - N\right\|^{1/2} + \left\|S\right\|  \right)^2 + 1 \right)^{1/2} + \frac{\sqrt{2}(1-\rho)}{(3-\rho) c} \right) \varepsilon_1 \\
            &:= C_1 \varepsilon_1,
        \end{aligned}
    \end{equation}
    where $C_1 > 0$ is a constant independent of $\varepsilon_1$ and $\delta$.

    For any $\varepsilon > 0$, take
    \begin{equation*}
        \varepsilon_1 = \varepsilon / C_1, \quad \delta_0 = \frac{1-\rho}{(3-\rho) c C_1} \varepsilon =: C \varepsilon.
    \end{equation*}
    Clearly $C > 0$ is independent of $\varepsilon$.
    It is easy to verify that any $\delta < \delta_0$ satisfies \eqref{eq:tilde-eps1-def}, hence \eqref{eq:final-iter-err-bd} implies for any $k \geq L$ that $\left\| w^k - w^* \right\|_{U \times Y} < \varepsilon$, where $L$ is defined as in \eqref{eq:K-def}.
    Finally, recall that the constant in the big-O notation in \eqref{eq:K-def} is independent of $\delta$, which concludes the proof.
\end{proof}

\begin{remark}\label{rem:bigo-const-dep}
    Note that \eqref{eq:perturbed-lin-conv-rate} remains valid even when $\delta = 0$, which verifies the linear convergence rate of \eqref{eq:inexact-uzawa} for the specific model \eqref{eq:opt-ctrl}.
    Note from \eqref{eq:K-def} and \eqref{eq:final-iter-err-bd} that the constant in $L = O(\log(1/\varepsilon))$ and $C$ depends only on the choice of the preconditioners $Q_A$ and $Q_S$ in \eqref{eq:inexact-inexact-uzawa}. This observation will be crucial for establishing the uniformity of the constants $C$ and $L$ in \Cref{thm:iuzawa-net-univ-approx-rec}, the asymptotic $\varepsilon$-optimality result of the \texttt{iUzawa-Net}.
\end{remark}

\subsection{Universal Approximation of \texorpdfstring{$\cQ_A^k$}{QAk}}

Since $\mathcal S^k$ and $\mathcal A^k$ are implemented as FNOs, their universal approximation properties follow directly from \Cref{prop:fno-univ-approx-general}. In addition, by \Cref{prop:qsk-univ-approx}, the operator $\mathcal Q_S^k$ can be realized as sufficiently positive definite operator, which is already adequate for our purposes. Consequently, $\mathcal Q_A^k$ is the only component whose approximation properties require further analysis among all network modules in the \texttt{iUzawa-Net}.
In this subsection, we prove that under \Cref{assump:ptwise} and some additional mild conditions, the neural network architecture $\cQ_A^k$ defined in \eqref{eq:qak-ptwise}-\eqref{eq:nn-rec-structure} can approximate $(N + \tau I + \partial \theta)^{-1}$ to arbitrary accuracy on any compact set.
We begin with the following continuity assumption on the subgradient of $\psi_\xi$ defined in \Cref{assump:ptwise}, which is required to apply \Cref{prop:fc-univ-approx} in the subsequent analysis.

\begin{assumption}\label{assump:subgrad-gcont}
    Let $E$ and $\psi$ be defined as in \Cref{assump:ptwise}.
    Denote $\mathrm{gph} \, \partial \psi_{\xi}$ as the graph of $\partial \psi_{\xi}$ \cite{rockafellar1998variational}.
    We assume that the set-valued mapping $\xi \mapsto \mathrm{gph} \, \partial \psi_{\xi}$ is continuous; or equivalently,
    for any $\xi \in E$ and any sequence $\{\xi_n\} \subset E$ with $\xi_n \to \xi$, the set-valued mappings $\partial \psi_{\xi_n}$ converge to $\partial \psi_\xi$ in the sense of graph limit, i.e.,
    \begin{equation}\label{eq:subgrad-glim}
        \glimsupn \partial \psi_{\xi_n} \subset \partial \psi_\xi \subset \gliminfn \partial \psi_{\xi_n},
    \end{equation}
    where the graphical outer limit $\glimsup$ and graphical inner limit $\gliminf$ are defined as in \cite[Definition 5.32]{rockafellar1998variational}\footnote{For any two set-valued mappings $\varphi_1: X \rightrightarrows Y$ and $\varphi_2: X \rightrightarrows Y$, we follow the convention in \cite{rockafellar1998variational} and denote $\varphi_1 \subset \varphi_2$ if and only if $\varphi_1(x) \subset \varphi_2(x)$ for any $x \in X$.}.
\end{assumption}

\begin{remark}
    The notations of graphical convergence and graphical outer/inner limit are standard and widely adopted in variational analysis.
    The condition \eqref{eq:subgrad-glim} is a mild assumption that requires $\psi_\xi$ to be parameterized by $\xi$ in a graphically continuous manner with respect to its subdifferential.
    Since $\psi_\xi$ is a mapping from $\bR$ to $\bR$, \eqref{eq:subgrad-glim} is verifiable for many practical cases, including the examples given in \Cref{sec:qa-special}.
\end{remark}

We claim that the mapping $(N + \tau I + \partial \theta)^{-1}$ acts pointwise on its input.
Furthermore, under \Cref{assump:subgrad-gcont}, we show that this pointwise action is continuous by some technical variational analysis arguments.

\begin{lemma}\label{lem:nresol-ptwise}
    Assume that $N$ and $\theta$ satisfy \Cref{assump:ptwise} and let
    $\lambda$, $c_0$, $E$ and $\mu$ be defined as therein.
    Then there exists a function $\phi: \bR \times E \times [c_0, \infty) \to \bR$, such that for any $v \in L^2(\Omega)$, we have
    \begin{equation}\label{eq:ptwise-resol}
        (N + \tau I + \partial \theta)^{-1}(v)(x) = \phi(v(x), \mu(x), \lambda(x)), \quad ~\text{a.e.~in}~\Omega,
    \end{equation}
    and $\phi\left(v(\cdot), \mu(\cdot), \lambda(\cdot)\right) \in L^2(\Omega)$.
    Moreover, $\phi(s, \xi, \eta)$ is Lipschitz continuous in the variable $s$ with a uniform Lipschitz constant for all $(\xi, \eta)^\top \in E \times [c_0, \infty)$.
    If \Cref{assump:subgrad-gcont} holds in addition, then $\phi$ is continuous on $\bR \times E \times [c_0, \infty)$.
\end{lemma}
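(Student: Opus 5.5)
The plan is to define $\phi$ explicitly as a scalar resolvent. For $(s,\xi,\eta)\in\bR\times E\times[c_0,\infty)$ set
\begin{equation*}
    \phi(s,\xi,\eta) := \left((\eta+\tau) + \partial\psi_\xi\right)^{-1}(s) = \mathrm{prox}_{\psi_\xi/(\eta+\tau)}\left(\frac{s}{\eta+\tau}\right).
\end{equation*}
The scalar function $\psi_\xi$ is convex, lower semicontinuous and proper, and $\eta+\tau\ge c_0>0$. Applying \Cref{lem:nresolv-lip} in the one-dimensional Hilbert space $\bR$, with $N=(\eta+\tau)$, shows that $\phi$ is single-valued. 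The same lemma shows that $\phi$ is Lipschitz in $s$ with constant $(\eta+\tau)^{-1}\le c_0^{-1}$, which is uniform in $(\xi,\eta)$.

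Next I verify the pointwise representation \eqref{eq:ptwise-resol}. Fix $v\in L^2(\Omega)$ and set $w(x):=\phi(v(x),\mu(x),\lambda(x))$. For measurability, I would use that $\phi(\cdot,\xi,\eta)$ is continuous and that $(\xi,\eta)\mapsto\phi(s,\xi,\eta)$ is measurable. Without \Cref{assump:subgrad-gcont} this needs care; the cleanest route is to note that $\mathrm{gph}\,\partial\psi_{\mu(x)}$ is a measurable set-valued map (e.g.\ via normal-integrand theory, \cite{rockafellar1998variational} Ch.\ 14, since $(x,r)\mapsto\psi_{\mu(x)}(r)$ is a normal integrand). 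I will take this as given, and remark that it is immediate under \Cref{assump:subgrad-gcont}.

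For the $L^2$ bound, pick $r_0\in\mathrm{dom}\,\partial\psi_\xi$. Then $|w(x)|\le c_0^{-1}|v(x)|+|\phi(0,\mu(x),\lambda(x))|$. Bounding $\phi(0,\cdot,\cdot)$ requires some integrability; the natural choice is to use that $\mathrm{dom}\,\theta\neq\emptyset$ provides some $\bar u\in L^2$ with $\psi_{\mu}(\bar u)$ integrable. I would argue via the operator $(N+\tau I+\partial\theta)^{-1}(v)$, which exists in $L^2$ by \Cref{lem:nresolv-lip}. The subdifferential of an integral functional is pointwise (\cite{rockafellar1998variational} Thm.\ 14.60 / Rockafellar's integral functional theorem): $g\in\partial\theta(u)$ iff $g(x)\in\partial\psi_{\mu(x)}(u(x))$ a.e. Hence $u=(N+\tau I+\partial\theta)^{-1}v$ satisfies $v(x)\in(\lambda(x)+\tau)u(x)+\partial\psi_{\mu(x)}(u(x))$ a.e., i.e.\ $u(x)=w(x)$ a.e. This gives both \eqref{eq:ptwise-resol} and $w\in L^2(\Omega)$ at once.

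Joint continuity under \Cref{assump:subgrad-gcont} is the main obstacle. Take $(s_n,\xi_n,\eta_n)\to(s,\xi,\eta)$. By the uniform Lipschitz bound in $s$, it suffices to show $\phi(s,\xi_n,\eta_n)\to\phi(s,\xi,\eta)$.

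I would use Attouch's theorem in its graphical form (\cite{rockafellar1998variational} Thm.\ 12.32 / Prop.\ 12.19 on resolvents). Graphical convergence of the maximal monotone maps $\partial\psi_{\xi_n}$ to $\partial\psi_\xi$ is equivalent to pointwise convergence of their resolvents. Adding $\eta_n\to\eta$ preserves graphical convergence, since it amounts to a continuous shear of the graphs.

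A direct argument is also available. Let $r=\phi(s,\xi,\eta)$, so $(r,s-(\eta+\tau)r)\in\mathrm{gph}\,\partial\psi_\xi$. By the inner limit there are $(r_n,g_n)\in\mathrm{gph}\,\partial\psi_{\xi_n}$ converging to this point. Set $t_n:=(\eta_n+\tau)r_n+g_n\to s$; then $\phi(t_n,\xi_n,\eta_n)=r_n$. The uniform Lipschitz estimate gives $|\phi(s,\xi_n,\eta_n)-r_n|\le c_0^{-1}|s-t_n|\to0$, so $\phi(s,\xi_n,\eta_n)\to r$.

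Only the inner-limit half of \eqref{eq:subgrad-glim} is needed, together with the maximal monotonicity that makes $\phi$ well defined on all of $\bR$.
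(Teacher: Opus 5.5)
Your proof is correct. For the representation \eqref{eq:ptwise-resol}, you and the paper use the same ingredient in different forms. The paper decomposes pointwise the minimization that defines $(N+\tau I+\partial\theta)^{-1}(\bar v)$, while you use the pointwise characterization of $\partial\theta$. Both amount to the interchange of minimization and integration for normal integrands \cite[Theorem 14.60]{rockafellar1998variational}, and both tacitly treat $(x,r)\mapsto\psi_{\mu(x)}(r)$ as a normal integrand. Your separate measurability discussion is unnecessary: once $w=u$ a.e.\ with $u\in L^2(\Omega)$, $w$ is automatically measurable.

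The genuine difference is in the continuity part. The paper proceeds in four steps:
\begin{enumerate}
\item it proves graphical convergence of the sums $\partial\psi_{\xi_n}+(\eta_n+\tau)I$ through a sum rule for set limits;
\item it applies \cite[Theorem 5.37]{rockafellar1998variational} to the inverses;
\item it uses the outer limit to identify all cluster points of $\phi(\bar s,\xi_n,\eta_n)$, and uses the inner limit together with the uniform Lipschitz bound only to exclude unboundedness;
\item it finishes with a subsequence argument.
\end{enumerate}
You instead lift the single point $(r,s-(\eta+\tau)r)$ to graph points $(r_n,g_n)$ of $\partial\psi_{\xi_n}$ and set $t_n=(\eta_n+\tau)r_n+g_n\to s$. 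You then conclude from $\phi(t_n,\xi_n,\eta_n)=r_n$ and the uniform Lipschitz estimate.

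This bypasses both the sum rule and the inverse-limit theorem. It also shows that only the inclusion $\partial\psi_\xi\subset\gliminfn\partial\psi_{\xi_n}$ from \Cref{assump:subgrad-gcont} is needed. In fact, the paper's own inner-limit step, combined with the Lipschitz bound, already yields convergence directly, so its outer-limit half and the boundedness-by-contradiction are redundant here. This is consistent with the general fact that, for maximal monotone mappings, inner graphical convergence to a maximal monotone limit implies full graphical convergence. The paper's more general template would only earn its keep if the solution map were not single-valued and uniformly Lipschitz in $s$; in the present setting, that generality buys nothing.
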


\begin{proof}
    Fix any $\bar{v} \in L^2(\Omega)$ and denote $\bar{u} = (N + \tau I + \partial \theta)^{-1}(\bar{v})$.
    We have
    \begin{equation*}
        \bar{u} = \mathrm{arg\,min}_{u \in L^2(\Omega)} \left\{ \frac{1}{2} \int_\Omega \left(\lambda(x) + \tau\right) |u(x)|^2 - \bar{v}(x) u(x) + \psi(u(x), \mu(x)) \, dx \right\}.
    \end{equation*}
    Clearly, the above optimization problem can be equivalently decomposed into pointwise minimization problems, i.e., $\bar{u}(x)$ satisfies
    \begin{equation*}
        \bar{u}(x) = \mathrm{arg\,min}_{r \in \bR} \left\{ \frac{1}{2} \left(\lambda(x)+ \tau \right) r^2 - \bar{v}(x) r + \psi(r, \mu(x)) \right\}, \quad\text{a.e.~in}~\Omega.
    \end{equation*}
    Thus, $(N + \tau I + \partial \theta)^{-1}(\bar{v})(x)$ satisfies \eqref{eq:ptwise-resol} with
    \begin{equation*}
        \phi(s, \xi, \eta) = \mathrm{arg\,min}_{r \in \bR} \left\{ \frac{1}{2} (\eta + \tau) r^2 - s r + \psi(r, \xi) \right\}, \quad \text{for }s \in \bR, \xi \in \bR^m, \eta \geq c_0.
    \end{equation*}
    We have $\phi\left(v(\cdot), \mu(\cdot), \lambda(\cdot)\right) \in L^2(\Omega)$ since $(N + \tau I + \partial \theta)^{-1}$ maps $L^2(\Omega)$ into $L^2(\Omega)$.
    Moreover, note that
    \begin{equation}\label{eq:phi-prox-form}
        \phi(s, \xi, \eta) = \left((\eta + \tau)I + \partial \psi_\xi \right)^{-1}(s).
    \end{equation}
    Hence, applying \Cref{lem:nresolv-lip} to $(\eta + \tau)I$ and $\psi_\xi$, we see that $\phi(s, \xi, \eta)$ is Lipschitz continuous in $s$ for all $(\xi, \eta)^\top \in E \times [c_0, \infty)$ with a uniform Lipschitz constant $(c_0 + \tau)^{-1}$.

    Denote the set-valued mappings $\Xi_\xi = \partial \psi_{\xi}$ and $H_\eta = (\eta + \tau) I$ for any $\xi \in E$ and $\eta \geq c_0$.
    Thus \eqref{eq:subgrad-glim} becomes
    \begin{equation}\label{eq:subgrad-glim-concise}
        \glimsupn \Xi_{\xi_n} \subset \Xi_{\xi} \subset \gliminfn \Xi_{\xi_n}.
    \end{equation}
    and \eqref{eq:phi-prox-form} becomes
    \begin{equation}\label{eq:phi-prox-form-concise}
        \phi(s, \xi, \eta) = (\Xi_{\xi} + H_{\eta})^{-1}(s).
    \end{equation}
    We now show that under \eqref{eq:subgrad-glim-concise}, for any $(\bar{\xi}, \bar{\eta})^\top \in E \times [c_0, \infty)$ and any sequence $\{(\xi_n, \eta_n)^\top\} \subset E \times [c_0, \infty)$ with $(\xi_n, \eta_n)^\top \to (\bar{\xi}, \bar{\eta})^\top$, we have
    \begin{equation}\label{eq:sum-glim}
        \glimsupn \left(\Xi_{\xi_n} + H_{\eta_n} \right) \subset \Xi_{\bar{\xi}} + H_{\bar{\eta}} \subset \gliminfn \left(\Xi_{\xi_n} + H_{\eta_n} \right).
    \end{equation}
    By \cite[Proposition 5.33]{rockafellar1998variational}, for any $\bar{r} \in \bR$, we have
    \begin{equation}\label{eq:glim-sup-inf-ptwise-eval}
    \begin{aligned}
        &\left( \glimsupn \left(\Xi_{\xi_n} + H_{\eta_n} \right) \right) (\bar{r}) = \bigcup_{\{r_n \to \bar{r}\}} \limsup_{n \to \infty} \left( \Xi_{\xi_n} (r_n) + H_{\eta_n}(r_n) \right), \\
        &\left( \gliminfn \left(\Xi_{\xi_n} + H_{\eta_n} \right) \right) (\bar{r}) = \bigcup_{\{r_n \to \bar{r}\}} \liminf_{n \to \infty} \left( \Xi_{\xi_n} (r_n) + H_{\eta_n}(r_n) \right),
    \end{aligned}
    \end{equation}
    where the unions are taken over all sequences $\{r_n\} \subset \bR$ such that $r_n \to \bar{r}$, and the outer limit $\limsup$ and the inner limit $\liminf$ are defined in \cite[Definition 4.1]{rockafellar1998variational}.
    Since $(\eta_n + \tau) r_n \to (\bar{\eta} + \tau) \bar{r}$ for any $r_n \to \bar{r}$, by the definition of $H_{\eta}$, we have 
    \begin{equation}\label{eq:setvalm-h-lim}
        \limsup_{n \to \infty} H_{\eta_n}(r_n) = \liminf_{n \to \infty} H_{\eta_n}(r_n) = H_{\eta}(\bar{r}).
    \end{equation}
    Applying \cite[Theorem 4.26]{rockafellar1998variational} to the addition operator $A: \bR \times \bR \to \bR, (x,y) \mapsto x + y$, which satisfies $|A(x, y)| \to \infty$ when $|(x, y)| \to \infty$, we obtain that for any $r_n \to \bar{r}$,
    \begin{equation}\label{eq:limsup-inclusion}
        \begin{aligned}
            \limsup_{n \to \infty} \left( \Xi_{\xi_n} (r_n) + H_{\eta_n} (r_n) \right) &= A \left( \limsup_{n \to \infty} \left( \Xi_{\xi_n}(r_n) \times H_{\eta_n} (r_n) \right) \right) \\
            &\subset A \left( \limsup_{n \to \infty} \Xi_{\xi_n}(r_n) \times \limsup_{n \to \infty} H_{\eta_n}(r_n) \right) \\
            &= H_{\bar{\eta}}(\bar{r}) + \limsup_{n \to \infty} \Xi_{\xi_n}(r_n),
        \end{aligned}
    \end{equation}
    where the last equality follows from \eqref{eq:setvalm-h-lim}.
    Similarly,
    \begin{equation}\label{eq:liminf-inclusion}
        \begin{aligned}
            \liminf_{n \to \infty} \left( \Xi_{\xi_n} (r_n) + H_{\eta_n} (r_n) \right) &\supset A \left( \liminf_{n \to \infty} \left( \Xi_{\xi_n}(r_n) \times H_{\eta_n} (r_n) \right) \right) \\
            &= A \left( \liminf_{n \to \infty} \Xi_{\xi_n}(r_n) \times \liminf_{n \to \infty} H_{\eta_n}(r_n) \right)\\
            &= H_{\bar{\eta}}(\bar{r}) + \liminf_{n \to \infty} \Xi_{\xi_n}(r_n).
        \end{aligned}
    \end{equation}
    Take the union over all sequences $r_n \to \bar{r}$ in \eqref{eq:limsup-inclusion} and \eqref{eq:liminf-inclusion}.
    It follows from \eqref{eq:subgrad-glim-concise}, \eqref{eq:glim-sup-inf-ptwise-eval}, and \cite[Proposition 5.33]{rockafellar1998variational} that
    \begin{equation*}
        \begin{aligned}
            \left( \glimsupn \left(\Xi_{\xi_n} + H_{\eta_n} \right) \right) (\bar{r}) &\subset \bigcup_{\{r_n \to \bar{r}\}} \left( H_{\bar{\eta}}(\bar{r}) + \limsup_{n \to \infty} \Xi_{\xi_n}(r_n) \right) \\
            &= H_{\bar{\eta}}(\bar{r}) + \bigcup_{\{r_n \to \bar{r}\}} \limsup_{n \to \infty} \Xi_{\xi_n}(r_n) \\
            &= H_{\bar{\eta}}(\bar{r}) + \left( \glimsupn \Xi_{\xi_n} \right) (\bar{r}) \\
            &\subset  \left( \Xi_{\bar{\xi}} + H_{\bar{\eta}} \right) (\bar{r}),
        \end{aligned}
    \end{equation*}
    and similarly,
    \begin{equation*}
        \left( \gliminfn \left(\Xi_{\xi_n} + H_{\eta_n} \right) \right) (\bar{r}) \supset \left( \Xi_{\bar{\xi}} + H_{\bar{\eta}} \right) (\bar{r}),
    \end{equation*}
    which proves \eqref{eq:sum-glim}.

    We are now ready to establish the continuity of $\phi$ at any $(\bar{s}, \bar{\xi}, \bar{\eta})^\top \in \bR \times E \times [c_0, \infty)$ under \eqref{eq:subgrad-glim} (or equivalently, under \eqref{eq:subgrad-glim-concise}).
    Recall that $\phi$ takes the form \eqref{eq:phi-prox-form-concise}.
    Since $\psi_\xi$ is convex, lower semicontinuous, and proper for any $\xi \in E$, $\Xi_{\bar{\xi}} = \partial \psi_{\bar{\xi}}$ is closed valued \cite[Section 23]{rockafellar1970convex}.
    By \eqref{eq:sum-glim} and \cite[Theorem 5.37]{rockafellar1998variational}, we have for any $(\xi_n, \eta_n)^\top \to (\bar{\xi}, \bar{\eta})^\top$ that
    \begin{equation*}
        \limsup_{n \to \infty} \left(\Xi_{\xi_n} + H_{\eta_n} \right)^{-1}(\bar{s}) \subset \left(\Xi_{\bar{\xi}} + H_{\bar{\eta}} \right)^{-1}(\bar{s}).
    \end{equation*}
    Hence, by \eqref{eq:phi-prox-form-concise} and the definition of $\limsup$, any limit point of the sequence $\{\phi(\bar{s}, \xi_n, \eta_n)\}_{n=1}^\infty$ is $\phi(\bar{s}, \bar{\xi}, \bar{\eta})$.
    We claim that $\{\phi(\bar{s}, \xi_n, \eta_n)\}_{n=1}^\infty$ is bounded.
    If not, there exists an unbounded subsequence of $\{\phi(\bar{s}, \xi_n, \eta_n)\}_{n=1}^\infty$, and assume without loss of generality that $|\phi(\bar{s}, \xi_n, \eta_n)| \to \infty$.
    Again, by \cite[Theorem 5.37]{rockafellar1998variational},
    \begin{equation*}
        \left( \Xi_{\bar{\xi}} + H_{\bar{\eta}} \right)^{-1}(\bar{s}) \subset \bigcap_{\omega > 0} \liminf_{n \to \infty} \left(\Xi_{\xi_n} + H_{\eta_n} \right)^{-1}(\overline{B}(\bar{s}, \omega)).
    \end{equation*}
    where $\overline{B}(\bar{s}, \omega)$ denotes the closed ball centered at $\bar{s}$ with radius $\omega$.
    Hence by \eqref{eq:phi-prox-form-concise}, for any $\omega > 0$, there exists a sequence $\{s_n\}_{n=1}^\infty \subset \overline{B}(\bar{s}, \omega)$ such that $\phi(s_n, \xi_n, \eta_n) \to \phi(\bar{s}, \bar{\xi}, \bar{\eta})$ as $n \to \infty$.
    In particular, there exists $s_n \to \bar{s}$ such that $\phi(s_n, \xi_n, \eta_n) \to \phi(\bar{s}, \bar{\xi}, \bar{\eta})$.
    Together with the Lipschitz continuity of $\phi(\cdot, \xi_n, \eta_n)$ and the uniformity of the Lipschitz constant, this contradicts $|\phi(\bar{s}, \xi_n, \eta_n)| \to \infty$.
    Therefore, $\{\phi(\bar{s}, \xi_n, \eta_n)\}_{n=1}^\infty$ is bounded with every convergent subsequence converging to $\phi(\bar{s}, \bar{\xi}, \bar{\eta})$, which implies that
    \begin{equation}\label{eq:phi-continuity}
        \lim_{n \to \infty} \phi(\bar{s}, \xi_n, \eta_n) = \phi(\bar{s}, \bar{\xi}, \bar{\eta}).
    \end{equation}
    Since the above argument holds for any sequence $\{(\xi_n, \eta_n)^\top\} \subset E \times [c_0, \infty)$ with $(\xi_n, \eta_n)^\top \to (\bar{\xi}, \bar{\eta})^\top$, we have that $\phi(\bar{s}, \xi, \eta)$ being continuous in $(\xi, \eta)^\top$ on $E \times [c_0, \infty)$ for any $\bar{s} \in \bR$.
    Finally, the joint continuity of $\phi$ follows from the Lipschitz continuity in $s$ and the continuity in $(\xi, \eta)^\top$.
\end{proof}

\begin{lemma}\label{lem:ptwise-operator-univ-approx}
    Assume that \Cref{assump:ptwise} holds, and let $\lambda$, $\mu$, $E$ and $c_0$ be defined as therein.
    Let $\phi: \bR \times E \times [c_0, \infty) \to \bR$ be a continuous function such that $\phi(s, \xi, \eta)$ is Lipschitz continuous in $s$ with a uniform Lipschitz constant $L_\phi > 0$ for all $\xi \in E$ and $\eta \in [c_0, \infty)$,
    and $\phi\left(u(\cdot), \mu(\cdot), \lambda(\cdot)\right) \in L^2(\Omega)$ for any $u \in L^2(\Omega)$.
    Assume that $G: L^2(\Omega) \to L^2(\Omega)$ satisfies
    \begin{equation}\label{eq:ptwise-operator}
        G(u)(x) = \phi(u(x), \mu(x), \lambda(x)), \quad~\text{a.e.~in}~\Omega, \quad \forall u \in L^2(\Omega).
    \end{equation}
    Then for any compact $K \subset L^2(\Omega)$ and $\varepsilon > 0$, there exists a neural network $\mathcal{N}: \bR^{m+2} \to \bR$ as given in \eqref{eq:nn-rec-structure}, such that the neural network $\cG: L^2(\Omega) \to L^2(\Omega)$ defined by
    \begin{equation}
        \cG \left( u (x) \right) = \mathcal{N}\left(u(x), \mu(x), \lambda(x)\right), \quad~\text{a.e.~in}~\Omega, \quad \forall u \in L^2(\Omega)
    \end{equation}
    satisfies
    \begin{equation}\label{eq:ptwise-operator-univ-approx}
        \sup_{u \in K} \left\| G(u) - \cG(u) \right\|_{L^2(\Omega)} < \varepsilon.
    \end{equation}
    Moreover, the image $\cG(K)$ is compact in $L^2(\Omega)$.
\end{lemma}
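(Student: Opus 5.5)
The plan is a truncation argument. Compactness of $K$ in $L^2(\Omega)$ controls the part of each $u \in K$ where $|u(x)|$ is large. On the remaining bounded range we approximate $\phi$ uniformly by a fully connected network via \Cref{lem:nn-rec-univ-approx}. A clipping of the input $r$ is prepended to this network; the clipping is realizable by ReLU layers in \eqref{eq:nn-rec-structure}, and it keeps the network under control where $|u(x)|$ is large.

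\emph{Step 1: the bounded parameter set.} Set $M_\mu := \|\mu\|_{L^\infty}$ and $M_\lambda := \|\lambda\|_{L^\infty}$. After modifying $\mu,\lambda$ on a null set (which changes nothing in $L^2$), we may assume
\[
(\mu(x),\lambda(x)) \in F_0 := \bigl(E \cap \overline{B}(0, M_\mu)\bigr) \times [c_0, M_\lambda] \quad \text{for every } x \in \Omega .
\]
The set $F_0$ is compact because $E$ is closed.

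\emph{Step 2: uniform smallness of large values.} Since $K$ is compact in $L^2(\Omega)$, it is uniformly $2$-integrable. This follows from a finite $\epsilon$-net argument together with the absolute continuity of the integral for each net element. Hence, for any $\epsilon' > 0$, there is $M > 0$ with
\[
\sup_{u \in K} \bigl\| u \, \mathbbm{1}_{\{|u| > M\}} \bigr\|_{L^2(\Omega)} < \epsilon' .
\]
Note also that $|u - \mathrm{clip}_M(u)| \le |u|\,\mathbbm{1}_{\{|u|>M\}}$ pointwise, where $\mathrm{clip}_M(s) := -M + \sigma(s+M) - \sigma(s-M)$ is the projection onto $[-M,M]$.

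\emph{Step 3: approximate $\phi$ on a compact set and clip the input.} The function $\phi$ is continuous on the closed set $\bR \times E \times [c_0,\infty)$. By \Cref{lem:nn-rec-univ-approx}, applied on the compact set $[-M,M] \times F_0$, there is a network $\mathcal{N}_0$ with
\[
\sup_{[-M,M]\times F_0} |\phi - \mathcal{N}_0| < \eta_0 .
\]
We then define
\[
\mathcal{N}(r,\xi,\eta) := \mathcal{N}_0(\mathrm{clip}_M(r), \xi, \eta).
\]
This network fits the architecture \eqref{eq:nn-rec-structure}:
\begin{itemize}
\item The duplicated input $(r,r,\xi,\eta)$ in $v^{(0)}$ together with one ReLU layer produces $\sigma(r+M)$ and $\sigma(r-M)$.
\item The next affine layer forms $\mathrm{clip}_M(r)$.
\item The concatenated skip connections supply $(\xi,\eta)$ to the subsequent layers, which implement $\mathcal{N}_0$; the unneeded skip weights are set to zero.
\end{itemize}
Checking that this bookkeeping really fits \eqref{eq:nn-rec-structure} is the one step I expect to need care, specifically passing the clipped value through ReLU layers (use $\sigma(t) - \sigma(-t) = t$) while $(\xi,\eta)$ arrive via the skip connections.

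\emph{Step 4: the error bound.} For $u \in K$ and a.e.\ $x$, the uniform Lipschitz property of $\phi$ in $s$ gives
\[
|\phi(u(x),\mu(x),\lambda(x)) - \mathcal{N}(u(x),\mu(x),\lambda(x))| \le L_\phi \, |u(x)|\,\mathbbm{1}_{\{|u(x)|>M\}} + \eta_0 .
\]
Taking $L^2$ norms yields
\[
\|G(u) - \cG(u)\|_{L^2(\Omega)} \le L_\phi \epsilon' + \eta_0 |\Omega|^{1/2},
\]
which is below $\varepsilon$ after choosing $\epsilon'$ first (hence $M$) and then $\eta_0$.

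\emph{Step 5: compactness of $\cG(K)$.} A ReLU network is globally Lipschitz, so
\[
|\mathcal{N}(r_1,\xi,\eta) - \mathcal{N}(r_2,\xi,\eta)| \le L_{\mathcal{N}} |r_1 - r_2| .
\]
Moreover $\mathcal{N}(0,\mu(\cdot),\lambda(\cdot))$ is bounded because $F_0$ is compact, so it lies in $L^2(\Omega)$. Hence $\cG$ maps $L^2(\Omega)$ into $L^2(\Omega)$ and is Lipschitz, in particular continuous, and so $\cG(K)$ is compact as the continuous image of a compact set. Measurability of $x \mapsto \mathcal{N}(u(x),\mu(x),\lambda(x))$ is clear from continuity of $\mathcal{N}$.
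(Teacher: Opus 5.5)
Your proposal is correct and follows essentially the same route as the paper. Both arguments use uniform control of $\|u\,\mathbbm{1}_{\{|u|>M\}}\|_{L^2(\Omega)}$ over the compact set $K$, uniform approximation of $\phi$ on a compact box, and a ReLU clipping prepended so that $\mathcal{N}(r,\xi,\eta)=\mathcal{N}_0(\mathrm{clip}_M(r),\xi,\eta)$. They then apply the uniform Lipschitz bound in $s$ on the large-value region and use continuity of $\cG$ to get compactness of $\cG(K)$.

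Your details are somewhat cleaner than the paper's in three places:
\begin{itemize}
\item Your $\epsilon$-net argument avoids the paper's appeal to Dini's theorem for $\pi_r$. That function is not actually continuous: take $u\equiv r$ and $u_n\equiv r+1/n$.
\item Your $F_0$ is genuinely compact, whereas the paper's $E\times[c_0,R_1]$ need not be, since $E$ is only assumed closed.
\item Global Lipschitz continuity of a ReLU network gives compactness of $\cG(K)$ more directly than the paper's Chebyshev argument.
\end{itemize}
The bookkeeping you flagged is harmless. Take $\mathcal{N}_0$ to be a plain fully connected ReLU network and absorb the affine map producing $\mathrm{clip}_M(r)$ from $(\sigma(r+M),\sigma(r-M))$ into its first layer, with $(\xi,\eta)$ supplied by the skip connections.
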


\begin{proof}
    Fix any compact $K \subset L^2(\Omega)$ and $\varepsilon > 0$.
    Denote by $\nu(x) := (\mu(x), \lambda(x))^\top \in L^\infty(\Omega; \bR^{m+1})$.
    For any $u \in L^2(\Omega)$ and $r > 0$, define $u_r: \Omega \to \bR$ (up to a measure zero set) by
    \begin{equation}\label{eq:u-trunc}
        u_r(x) = \begin{cases}
            u(x), & |u(x)| \leq r, \\
            0, & |u(x)| > r,
        \end{cases} \quad \text{a.e.~in}~\Omega.
    \end{equation}
    Consider the mapping $\pi_r: L^2(\Omega) \to \bR$ defined by $\pi_r(u) = \left\|u - u_r \right\|_{L^2(\Omega)}$.
    By the dominated convergence theorem, we have $\lim_{r \to \infty} \pi_r(u) = 0$.
    It is clear that $\pi_r$ is continuous and $r \mapsto \pi_r(u)$ is monotone for each $u \in L^2(\Omega)$.
    Since $K$ is compact, it follows from Dini's theorem that
    \begin{equation*}
        \lim_{r \to \infty} \sup_{u \in K} \left\| u - u_r \right\|_{L^2(\Omega)} = 0.
    \end{equation*}
    In particular, there exists $R_0 > 0$ independent of $u$, such that $R_0 \geq \|\nu\|_{L^\infty(\Omega; \bR^{m+1})}$ and
    \begin{equation}\label{eq:pfaux-lpoua-1}
        \left\|u - u_{R_0}\right\|_{L^2(\Omega)}^2 < \frac{\varepsilon^2}{4 L_\phi^2}, \quad \text{for all } u \in K.
    \end{equation}

   We now approximate $G(u)$ by a neural network satisfying \eqref{eq:qak-ptwise}.
    Define the constants
    \begin{equation*}
        \begin{aligned}
            &R_1 := \left\| \nu \right\|_{L^\infty(\Omega; \bR^{m+1})}, \quad M_0 := \mathrm{ess\,sup}_{x \in \Omega} \left|\phi(0, \nu(x))\right|,
        \end{aligned}
    \end{equation*}
    and two compact sets
    \begin{equation*}
        \mathcal{K}_1 := [-R_0, R_0] \subset \bR, \quad \mathcal{K}_2 = E \times [c_0, R_1] \subset \bR^{m+1}.
    \end{equation*}
    Note that $R_1 \leq R_0$, $\nu(x) \in \mathcal{K}_2$ a.e. on $\Omega$, and $M_0 < \infty$ (since $K_2$ is compact and $\phi(0, \cdot)$ is continuous on $K_2$ by \Cref{lem:nresol-ptwise}).
    By \Cref{lem:nn-rec-univ-approx}, there exists a neural network $\mathcal{N}_0: \bR^{m+2} \to \bR$ of the form \eqref{eq:nn-rec-structure}, such that
    \begin{equation}\label{eq:pfaux-lpoua-2}
        \sup_{s \in \cK_1, \, \zeta \in \cK_2} \left| \phi(s, \zeta) - \mathcal{N}_0(s, \zeta) \right| < \frac{\varepsilon}{2 |\Omega|^{1/2}}.
    \end{equation}
    Define the neural network $\mathcal{N}: \bR^{m+2} \to \bR$ with ReLU activation function $\sigma$ by
    \begin{equation*}
        \mathcal{N}(s, \zeta) = \mathcal{N}_0 \left(R_0\mathbf{1} - \sigma \left( 2R_0 \mathbf{1} - \sigma \left( (s, \zeta)^\top + R_0 \mathbf{1} \right) \right)\right),
    \end{equation*}
    where $\mathbf{1} \in \bR^{m+2}$ is the all-one vector.
    Note that $\cN$ also has the structure defined in \eqref{eq:nn-rec-structure}.
    Since $R_1 \leq R_0$, we have
    \begin{equation}\label{eq:fcnn-proj-property}
        \mathcal{N}(s, \zeta) = \mathcal{N}_0 \left( \mathrm{proj}_{[-R_0, R_0]^{m+2}}(s, \zeta) \right) = \mathcal{N}_0 \left( \mathrm{proj}_{\cK_1}(s), \zeta \right), \quad \forall s \in \bR, \ \zeta \in \cK_2.
    \end{equation}
    For any $u \in K$, define the sets $E_u^+ := \left\{ x \in \Omega \, : \, u(x) > R_0 \right\}$ and $E_u^- := \left\{ x \in \Omega \, : \, u(x) < -R_0 \right\}$.
    By the set decomposition $\Omega = E_u^+ \sqcup E_u^- \sqcup \left( \Omega \setminus (E_u^+ \cup E_u^-) \right)$, we have
    \begin{equation*}\label{eq:qa-approx-bd-each-u}
        \begin{aligned}
            &\| G(u) - \cG(u) \|_{L^2(\Omega)}^2 \\
            = \, &\int_{E_u^+} \left| \phi(u(x), \nu(x)) - \mathcal{N}(u(x), \nu(x)) \right|^2 \, dx + \int_{E_u^-} \left| \phi(u(x), \nu(x)) - \mathcal{N}(u(x), \nu(x)) \right|^2 \, dx \\
            &\qquad + \int_{\Omega \setminus (E_u^+ \cup E_u^-)} \left| \phi(u(x), \nu(x)) - \mathcal{N}(u(x), \nu(x)) \right|^2 \, dx \\
            &=: \text{(I)} + \text{(II)} + \text{(III)}.
        \end{aligned}
    \end{equation*}
    For $\text{(I)}$, from \eqref{eq:pfaux-lpoua-2}, \eqref{eq:fcnn-proj-property}, the Lipschitz continuity of $\phi$ in $s$, and the fact that $\nu(x) \in \cK_2$ a.e. in $\Omega$, we have
    \begin{equation*}
    \begin{aligned}
        \text{(I)} &\leq \int_{E_u^+} \left( 2 \left| \phi(u(x), \nu(x)) - \phi(R_0, \nu(x)) \right|^2 + 2 \left| \phi(R_0, \nu(x)) - \mathcal{N}(u(x), \nu(x)) \right|^2 \right) \, dx \\
        &= 2 \int_{E_u^+} \left| \phi(u(x), \nu(x)) - \phi(R_0, \nu(x)) \right|^2 \, dx + 2 \int_{E_u^+} \left| \phi(R_0, \nu(x)) - \mathcal{N}_0(R_0, \nu(x)) \right|^2 \, dx \\
        &\leq 2L_\phi^2 \int_{E_u^+} |u(x) - u_{R_0}(x)|^2 \, dx + 2 \left| E_u^+ \right| \cdot \frac{\varepsilon^2}{4 |\Omega|} \\
        &\leq 2L_\phi^2 \|u - u_{R_0}\|_{L^2(E_u^+)}^2 + \frac{\left| E_u^+ \right|}{2 |\Omega|} \cdot \varepsilon^2.
    \end{aligned}
    \end{equation*}
    Similarly,
    \begin{equation*}
    \begin{aligned}
        \text{(II)} &\leq 2L_\phi^2 \|u - u_{R_0}\|_{L^2(E_u^-)}^2 + \frac{\left| E_u^- \right|}{2 |\Omega|} \cdot \varepsilon^2.
    \end{aligned}
    \end{equation*}
    For $\text{(III)}$, we have $|u(x)| \leq R_0$ on $\Omega \setminus (E_u^+ \cup E_u^-)$, and hence by \eqref{eq:pfaux-lpoua-2},
    \begin{equation*}
    \begin{aligned}
        \text{(III)} &= \int_{\Omega \setminus (E_u^+ \cup E_u^-)} \left| \phi(u(x), \nu(x)) - \mathcal{N}_0(u(x), \nu(x)) \right|^2 \, dx \\
        &\leq \frac{\left| \Omega \setminus (E_u^+ \cup E_u^-) \right|}{4 |\Omega|} \cdot \varepsilon^2.
    \end{aligned}
    \end{equation*}
    Combining the estimates of $\text{(I)}$, $\text{(II)}$ and $\text{(III)}$, it follows from \eqref{eq:pfaux-lpoua-1} that
    \begin{equation*}
        \begin{aligned}
            \| G(u) - \cG(u) \|_{L^2(\Omega)}^2 &= \text{(I)} + \text{(II)} + \text{(III)}\\
            &\leq 2L_{\phi}^2 \|u - u_{R_0} \|_{L^2(\Omega)}^2 + \frac{2\left| E_u^+ \right| + 2 \left| E_u^- \right| + \left| \Omega \setminus (E_u^+ \cup E_u^-) \right|}{4|\Omega|} \cdot \varepsilon^2 \\
            &\leq \frac{\varepsilon^2}{2} + \frac{\varepsilon^2}{2} = \varepsilon^2.
        \end{aligned}
    \end{equation*}
    Since the above inequality is valid for all $u \in K$, we conclude that \eqref{eq:ptwise-operator-univ-approx} holds.

    Next, for proving the compactness of $\cG(K)$, it suffices to show the continuity of $\cG$ on $K$.
    Take any $\varepsilon' > 0$.
    For any $u, v \in K$, we have
    \begin{equation}\label{eq:nng-diff-decomp}
    \begin{aligned}
        \|\cG(u) - \cG(v) \|_{L^2(\Omega)}^2 &= \int_{\Omega} |\cN(u(x), \nu(x) ) - \cN(v(x), \nu(x)) |^2 \, dx \\
        &= \int_{\Omega} |\cN_0(u_{R_0}(x), \nu(x) ) - \cN_0(v_{R_0}(x), \nu(x)) |^2 \, dx.
    \end{aligned}
    \end{equation}
    Since $\cN_0$ is continuous on $\bR^{m+2}$, we have
    \begin{equation}\label{eq:m1-aux-def}
    \begin{aligned}
        \sup_{x \in \Omega} |\cN_0(u_{R_0}(x), \nu(x) ) - \cN_0(v_{R_0}(x), \nu(x)) |^2 \leq \sup_{s_1, s_2 \in \cK_1, \zeta \in \cK_2} |\cN_0(s_1, \zeta ) - \cN_0(s_2, \zeta) |^2 =: M_1 < \infty.
    \end{aligned}
    \end{equation}
    Note that $M_1$ does not depend on the choice of $u$ and $v$ in $K$.
    Moreover, $\cN_0$ is uniformly continuous on the compact set $\cK_1 \times \cK_2$.
    Thus, there exists a $\delta' > 0$, independent of $u$ and $v$, such that
    \begin{equation}\label{eq:nn-n0-unif-cont}
    \begin{aligned}
        &(s_1, \zeta_1)^\top, (s_2, \zeta_2)^\top \in \cK_1 \times \cK_2, \quad \| (s_1, \zeta_1)^\top - (s_2, \zeta_2)^\top \| \leq \delta' \\
        \Rightarrow \quad &|\cN_0(s_1, \zeta_1) - \cN_0(s_2, \zeta_2)| < \frac{\varepsilon'}{\left(2 |\Omega| \right)^{1/2}}.
        \end{aligned}
    \end{equation}
    Denote the set $E'_{u, v} := \left\{ x \in \Omega \, : \, |u_{R_0}(x) - v_{R_0}(x)| \geq \delta' \right\}$ for any $u, v \in K$.
    By Chebyshev's inequality, we have
    \begin{equation}\label{eq:cheb-ineq}
        | E'_{u, v} | \leq \frac{1}{(\delta')^2} \left\| u_{R_0} - v_{R_0} \right\|_{L^2(\Omega)}^2 \leq \frac{1}{(\delta')^2} \left\| u - v \right\|_{L^2(\Omega)}^2, \quad \forall u, v \in K.
    \end{equation}
    It follows from \eqref{eq:nng-diff-decomp}--\eqref{eq:cheb-ineq} and the definition of $E'_{u, v}$ that
    \begin{equation*}
    \begin{aligned}
        \|\cG(u) - \cG(v) \|_{L^2(\Omega)}^2 &= \int_{E'_{u, v}} |\cN_0(u_{R_0}(x), \nu(x) ) - \cN_0(v_{R_0}(x), \nu(x)) |^2 \, dx \\
        & \qquad + \int_{\Omega \setminus E'_{u, v}} |\cN_0(u_{R_0}(x), \nu(x) ) - \cN_0(v_{R_0}(x), \nu(x)) |^2 \, dx \\
        & \leq \frac{M_1}{(\delta')^2} \|u - v\|_{L^2(\Omega)}^2 + \frac{(\varepsilon')^2}{2}.
    \end{aligned}
    \end{equation*}
    Hence, there exists $\delta := (\varepsilon' \delta') / (2 M_1)^{1/2}$ such that for any $u, v \in K$ with $\|u - v\|_{L^2(\Omega)} < \delta$, we have $\|\cG(u) - \cG(v) \|_{L^2(\Omega)} < \varepsilon'$.
    Note that the constant $\delta$ does not depend on $u$ or $v$.
    Since $\varepsilon' > 0$ is taken arbitrarily, we conclude that $\mathcal{G}$ is uniformly continuous on $K$.
    In particular, $\mathcal{G}(K)$ is compact.
\end{proof}

\begin{proposition}\label{prop:qak-ptwise-univ-approx}
    Let $N: L^2(\Omega) \to L^2(\Omega)$ and $\theta: L^2(\Omega) \to \bR \cup \{ + \infty \}$ satisfy \Cref{assump:ptwise,assump:subgrad-gcont}.
    For any compact set $K \subset L^2(\Omega)$ and $\varepsilon > 0$, there exists a neural network $\cQ_A$ defined in \eqref{eq:qak-ptwise} such that
    \begin{equation*}
        \sup_{v \in K} \left\| (N + \tau I + \partial \theta)^{-1}(v) - \cQ_A(v) \right\|_{L^2(\Omega)} < \varepsilon.
    \end{equation*}
    Moreover, the image $\cQ_A(K)$ is compact in $L^2(\Omega)$.
\end{proposition}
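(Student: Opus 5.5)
The proof combines \Cref{lem:nresol-ptwise} with \Cref{lem:ptwise-operator-univ-approx}; the only work is to check that the hypotheses of the latter are met by the function produced by the former.

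First, apply \Cref{lem:nresol-ptwise}, which is valid under \Cref{assump:ptwise,assump:subgrad-gcont}. It gives a function $\phi: \bR \times E \times [c_0, \infty) \to \bR$ with three properties:
\begin{enumerate}[label=(\alph*)]
    \item the resolvent acts pointwise, $(N + \tau I + \partial \theta)^{-1}(v)(x) = \phi(v(x), \mu(x), \lambda(x))$ a.e. in $\Omega$, and $\phi(v(\cdot), \mu(\cdot), \lambda(\cdot)) \in L^2(\Omega)$ for every $v \in L^2(\Omega)$;
    \item $\phi(s, \xi, \eta)$ is Lipschitz in $s$ with the uniform constant $L_\phi = (c_0 + \tau)^{-1}$ for all $(\xi, \eta)^\top \in E \times [c_0, \infty)$;
    \item because \Cref{assump:subgrad-gcont} is assumed, $\phi$ is jointly continuous on $\bR \times E \times [c_0, \infty)$.
\end{enumerate}
These are exactly the hypotheses on $\phi$ in \Cref{lem:ptwise-operator-univ-approx}, with $G := (N + \tau I + \partial\theta)^{-1}$ realizing \eqref{eq:ptwise-operator}.

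Second, invoke \Cref{lem:ptwise-operator-univ-approx} with the given compact set $K$ and accuracy $\varepsilon$. It yields a network $\mathcal{N}$ of the form \eqref{eq:nn-rec-structure} such that the operator $\cG(u)(x) = \mathcal{N}(u(x), \mu(x), \lambda(x))$ satisfies $\sup_{v \in K} \| G(v) - \cG(v)\|_{L^2(\Omega)} < \varepsilon$, and such that $\cG(K)$ is compact. This $\cG$ is by definition a network $\cQ_A$ of the form \eqref{eq:qak-ptwise}, so both assertions of the proposition follow. If $\lambda$ or $\mu$ is constant, one uses the reduced architecture \eqref{eq:qak-ptwise-reduced}; the same argument applies because the constant inputs can be absorbed into the biases.

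The main point to check is that the domain of $\phi$ is compatible with the closed-set requirement in \Cref{lem:nn-rec-univ-approx}, which \Cref{lem:ptwise-operator-univ-approx} uses internally. The set $\bR \times E \times [c_0, \infty)$ is closed because $E$ is closed, and $(\mu(x), \lambda(x))^\top$ lies in $E \times [c_0, \|\lambda\|_{L^\infty}]$ a.e. One subtlety is that $E \times [c_0, R_1]$ must be compact for the uniform approximation step. If $E$ is unbounded, I would intersect it with the closed ball of radius $\|\mu\|_{L^\infty}$. This gives a compact set that still contains $\mu(x)$ a.e., so the argument in \Cref{lem:ptwise-operator-univ-approx} goes through unchanged.
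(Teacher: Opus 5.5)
Your proposal is correct and matches the paper's proof: apply \Cref{lem:nresol-ptwise} to get the jointly continuous, uniformly $s$-Lipschitz pointwise representation $\phi$, then invoke \Cref{lem:ptwise-operator-univ-approx} with $G = (N+\tau I+\partial\theta)^{-1}$ and $\nu = (\mu,\lambda)^\top$. Your remark that $E \times [c_0, R_1]$ need not be compact when $E$ is unbounded is a legitimate repair of that step. Intersecting $E$ with a bounded set containing the essential range of $\mu$ fixes it, and this is exactly the set $\mathcal{K}_2$ the paper itself uses later in \Cref{lem:ptwise-operator-univ-approx-reg}.
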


\begin{proof}
    This follows immediately by first applying \Cref{lem:nresol-ptwise} and then applying \Cref{lem:ptwise-operator-univ-approx} with $\nu(x) = (\mu(x), \lambda(x))^\top$ and $G = (N + \tau I + \partial \theta)^{-1}$.
\end{proof}

\subsection{The iUzawa-Net as Inexact Iterations of \texorpdfstring{\eqref{eq:inexact-uzawa}}{the inexact Uzawa method}}

In this subsection, we show that the \texttt{iUzawa-Net} \eqref{eq:inexact-uzawa-duf} can be embedded into the abstract framework \eqref{eq:inexact-inexact-uzawa}.
Specifically, we prove that the conditions \eqref{eq:inexact-inexact-uzawa-u}--\eqref{eq:inexact-inexact-uzawa-p} can be satisfied with arbitrary $\delta > 0$ by choosing sufficiently accurate neural network surrogates $\cQ_S$, $\cQ_A$, $\cS$, and $\cA$ for the corresponding operators.

\begin{proposition}\label{prop:iuzawa-net-err}
    Assume that \Cref{assump:ptwise,assump:subgrad-gcont} hold.
    Let $K \subset U \times Y$ be any compact sets, and denote by $K_U$ and $K_Y$ the projections of $K$ onto $U$ and $Y$, respectively.
    For any $\delta > 0$, there exist FNOs $\cS$, $\cA$, neural networks $\cQ_A$ defined in \eqref{eq:qak-ptwise} and $\cQ_S$ defined in \eqref{eq:qsk-def}, and compact sets $K_\cA \subset Y$ and $ K_{\cQ_A}, K_\cS \subset U$, such that
    \begin{enumerate}[label=(\roman*)]
        \item $\cQ_S$ is bounded, linear, and $\cQ_S - (I + SN^{-1}S^*)$ is positive definite.
        \item The following approximation bounds hold:
        \begin{equation}\label{eq:iuzawa-net-approx-conds}
        \begin{aligned}
            &K_\cA := K_Y, \quad \sup_{p \in K_\cA} \left\| S^*p - \cA(p) \right\|_U < \min \left\{\frac{1}{2}, \ \frac{1}{4\|S\|}, (c_0 + \tau)^{-1} \right\} (c_0 + \tau) \delta, \\
            &K_{\cQ_A} := \tau K_U - \cA(K_\cA), \quad \sup_{v \in K_{\cQ_A}} \left\| (N + \tau I + \partial \theta)^{-1}(v) - \cQ_A(v) \right\|_U < \min \left\{ \frac{1}{2}, \ \frac{1}{4\|S\|}  \right\}\delta, \\
            &K_\cS := \cQ_A(K_{\cQ_A}) + K_U, \quad \sup_{v \in K_\cS} \left\| Sv - \cS(v) \right\|_Y < \frac{1}{2} \delta.
        \end{aligned}
        \end{equation}
    \end{enumerate}
    Moreover, for any $(f, y_d)^\top \in K$ and $w^k = (u^k, p^k)^\top \in K$, let $\cS$, $\cA$, $\cQ_A$ and $\cQ_S$ satisfy the conditions above.
    Let $\bar{w}^{k+1} = (\bar{u}^{k+1}, \bar{p}^{k+1})^\top$ be defined by \eqref{eq:inexact-inexact-uzawa-bar-u}--\eqref{eq:inexact-inexact-uzawa-bar-p} with $Q_A = N + \tau I$ and $Q_S = \cQ_S$, and $w^{k+1} = (u^{k+1}, p^{k+1})^\top$ be the output of the $(k+1)$-th layer of the iUzawa-Net \eqref{eq:inexact-uzawa-duf} with inputs $y_d$, $f$, $u^k$, $p^k$ and neural networks $\cS^k = \cS$, $\cA^k = \cA$, $\cQ_A^k = \cQ_A$, $\cQ_S^k = \cQ_S$.
    Then we have
    \begin{equation}\label{eq:iuzawa-net-err}
        \max \left\{ \|u^{k+1} - \bar{u}^{k+1}\|_U, \ \|p^{k+1} - \bar{p}^{k+1}\|_Y \right\} < \delta,
    \end{equation}
    and the image set $\{w^{k+1} : w^k \in K, (f, y_d)^\top \in K\}$ is compact.
\end{proposition}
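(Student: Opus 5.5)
The plan is to build the four modules in a fixed order, $\cQ_S$, then $\cA$, then $\cQ_A$, then $\cS$. Each compact set in \eqref{eq:iuzawa-net-approx-conds} depends only on modules chosen earlier, so this order is forced. After that we estimate the one-layer error by the triangle inequality.

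For (i), we apply \Cref{prop:qsk-univ-approx}(i) with $\tilde\gamma > \max\{\gamma,\, 1+\|S\|^2\|N^{-1}\|\}$. This makes $\cQ_S=\tilde\gamma I$ linear and bounded, and $\cQ_S-(I+SN^{-1}S^*)\succeq(\tilde\gamma-1-\|S\|^2c_0^{-1})I$ is positive definite. Since $\cQ_S$ is linear, it commutes with all differences, and $\|\cQ_S^{-1}\|\le 1$ because $\sigma_0(\cQ_S)\ge1$.

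Next, $K_\cA=K_Y$ is compact, so \Cref{prop:fno-univ-approx-general} (with $s_1=s_2=0$) gives an FNO $\cA$ meeting the first bound, and $\cA(K_\cA)$ is compact. Then $K_{\cQ_A}=\tau K_U-\cA(K_\cA)$ is compact as the continuous image of a product of compacts. \Cref{prop:qak-ptwise-univ-approx} gives $\cQ_A$ meeting the second bound with $\cQ_A(K_{\cQ_A})$ compact. Hence $K_\cS$ is compact, and \Cref{prop:fno-univ-approx-general} again gives $\cS$ meeting the third bound with $\cS(K_\cS)$ compact.

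For the error estimate, we write $v=\tau u^k-\cA p^k\in K_{\cQ_A}$ and $\bar v=\tau u^k-S^*p^k$. By \Cref{lem:nresolv-lip}, applied to $N+\tau I$ with constant $(c_0+\tau)^{-1}$,
\[
\|u^{k+1}-\bar u^{k+1}\|\le \|\cQ_A v-(N+\tau I+\partial\theta)^{-1}v\|+(c_0+\tau)^{-1}\|\cA p^k-S^*p^k\|.
\]
With the stated constants, this is below $\min\{\tfrac12,\tfrac1{4\|S\|}\}\delta+\min\{\tfrac12,\tfrac1{4\|S\|},(c_0+\tau)^{-1}\}\delta\le\min\{1,\tfrac1{2\|S\|}\}\delta$, which proves the $u$-part of \eqref{eq:iuzawa-net-err}.

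For $p$, we have $p^{k+1}-\bar p^{k+1}=\cQ_S^{-1}$-free in the network. The architecture uses $\cQ_S$ in the same place as $Q_S^{-1}$ in \eqref{eq:inexact-inexact-uzawa-bar-p}, and this is the point needing care. I will read the statement as identifying the layer's $\cQ_S^k$ with the inverse preconditioner, i.e.\ $Q_S^{-1}:=\cQ_S$ in \eqref{eq:inexact-inexact-uzawa-bar-p}, or equivalently choose the linear $\cQ_S$ so that both agree. Then
\[
p^{k+1}-\bar p^{k+1}=\cQ_S\big(\cS(u^{k+1}+f)-S(\bar u^{k+1}+f)\big).
\]
We split this as $\cS(u^{k+1}+f)-S(u^{k+1}+f)$, which is below $\delta/2$ since $u^{k+1}+f\in K_\cS$, plus $S(u^{k+1}-\bar u^{k+1})$, which is below $\|S\|\cdot\delta/(2\|S\|)=\delta/2$. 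The factor coming from $\cQ_S$ must be at most $1$, i.e.\ we need $\|Q_S^{-1}\|\le1$. I expect this normalization to be the main obstacle: the requirement that $\cQ_S-(I+SN^{-1}S^*)$ be positive definite while $\cQ_S$ also multiplies the residual. My plan is to track the operator norm of the preconditioner-inverse (at most $1$ since $\sigma_0\ge1$) and, if needed, shrink the tolerances in \eqref{eq:iuzawa-net-approx-conds} by that bounded constant.

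Compactness of the image follows because every map involved is continuous on its compact domain. The map $(w^k,(f,y_d))\mapsto u^{k+1}=\cQ_A(\tau u^k-\cA p^k)$ is continuous by the uniform continuity shown in \Cref{lem:ptwise-operator-univ-approx}. Then $p^{k+1}$ is a continuous combination of $p^k$, $y_d$, and $\cS(u^{k+1}+f)$, with $\cQ_S$ bounded and linear. Thus the image of the compact set $K\times K$ is compact.
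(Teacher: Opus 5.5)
Your construction of $\cA$, $\cQ_A$, $\cS$ in that order, the compactness bookkeeping, the $u$-estimate, and the compactness of the image all coincide with the paper's argument. The gap is the $p$-estimate.

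You are right that the statement needs some reading here. Taken literally, the layer applies $\cQ_S$ while $\bar p^{k+1}$ applies $Q_S^{-1}=\cQ_S^{-1}$. Then $p^{k+1}-\bar p^{k+1}$ contains $(\cQ_S-\cQ_S^{-1})\big(S(\bar u^{k+1}+f)-p^k-y_d\big)$, which does not shrink with the approximation errors.

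But the reading you pick fails. With $\cQ_S=\tilde\gamma I$, $\tilde\gamma>1$, and $Q_S^{-1}:=\cQ_S$, the operator multiplying $\cS(u^{k+1}+f)-S(\bar u^{k+1}+f)$ is $\cQ_S$ itself, of norm $\tilde\gamma>1$. The fact $\sigma_0(\cQ_S)\ge 1$ bounds $\|\cQ_S^{-1}\|$, not $\|\cQ_S\|$, so your ``factor at most $1$'' is backwards.

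Shrinking the tolerances by $\tilde\gamma$ would rescue the inequality, but not the statement. The preconditioner that now defines $\bar p^{k+1}$ is $Q_S=\tilde\gamma^{-1}I$, contrary to the statement's $Q_S=\cQ_S$. It satisfies $Q_S-(I+SN^{-1}S^*)\preceq(\tilde\gamma^{-1}-1)I\prec 0$, which violates \eqref{eq:inuzawa-conv-cond}. Part (i) then describes an operator that no longer appears in $\bar w^{k+1}$. The iterates you track are also not those of a scheme covered by \Cref{prop:inexact-inexact-uzawa-stability}, and that coverage is the whole purpose of this proposition in \Cref{thm:iuzawa-net-univ-approx-rec}.

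Your alternative, choosing $\cQ_S$ so that the two agree, is impossible. $\cQ_S=\cQ_S^{-1}$ with $\cQ_S$ positive forces $\cQ_S=I$, and then $\cQ_S-(I+SN^{-1}S^*)=-SN^{-1}S^*$ is not positive definite.

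The consistent reading is the one the paper's proof uses; it writes the layer update as $p^k+\cQ_S^{-1}\big(\cS(u^{k+1}+f)-y_d-p^k\big)$. Keep $Q_S=\cQ_S$ with property (i) as the preconditioner, and let the layer's module be its inverse. With your scalar choice this module is $\tilde\gamma^{-1}I$, which \Cref{prop:qsk-univ-approx}(i) realizes in the architecture \eqref{eq:qsk-def} provided the hyperparameter satisfies $\gamma\le\tilde\gamma^{-1}$. Then
\[
p^{k+1}-\bar p^{k+1}=\cQ_S^{-1}\big(\cS(u^{k+1}+f)-S(\bar u^{k+1}+f)\big), \qquad \|\cQ_S^{-1}\|\le\sigma_0(\cQ_S)^{-1}\le 1,
\]
and your split into two terms each below $\delta/2$ closes the bound without changing the tolerances in (ii).

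A minor slip in (i): the lower bound should read $(\tilde\gamma-1-\|S\|^2\|N^{-1}\|)I$. Since $c_0^{-1}\ge\|N^{-1}\|$, your displayed constant with $c_0^{-1}$ need not be positive under your choice of $\tilde\gamma$.
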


\begin{proof}
    By \Cref{prop:qsk-univ-approx}, there exists a neural network $\cQ_S$ defined in \eqref{eq:qsk-def} such that $\cQ_S$ is linear, bounded, and $\cQ_S - (I + SN^{-1}S^*)$ is positive semidefinite.
    In particular, we have $\sigma_0(\cQ_S) \geq 1$.
    Note that $K_U$ and $K_Y$ are compact, hence the existence of $\cA$, $\cQ_A$ and $\cS$ satisfying \eqref{eq:iuzawa-net-approx-conds} and the compactness of  $K_\cA$, $K_{\cQ_A}$, and $K_\cS$ follow from \Cref{prop:fno-univ-approx-general,prop:qak-ptwise-univ-approx}.
    Moreover, the compactness of $\{w^{k+1} : w^k \in K, (f, y_d)^\top \in K\}$ follows from the continuity of $\cS$, $\cA$, $\cQ_A$, and $\cQ_S$ in $L^2(\Omega)$-norm.

    To verify \eqref{eq:iuzawa-net-err}, note that $p^k \in K_Y = K_{\cA}$, $u^k \in K_U$, and $\sigma_0(\cQ_S) \geq 1$. Hence
    \begin{equation*}
        \| S^* p^k - \cA(p^k) \|_U < \min \left\{\frac{1}{2}, \ \frac{1}{4\|S\|} \sigma_0(\cQ_S) \right\} (c_0 + \tau) \delta.
    \end{equation*}
    \begin{equation*}
        \left\| (N + \tau I + \partial \theta)^{-1} (\tau u^k - \cA ( p^k )) - \cQ_A (\tau u^k - \cA ( p^k )) \right\|_U < \min \left\{ \frac{1}{2}, \ \frac{1}{4\|S\|} \sigma_0(\cQ_S) \right\}\delta.
    \end{equation*}
    Applying \Cref{lem:nresolv-lip} to $N + \tau I$ yields that $(N + \tau I + \partial \theta)^{-1}$ is Lipschitz continuous with constant $(c_0 + \tau)^{-1}$.
    Therefore, we have
    \begin{equation}\label{eq:pine-aux-1}
        \begin{aligned}
            \left\| u^{k+1} - \bar{u}^{k+1} \right\|_U 
            \leq \ &\left\| \cQ_A (\tau u^k - \cA ( p^k )) - (N + \tau I + \partial \theta)^{-1} (\tau u^k - \cA ( p^k )) \right\|_U \\
            & \qquad + \left\| (N + \tau I + \partial \theta)^{-1} \left( \tau u^k - \cA ( p^k ) \right) - (N + \tau I + \partial \theta)^{-1} \left( \tau u^k - S^* p^k \right) \right\|_U \\
            < \ &\min \left\{ \frac{1}{2}, \ \frac{1}{4\|S\|} \sigma_0(\cQ_S) \right\} \delta + (c_0 + \tau)^{-1} \| S^* p^k - \cA(p^k) \|_U \\
            < \ &\min\left\{1, \ \frac{1}{2\|S\|}\sigma_0(\cQ_S)\right\} \delta \leq \delta.
        \end{aligned}
    \end{equation}
    Note that $u^{k+1} \in \cQ_A(K_{\cQ_A})$, hence $u^{k+1} + f \in K_\cS$.
    Thus,
    \begin{equation*}
        \left\| \cS(u^{k+1} + f) - S (u^{k+1} + f) \right\|_Y < \frac{1}{2} \sigma_0(\cQ_S) \delta.
    \end{equation*}
    Utilizing $Q_S = \cQ_S$ and the boundedness of $\cQ_S^{-1}$, we obtain
    \begin{equation*}
        \begin{aligned}
            \left\| p^{k+1} - \bar{p}^{k+1} \right\|_Y 
            &\leq \left\| \cQ_S^{-1} \left( \cS(u^{k+1} + f) - y_d - p^k \right) - \cQ_S^{-1} \left( S(\bar{u}^{k+1} + f) - y_d - p^k \right) \right\|_Y \\
            &< \| \cQ_S^{-1} \| \left( \left\| \cS(u^{k+1} + f) - S(u^{k+1} + f) \right\|_Y + \left\| Su^{k+1} - S \bar{u}^{k+1} \right\|_Y \right) \\
            &< \sigma_0(\cQ_S)^{-1} \left( \frac{1}{2} \sigma_0(\cQ_S) \delta + \|S\| \cdot \frac{1}{2\|S\|} \sigma_0(\cQ_S) \delta \right) = \delta,
        \end{aligned}
    \end{equation*}
    where the last inequality follows from the bound in \eqref{eq:pine-aux-1}.
\end{proof}

\subsection{Asymptotic \texorpdfstring{$\varepsilon$}{epsilon}-Optimality of the iUzwa-Net Layer Outputs}

We are now ready to establish the following asymptotic $\varepsilon$-optimality of the \texttt{iUzawa-Net} layer outputs, which can also be viewed as a new universal approximation property of the \texttt{iUzawa-Net}.

\begin{theorem}\label{thm:iuzawa-net-univ-approx-rec}
    Given \Cref{assump:ptwise,assump:subgrad-gcont}, for any compact set $K \subset Y \times U$ and $\varepsilon > 0$, there exist $\delta_0 = C \varepsilon$ with $C>0$ independent of $\varepsilon$ and $L_0 = O(\log(1/\varepsilon))$, such that for any $\delta < \delta_0$ and $L \geq L_0$, there exists an \texttt{iUzawa-Net} $\mathcal{T}(y_d, f; \theta_{\mathcal{T}})$ with $L$ layers that is algorithm tracking with respect to $K$ and $\delta$ in the sense of \Cref{def:alg-track}.
    Moreover, for any $(y_d, f)^\top \in K$, the layer outputs $\{(u^k, p^k)^\top\}_{k=1}^L$ of $\cT(y_d, f; \theta_\cT)$ satisfy
    \begin{equation}\label{eq:iuzawa-net-conv-to-eps}
        \left\| T(y_d, f) - u^k \right\|_{U} < \varepsilon, \quad \forall k \geq L_0.
    \end{equation}
\end{theorem}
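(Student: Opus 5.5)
We combine \Cref{prop:inexact-inexact-uzawa-stability} (the inexact Uzawa scheme, perturbed at level $\delta$, eventually stays $\varepsilon$-close to $w^*$) with \Cref{prop:iuzawa-net-err} (a single layer built from accurate surrogates realizes one perturbed step with error $<\delta$). The first task is to fix the preconditioners before choosing $\delta_0$ and $L_0$. By \Cref{prop:qsk-univ-approx}(i)--(ii), take one fixed $\cQ_S=\tilde\gamma I$ with $\tilde\gamma$ so large that $\cQ_S-(I+SN^{-1}S^*)\succeq 0$, and set $Q_A=N+\tau I$, $Q_S=\cQ_S$. Then \eqref{eq:inuzawa-conv-cond} holds, so \Cref{prop:inexact-inexact-uzawa-stability} gives $\rho,c$ and constants $C$ and $L$ that, by \Cref{rem:bigo-const-dep}, depend only on $Q_A,Q_S$. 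The only other dependence of $L$ in \eqref{eq:K-def} is on $\|w^0-w^*\|_Q$. With $w^0=0$ this is $\|w^*\|_Q$, which is uniformly bounded over the compact $K$ because $T_0$ is Lipschitz (\Cref{lem:param-to-ctrl-lip}) and $p^*=Su^*+Sf-y_d$. Hence $\delta_0=C\varepsilon$ and $L_0=O(\log(1/\varepsilon))$ can be chosen uniformly for $(y_d,f)^\top\in K$.

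Next, for fixed $\delta<\delta_0$ and $L\ge L_0$, build the layers inductively. Set $K^0:=K\cup\{0\}$, viewed as a compact subset of $U\times Y$ after identifying coordinates appropriately; this also takes care of the initialization clause in \Cref{def:alg-track}. Given a compact $K^k\subset U\times Y$ containing all possible $w^k$, apply \Cref{prop:iuzawa-net-err} with the compact set $K^k\cup K$ (whose projections cover both the current iterates and the data $y_d,f$) to obtain $\cS^k,\cA^k,\cQ_A^k$. Keep $\cQ_S^k=\cQ_S$ fixed, since part (i) of that proposition only needs a sufficiently positive definite $\cQ_S$, which our fixed choice already is. This yields $\|w^{k+1}-\bar w^{k+1}\|<\delta$ componentwise, with $\bar w^{k+1}$ defined using the same $Q_S$. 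The proposition also says the image set is compact; call it $K^{k+1}$ and continue. After $L$ steps, every layer satisfies \eqref{eq:inexact-inexact-uzawa} with these fixed $Q_A,Q_S$, so the network is $(K,\delta)$-tracking.

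Finally, apply \Cref{prop:inexact-inexact-uzawa-stability} to the layer outputs, which form a genuine sequence satisfying \eqref{eq:inexact-inexact-uzawa} for $k<L$. The proof of that proposition is a finite recursion, so truncating the sequence at $L$ causes no issue. It gives $\|w^k-w^*\|_{U\times Y}<\varepsilon$ for $L_0\le k\le L$, hence $\|u^k-T(y_d,f)\|_U<\varepsilon$, which is \eqref{eq:iuzawa-net-conv-to-eps}.

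The main obstacle is consistency and uniformity of the preconditioners. Definition~\ref{def:alg-track} and \Cref{prop:inexact-inexact-uzawa-stability} require a single fixed $Q_S$ across all layers and all inputs. Otherwise $\rho$, $c$ and $C$ would vary, and $\delta_0,L_0$ could not be fixed before the networks are constructed. Fixing $\cQ_S=\tilde\gamma I$ once resolves this. A second point to check is that the compact sets used in \Cref{prop:iuzawa-net-err} must contain both the evolving iterates and the data simultaneously. This is why each step applies the proposition to $K^k\cup K$ rather than to $K^k$ alone.
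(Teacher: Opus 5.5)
Your proposal is correct and matches the paper's proof. You fix one $\cQ_S$ with $\cQ_S-(I+SN^{-1}S^*)\succeq 0$ before choosing $\delta$, so that $Q_A=N+\tau I$, $Q_S=\cQ_S$ and hence $\rho$, $c$, $C$ are fixed; you build layer-specific $\cS^k,\cA^k,\cQ_A^k$ by applying \Cref{prop:iuzawa-net-err} recursively to the previous image set together with $\{0\}$ and the permuted $K$; and you conclude with \Cref{prop:inexact-inexact-uzawa-stability}. Your observation that the $L$ in \eqref{eq:K-def} also depends on $\|w^0-w^*\|_Q=\|w^*\|_Q$, which \Cref{lem:param-to-ctrl-lip} bounds uniformly over the compact $K$, makes explicit a uniformity point the paper only asserts through \Cref{rem:bigo-const-dep}. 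Note also that, exactly as in the paper, the identification $Q_S=\cQ_S$ is inherited from \Cref{prop:iuzawa-net-err}: since \eqref{eq:inexact-uzawa-duf-iter} applies $\cQ_S^k$ itself to the residual, this literally corresponds to $Q_S=(\cQ_S^k)^{-1}$, and hence to a small rather than a large $\tilde\gamma$.
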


\begin{proof}
    Denote $K^{\text{perm}} := \{ (z, v) \in U \times Y \,:\, (v, z) \in K \}$, which corresponds to $K$ up to a permutation of coordinates.
    By \Cref{prop:iuzawa-net-err}, there exists $\cQ_S$ of the architecture \eqref{eq:qsk-def} such that $\cQ_S$ is bounded, linear, and $\cQ_S - (I + SN^{-1}S^*)$ is positive semidefinite.
    Take $\cQ_S^k = \cQ_S$ for all $k = 0, \ldots, L - 1$.
    For any $\delta > 0$ and $L \geq 1$, we apply \Cref{prop:iuzawa-net-err} recursively on $k$ to obtain the neural networks $\cS^k$, $\cA^k$, and $\cQ_A^k$ in $\cT$ and compact sets $K_\cS^{(k)}$, $K_{\cQ_A}^{(k)}$, and $K_\cA^{(k)}$.
    Specifically,
    \begin{itemize}
        \item For $k = 0$, apply \Cref{prop:iuzawa-net-err} with $K = \{(0, 0)^\top\} \cup K^{\text{perm}}$ and $\delta$ to obtain neural networks $\cS^0$, $\cA^0$, $\cQ_A^0$, and compact sets $K_\cS^{(0)}$, $K_\cA^{(0)}$ and $K_{\cQ_A}^{(0)}$. Denote the image $\{ w^1 : w^0 = K, (y_d, f)^\top \in K \}$ as $K_1$.
        \item For each $k = 1, \ldots, L - 1$, apply \Cref{prop:iuzawa-net-err} with $K = K_k \cup K^{\text{perm}}$ and $\delta$ to obtain neural networks $\cS^k$, $\cA^k$, $\cQ_A^k$ and compact sets $K_\cS^{(k)}$, $K_\cA^{(k)}$ and $K_{\cQ_A}^{(k)}$. Denote the image $\{ w^1 : w^0 = 0, (y_d, f)^\top \in K \}$ as $K_{k+1}$.
    \end{itemize}
    
    Define an \texttt{iUzawa-Net} $\cT(y_d, f; \theta_\cT)$ of the form \eqref{eq:inexact-uzawa-duf} with the constructed neural networks $\cS^k$, $\cA^k$, $\cQ_A^k$, and $\cQ_S^k$.
    For any $(y_d, f)^\top \in K$, denote the $k$-th layer output of $\cT$ as $(u^{k}, p^k)^\top$.
    For each $k = 1, \ldots, L$, let $(\bar{u}^{k}, \bar{p}^k)^\top$ be defined from $(u^{k-1}, p^{k-1})^\top$ by \eqref{eq:inexact-inexact-uzawa-bar-u}--\eqref{eq:inexact-inexact-uzawa-bar-p} with the choice $Q_A = (N + \tau I + \partial \theta)$ and $Q_S = \cQ_S$.
    By the above construction and \eqref{eq:iuzawa-net-err}, we have
    \begin{equation*}
        \|u^{k+1} - \bar{u}^{k+1}\|_U < \delta, \qquad \|p^{k+1} - \bar{p}^{k+1}\|_Y < \delta, \quad \forall k = 0, \ldots, L-1,
    \end{equation*}
    and hence \eqref{eq:inexact-inexact-uzawa} is satisfied by $\{(u^{k}, p^k)^\top\}_{k=0}^L$ and $\{(\bar{u}^{k}, \bar{p}^k)^\top\}_{k=1}^L$.
    Note that $Q_A$ and $Q_S$ does not depend on $\delta$ in the above construction.
    Hence, it follows from \Cref{prop:inexact-inexact-uzawa-stability} and \Cref{rem:bigo-const-dep} that there exist $\delta_0 = C \varepsilon$ with $C$ independent of $\varepsilon$ and $L_0 = O(\log(1/\varepsilon))$, both do not depend on $(y_d, f)^\top$, such that for any $\delta < \delta_0$ and $L \geq L_0$, the layer outputs $\{(u^k, p^k)^\top\}_{k=1}^L$ of $\mathcal{T}(y_d, f; \theta_\cT)$ satisfies
    \begin{equation*}
        \| T(y_d, f) - u^k \|_U \leq \| (u^k, p^k)^\top - (u^*, p^*)^\top \|_{U \times Y} < \varepsilon, \quad \forall k \geq L_0.
    \end{equation*}
\end{proof}

By setting $k = L$ and taking supremum over $K$ in \eqref{eq:iuzawa-net-conv-to-eps}, \Cref{thm:iuzawa-net-univ-approx-rec} yields a new universal approximation theorem for the \texttt{iUzawa-Net}
This result is distinguished from \Cref{thm:iuzawa-univ-approx} in that it guarantees the constructed \texttt{iUzawa-Net} to be $(K, \delta)$-tracking.
Moreover, \Cref{thm:iuzawa-net-univ-approx-rec} extends naturally to the dual variables $\{p^k\}_{k=0}^L$ in the layer outputs, establishing the asymptotic $\varepsilon$-optimality of $\{p^k\}_{k=0}^L$ with respect to the dual optimal solution $p^*$ of \eqref{eq:opt-ctrl-pd}.
It is worth remarking that the asymptotic $\varepsilon$-optimality result presented in \Cref{thm:iuzawa-net-univ-approx-rec}  does not, in itself, imply convergence, which requires the error to vanish as $k \to \infty$.
The convergence of the inexact iterates \eqref{eq:inexact-inexact-uzawa} can be achieved by forcing the inexactness $\delta$ to decrease sufficiently fast with respect to iterations. A careful analysis of the convergence shapes the direction of subsequent works.

\section{Approximation Results under Additional Regularity}\label{se:result_regularity}

Since the underlying optimization algorithm \eqref{eq:inexact-uzawa} involves identical operations in each iteration, it is natural to expect the optimization-informed neural network \texttt{iUzawa-Net} to be \emph{weight tying}, whose definition is formulated as follows.
\begin{definition}\label{def:weight-tying}
    An \texttt{iUzawa-Net} defined in \eqref{eq:inexact-uzawa-duf} is called \emph{weight tying} if each layer consists of identical neural network parameters.
    Specifically, we have $\cS^k = \cS$, $\cA^k = \cA$, $\cQ_A^k = \cQ_A$ and $\cQ_S^k = \cQ_S$ for some fixed neural networks $\cS$, $\cA$, $\cQ_A$ and $\cQ_S$ for any $k = 0, \ldots, L-1$.
\end{definition}
\noindent Note that \Cref{thm:iuzawa-net-univ-approx-rec} does not guarantee that the constructed \texttt{iUzawa-Net} $\cT$ is weight tying.
This theoretical inconsistency motivates us to establish a universal approximation theorem for the \texttt{iUzawa-Net} that possesses both the weight tying and algorithm tracking properties.
In this section, we show that this is achievable if additional regularity is imposed on the problem \eqref{eq:opt-ctrl}.
Our approach relies heavily on an analysis that allows neural network approximation on \emph{bounded} subsets of Sobolev spaces $H^s(\Omega)$ with $s > 0$, thereby bypassing the restrictive compactness assumption required in the previous sections.
The core assumption is stated in \Cref{assump:regularity}.

\begin{assumption}\label{assump:regularity}
    There exists a regularity exponent $0 < s \leq 1$ such that
    \begin{enumerate}[label=(\roman*)]
        \item The operator $S: L^2(\Omega) \to L^2(\Omega)$ takes the form
        \begin{equation}
            S = \iota^* \circ \tilde{S} \circ \iota,
        \end{equation}
        where $\tilde{S}: H^{-s}(\Omega) \to H^s(\Omega)$ is a bounded linear operator, and $\iota: L^2(\Omega) \hookrightarrow H^{-s}(\Omega)$ is the canonical embedding, whose adjoint is given by another canonical embedding $\iota^*: H^{s}(\Omega) \hookrightarrow L^2(\Omega)$. \label{assump:sol-oper-reg}
        \item The operator $N$ and functional $\theta$ satisfy \Cref{assump:ptwise} with parameters $\lambda, \mu \in H^s(\Omega) \cap L^\infty(\Omega)$. \label{assump:n-theta-data-reg}
        \item For the function $\psi_\xi$ and the set $E$ defined in \Cref{assump:ptwise}, the set-valued mapping $\xi \mapsto \mathrm{gph} \, \partial \psi_{\xi}$ is sub-Lipschitz on $E$ in the sense of \cite[Definition 9.27]{rockafellar1998variational}; or equivalently, for any $\xi_1, \xi_2 \in E$ and any $\rho > 0$, there exists $\kappa_\rho > 0$ such that
        \begin{equation*}
            \setd_\rho\left( \mathrm{gph} \, \partial \psi_{\xi_1}, \mathrm{gph} \, \partial \psi_{\xi_2} \right) \leq \kappa_\rho \left\| \xi_1 - \xi_2 \right\|,
        \end{equation*}
        where $\setd_\rho$ is the Attouch--Wets distance between two sets, defined by
        \begin{equation*}
            \setd_\rho(C, D) = \max_{|x| \leq \rho} \left| d(x, C) - d(x, D) \right| = \max_{|x| \leq \rho} \left| \inf_{y \in C} \|x - y\| - \inf_{z \in D} \|x - z\| \right|
        \end{equation*}
        for any sets $C, D \subset \bR^n$ \cite{attouch1991topology,rockafellar1998variational}. \label{assump:prox-reg}
    \end{enumerate}
\end{assumption}

\begin{remark}\label{rem:reg-assump-validity}
    \noindent
    \begin{enumerate}[itemsep=0.1pt,label=(\roman*)]
        \item \Cref{assump:regularity} \ref{assump:sol-oper-reg} covers many important cases of \eqref{eq:opt-ctrl}, e.g., we have $s=1$ for distributed control of elliptic equations.
        With minor modifications, it also applies to the boundary control of elliptic equations or even parabolic equations; we refer to \cite[Sections 2.13 and 3.9]{troltzsch2010optimal} for more details.
        \item The requirement $0 < s \leq 1$ is just for simplicity in analysis.
        Our main theoretical result can be easily generalized to any $s > 0$; see the discussion after \Cref{thm:iuzawa-net-univ-approx-rec-reg}.
        \item Note that \Cref{assump:regularity} \ref{assump:n-theta-data-reg} and \ref{assump:prox-reg} mildly strengthen \Cref{assump:ptwise,assump:subgrad-gcont}, which were applied in the analysis in \Cref{sec:universal-approx,sec:stability-analysis}.
        In particular, \Cref{assump:regularity}~\ref{assump:n-theta-data-reg} is satisfied for any function $\lambda, \mu \in C^1(\Omega) \cap C(\overline{\Omega})$, which holds for the important examples in \Cref{sec:qa-special} if $u_a, u_b \in C^1(\Omega) \cap C(\overline{\Omega})$.
        The notion of sub-Lipschitz continuity is ubiquitous in variational analysis.
        Though it is generally stronger than the continuity assumption in \Cref{assump:subgrad-gcont}, it is still valid for the cases in \Cref{sec:qa-special}.
    \end{enumerate}
\end{remark}

The observation that additional regularity avoids the compactness assumption is grounded in the classical Rellich--Kondrachov theorem and its generalizations \cite[Theorems 6.13 and 8.5]{leoni2023first}, which can be summarized as the following proposition.
\begin{proposition}[Rellich--Kondrachov]\label{prop:rellich-kondrachov}
    Assume that $\Omega \subset \bR^d$ is bounded with Lipschitz boundary.
    For any $0 \leq s_2 < s_1 \leq 1$, the embedding $H^{s_1}(\Omega) \hookrightarrow H^{s_2}(\Omega)$ is compact.
    Consequently, for any bounded set $B \subset H^{s_1}(\Omega)$, the closure of $B$ in the topology of $H^{s_2}(\Omega)$ is compact.
\end{proposition}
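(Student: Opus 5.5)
The plan is to derive both assertions of \Cref{prop:rellich-kondrachov} from two standard ingredients on Lipschitz domains: a bounded linear extension operator into $\bR^d$, and the Fourier characterization of fractional Sobolev norms. The second assertion will then follow in one line from the first.

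\textbf{Reduction to compact support.} First, I will use the bounded extension operator $E: H^{s_1}(\Omega) \to H^{s_1}(\bR^d)$ for Lipschitz domains (Stein's extension for $s_1=1$, and interpolation or the Sobolev--Slobodeckij construction for fractional $s_1$). I will multiply by a smooth cutoff $\chi$ that equals $1$ on $\Omega$, so that all extended functions are supported in a fixed ball $B_R$. Restriction $H^{s_2}(\bR^d) \to H^{s_2}(\Omega)$ is bounded. It therefore suffices to show that a bounded sequence $(v_n)$ in $H^{s_1}(\bR^d)$, supported in $B_R$, has a subsequence that is Cauchy in $H^{s_2}(\bR^d)$.

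\textbf{Fourier splitting argument.} I will pass to a subsequence converging weakly in $H^{s_1}$, with weak limit $v$, and set $w_n = v_n - v$. I then split $\|w_n\|_{H^{s_2}}^2 = \int (1+|\xi|^2)^{s_2} |\hat w_n|^2\, d\xi$ into the regions $|\xi| \le \rho$ and $|\xi| > \rho$.

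For the high frequencies, $(1+|\xi|^2)^{s_2} \le (1+\rho^2)^{s_2-s_1}(1+|\xi|^2)^{s_1}$. Hence this part is at most $(1+\rho^2)^{s_2-s_1}\sup_n\|w_n\|_{H^{s_1}}^2$, which is small for large $\rho$ because $s_2 < s_1$.

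For the low frequencies, compact support gives $\hat w_n(\xi) = \langle w_n, \chi e^{-i\xi\cdot x}\rangle$. Weak convergence then yields $\hat w_n(\xi)\to 0$ pointwise, with a uniform bound $|\hat w_n(\xi)| \le C\|w_n\|_{L^2}$. Dominated convergence on the ball $\{|\xi|\le\rho\}$ makes this part tend to zero.

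\textbf{Main obstacle.} I expect the main obstacle to be the extension step for fractional $s_1<1$ on a merely Lipschitz boundary. Here I would cite the existence of bounded extension operators, as in \cite{leoni2023first} (or \cite[Lemma 41]{kovachki2021universal}, already used in \Cref{prop:qsk-univ-approx}), rather than build one. I also need to make sure the $H^{s}(\Omega)$ norm is equivalent to the quotient norm from $H^s(\bR^d)$, which holds on Lipschitz domains.

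\textbf{Consequence for bounded sets.} Once compactness of the embedding is established, take a bounded set $B \subset H^{s_1}(\Omega)$. Its image is relatively compact in $H^{s_2}(\Omega)$. Hence its closure in the $H^{s_2}$ topology is compact, because a relatively compact subset of a complete metric space has compact closure.
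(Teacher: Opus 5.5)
Your proof is correct, and it does more than the paper does. The paper gives no argument of its own: it states \Cref{prop:rellich-kondrachov} as a summary of \cite[Theorems 6.13 and 8.5]{leoni2023first} and then uses it as a black box.

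Your route works as follows. You extend and cut off so that all functions are supported in a fixed ball, then use weak compactness in $H^{s_1}(\bR^d)$. You split the $H^{s_2}$ norm of $w_n=v_n-v$ into high frequencies, controlled uniformly by $(1+\rho^2)^{s_2-s_1}$, and low frequencies, handled by pointwise convergence of $\hat w_n$ plus dominated convergence on a ball. This is the standard Hilbert-space proof, and it gives convergence directly in $H^{s_2}$ in one stroke.

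For general $W^{s,p}$, one typically argues differently: a Riesz--Fr\'echet--Kolmogorov translation estimate gives compactness into $L^p$, and an interpolation inequality then upgrades this to $W^{s_2,p}$. That covers all $p$ but takes more work; your Fourier splitting is essentially the $p=2$ interpolation step done by hand.

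Your argument still does not remove the genuinely Lipschitz-domain input. You must cite the bounded extension operator $H^{s_1}(\Omega)\to H^{s_1}(\bR^d)$ for fractional $s_1$, together with the equivalence of the Gagliardo and Fourier norms on $\bR^d$. So it is self-contained modulo exactly these facts, which is the right place for the citation.

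One cosmetic fix is needed in the low-frequency step. You should pair $w_n$ with $\mathbf{1}_{\overline{B_R}}e^{-i\xi\cdot x}$, or with a second cutoff equal to $1$ on $\overline{B_R}$. The cutoff $\chi$ you introduced was only assumed equal to $1$ on $\Omega$. You should also note that the weak limit $v$ is supported in $\overline{B_R}$ (test against functions supported outside the ball), so that $w_n$ is as well.
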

\noindent However, our setting requires a more delicate analysis than a direct application of \Cref{prop:rellich-kondrachov}.
Our argument proceeds as follows.
We first show that FNOs can approximate any continuous mappings from $H^{s_1}(\Omega)$ to $H^{s_2}(\Omega)$ on \emph{bounded} subsets of $H^s(\Omega)$ for any $s > s_1$.
For $\cQ_A^k$ defined in \eqref{eq:qak-ptwise}, we show that it achieves $L^2$-approximation on any bounded subset of $H^s{(\Omega)}$, and the image of such a set is also bounded in $H^s(\Omega)$.
Utilizing these results, for any bounded $B \subset H^s(\Omega)$, $\delta > 0$, and $R > 0$, we construct neural networks $\cS$, $\cA$, $\cQ_A$ and $\cQ_S$ that achieve accuracy $\delta$ on a sufficiently large approximation domain $B' \subset H^s(\Omega)$, an \texttt{iUzawa-Net} $\cT(y_d, f; \theta_\cT)$ with each layer consisting of these neural networks, and show that the outputs of the first $L$ layers of $\cT$ stay inside $B'$.
Finally, we apply \Cref{prop:inexact-inexact-uzawa-stability} to conclude the existence of an \texttt{iUzawa-Net} that achieves desirable approximation error, $(B, \delta)$-tracking, and weight tying properties.

The technical details are specified in the remaining subsections.

\subsection{Universal Approximation of FNOs and \texorpdfstring{$\cQ_A^k$}{QAk} on Bounded Sets}

We begin with verifying the following refined approximation result of FNOs.

\begin{proposition}\label{cor:fno-univ-approx-bddset}
    Let $0 \leq s_1 < 1$ and $s_2 \geq 0$.
    Let $\Omega \subset \bR^d$ be a bounded domain with Lipschitz boundary such that $\overline{\Omega} \subset (0, 2\pi)^d$.
    Let $G: H^{s_1}(\Omega) \to H^{s_2}(\Omega)$ be a continuous mapping and $B \subset H^s(\Omega)$ be a bounded set for some $s_1 < s \leq 1$.
    Then for any $\varepsilon > 0$, there exist a continuous embedding $\mathcal{E}: H^{s_1}(\Omega) \to H^{s_1}(\bT^d)$ and an FNO $\mathcal{G}: H^{s_1}(\bT^d) \to H^{s_2}(\bT^d)$, such that
    \begin{equation}
        \sup_{u \in B} \left\| G(u) - \left(\mathcal{G} \circ \mathcal{E} (u)\right)|_{\Omega} \right\|_{H^{s_2}(\Omega)} < \varepsilon.
    \end{equation}
\end{proposition}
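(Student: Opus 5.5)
The plan is to reduce the statement to \Cref{prop:fno-univ-approx-general} through a compactness argument based on \Cref{prop:rellich-kondrachov}. Let $B \subset H^s(\Omega)$ be bounded with $s_1 < s \leq 1$. Since $0 \leq s_1 < s \leq 1$, \Cref{prop:rellich-kondrachov} shows that the closure $K := \overline{B}^{H^{s_1}(\Omega)}$ of $B$ in the $H^{s_1}(\Omega)$-topology is compact in $H^{s_1}(\Omega)$. Because $G$ is continuous on all of $H^{s_1}(\Omega)$, it is defined and continuous on $K$; no extension of $G$ is needed.

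Next, apply \Cref{prop:fno-univ-approx-general} with $G$, the compact set $K \subset H^{s_1}(\Omega)$, and the given $\varepsilon$, replaced by $\varepsilon/2$ so that the final inequality is strict. This yields a continuous embedding $\mathcal{E}: H^{s_1}(\Omega) \to H^{s_1}(\bT^d)$ and an FNO $\mathcal{G}$ with
\begin{equation*}
    \sup_{u \in K} \left\| G(u) - \left(\mathcal{G} \circ \mathcal{E}(u)\right)|_{\Omega} \right\|_{H^{s_2}(\Omega)} \leq \frac{\varepsilon}{2} < \varepsilon .
\end{equation*}
Since $B \subset K$ (every element of $B$ lies in $H^{s_1}(\Omega)$ and in its own closure), the supremum over $B$ is bounded by the supremum over $K$, which gives the claim.

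The only point needing care is that the two topologies are compatible. The norms on $H^s(\Omega)$ and $H^{s_1}(\Omega)$ must be the ones for which the continuous inclusion $H^s(\Omega) \hookrightarrow H^{s_1}(\Omega)$ holds, so that $B$ is literally a subset of $H^{s_1}(\Omega)$ and the closure is taken there. I also need \Cref{prop:rellich-kondrachov} to apply for fractional $s_1$, including $s_1 = 0$ (i.e.\ $L^2$); its hypotheses cover exactly the range $0 \leq s_2 < s_1 \leq 1$, here used with the pair $(s_1, s)$. I expect no genuine obstacle beyond this bookkeeping. In particular, $s_2$ is unrestricted, because the compactness of the image in \Cref{prop:fno-univ-approx-general} is not used; only its approximation part is needed.
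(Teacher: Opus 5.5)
Your proposal is correct and matches the paper's proof: the paper likewise uses \Cref{prop:rellich-kondrachov} to place $B$ inside a compact subset of $H^{s_1}(\Omega)$ and then invokes \Cref{prop:fno-univ-approx-general}. Your extra bookkeeping is consistent with that argument: taking the closure in $H^{s_1}(\Omega)$, the harmless $\varepsilon/2$, and observing that $s_2 \le s_1$ is needed only for the image-compactness clause.
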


\begin{proof}
    By \Cref{prop:rellich-kondrachov}, there exists a compact set $K \subset H^{s_1}(\Omega)$ such that $B \subset K$.
    The desired result follows immediately from \Cref{prop:fno-univ-approx-general}.
\end{proof}

Next, we prove the approximation properties of $\cQ_A^k$ for $(N + \tau I + \partial \theta)^{-1}$ over bounded subsets.
The following proposition is a weaker version of \cite[Theorem 1]{yarotsky2017error} that suffices for our purpose.

\begin{proposition}[c.f. {\cite[Theorem 1]{yarotsky2017error}}]\label{prop:fc-univ-approx-deriv}
    Let $K = [a_1, b_1] \times \cdots \times [a_m, b_m] \subset \bR^{m}$ be a cube.
    Let $B := \{g \in W^{1, \infty}(K) \, : \, \|g\|_{W^{1, \infty}(K)} \leq 1\}$.
    Then there exists an FCNN $\mathcal{N}: K \to \bR$ with ReLU activation function, such that
    \begin{equation}
        \sup_{g \in B} \| g - \mathcal{N} \|_{W^{1, \infty}(K)} < \varepsilon.
    \end{equation} 
\end{proposition}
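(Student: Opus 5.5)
We plan to prove the statement essentially as in \cite[Theorem 1]{yarotsky2017error}. A single piecewise-linear interpolation scheme on a uniform grid, with a fixed grid size chosen from $\varepsilon$, is realized by ReLU networks that all share one architecture. The dependence on $g\in B$ enters only through the output weights, namely the nodal values of $g$. The required error is then obtained uniformly over $B$, with the grid size and network size depending only on $\varepsilon$, $m$ and the cube $K$, and not on the particular $g$.

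\textbf{Step 1: rescaling.} We first reduce to $K=[0,1]^m$ by the affine map $x\mapsto\big((x_i-a_i)/(b_i-a_i)\big)_i$. This map is absorbed into the first linear layer, and it changes $W^{1,\infty}$ norms only by factors depending on $\max_i(b_i-a_i)^{\pm1}$. After this step, $B$ is replaced by a ball of radius $C_K$ in $W^{1,\infty}([0,1]^m)$.

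\textbf{Step 2: interpolation.} Fix $M\in\mathbb N$ and the Kuhn (Freudenthal) triangulation of $[0,1]^m$ subordinate to the grid $\{n/M\}$. For each node $n$, let $\varphi_n$ be the associated nodal hat function, and define the interpolant $I_Mg=\sum_n g(n/M)\varphi_n$.

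On each simplex of the triangulation we will establish the following bounds, using only $\|g\|_{W^{1,\infty}}\le C_K$:
\begin{equation*}
\|g-I_Mg\|_{L^\infty}\le C/M, \qquad \|\nabla I_Mg\|_{L^\infty}\le C\|\nabla g\|_{L^\infty}.
\end{equation*}
Each $\varphi_n$ is an exact ReLU network: the Kuhn hat function equals $\sigma\big(1-\max\{\dots\}+\min\{\dots\}\big)$ in the rescaled coordinates, and $\max$ and $\min$ are exactly representable by ReLU networks. Consequently $I_Mg$ is exactly a ReLU network whose architecture depends only on $(m,M)$, and whose last-layer weights are the nodal values $g(n/M)$.

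\textbf{Step 3: the derivative error.} Choosing $M\gtrsim 1/\varepsilon$ gives the $L^\infty$ part of the estimate. The $W^{1,\infty}$ part requires $\|\nabla(g-I_Mg)\|_{L^\infty}<\varepsilon$. I expect this to be the main obstacle. Step 2 only gives boundedness of $\nabla I_Mg$, not smallness of $\nabla g-\nabla I_Mg$, because $\nabla g$ is merely $L^\infty$ and need not be approximable in $L^\infty$ by piecewise constants.

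My first attempt is a regularization argument. I would mollify $g$ at scale $h$, obtaining $g_h$ with $\|g_h\|_{W^{2,\infty}}\le C/h$. For $g_h$ the standard interpolation estimate $\|\nabla(g_h-I_Mg_h)\|_\infty\le C/(hM)$ holds. Taking $M\gg 1/h$ then controls the interpolation error of $g_h$, and the remaining error $g-g_h$ must be absorbed.

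If this does not close in the strong norm, the fallback is to invoke the cited construction of \cite[Theorem 1]{yarotsky2017error} directly for the derivative bound. In that case the network provided by Steps 1--2 serves as the fixed architecture, and the proof consists of checking that its error constants are uniform over $B$.
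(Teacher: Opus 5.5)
\textbf{The gap is Step~3, and no better construction can close it: in the $W^{1,\infty}$ norm the claim is false.} Every ReLU network is continuous and piecewise linear on finitely many polyhedral pieces, so its gradient is piecewise constant. Your own remark that $\nabla g$ ``need not be approximable in $L^\infty$ by piecewise constants'' is therefore a disproof, not an obstacle.

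Here is a concrete counterexample. Take $m=1$ and $K=[0,1]$; for $m>1$, let $g$ depend on $x_1$ only. Let $C\subset[0,1]$ be a fat Cantor set (closed, nowhere dense, $|C|>0$), and set $g(x)=\tfrac12\int_0^x\mathbf{1}_C(t)\,dt$, so $g\in B$.
\begin{itemize}
\item For any ReLU network $\mathcal N$, $\mathcal N'$ equals a constant $s_I$ on each of finitely many intervals $I$.
\item Some such $I$ satisfies $|I\cap C|>0$, and $|I\setminus C|>0$ because $C$ is nowhere dense.
\item Since $g'=\tfrac12$ a.e.\ on $I\cap C$ and $g'=0$ a.e.\ on $I\setminus C$, we get $\|g-\mathcal N\|_{W^{1,\infty}(K)}\ge\max\{|\tfrac12-s_I|,|s_I|\}\ge\tfrac14$ for every $\mathcal N$.
\end{itemize}
Separately, the literal quantifier order in the statement (one $\mathcal N$ serving all of $B$) already fails for $\varepsilon\le\tfrac12$ with the constants $g=\pm\tfrac12$. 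Your reading ``one architecture, output weights depending on $g$'' is the right repair, but you should state it explicitly.

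\textbf{Neither of your remedies survives this example.}
\begin{itemize}
\item Mollification gives $\|\nabla(g-g_h)\|_{L^\infty}\to0$ only when $\nabla g$ is uniformly continuous. Here $\|g'-g_h'\|_{L^\infty}\ge\tfrac14$ for every $h$. A continuous function within $\tfrac14$ of $\tfrac12\mathbf{1}_C$ a.e.\ would be $\le\tfrac14$ on the dense open set $[0,1]\setminus C$, hence everywhere, which contradicts $|C|>0$.
\item The fallback fails too. \cite[Theorem 1]{yarotsky2017error} bounds the error only in the uniform norm and supplies no derivative estimate. The paper gives no argument beyond that citation, so it does not support the $W^{1,\infty}$ version either. $W^{1,\infty}$-error bounds for ReLU networks require strictly more smoothness of the target, e.g.\ a $W^{2,\infty}$ bound on the class.
\end{itemize}

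\textbf{What you do prove.} Steps 1--2, together with the $L^\infty$ half of Step~3, establish the correct weaker statement. For each $\varepsilon$ there is one architecture such that every $g\in B$ is realized by $\mathcal N_g=I_Mg$ with $\|g-\mathcal N_g\|_{L^\infty(K)}<\varepsilon$ and $\|\nabla\mathcal N_g\|_{L^\infty(K)}\le\sqrt m\,\|\nabla g\|_{L^\infty(K)}$. The gradient bound holds because, on a Kuhn simplex, each partial derivative of the interpolant is a difference quotient of the Lipschitz function $g$ along a coordinate edge.

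Uniform closeness plus a Lipschitz bound proportional to $L_\phi$ is all that the proof of \Cref{lem:ptwise-operator-univ-approx-reg} actually extracts from \eqref{eq:fcnn-w1inf-err-bd}. The only change there is that $2L_\phi$ becomes $\sqrt{m+2}\,L_\phi$. So the right fix is to weaken the statement to this form, not to look for a sharper Step~3.
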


By \Cref{assump:regularity} \ref{assump:n-theta-data-reg} and \Cref{lem:nresol-ptwise}, the operator $(N + \tau I + \partial \theta)^{-1}$ satisfies \eqref{eq:ptwise-resol}.
We now show by a variational analysis approach that \Cref{assump:regularity} \ref{assump:prox-reg} guarantees that the function $\phi$ defined in \eqref{eq:ptwise-resol} is locally Lipschitz continuous on $\bR \times E \times [c_0, \infty)$, which is crucial for estimating the $H^s(\Omega)$ norm of the output of $\cQ_A^k$.

\begin{lemma}\label{lem:phi-lip-jointly-reg}
    Assume that \Cref{assump:regularity} \ref{assump:n-theta-data-reg} holds, and let $\phi: \bR \times E \times [c_0, \infty)$ be defined as in \eqref{eq:ptwise-resol}.
    Then $\phi(s, \xi, \eta)$ is locally Lipschitz on $\bR \times E \times [c_0, \infty)$ if \Cref{assump:regularity} \ref{assump:prox-reg} holds.
\end{lemma}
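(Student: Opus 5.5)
The plan is to prove local Lipschitz continuity separately in each group of variables and then combine. By \Cref{lem:nresol-ptwise}, $\phi(s,\xi,\eta)=((\eta+\tau)I+\partial\psi_\xi)^{-1}(s)$, and $\phi$ is already Lipschitz in $s$ with constant $(c_0+\tau)^{-1}$, uniformly in $(\xi,\eta)$. It therefore suffices to show that, locally and uniformly for $s$ in a bounded set, $\phi$ is Lipschitz in $\eta$ and in $\xi$. The triangle inequality
\[
|\phi(s_1,\xi_1,\eta_1)-\phi(s_2,\xi_2,\eta_2)|\le|\phi(s_1,\xi_1,\eta_1)-\phi(s_2,\xi_1,\eta_1)|+|\phi(s_2,\xi_1,\eta_1)-\phi(s_2,\xi_2,\eta_1)|+|\phi(s_2,\xi_2,\eta_1)-\phi(s_2,\xi_2,\eta_2)|
\]
then gives the claim.

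\emph{Dependence on $\eta$.} Fix $\xi$ and $s$, and let $r_i=\phi(s,\xi,\eta_i)$, so that $s-(\eta_i+\tau)r_i\in\partial\psi_\xi(r_i)$. By monotonicity of $\partial\psi_\xi$,
\[
\bigl(-(\eta_1+\tau)r_1+(\eta_2+\tau)r_2\bigr)(r_1-r_2)\ge0,
\]
hence $(\eta_1+\tau)(r_1-r_2)^2\le|\eta_2-\eta_1|\,|r_2|\,|r_1-r_2|$. This yields $|r_1-r_2|\le(c_0+\tau)^{-1}|r_2|\,|\eta_1-\eta_2|$. On a bounded neighbourhood, $|r_2|$ is bounded by the joint continuity of $\phi$ from \Cref{lem:nresol-ptwise}; the required continuity in $\xi$ follows from \Cref{assump:subgrad-gcont}, which is implied by sub-Lipschitz continuity. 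So $\phi$ is locally Lipschitz in $\eta$.

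\emph{Dependence on $\xi$, the main obstacle.} Fix $\eta$ and $s$ and set $c=\eta+\tau$. Take $\xi_1,\xi_2\in E$ near $\bar\xi$, and let $r_i=\phi(s,\xi_i,\eta)$, $v_i=s-cr_i$, so that $(r_i,v_i)\in\mathrm{gph}\,\partial\psi_{\xi_i}$. Local boundedness of $\phi$ lets us choose $\rho$ with $|(r_i,v_i)|\le\rho$ throughout the neighbourhood. The Attouch--Wets bound in \Cref{assump:regularity}~\ref{assump:prox-reg} then gives $d((r_1,v_1),\mathrm{gph}\,\partial\psi_{\xi_2})\le\kappa_\rho\|\xi_1-\xi_2\|$, since the distance of $(r_1,v_1)$ to its own graph is zero. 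Closedness of the graph, which holds because the subdifferential is maximally monotone, provides $(r',v')\in\mathrm{gph}\,\partial\psi_{\xi_2}$ with $|(r_1-r',v_1-v')|\le\kappa_\rho\|\xi_1-\xi_2\|$.

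Now $r'=\phi(s',\xi_2,\eta)$ with $s'=v'+cr'$, and $|s'-s|=|v'-v_1+c(r'-r_1)|\le(1+c)\kappa_\rho\|\xi_1-\xi_2\|$. Using the uniform Lipschitz continuity of $\phi$ in $s$,
\[
|r_1-r_2|\le|r_1-r'|+|\phi(s',\xi_2,\eta)-\phi(s,\xi_2,\eta)|\le\kappa_\rho\bigl(1+(1+c)(c_0+\tau)^{-1}\bigr)\|\xi_1-\xi_2\|.
\]
Because $\eta$ ranges over a bounded set, $c$ is bounded and the constant is uniform.

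The delicate points are two. First, the constant $\kappa_\rho$ must be uniform over the neighbourhood; I will read \Cref{assump:regularity}~\ref{assump:prox-reg} in the sense of \cite[Definition 9.27]{rockafellar1998variational}, which gives a modulus valid locally for each $\rho$. Second, the a priori bound on $(r_i,v_i)$ must be in place before $\rho$ is chosen. Both of these come from local boundedness of $\phi$.
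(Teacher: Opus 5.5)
Your proof is correct, but it takes a more elementary route than the paper's.

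\textbf{The paper's route.} The paper treats $(\xi,\eta)$ jointly. It first bounds the truncated Hausdorff distance $\hatsetd_\rho$ between $\mathrm{gph}(\partial\psi_{\xi_1}+(\eta_1+\tau)I)$ and $\mathrm{gph}(\partial\psi_{\xi_2}+(\eta_2+\tau)I)$. For this it uses a sum rule for graph distances \cite[Theorem 5.2]{royset2020stability} together with \Cref{assump:regularity}~\ref{assump:prox-reg}. It then converts this graph estimate into an estimate on the solutions $\phi(s,\xi_i,\eta_i)$ via a stability theorem for generalized equations \cite[Theorem 5.1]{royset2020stability}. Lipschitz continuity in $s$ absorbs the enlargement of the right-hand side to $\overline{B}(s,\delta)$.

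\textbf{Your route.} You split the variables instead. The $\eta$-dependence comes from a direct monotonicity computation. For the $\xi$-dependence, you move from $(r_1,v_1)$ to a nearest point $(r',v')$ of $\mathrm{gph}\,\partial\psi_{\xi_2}$, observe that $r'=\phi(v'+cr',\xi_2,\eta)$, and apply the uniform Lipschitz bound in $s$. This step is in effect a self-contained proof of exactly the special case of Royset's solution-stability result needed here, and it works because the resolvent is single-valued.

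\textbf{What each buys.} Your version is transparent and gives explicit constants. It also works with the local modulus of \cite[Definition 9.27]{rockafellar1998variational}, which is all that local Lipschitz continuity requires; the paper instead uses \Cref{assump:regularity}~\ref{assump:prox-reg} in the uniform form in which it is stated. The paper's version is more modular: the same two abstract results apply to more general perturbed inclusions where no such explicit reconstruction is available.

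Both arguments depend in the same way on local boundedness of $\phi$ to fix $\rho$ before invoking the assumption. That boundedness comes from the joint continuity in \Cref{lem:nresol-ptwise}, which is available because sub-Lipschitz continuity implies \Cref{assump:subgrad-gcont}.
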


\begin{proof}
    For any $\xi \in E$ and $\eta \in [c_0, \infty)$, denote the set-valued mappings $\Xi_{\xi} := \partial \psi_\xi$ and $H_{\eta} := (\eta + \tau) I$.
    Note that \Cref{assump:regularity} \ref{assump:n-theta-data-reg} implies \Cref{assump:subgrad-gcont} (see e.g., \cite[Theorem 5.50]{rockafellar1998variational}), hence by \Cref{lem:nresol-ptwise}, we have
    \begin{equation}
        \phi(s, \xi, \eta) = (\Xi_{\xi} + H_{\eta})^{-1}(s).
    \end{equation}
    In particular, the mapping $(s, \xi, \eta)^\top \mapsto (\Xi_{\xi} + H_{\eta})^{-1}(s) \in \bR$ is single-valued, continuous jointly in $(s, \xi, \eta)^\top$ on $\bR \times E \times [c_0, \infty)$, and Lipschitz continuous in $s$ with a uniform Lipschitz constant $L_\phi > 0$ for any $\xi \in E$ and $\eta \in [c_0, \infty)$.
    It suffices to show that $\phi$ is Lipschitz on any compact subset of $\bR \times E \times [c_0, \infty)$.
    Fix any compact $K \subset \bR \times E \times [c_0, \infty)$.
    Without loss of generality, assume that $K = [-R_0, R_0] \times ([-R_1, R_1]^m \cap E) \times [c_0, R_2]$ for some $R_0, R_1 \geq 0$ and $R_2 \geq c_0$.

    We first show that for any $\rho > 0$, there exists $\bar{\kappa}_\rho > 0$ such that for any $\xi_1, \xi_2 \in E$ and $\eta_1, \eta_2 \in [c_0, R_2]$, we have
    \begin{equation}\label{eq:sum-gph-dist-lip-bd}
        \hatsetd_\rho \left( \mathrm{gph}\left( \Xi_{\xi_1} + H_{\eta_1} \right), \mathrm{gph} \left( \Xi_{\xi_2} + H_{\eta_2} \right) \right) \leq \bar{\kappa}_\rho \left\| (\xi_1, \eta_1)^\top - (\xi_2, \eta_2)^\top \right\|,
    \end{equation}
    where $\hatsetd_\rho$ denotes the truncated Hausdorff distance, which is defined by \cite{mosco1969convergence,rockafellar1998variational}
    \begin{equation*}
        \hatsetd_\rho(C, D) = \inf \left\{ \eta \geq 0 \, : \, C \cap \overline{B}(0, \rho) \subset D + \overline{B}(0, \eta), \, D \cap \overline{B}(0, \rho) \subset C + \overline{B}(0, \eta) \right\}, \quad \text{~for~any~sets~} C, D.
    \end{equation*}
    Clearly, the mappings $H_{\eta_1}$ and $H_{\eta_2}$ are single-valued and Lipschitz continuous with common modulus $R_2 + \tau$.
    By \Cref{assump:regularity} \ref{assump:n-theta-data-reg}, which implies \Cref{assump:ptwise}, the function $\psi_\xi$ is convex, lower semicontinuous, and proper for all $\xi \in E$, hence $\Xi_{\xi_1}$ and $\Xi_{\xi_2}$ have nonempty graphs.
    Thus by \cite[Theorem 5.2]{royset2020stability}, for any $\rho > 0$, there exists a sufficiently large $\bar{\rho} \geq \rho$ such that
    \begin{equation*}
    \begin{aligned}
        \hatsetd_\rho \left( \mathrm{gph}\left( \Xi_{\xi_1}, H_{\eta_1} \right), \mathrm{gph} \left( \Xi_{\xi_2} + H_{\eta_2} \right) \right) &\leq \sup_{r \in \overline{B}(0, \rho)} \hatsetd_{\infty} ( H_{\eta_1}(r),H_{\eta_2}(r) ) + (1 + R_2 + \tau) \hatsetd_{\bar{\rho}} \left(\mathrm{gph}\left( \Xi_{\xi_1} \right), \mathrm{gph} \left( \Xi_{\xi_2} \right)\right) \\
        &\leq \sup_{r \in \overline{B}(0, \rho)} \hatsetd_{\infty} ( H_{\eta_1}(r),H_{\eta_2}(r) ) + (1 + R_2 + \tau) \setd_{\bar{\rho}} \left(\mathrm{gph}\left( \Xi_{\xi_1} \right), \mathrm{gph} \left( \Xi_{\xi_2} \right)\right) \\
        &\leq \rho \| \eta_1 - \eta_2 \| + (1 + R_2 + \tau) \kappa_{\bar{\rho}} \| \xi_1 - \xi_2 \| \\
        &\leq \bar{\kappa}_\rho \left\| (\xi_1, \eta_1)^\top - (\xi_2, \eta_2)^\top \right\|,
    \end{aligned}
    \end{equation*}
    for some $\bar{\kappa}_\rho > 0$ that depends only on $\rho$.
    Note that the second inequality follows from the fact that \begin{equation*}
        \hatsetd_{\bar{\rho}} \left(\mathrm{gph}\left( \Xi_{\xi_1} \right), \mathrm{gph} \left( \Xi_{\xi_2} \right)\right) \leq \setd_{\bar{\rho}} \left(\mathrm{gph}\left( \Xi_{\xi_1} \right), \mathrm{gph} \left( \Xi_{\xi_2} \right)\right),
    \end{equation*}
    see e.g., \cite{rockafellar1998variational}; and the third inequality follows from \Cref{assump:regularity} \ref{assump:prox-reg}.

    Then, by \eqref{eq:sum-gph-dist-lip-bd} and \cite[Theorem 5.1]{royset2020stability}, for any $\rho \geq R_0$ and $s \in [-R_0, R_0]$, we have
    \begin{equation}\label{eq:generalized-eq-near-sol-approx}
        \mathrm{exs} \left( \left( \Xi_{\xi_1} + H_{\eta_1} \right)^{-1}(s) \cap \overline{B}(0, \rho), \left( \Xi_{\xi_2} + H_{\eta_2} \right)^{-1}(\overline{B}(s, \delta)) \right) \leq \hatsetd_\rho \left( \mathrm{gph}\left( \Xi_{\xi_1} + H_{\eta_1} \right), \mathrm{gph} \left( \Xi_{\xi_2} + H_{\eta_2} \right) \right),
    \end{equation}
    provided that $\delta \geq \hatsetd_\rho \left( \mathrm{gph}\left( \Xi_{\xi_1} + H_{\eta_1} \right), \mathrm{gph} \left( \Xi_{\xi_2} + H_{\eta_2} \right) \right)$. Here, $\mathrm{exs}(C, D)$ denotes the excess of $C$ over $D$ and satisfies $\mathrm{exs}(C, D) = \sup_{x \in C} \mathrm{dist}(x, D)$ if $C$, $D$ are nonempty.
    As $\left( \Xi_{\xi} + H_{\eta} \right)^{-1}(s)$ is continuous in $s$, $\xi$ and $\eta$, there exists a sufficiently large $\rho_0$ such that
    \begin{equation*}
        \left( \Xi_{\xi} + H_{\eta} \right)^{-1}(s) \subset \overline{B}(0, \rho_0), \quad \forall s \in [-R_0, R_0], \ \xi \in -[R_1, R_1]^m \cap E, \ \eta \in [c_0, R_2].
    \end{equation*}
    Recall that $\left( \Xi_{\xi_1} + H_{\eta_1} \right)^{-1}(s)$ is single-valued. Hence, for the choice 
    \begin{equation*}
        \rho_1 = \max\{\rho_0, R_0\}, \quad \delta = \hatsetd_{\rho_1} \left( \mathrm{gph}\left( \Xi_{\xi_1} + H_{\eta_1} \right), \mathrm{gph} \left( \Xi_{\xi_2} + H_{\eta_2} \right) \right),
    \end{equation*}
    the inequality \eqref{eq:generalized-eq-near-sol-approx} reduces to
    \begin{equation*}
        \mathrm{dist} \left( \left( \Xi_{\xi_1} + H_{\eta_1} \right)^{-1}(s), \left( \Xi_{\xi_2} + H_{\eta_2} \right)^{-1}(\overline{B}(s, \delta)) \right) \leq \hatsetd_{\rho_1} \left( \mathrm{gph}\left( \Xi_{\xi_1} + H_{\eta_1} \right), \mathrm{gph} \left( \Xi_{\xi_2} + H_{\eta_2} \right) \right).
    \end{equation*}
    Since $\left( \Xi_{\xi_2} + H_{\eta_2} \right)^{-1}(s)$ is Lipschitz in $s$, it follows that
    \begin{equation*}
    \begin{aligned}
        \left| \phi(s, \xi_1, \eta_1) - \phi(s, \xi_2, \eta_2) \right| &\leq \mathrm{dist} \left( \left( \Xi_{\xi_1} + H_{\eta_1} \right)^{-1}(s), \left( \Xi_{\xi_2} + H_{\eta_2} \right)^{-1}(\overline{B}(s, \delta)) \right) + 2L_\phi \delta \\
        &\leq (1 + 2L_\phi) \cdot \hatsetd_{\rho_1} \left( \mathrm{gph}\left( \Xi_{\xi_1} + H_{\eta_1} \right), \mathrm{gph} \left( \Xi_{\xi_2} + H_{\eta_2} \right) \right) \\
        &\leq (1 + 2L_\phi) \bar{\kappa}_{\rho_1} \left\| (\xi_1, \eta_1)^\top - (\xi_2, \eta_2)^\top \right\|.
    \end{aligned}
    \end{equation*}
    Note that $\bar{\kappa}_{\rho_1}$ depends only on $\rho_1$, which is determined by the choice of $K$.
    Since any compact set of $\bR \times E \times [c_0, \infty)$ is contained in some set of the form $[-R_0, R_0] \times ([-R_1, R_1]^m \cap E) \times [-R_2, R_2]$, it follows that $\phi$ is Lipschitz continuous on any compact subset of $\bR \times E \times [c_0, \infty)$.
    We thus conclude that $\phi$ is locally Lipschitz on $\bR \times E \times [c_0, \infty)$.
\end{proof}

\begin{lemma}\label{lem:ptwise-operator-univ-approx-reg}
    Assume that \Cref{assump:regularity} holds, and let $\lambda$, $\mu$, $E$ and $c_0$ be defined as in \Cref{assump:ptwise}.
    Let $\phi: \bR \times E \times [c_0, \infty) \to \bR$ be a locally Lipschitz continuous function.
    Assume that $G: L^2(\Omega) \to L^2(\Omega)$ satisfies
    \begin{equation}
        G(u)(x) = \phi(u(x), \mu(x), \lambda(x)), \quad ~\text{a.e.~in}~\Omega, \quad \forall u \in L^2(\Omega).
    \end{equation}
    Then for any bounded subset $B \subset H^s(\Omega)$ and $\varepsilon > 0$, there exists a neural network $\mathcal{N}: \bR^{m+2} \to \bR$ defined in \eqref{eq:nn-rec-structure}, such that the neural network $\cG: L^2(\Omega) \to L^2(\Omega)$ defined by
    \begin{equation}\label{eq:cg-def-reg}
        \cG \left( u (x) \right) = \mathcal{N}\left(u(x), \mu(x), \lambda(x)\right)\quad ~\text{a.e.~in}~\Omega, , \quad \forall u \in L^2(\Omega).
    \end{equation}
    satisfies
    \begin{equation}\label{eq:ptwise-operator-univ-approx-reg}
        \sup_{u \in B} \left\| G(u) - \cG(u) \right\|_{L^2(\Omega)} < \varepsilon,
    \end{equation}
    and
    \begin{equation}\label{eq:ptwise-operator-hs-bdd-reg}
        \| \mathcal{G}(u) \|_{H^s(\Omega)} \leq C (1 + \|u\|_{H^s(\Omega)}), \quad \forall u \in B
    \end{equation}
    for some constant $C > 0$ independent of $u$, $B$, and $\varepsilon$.
\end{lemma}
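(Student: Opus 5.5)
The plan is to approximate $\phi$ in $W^{1,\infty}$ on a cube rather than only uniformly, so that the constructed network has controlled Lipschitz constants. The $L^2$ estimate \eqref{eq:ptwise-operator-univ-approx-reg} then follows as in \Cref{lem:ptwise-operator-univ-approx}, and the $H^s$ estimate \eqref{eq:ptwise-operator-hs-bdd-reg} follows from a Lipschitz composition bound in the Gagliardo seminorm.

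\emph{Setup.} Set $\nu=(\mu,\lambda)^\top$. By \Cref{assump:regularity} \ref{assump:n-theta-data-reg}, $\nu\in H^s(\Omega;\bR^{m+1})\cap L^\infty$.
\begin{itemize}
\item By \Cref{prop:rellich-kondrachov}, the $L^2$-closure $\overline{B}$ of $B$ is compact in $L^2(\Omega)$. Dini's argument from the proof of \Cref{lem:ptwise-operator-univ-approx} then gives a truncation level $R_0\ge \|\nu\|_{L^\infty}$ such that \eqref{eq:pfaux-lpoua-1} holds uniformly on $B$.
\item Work on the cube $\mathcal{K}=[-R_0,R_0]\times\mathcal{K}_2$, extending $\phi$ from $E$ in a Lipschitz way if needed. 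Since $\phi$ is locally Lipschitz, $\phi|_{\mathcal K}\in W^{1,\infty}(\mathcal K)$.
\item Apply \Cref{prop:fc-univ-approx-deriv} to the rescaled function $\phi/\|\phi\|_{W^{1,\infty}(\mathcal K)}$. This yields a ReLU network $\mathcal N_0$ with $\|\phi-\mathcal N_0\|_{W^{1,\infty}(\mathcal K)}$ as small as desired. Hence $\sup_{\mathcal K}|\phi-\mathcal N_0|<\varepsilon/(2|\Omega|^{1/2})$, and the partial Lipschitz constants of $\mathcal N_0$ on $\mathcal K$ are at most those of $\phi$ plus $1$. In particular, the Lipschitz constant in $s$ is at most $L_\phi+1$.
\item Compose $\mathcal N_0$ with the ReLU clipping used in \Cref{lem:ptwise-operator-univ-approx}. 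The result is a network $\mathcal N$ of the form \eqref{eq:nn-rec-structure} that is globally Lipschitz, since clipping is $1$-Lipschitz, with the same constants. The $L^2$ error bound \eqref{eq:ptwise-operator-univ-approx-reg} then follows verbatim from the (I)+(II)+(III) splitting in that proof.
\end{itemize}

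\emph{$L^2$ part of the $H^s$ bound.} From $|\mathcal N(r,\zeta)|\le |\mathcal N(0,\zeta)|+L_s|r|$, together with $|\mathcal N(0,\nu(x))|\le M_0+1$, we get $\|\mathcal G(u)\|_{L^2}\le (M_0+1)|\Omega|^{1/2}+L_s\|u\|_{L^2}$.

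\emph{Seminorm part, $0<s<1$.} Use the pointwise bound
\begin{equation*}
|\mathcal G(u)(x)-\mathcal G(u)(y)|\le L_s|u(x)-u(y)|+L_\nu|\nu(x)-\nu(y)|.
\end{equation*}
Inserting this into the Gagliardo double integral gives $|\mathcal G(u)|_{H^s}\le L_s|u|_{H^s}+L_\nu|\nu|_{H^s}$.

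\emph{Seminorm part, $s=1$.} Use the chain rule for Lipschitz compositions of $W^{1,2}$ functions, $|\nabla \mathcal G(u)|\le L_s|\nabla u|+L_\nu|\nabla\nu|$ a.e. Combining with the $L^2$ part gives \eqref{eq:ptwise-operator-hs-bdd-reg} with $C$ depending on $L_s,L_\nu,M_0,\|\nu\|_{H^s}$.

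\emph{Main obstacle: independence of $C$ from $B$ and $\varepsilon$.} Using $W^{1,\infty}$ approximation with tolerance at most $1$ already removes any dependence on $\varepsilon$. The $s$-Lipschitz constant $L_s\le L_\phi+1$ is also uniform, because $L_\phi$ is uniform in $(\xi,\eta)$ by \Cref{lem:nresol-ptwise}.

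The delicate constant is $L_\nu$, the Lipschitz constant of $\phi$ in $(\xi,\eta)$ on the cube. The cube grows with $R_0$, which depends on $B$.
\begin{itemize}
\item First, I would try to show that the $(\xi,\eta)$-Lipschitz constant of $\phi(r,\cdot)$ can be bounded uniformly in $r$. This would use the Attouch--Wets estimate in the proof of \Cref{lem:phi-lip-jointly-reg}, together with the structure $\phi=((\eta+\tau)I+\partial\psi_\xi)^{-1}$ and the fact that $\nu$ ranges over the fixed compact set $\mathcal K_2$.
\item Alternatively, I would move the $u$-dependence of the $\nu$-term into the right-hand side. A growth bound of the form $|\partial_\nu\phi(r,\zeta)|\le a+b|r|$ on $\mathbb{R}\times\mathcal K_2$, combined with $\nu\in L^\infty$, would still give a bound of the form $C(1+\|u\|_{H^s})$.
\item If neither works, the fallback is to accept $C$ depending on $B$ only through $\sup_B\|u\|_{L^\infty}$-type truncation data.
\end{itemize}
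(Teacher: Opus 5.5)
Your construction is the paper's own: Rellich--Kondrachov with the Dini truncation to fix $R_0$, a Lipschitz extension of $\phi$ to a cube, $W^{1,\infty}$ approximation via \Cref{prop:fc-univ-approx-deriv}, ReLU clipping, and a Lipschitz-composition estimate in $H^s$. (Writing $\mathcal{G}(u)=\mathcal{N}_0(\mathrm{proj}_{[-R_0,R_0]}u,\nu)$ with the $1$-Lipschitz clipping is the correct formulation. The paper's $\mathcal{N}_0(u_{R_0},\nu)$ uses the zero-truncation \eqref{eq:u-trunc}, which is not what the network computes; it generally has jumps, so it is generally not in $H^s$ for $s\ge 1/2$.)

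The genuine gap is the step you flag yourself: the independence of $C$ from $B$ and $\varepsilon$. None of your repairs closes it.
Your first repair tries to bound the $(\xi,\eta)$-modulus of $\phi(r,\cdot)$ uniformly in $r$, and this is false in general. For $\theta=\beta\|\cdot\|_{L^1(\Omega)}$ with non-constant $\lambda$ one has $\phi(r,\xi,\eta)=\mathrm{sgn}(r)[|r|-\xi]_+/(\eta+\tau)$, so $|\partial_\eta\phi|$ grows linearly in $|r|$. This matches the fact that \Cref{lem:phi-lip-jointly-reg} only gives the constant $(1+2L_\phi)\bar\kappa_{\rho_1}$ with $\rho_1\ge R_0$.
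The same example satisfies your growth bound $a+b|r|$. That bound, however, leaves the term $(a+b|u|)|\nabla\nu|$, or $\iint|u(x)|^2|\nu(x)-\nu(y)|^2|x-y|^{-d-2s}\,dx\,dy$ for $s<1$, and $\|u\|_{H^s}$ does not control it when $\nu$ is merely in $H^s\cap L^\infty$. Bounding $|u|$ by the clipping level reintroduces $R_0$.
Your fallback proves a weaker lemma. It no longer supplies the $R_2$-independent constant $C_1$ that \Cref{prop:iuzawa-net-err-reg} relies on.

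In fact the step cannot be closed as stated. If $C$ did not depend on $\varepsilon$, then for fixed $u\in B$ the outputs $\mathcal{G}_\varepsilon(u)$ would be bounded in $H^s$ and converge to $G(u)$ in $L^2$, so weak compactness would force $G(u)\in H^s$. Now take $d=3$, $s=1$, $S=(-\Delta)^{-1}$ with Dirichlet conditions, the $L^1$ example above, $x_0\in\Omega$, $\lambda=2+\sin(|x-x_0|^{-0.3})\in H^1(\Omega)\cap L^\infty(\Omega)$, and $u=2\beta+|x-x_0|^{-0.3}\in H^1(\Omega)$. Then \Cref{assump:regularity} holds. Away from $x_0$, however, $\nabla G(u)=\nabla u/(\lambda+\tau)-(u-\beta)\nabla\lambda/(\lambda+\tau)^2$, where the first term lies in $L^2$ and the second does not. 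Hence $G(u)\notin H^1(\Omega)$.

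The paper's proof has the same hole rather than a fix. After Kirszbraun it uses $L_\phi$ for the Lipschitz constant of $\phi$ on $\cK_1\times\cK_2$ with $\cK_1=[-R_0,R_0]$, a quantity that depends on $B$ and $\varepsilon$ through $R_0$. It then declares that $C$ depends only on $L_\phi$ and $\|\nu\|_{H^s(\Omega)}$, as though $L_\phi$ were the uniform $s$-constant from the truncation step.

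The lemma becomes true under an extra hypothesis: a bound $L_\nu$ on the $(\xi,\eta)$-Lipschitz modulus of $\phi$ that is uniform in $r$. This holds for the box-constrained and box-plus-$L^1$ examples of \Cref{sec:qa-special}, where $\phi$ takes values in $[\xi_1,\xi_2]$. That gives $|\partial_\eta\phi|\le\max\{|\xi_1|,|\xi_2|\}/(\eta+\tau)$ and bounded $\xi$-derivatives. Under this hypothesis your first route goes through after two adjustments:
\begin{enumerate}
\item Extend $\phi$ in $\zeta$, for each fixed $r$, by the McShane infimum $\inf_{\zeta'\in\cK_2}\{\phi(r,\zeta')+L_\nu|\zeta-\zeta'|\}$, which preserves both partial moduli. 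Kirszbraun controls only the joint constant, so without this your bound $L_s\le L_\phi+1$ is unjustified.
\item Replace \Cref{prop:fc-univ-approx-deriv}, which cannot hold for merely Lipschitz targets: a ReLU network has piecewise constant gradient, which cannot be uniformly close to $g'=\mathbf{1}_C$ for $g(t)=\int_0^t\mathbf{1}_C$ with $C$ a fat Cantor set. Use instead the piecewise linear interpolant on a fine Kuhn triangulation. It is a ReLU network and is uniformly close to the extension. Its slope in each coordinate direction is a difference quotient along that direction, so it is bounded by the corresponding partial modulus.
\end{enumerate}
This yields $C$ depending only on $L_\phi$, $L_\nu$, $M_0$, $m$, $|\Omega|$ and $\|\nu\|_{H^s(\Omega)}$.
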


\begin{proof}
    Fix any bounded $B \subset H^s(\Omega)$ and $\varepsilon > 0$.
    Denote by $\nu(x) = (\mu(x), \lambda(x))^\top \in L^\infty(\Omega; \bR^{m+1})$.
    Define the truncated function $u_r$ of $u$ as in \eqref{eq:u-trunc} for all $u \in H^s(\Omega)$ and $r > 0$.
    By \Cref{prop:rellich-kondrachov}, there exists a compact set $K \subset L^2(\Omega)$ such that $u \in B$ implies $u \in K$.
    Thus, following a similar argument to the proof of \Cref{lem:ptwise-operator-univ-approx}, there exists an $R_0 > 0$ such that $R_0 \geq \|\nu\|_{L^\infty(\Omega; \bR^{m+1})}$ and
    \begin{equation}\label{eq:truncated-u-bdd-reg}
        \left\|u - u_{R_0}\right\|_{L^2(\Omega)}^2 < \frac{\varepsilon^2}{4 L_\phi^2}, \quad \text{for all } u \in B.
    \end{equation}
    Note that $u_{R_0} \in H^s(\Omega)$ by \cite[Remark 6.28]{leoni2023first}.

    We now approximate $G(u)$ by a neural network of the form \eqref{eq:cg-def-reg}.
    Define constants
    \begin{equation*}
        \begin{aligned}
            &R_1 := \left\| \nu \right\|_{L^\infty(\Omega; \bR^{m+1})}, \quad M_0 := \mathrm{ess\,sup}_{x \in \Omega} \left|\phi(0, \mu(x), \lambda(x))\right|,
        \end{aligned}
    \end{equation*}
    and three compact sets
    \begin{equation*}
        \cK_1 := [-R_0, R_0] \subset \bR, \quad \cK_2 := ([-R_1, R_1]^m \cap E) \times [c_0, R_1] \subset \bR^{m+1}, \quad \tilde{\cK}_2 = [-R_1, R_1]^{m+1} \subset \bR^{m+1}.
    \end{equation*}
    Note that $R_1 \leq R_0$, $\cK_2 \subset \tilde{\cK}_2$, $u_{R_0}(x) \in \cK_1$ a.e. in $\Omega$ for all $u \in K$, $\nu(x) \in \cK_2$ a.e. in $\Omega$, and $M_0 < \infty$ (since $\phi(0, \cdot)$ is continuous on the compact set $\cK_2$ by assumption).
    By \Cref{assump:regularity} and \Cref{lem:phi-lip-jointly-reg}, $\phi$ is locally Lipschitz on $\bR \times E \times [c_0, \infty)$. Since $\cK_1 \times \cK_2$ is compact, $\phi$ is Lipschitz on $\cK_1 \times \cK_2$.
    By the Kirszbraun theorem, the Lipschitz continuous function $\phi$ on $\cK_1 \times \cK_2$ admits a Lipschitz continuous extension $\tilde{\phi}$ on $\cK_1 \times \tilde{\cK}_2$ with the same Lipschitz constant $L_\phi$.
    Note that $\cK_1 \times \tilde{\cK}_2$ is convex, and hence by \cite[Theorem 4.1 and Remark 4.2]{heinonen2005lectures} that $W^{1, \infty}(\cK_1 \times \tilde{\cK}_2) = C^{0, 1}(\cK_1 \times \tilde{\cK}_2)$.
    In particular, $\tilde{\phi} \in W^{1, \infty}(\cK_1 \times \tilde{\cK}_2)$ with $\| D\tilde{\phi} \|_{L^\infty(\cK_1 \times \tilde{\cK}_2)} \leq L_\phi$, where $D \tilde{\phi}$ is the derivative of $\tilde{\phi}$ in the distributional sense.
    By \Cref{prop:fc-univ-approx-deriv} and scaling $\tilde{\phi}$ if necessary, there exists a fully connected neural network $\mathcal{N}_0: \bR^{m+2} \to \bR$ with ReLU activation function, such that
    \begin{equation}\label{eq:fcnn-w1inf-err-bd}
        \| \tilde{\phi} - \mathcal{N}_0 \|_{W^{1, \infty}(\cK_1 \times \tilde{\cK}_2)} < \min \left\{ \frac{\varepsilon}{\sqrt{2|\Omega|}} , \ L_\phi \right\}.
    \end{equation}
    In particular, $\tilde{\phi} - \cN_0$ is Lipschitz continuous on $\cK_1 \times \tilde{\cK}_2$ with the Lipschitz constant $L_{\tilde{\phi} - \cN_0}$ satisfying 
    \begin{equation*}
        L_{\tilde{\phi} - \cN_0} \leq \| D(\tilde{\phi} - \cN_0) \|_{L^\infty(\cK_1 \times \tilde{\cK}_2)} \leq \| \tilde{\phi} - \mathcal{N}_0 \|_{W^{1, \infty}(\cK_1 \times \tilde{\cK}_2)} < L_\phi.
    \end{equation*}
    Hence $\cN_0$ is Lipschitz continuous on $\cK_1 \times \tilde{\cK}_2$ with the Lipschitz constant $L_{\cN_0} \leq 2 L_\phi$, and
    \begin{equation}
        \sup_{(s, \zeta)^\top \in \cK_1 \times \tilde{\cK}_2} \left| \tilde{\phi}(s, \zeta) - \mathcal{N}_0(s, \zeta) \right|^2 < \frac{\varepsilon^2}{2|\Omega|}.
    \end{equation}
    Restricting $\cN_0$ back to the set $\cK_1 \times \cK_2$, and using the identity $\tilde{\phi} = \phi$ on $\cK_1 \times \cK_2$, it follows that $\cN_0$ is Lipschitz continuous on $\cK_1 \times \cK_2$ with a Lipschitz constant $L_{\cN_0} \leq 2 L_\phi$, and
    \begin{equation}\label{eq:fcnn-linf-err-bd}
        \sup_{(s, \zeta)^\top \in \cK_1 \times \cK_2} \left| \phi(s, \zeta) - \mathcal{N}_0(s, \zeta) \right|^2 < \frac{\varepsilon^2}{2|\Omega|}.
    \end{equation}
    Clearly, there exists a neural network of the structure \eqref{eq:nn-rec-structure} that equals the FCNN $\cN_0$.
    Without loss of generality, denote this neural network also as $\cN_0$.
    Then, from \eqref{eq:truncated-u-bdd-reg} and \eqref{eq:fcnn-linf-err-bd}, one can apply the same argument in the proof of \Cref{lem:ptwise-operator-univ-approx} to show that there exists a neural network $\cN$ of the form \eqref{eq:nn-rec-structure}, such that
    \begin{equation*}
        \mathcal{N}(s, \zeta) = \mathcal{N}_0 \left( \mathrm{proj}_{\cK_1}(s), \zeta \right), \quad \forall s \in \bR, \ \zeta \in \cK_2,
    \end{equation*}
    and the $\cG(u)$ defined in \eqref{eq:cg-def-reg} satisfies
    \begin{equation*}
        \| G(u) - \cG(u) \|_{L^2(\Omega)} < \varepsilon, \quad \forall u \in B,
    \end{equation*}
    which establishes \eqref{eq:ptwise-operator-univ-approx-reg}.

    We now prove \eqref{eq:ptwise-operator-hs-bdd-reg}.
    For any $u_{R_0} \in H^s(\Omega)$, recall that $u_{R_0} \in H^s(\Omega)$.
    If $s = 0$ or $1$, clearly we have $\| u_{R_0} \|_{H^s(\Omega)} \leq \|u\|_{H^s(\Omega)}$.
    For $0 < s < 1$, we have
    \begin{equation*}
        \begin{aligned}
            \|u_{R_0}\|_{H^s(\Omega)} &= \|u_{R_0}\|_{L^2(\Omega)} + \int_{\Omega} \int_{\Omega} \frac{|u_{R_0}(x) - u_{R_0}(y)|^2}{|x - y|^{d+2s}} \, dx \, dy \\
            &\leq \|u\|_{L^2(\Omega)} + \int_{\Omega} \int_{\Omega} \frac{|u(x) - u(y)|^2}{|x - y|^{d+2s}} \, dx \, dy \\
            &= \|u\|_{H^s(\Omega)}.
        \end{aligned}
    \end{equation*}
    Since $\nu \in H^s(\Omega; \bR^{m+1})$, we have $(u_{R_0}, \nu)^\top \in H^s(\Omega; \bR^{m+2})$.
    Recall that $\cN_0$ is Lipschitz continuous on $\cK_1 \times \cK_2$ with $L_{\cN_0} \leq 2L_\phi$.
    Thus, a simple extension of \cite[Theorem 6.27]{leoni2023first} yields
    \begin{equation}
        \cG(u)(\cdot) = \mathcal{N}(u(\cdot), \nu(\cdot)) = \mathcal{N}_0(u_{R_0}(\cdot), \nu(\cdot)) \in H^s(\Omega),
    \end{equation}
    and, in particular,
    \begin{equation*}
        \| \cG(u) \|_{H^s(\Omega)} \leq 2 L_\phi \|(u_R, \nu)^\top \|_{H^s(\Omega; \bR^{m+2})} \leq 2 \sqrt{2} L_\phi \left(\|\nu\|_{H^s(\Omega; \bR^{m+1})} + \|u\|_{H^s(\Omega)}\right) \leq C \left( 1 + \| u \|_{H^s(\Omega)} \right),
    \end{equation*}
    where $C > 0$ is a constant depending only on $L_\phi$, $\|\nu \|_{H^s(\Omega; \bR^{m+1})}$ but independent of $u$, $B$, and $\varepsilon$.
\end{proof}

\begin{proposition}\label{prop:qak-ptwise-univ-approx-bddset}
    Let $N: L^2(\Omega) \to L^2(\Omega)$ and $\theta: L^2(\Omega) \to \bR \cup \{ +\infty \}$ satisfy \Cref{assump:regularity}.
    For any $s > 0$, bounded subset $B \subset H^s(\Omega)$, and $\varepsilon > 0$, there exists a neural network $\cQ_A$ defined in \eqref{eq:qak-ptwise} such that
    \begin{equation*}
        \sup_{v \in B} \left\| (N + \tau I + \partial \theta)^{-1}(v) - \cQ_A(v) \right\|_{L^2(\Omega)} < \varepsilon,
    \end{equation*}
    and
    \begin{equation*}
        \| \cQ_A(v) \|_{H^{s}(\Omega)} \leq C (1 + \|v\|_{H^{s}(\Omega)}), \quad \forall v \in B,
    \end{equation*}
    for some constant $C>0$ independent of $u$, $B$, and $\varepsilon$.
\end{proposition}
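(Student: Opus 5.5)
This proposition is the analogue of \Cref{prop:qak-ptwise-univ-approx} on bounded sets. I would obtain it by chaining \Cref{lem:nresol-ptwise}, \Cref{lem:phi-lip-jointly-reg}, and \Cref{lem:ptwise-operator-univ-approx-reg}, exactly as \Cref{prop:qak-ptwise-univ-approx} was obtained from \Cref{lem:nresol-ptwise} and \Cref{lem:ptwise-operator-univ-approx}.

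First, \Cref{assump:regularity} \ref{assump:n-theta-data-reg} contains \Cref{assump:ptwise}, and, as noted in the proof of \Cref{lem:phi-lip-jointly-reg}, \Cref{assump:regularity} also yields \Cref{assump:subgrad-gcont}. \Cref{lem:nresol-ptwise} therefore gives a continuous function $\phi:\bR\times E\times[c_0,\infty)\to\bR$ with
\[
(N+\tau I+\partial\theta)^{-1}(v)(x)=\phi(v(x),\mu(x),\lambda(x)) \quad \text{a.e.~in } \Omega .
\]
This $\phi$ is Lipschitz in its first argument, uniformly in the remaining arguments, with constant $(c_0+\tau)^{-1}$. Next, \Cref{assump:regularity} \ref{assump:prox-reg} together with \Cref{lem:phi-lip-jointly-reg} upgrades this to local Lipschitz continuity of $\phi$ jointly on $\bR\times E\times[c_0,\infty)$.

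With these two facts, $G:=(N+\tau I+\partial\theta)^{-1}$ satisfies every hypothesis of \Cref{lem:ptwise-operator-univ-approx-reg}. Applying that lemma to the given bounded set $B\subset H^s(\Omega)$ and to $\varepsilon$ produces a network $\mathcal{N}$ of the form \eqref{eq:nn-rec-structure}. Defining $\cQ_A$ through \eqref{eq:qak-ptwise} with this $\mathcal{N}$ gives the $L^2$ approximation estimate on $B$. The same lemma gives the bound $\|\cQ_A(v)\|_{H^s(\Omega)}\le C(1+\|v\|_{H^s(\Omega)})$, where $C$ depends only on $L_\phi$ and $\|(\mu,\lambda)\|_{H^s}$; in particular $C$ is independent of $v$, $B$, and $\varepsilon$.

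The main point needing care is the range of $s$. \Cref{assump:regularity} fixes an exponent $0<s\le 1$, and both \Cref{lem:ptwise-operator-univ-approx-reg} and the Rellich--Kondrachov step it relies on (\Cref{prop:rellich-kondrachov}) are formulated for that range. I would interpret the $s$ in the statement as the regularity exponent of \Cref{assump:regularity}, so that $\lambda,\mu\in H^s(\Omega)$ and the fractional Sobolev chain rule for Lipschitz compositions used in that lemma applies.

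A second subtlety is the uniformity of $C$. The Lipschitz constant of $\phi$ in \Cref{lem:phi-lip-jointly-reg} is only local. However, the relevant compact set is $[-R_0,R_0]\times\cK_2$, and $R_0$ grows with $B$ and $\varepsilon$. I would accept the constant exactly as delivered by \Cref{lem:ptwise-operator-univ-approx-reg}, which asserts it is independent of $B$ and $\varepsilon$, and not re-derive that claim here.
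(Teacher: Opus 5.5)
Your route is the paper's. Its proof is the one-line application of \Cref{lem:ptwise-operator-univ-approx-reg} with $G=(N+\tau I+\partial\theta)^{-1}$ and $\nu=(\mu,\lambda)^\top$, with \Cref{lem:nresol-ptwise,lem:phi-lip-jointly-reg} supplying the hypotheses as you describe. Reading $s$ as the exponent of \Cref{assump:regularity} is also the only reading under which that lemma applies.

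The step you chose not to re-derive is, however, a genuine gap, and the paper's proof has the same one. In the proof of \Cref{lem:ptwise-operator-univ-approx-reg}, the $L_\phi$ in the final bound $\|\cG(u)\|_{H^s(\Omega)}\le 2\sqrt{2}L_\phi(\cdots)$ is the joint Lipschitz constant of $\phi$ on $[-R_0,R_0]\times\cK_2$. That is the constant used for the Kirszbraun extension and for $L_{\cN_0}\le 2L_\phi$; it is not the uniform-in-$s$ constant $(c_0+\tau)^{-1}$. Moreover, $R_0$ is chosen so that $\sup_{u\in B}\|u-u_{R_0}\|_{L^2(\Omega)}$ is small, so it grows with $B$ and as $\varepsilon\to 0$. \Cref{lem:phi-lip-jointly-reg} gives only local Lipschitz continuity, and the modulus in $(\xi,\eta)$ typically grows with $|s|$. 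For $\psi_\xi=\xi|\cdot|$ one has $\phi=\mathrm{sgn}(s)[|s|-\xi]_+/(\eta+\tau)$, hence $|\partial_\eta\phi|=[|s|-\xi]_+/(\eta+\tau)^2$. So the argument only yields $C=C(B,\varepsilon)$.

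This is not merely a missing estimate: the uniform claim is false under \Cref{assump:regularity} alone. Suppose $C$ did not depend on $B$ or $\varepsilon$, and take $B=\{v\}$, $\varepsilon=1/n$. The outputs $\cQ_A^{(n)}(v)$ are then bounded in $H^s(\Omega)$ and converge to $G(v)$ in $L^2(\Omega)$. A weakly convergent subsequence gives $G(v)\in H^s(\Omega)$ with $\|G(v)\|_{H^s(\Omega)}\le C(1+\|v\|_{H^s(\Omega)})$, for every $v\in H^s(\Omega)$.

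Now take $d=2$, $s=1$, $S=(-\Delta)^{-1}$ with Dirichlet conditions, and $\theta=\beta\|\cdot\|_{L^1(\Omega)}$ with $\beta>0$. The graphs of $\partial\psi_\xi$ lie at Hausdorff distance $|\xi_1-\xi_2|$, so \Cref{assump:regularity}~\ref{assump:prox-reg} holds. Near an interior point $x_0$, with a smooth cutoff away from it, set $h(x)=\log^{0.3}(1/|x-x_0|)$, $\lambda=2+\sin h$, and $v=h$. Then $h,\lambda\in H^1(\Omega)$, $\lambda\in L^\infty(\Omega)$ and $\lambda\ge 1$, so all hypotheses hold.

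Near $x_0$, however, $G(v)=(h-\beta)/(2+\tau+\sin h)$. Its gradient contains the term $(h-\beta)\cos h\,\nabla h/(2+\tau+\sin h)^2$. After the substitution $\sigma=h$, the integral $\int h^2\cos^2 h\,|\nabla h|^2\,dx$ is a constant multiple of $\int^\infty \sigma^{-1/3}\cos^2\sigma\,d\sigma=\infty$. Hence $G(v)\notin H^1(\Omega)$.

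To close the gap you need an extra hypothesis, such as Lipschitz continuity of $\phi$ on all of $\bR\times\cK_2$. This holds, for example, whenever finite box constraints are present, as in the paper's examples, because the projection onto $[\xi_1,\xi_2]$ caps the $\eta$- and $\xi$-dependence. With it, the chain-rule estimate of \Cref{lem:ptwise-operator-univ-approx-reg} can be run with a constant independent of $R_0$.

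Without such a hypothesis, the dependence of $C$ on $B$ and $\varepsilon$ also breaks the later use in \Cref{prop:iuzawa-net-err-reg}. There the constant $C_1$ must be fixed before $R_2$ is chosen.
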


\begin{proof}
    The desired result follows directly from \Cref{assump:regularity} and \Cref{lem:ptwise-operator-univ-approx-reg} with $\nu(x) = (\mu(x), \lambda(x))^\top$ and $G = (N + \tau I + \partial \theta)^{-1}$.
\end{proof}

\subsection{A New Universal Approximation Theorem of the iUzawa-Net}

We now prove that given sufficiently large approximation domains of $\cS$, $\cA$, and $\cQ_A$, a single layer of the \texttt{iUzawa-Net} maps bounded sets in $H^s(\Omega)$ to bounded images in $H^s(\Omega)$. Moreover, the radius of the image is bounded uniformly independent of the size of the approximation domains.
For notational convenience, denote by $\overline{B}_{H^s(\Omega)}(v, R)$ the closed ball in $H^s(\Omega)$ centered at $v$ with radius $R > 0$.

\begin{proposition}\label{prop:iuzawa-net-err-reg}
    Let \Cref{assump:regularity} hold with regularity exponent $0 < s \leq 1$.
    Then
    \begin{enumerate}[label=(\roman*)]
        \item There exists $\cQ_S$ defined in \eqref{eq:qsk-def} that is bounded, linear, and maps $H^s(\Omega)$ continuously into $H^s(\Omega)$. The operator $\cQ_S - (I + SN^{-1}S^*)$ is positive semidefinite.
    \end{enumerate}
    Moreover, for any $\delta > 0$, $R_1 > 0$ and $B := \overline{B}_{H^s(\Omega)}(0, R_1)$, there exists a critical radius $R_0 > 0$ such that for any $R_2 > R_0$, there exists two FNOs $\cS$, $\cA$, and a neural network $\cQ_A$ defined in \eqref{eq:qak-ptwise}, such that
    \begin{enumerate}[label=(\roman*),start=2]
        \item The following approximation bounds hold:
        \begin{equation}\label{eq:iuzawa-net-approx-conds-reg}
        \begin{aligned}
            &\sup_{p \in \overline{B}_{H^s(\Omega)}(0, R_2)} \left\| S^* p - \cA(p) \right\|_{L^2(\Omega)} < \min \left\{\frac{1}{2}, \ \frac{1}{4\|S\|}, \ (c_0 + \tau) \right\} (c_0 + \tau)^{-1} \delta, \\
            &\sup_{v \in \overline{B}_{H^s(\Omega)}(0, R_2)} \left\| (N + \tau I + \partial \theta)^{-1}(v) - \cQ_A(v) \right\|_{L^2(\Omega)} < \min \left\{ \frac{1}{2}, \ \frac{1}{4\|S\|} \right\}\delta, \\
            &\sup_{v \in \overline{B}_{H^s(\Omega)}(0, R_2)} \left\| S v - \cS(v) \right\|_{L^2(\Omega)} < \frac{1}{2} \delta.
        \end{aligned}
        \end{equation}
        \item For any $(y_d, f)^\top \in B \times B$, $(u^k, p^k)^\top \in B \times B$, and $\cQ_S$ satisfying (i), if $(u^{k+1}, p^{k+1})^\top$ is the $(k+1)$-th layer output of the iUzawa-Net \eqref{eq:inexact-uzawa-duf} with inputs $y_d$, $f$, $u^k$, $p^k$ and neural networks $\cS^k = \cS$, $\cA^k = \cA$, $\cQ_A^k = \cQ_A$ and $\cQ_S^k = \cQ_S$, then we have $u^{k+1} \in H^s(\Omega)$, $p^{k+1} \in H^s(\Omega)$, and
    \begin{equation}\label{eq:iuzawa-net-output-bdd-reg}
        \max \left\{ \| u^{k+1} \|_{H^s(\Omega)}, \ \|p^{k+1}\|_{H^s(\Omega)} \right\} \leq C (1 + \delta + \|u^k\|_{H^s(\Omega)} + \|p^k\|_{H^s(\Omega)} + \|y_d\|_{H^s(\Omega)} + \|f \|_{H^s(\Omega)}) \leq R_0,
    \end{equation}
    for some constant $C > 0$ independent of $u^k$, $p^k$, $y_d$, $f$, $\delta$, $R_1$ and $R_2$.
    \end{enumerate}
\end{proposition}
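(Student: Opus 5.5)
The plan is to take $\cQ_S$ to be a scalar multiple of the identity, and to approximate $S$ and $S^*$ in the stronger $H^s(\Omega)$ norm rather than only in $L^2(\Omega)$. This way every module in the layer has an $H^s$ growth bound whose constants do not depend on the approximation radius $R_2$.

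\textbf{Step 1: choice of $\cQ_S$, item (i).} By \Cref{prop:qsk-univ-approx}(i), for any $\tilde\gamma \geq \gamma$ there is a network of the form \eqref{eq:qsk-def} with $\cQ_S = \tilde\gamma I$. Choose $\tilde\gamma \geq \max\{\gamma,\ 1 + \|S\|^2 c_0^{-1}\}$. Since $\|SN^{-1}S^*\| \leq c_0^{-1}\|S\|^2$, the operator $\cQ_S - (I + SN^{-1}S^*)$ is positive semidefinite. It is obviously linear and bounded, and it maps $H^s(\Omega)$ continuously into $H^s(\Omega)$ with norm $\tilde\gamma$. This fixes $\cQ_S$ once and for all, independently of $\delta$, $R_1$ and $R_2$.

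\textbf{Step 2: approximation of $S$ and $S^*$ in $H^s$.} By \Cref{assump:regularity}\ref{assump:sol-oper-reg}, the map $G_S := \tilde S \circ \iota : L^2(\Omega) \to H^s(\Omega)$ is bounded linear, with norm at most $\|\tilde S\|$ (using $\|\iota\| \leq 1$), and $S = \iota^* G_S$. Dually, $S^* = \iota^* \circ \tilde S^* \circ \iota$ with $\tilde S^* : H^{-s}(\Omega) \to H^s(\Omega)$ bounded, so $G_{S^*} := \tilde S^* \circ \iota : L^2(\Omega) \to H^s(\Omega)$ is also bounded.

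Apply \Cref{cor:fno-univ-approx-bddset} with $s_1 = 0 < s$ and $s_2 = s$ on the bounded set $\overline{B}_{H^s(\Omega)}(0, R_2)$. This gives FNOs $\cS$ and $\cA$ with
\[
\sup \|G_S v - \cS(v)\|_{H^s} < \delta', \qquad \sup \|G_{S^*} p - \cA(p)\|_{H^s} < \delta',
\]
where $\delta'$ is no larger than the right-hand sides in \eqref{eq:iuzawa-net-approx-conds-reg} and also $\delta' \leq \delta$. Since $\|\cdot\|_{L^2} \leq \|\cdot\|_{H^s}$, the first and third bounds in \eqref{eq:iuzawa-net-approx-conds-reg} follow. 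In addition we obtain the growth bounds
\[
\|\cS(v)\|_{H^s} \leq \|\tilde S\|\,\|v\|_{H^s} + \delta, \qquad \|\cA(p)\|_{H^s} \leq \|\tilde S^*\|\,\|p\|_{H^s} + \delta,
\]
valid on the ball of radius $R_2$, with constants independent of $R_2$. For $\cQ_A$, \Cref{prop:qak-ptwise-univ-approx-bddset} on the same ball gives the middle bound of \eqref{eq:iuzawa-net-approx-conds-reg}, together with $\|\cQ_A(v)\|_{H^s} \leq C_A(1 + \|v\|_{H^s})$, where $C_A$ does not depend on the ball or on the accuracy.

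\textbf{Step 3: choice of $R_0$ and the output bound, item (iii).} The main obstacle is that $R_0$ must be fixed before $R_2$, and hence before the networks are built. At the same time, every intermediate quantity must land inside $\overline{B}_{H^s(\Omega)}(0, R_2)$, so that the approximation bounds of Step 2 actually apply to it. I would resolve this by a chain of a priori bounds that use only the $R_2$-independent constants $\|\tilde S\|$, $\|\tilde S^*\|$, $C_A$, $\tilde\gamma$ and $\tau$.

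Fix inputs with $u^k, p^k, y_d, f \in B = \overline{B}_{H^s(\Omega)}(0, R_1)$. Since $R_1 \leq R_2$ (ensured by taking $R_0 \geq R_1$), we have $p^k$ in the ball of radius $R_2$, so
\[
\|\tau u^k - \cA(p^k)\|_{H^s} \leq \tau R_1 + \|\tilde S^*\| R_1 + \delta.
\]
Provided this is at most $R_2$, Step 2 gives $\|u^{k+1}\|_{H^s} \leq C_A(1 + \tau R_1 + \|\tilde S^*\| R_1 + \delta)$. Then $\|u^{k+1} + f\|_{H^s}$ is bounded by a similar affine expression. Provided that is also at most $R_2$,
\[
\|p^{k+1}\|_{H^s} \leq R_1 + \tilde\gamma\bigl(\|\tilde S\|\,\|u^{k+1} + f\|_{H^s} + \delta + 2R_1\bigr).
\]
Collecting these estimates yields \eqref{eq:iuzawa-net-output-bdd-reg} with a constant $C$ built from $C_A$, $\tau$, $\tilde\gamma$, $\|\tilde S\|$ and $\|\tilde S^*\|$ only. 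Then set $R_0 := C(1 + \delta + 4R_1)$, enlarged if necessary so that it dominates $R_1$ and both intermediate radii above. These intermediate radii are of the same affine form, so $R_0$ is independent of $R_2$. Consequently, for every $R_2 > R_0$, all arguments fed to $\cA$, $\cQ_A$ and $\cS$ lie in $\overline{B}_{H^s(\Omega)}(0, R_2)$, which closes the argument.

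The membership $u^{k+1}, p^{k+1} \in H^s(\Omega)$ follows from the same bounds. This uses that FNOs map $H^s(\Omega)$ into $H^s(\Omega)$ (the remark preceding \Cref{prop:fno-univ-approx-general}), that $\cQ_A$ preserves $H^s$ by \Cref{prop:qak-ptwise-univ-approx-bddset}, and that $\cQ_S = \tilde\gamma I$ trivially does.
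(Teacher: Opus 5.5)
Your proposal is correct and follows essentially the same route as the paper. It fixes $\cQ_S$ first (your explicit $\tilde\gamma I$ is exactly what the proof of \Cref{prop:qsk-univ-approx}(ii) produces), approximates $\tilde S\circ\iota$ and $\tilde S^*\circ\iota$ in the $H^s(\Omega)$ norm on $\overline{B}_{H^s(\Omega)}(0,R_2)$ via \Cref{cor:fno-univ-approx-bddset}, uses the $R_2$-independent growth bound of $\cQ_A$ from \Cref{prop:qak-ptwise-univ-approx-bddset}, and chains a priori estimates with $R_2$-independent constants to fix $R_0$. Your $R_0$ depends only on $R_1$, $\delta$ and fixed constants, rather than on the particular norms $\|u^k\|_{H^s(\Omega)}, \|p^k\|_{H^s(\Omega)}, \dots$ as in \eqref{eq:iner-pf-r-final}, which is slightly cleaner because it makes $R_0$ uniform over $B\times B$ as the statement requires.
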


\begin{proof}
    By \Cref{prop:qsk-univ-approx}, there exists a neural network $\cQ_S: L^2(\Omega) \to L^2(\Omega)$ defined in \eqref{eq:qsk-def} such that $\cQ_S$ is linear, bounded, maps $H^s(\Omega)$ into $H^s(\Omega)$ continuously, and $\cQ_S - (I + SN^{-1}S^*)$ is positive semidefinite, thus (i) is verified.
    In particular, we have $\sigma_0(\cQ_S) \geq 1$.
    Now fix arbitrary $\delta > 0$ and $R_2 > 0$.
    Let $\tilde{S}: H^{-s} \to H^s$ and $\iota: L^2(\Omega) \to H^{-s}(\Omega)$ be defined as in \Cref{assump:regularity}.
    By \Cref{cor:fno-univ-approx-bddset}, there exist FNOs $\cS: L^2(\Omega) \to H^s(\Omega)$ and $\cA: L^2(\Omega) \to H^s(\Omega)$ such that
    \begin{equation}\label{eq:iner-pf-fnohs-approx-pre}
    \begin{aligned}
        &\sup_{p \in \overline{B}_{H^s(\Omega)}(0, R_2)} \| (\tilde{S}^* \circ \iota) p - \cA(p) \|_{H^{s}(\Omega)} < \min \left\{\frac{1}{2}, \ \frac{1}{4\|S\|}, \ (c_0 + \tau) \right\} (c_0 + \tau)^{-1} \delta, \\
        &\sup_{v \in \overline{B}_{H^s(\Omega)}(0, R_2)} \left\| (\tilde{S} \circ \iota) v - \cS(v) \right\|_{H^s(\Omega)} < \frac{1}{2} \delta.
    \end{aligned}
    \end{equation}
    Identify $\cS$, $\cA$ as mappings from $L^2(\Omega)$ to $L^2(\Omega)$ via the continuous embedding $\iota^*: H^s(\Omega) \to L^2(\Omega)$. Clearly, we have $\| \iota^* \| \leq 1$. It follows from \eqref{eq:iner-pf-fnohs-approx-pre} that
    \begin{equation}\label{eq:iner-pf-fnohs-approx}
    \begin{aligned}
        &\sup_{p \in \overline{B}_{H^s(\Omega)}(0, R_2)} \| S^* p - \cA(p) \|_{L^2(\Omega)} < \min \left\{\frac{1}{2}, \ \frac{1}{4\|S\|}, \ (c_0 + \tau) \right\} (c_0 + \tau)^{-1} \delta, \\
        &\sup_{v \in \overline{B}_{H^s(\Omega)}(0, R_2)} \left\| S v - \cS(v) \right\|_{L^2(\Omega)} < \frac{1}{2} \delta.
    \end{aligned}
    \end{equation}
    Applying \Cref{prop:qak-ptwise-univ-approx-bddset} to $(N + \tau I + \partial \theta)^{-1}$, we obtain a neural network $\cQ_A$ of the form \eqref{eq:qak-ptwise} such that
    \begin{equation}\label{eq:iner-pf-qak-l2-approx}
        \sup_{v \in \overline{B}_{H^s(\Omega)}(0, R_2)} \left\| (N + \tau I + \partial \theta)^{-1}(v) - \cQ_A(v) \right\|_{L^2(\Omega)} < \min \left\{ \frac{1}{2}, \ \frac{1}{4\|S\|} \right\}\delta,
    \end{equation}
    and
    \begin{equation}\label{eq:iner-pf-qak-hs-bdd}
        \| \mathcal{\cQ_A}(v) \|_{H^s(\Omega)} \leq C_1 (1 + \|v\|_{H^s(\Omega)}), \quad \forall v \in \overline{B}_{H^s(\Omega)}(0, R_2)
    \end{equation}
    for some constant $C_1$ independent of $u$, $R_1$, $R_2$, and $\delta$.
    Note that $\| \cdot \|_{L^2(\Omega)} \leq \| \cdot \|_{H^s(\Omega)}$.
    Thus, \eqref{eq:iuzawa-net-approx-conds-reg} follows from \eqref{eq:iner-pf-fnohs-approx} and \eqref{eq:iner-pf-qak-l2-approx}.

    Now, for any $R_1 > 0$, we show the existence of a critical radius $R_0>0$ by starting from a small value and enlarging it step-by-step to satisfy our approximation purpose.
    We initialize $R_0 = R_1$ and consider any $R_2 > R_0$.
    We have $p^k \in \overline{B}_{H^s(\Omega)}(0, R_1) \subset \overline{B}_{H^s(\Omega)}(0, R_2)$.
    By the already proved results, there exists an FNO $\cA$ satisfying \eqref{eq:iuzawa-net-approx-conds-reg} and \eqref{eq:iner-pf-fnohs-approx-pre}.
    It follows that
    \begin{equation}\label{eq:iner-pf-aux-1}
        \begin{aligned}
            \| \tau u^k - \cA (p^k) \|_{H^s(\Omega)} &\leq \tau \|u^k\|_{H^s(\Omega)} + \|\cA (p^k)\|_{H^s(\Omega)} \\
            &\leq \tau \left\|u^k\right\|_{H^s(\Omega)} + \left\| \tilde{S}^* \circ \iota \right\| \left\|p^k\right\|_{H^s(\Omega)} + C_3 \delta \\
            &\leq C_2 \|u^k\|_{H^s(\Omega)} + C_3 \|p^k\|_{H^s(\Omega)} + C_4 \delta
        \end{aligned}
    \end{equation}
    for some constants $C_2, C_3, C_4 > 0$ independent of $u^k$, $p^k$, $\delta$, $R_1$ and $R_2$.
    Now we take $R_0 = \max\{R_1, \sqrt{2} C_2 R_1 + C_3 \delta \}$ and consider any $R_2 > R_0$.
    For such a choice, there exist $\cA$ and $\cQ_A$ satisfying \eqref{eq:iuzawa-net-approx-conds-reg}, \eqref{eq:iner-pf-qak-hs-bdd} and \eqref{eq:iner-pf-aux-1}.
    Thus, we have
    \begin{equation}\label{eq:iner-pf-u-hs-bdd}
        \begin{aligned}
            \|u^{k+1}\|_{H^s(\Omega)} &\leq \left\|\cQ_A \left(\tau u^k - \cA (p^k)\right)\right\|_{H^s(\Omega)} \\
            &\leq C_1 \left( 1 + \|\tau u^k - \cA (p^k) \|_{H^s(\Omega)} \right) \\
            &\leq C_1 \left( 1 + C_2 \|u^k\|_{H^s(\Omega)} + C_3 \|p^k\|_{H^s(\Omega)} + C_4 \delta \right),
        \end{aligned}
    \end{equation}
    and hence
    \begin{equation}\label{eq:iner-pf-aux-2}
        \|u^{k+1} + f\|_{H^s(\Omega)} \leq C_1 \left( 1 + C_2 \|u^k\|_{H^s(\Omega)} + C_3 \|p^k\|_{H^s(\Omega)} + C_4 \delta \right) + \| f \|_{H^s(\Omega)}.
    \end{equation}
    Now we take 
    \begin{equation*}
        R_0 = \max \left\{R_1, \sqrt{2} C_2 R_1 + C_3 \delta, \ C_1 \left( 1 + C_2 \|u^k\|_{H^s(\Omega)} + C_3 \|p^k\|_{H^s(\Omega)} + C_4 \delta \right) + \| f \|_{H^s(\Omega)} \right\}
    \end{equation*}
    and consider any $R_2 > R_0$.
    From the above, there exist $\cS$, $\cA$, $\cQ_A$ and $\cQ_S$ satisfying (i) and (ii), \eqref{eq:iner-pf-fnohs-approx-pre} and \eqref{eq:iner-pf-aux-2}.
    As a result, we have $\|u^{k+1}\|_{H^s(\Omega)} \leq R_0$.
    Recall that $\cQ_S$ maps $H^s(\Omega)$ into $H^s(\Omega)$ continuously, and we have fixed $\cQ_S$ before choosing $\delta$ and $R_2$.
    Hence, \eqref{eq:iner-pf-fnohs-approx-pre} implies
    \begin{equation}\label{eq:iner-pf-p-hs-bdd}
        \begin{aligned}
            \|p^{k+1}\|_{H^s(\Omega)} &\leq \| p^k \|_{H^s(\Omega)} + \| \cQ_S \| \left( \|\cS(u^{k+1} + f) \|_{H^s(\Omega)} + \|p^k\|_{H^s(\Omega)} + \|y_d \|_{H^s(\Omega)} \right) \\
            &\leq \| p^k \|_{H^s(\Omega)} + \| \cQ_S \| \left( \| (\tilde{S} \circ \iota) \| \| (u^{k+1} + f) \|_{H^s(\Omega)} + \frac{1}{2} \delta + \|p^k\|_{H^s(\Omega)} + \|y_d \|_{H^s(\Omega)} \right) \\
            &\leq C_5 (1 + \delta + \|u^k\|_{H^s(\Omega)} + \|p^k\|_{H^s(\Omega)} + \|y_d\|_{H^s(\Omega)} + \|f \|_{H^s(\Omega)}),
        \end{aligned}
    \end{equation}
    for some $C_5 > 0$ independent of $u^k$, $p^k$, $y_d$, $f$, $\delta$, $R_1$ and $R_2$.
    Therefore, it follows from \eqref{eq:iner-pf-u-hs-bdd} and \eqref{eq:iner-pf-p-hs-bdd} that there exists
    \begin{equation}\label{eq:iner-pf-r-final}
    \begin{aligned}
        R_0 = \max \big\{&R_1, \sqrt{2} C_2 R_1 + C_3 \delta, \ C_1 \left( 1 + C_2 \|u^k\|_{H^s(\Omega)} + C_3 \|p^k\|_{H^s(\Omega)} + C_4 \delta \right) + \| f \|_{H^s(\Omega)}, \\
        &C_5 (1 + \delta + \|u^k\|_{H^s(\Omega)} + \|p^k\|_{H^s(\Omega)} + \|y_d\|_{H^s(\Omega)} + \|f \|_{H^s(\Omega)}) \big\},
    \end{aligned}
    \end{equation}
    such that for any $R_2 > R_0$, there exists a $C > 0$ constructed by $C_i, 1 \leq i \leq 5$ such that \eqref{eq:iuzawa-net-output-bdd-reg} holds.
    Moreover, $C$ is independent of $u^k$, $p^k$, $y_d$, $f$, $\delta$, $R_1$ and $R_2$ (although the construction of $R_0$ utilized the value of $C_i, 1 \leq i \leq 5$).
\end{proof}

We are finally at the position to prove a universal approximation theorem for the \texttt{iUzawa-Net} with shared layer parameters over any bounded set under \Cref{assump:regularity}.

\begin{theorem}\label{thm:iuzawa-net-univ-approx-rec-reg}
    Given \Cref{assump:regularity} with the regularity exponent $0 < s \leq 1$.
    For any $\varepsilon > 0$, $R > 0$, and $B := \overline{B}_{H^s(\Omega)}(0, R)$, there exists $\delta_0 = C\varepsilon$ with $C>0$ independent of $\varepsilon$ and $L_0 = O(\log(1/\varepsilon))$, such that for any $\delta < \delta_0$ and $L \geq L_0$, there exists an \texttt{iUzawa-Net} $\mathcal{T}(y_d, f; \theta_{\mathcal{T}})$ with $L$ layers that is weight tying and algorithm tracking with respect to $B \times B$ and $\delta$. Moreover, for any $(y_d, f)^\top \in B \times B$, the layer outputs $\{(u^k, p^k)^\top\}_{k=1}^L$ of $\cT(y_d, f; \theta_\cT)$ satisfy
    \begin{equation}\label{eq:iuzawa-net-univ-approx-rec-reg}
        \left\| T(y_d, f) - u^k \right\|_{L^2(\Omega)} < \varepsilon, \quad \forall k \geq L_0.
    \end{equation}
\end{theorem}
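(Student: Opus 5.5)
The plan is to build one set of modules $\cS,\cA,\cQ_A,\cQ_S$ that works on a single large $H^s$-ball containing every iterate of the first $L$ layers. Algorithm tracking then follows layer by layer, exactly as in \Cref{prop:iuzawa-net-err}, and the asymptotic $\varepsilon$-optimality follows from \Cref{prop:inexact-inexact-uzawa-stability}.

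\textbf{Step 1: fix the preconditioners and the stability constants.} Take $\cQ_S$ from \Cref{prop:iuzawa-net-err-reg}(i) and fix it once and for all. Set $Q_A=N+\tau I$ and $Q_S=\cQ_S$, so that \eqref{eq:inuzawa-conv-cond} holds. By \Cref{rem:bigo-const-dep}, the constants $\rho,c,C$ in \Cref{prop:inexact-inexact-uzawa-stability} depend only on $Q_A,Q_S$. The iteration count $L_0$ in \eqref{eq:K-def} additionally depends on $\|w^0-w^*\|_Q$. With $w^0=0$, this quantity is bounded uniformly over $(y_d,f)^\top\in B\times B$. Indeed, by \Cref{lem:param-to-ctrl-lip} and its proof, $u^*$ and $p^*$ are Lipschitz functions of $Sf-y_d$, so they are bounded on $B\times B$, which is $L^2$-bounded. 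Replacing $\|w^0-w^*\|_Q$ by this uniform bound in \eqref{eq:K-def} yields $\delta_0=C\varepsilon$ and $L_0=O(\log(1/\varepsilon))$. Both are independent of $(y_d,f)$, of $\delta$, and of the networks still to be chosen.

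\textbf{Step 2 (main obstacle): a single ball that traps all $L$ layer outputs.} \Cref{prop:iuzawa-net-err-reg}(iii) only controls one layer, and its bound grows from layer to layer. The key fact is that the constant $C$ in \eqref{eq:iuzawa-net-output-bdd-reg} is independent of $R_1$, $R_2$ and $\delta$. Fix $\delta<\delta_0$ and $L\ge L_0$, and define radii recursively by
\[
r_0=R,\qquad r_{k+1}=C\,(1+\delta+2r_k+2R),\qquad k=0,\dots,L-1.
\]
Set $R_1:=\max_{k\le L} r_k$. Let $R_0$ be the critical radius that \Cref{prop:iuzawa-net-err-reg} gives for this $R_1$ and $\delta$. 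Choose $R_2>\max\{R_0,R_1\}$ and take $\cS,\cA,\cQ_A$ from \Cref{prop:iuzawa-net-err-reg}(ii) with this $R_2$.

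Use these same modules in every layer, so the network is weight tying. By induction on $k$, with $u^0=p^0=0$, we get $\|u^k\|_{H^s},\|p^k\|_{H^s}\le r_k\le R_2$. At each step, the inputs $\tau u^k-\cA(p^k)$ and $u^{k+1}+f$ must lie in $\overline{B}_{H^s(\Omega)}(0,R_2)$ for the approximation bounds \eqref{eq:iuzawa-net-approx-conds-reg} to apply. I will verify this using the intermediate estimates \eqref{eq:iner-pf-aux-1} and \eqref{eq:iner-pf-aux-2} from that proof, enlarging $R_1$ by a further factor depending on these constants if necessary.

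The delicate point is that the proposition's $R_0$ is stated in terms of the current iterate. I will therefore reread its proof in the uniform form: for inputs in the ball of radius $r_k$, the outputs lie in the ball of radius $r_{k+1}$. This holds as long as all intermediate arguments stay inside the approximation domain of radius $R_2$, and the choice of $R_2$ above guarantees that.

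\textbf{Step 3: tracking and optimality.} With all inputs inside the approximation domain, the estimate argument of \Cref{prop:iuzawa-net-err} applies verbatim, using \eqref{eq:iuzawa-net-approx-conds-reg}, the Lipschitz bound $(c_0+\tau)^{-1}$ from \Cref{lem:nresolv-lip}, and $\sigma_0(\cQ_S)\ge1$. It gives
\[
\|u^{k+1}-\bar u^{k+1}\|_U<\delta,\qquad \|p^{k+1}-\bar p^{k+1}\|_Y<\delta\qquad\text{for all }k\le L-1.
\]
Hence the network is $(B\times B,\delta)$-tracking. For initializations $(u^0,p^0)^\top\in B\times B$ rather than $0$, the same induction works since $r_0=R$ already covers them.

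Finally, \Cref{prop:inexact-inexact-uzawa-stability}, with the uniform constants from Step 1 and $\delta<\delta_0$, yields $\|w^k-w^*\|_{U\times Y}<\varepsilon$ for all $k\ge L_0$. Since $\|T(y_d,f)-u^k\|_{L^2}\le\|w^k-w^*\|_{U\times Y}$, this proves \eqref{eq:iuzawa-net-univ-approx-rec-reg}.
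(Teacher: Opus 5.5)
Your proposal is correct and follows essentially the same route as the paper. You fix $\cQ_S$, use the fact that the constant in \eqref{eq:iuzawa-net-output-bdd-reg} is independent of $R_1$, $R_2$ and $\delta$ to trap all $L$ layer outputs (and the intermediate arguments $\tau u^k-\cA(p^k)$, $u^{k+1}+f$) in one $H^s$-ball on which a single shared set of modules is accurate, and conclude with \Cref{prop:inexact-inexact-uzawa-stability}. The differences are organizational only: you precompute the radii $r_k$ and invoke \Cref{prop:iuzawa-net-err-reg} once, whereas the paper applies it recursively and reuses the last layer's modules. You also make explicit the uniform bound on $\|w^0-w^*\|_Q$ over $B\times B$ needed for $L_0$ to be independent of $(y_d,f)$, which the paper leaves implicit.
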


\begin{proof}
    By \Cref{prop:iuzawa-net-err-reg}, there exists $\cQ_S$ of the architecture \eqref{eq:qsk-def} such that $\cQ_S$ is bounded, linear, maps $H^s(\Omega)$ continuously into $H^s(\Omega)$, and $\cQ_S - (I + SN^{-1}S^*)$ is positive semidefinite.
    In view of \Cref{prop:inexact-inexact-uzawa-stability}, it suffices to construct $\cS$, $\cA$, and $\cQ_A$, such that the layer outputs of the \texttt{iUzawa-Net} \eqref{eq:inexact-uzawa-duf} satisfy \eqref{eq:inexact-inexact-uzawa} for $Q_A = (N + \tau I + \partial \theta)^{-1}$, $Q_S = \cQ_S$ and sufficiently small $\delta$.
    To this end, we recursively apply \Cref{prop:iuzawa-net-err-reg} for $k = 0, \ldots, L-1$ to construct a series of critical radii $\{R_0^{(k)}\}_{k=0}^{L-1}$ and neural networks.
    Specifically, fix arbitrary $\delta > 0$ and formally denote $R_0^{(-1)} = 0$,
    \begin{itemize}
        \item For $k = 0, 1, \ldots, L-1$, take $R_1 := \max\{ R_0^{(k-1)}, R \}$, apply \Cref{prop:iuzawa-net-err-reg} with $R_1$ and $\delta$ to obtain a critical radius $R_0^{(k)} > 0$. Take $R_2^{(k)} = 2R_0^{(k)}$ and let $\cS^{(k)}$, $\cA^{(k)}$, and $\cQ_{A}^{(k)}$ be any neural network satisfying \Cref{prop:iuzawa-net-err-reg} (ii) and (iii).
    \end{itemize}
    Now consider the \texttt{iUzawa-Net} $\cT(y_d, f; \theta_\cT)$ with each layer sharing the neural networks $\cS := \cS^{(L-1)}$, $\cA := \cA^{(L-1)}$, $\cQ_A := \cQ_A^{(L-1)}$ and $\cQ_S := \cQ_S^{(L-1)}$.
    Denote $R_0 := R_0^{(L-1)}$, and consider any $(y_d, f)^\top \in B \times B$.
    Let $\{(u^k, p^k)^\top\}_{k=0}^L$ be the corresponding layer outputs of $\cT(y_d, f; \theta_\cT)$.
    By the above construction and the independence of the constant $C$ in \eqref{eq:iuzawa-net-output-bdd-reg} from $u^k$, $p^k$, $y_d$, $f$, $\delta$, $R_1$ and $R_2$, we have for all $k = 0, \ldots, L-1$ that
    \begin{equation*}
        \max\{ \| u^{k+1} \|_{H^s(\Omega)}, \|p^{k+1} \|_{H^s(\Omega)} \} \leq R_0^{(k)} \leq R_0.
    \end{equation*}
    Note in particular that $\max\{ \| u^{k+1} \|_{H^s(\Omega)}, \|p^{k+1} \|_{H^s(\Omega)} \} \subset \overline{B}_{H^s(\Omega)}(R_2^{(k)})$.
    Hence, by \eqref{eq:iuzawa-net-approx-conds-reg} and the argument in the proof of \Cref{prop:iuzawa-net-err}, we have
    \begin{equation}\label{eq:tired-of-naming}
        \max\left\{ \left\| u^{k+1} - \bar{u}^{k+1} \right\|_{L^2(\Omega)},  \, \left\| p^{k+1} - \bar{p}^{k+1} \right\|_{L^2(\Omega)} \right\} \leq \delta, \quad \forall k = 0, \ldots, L - 1,
    \end{equation}
    for the sequences $\{\bar{u}^{k+1}\}$ and $\{ \bar{p}^{k+1} \}$ satisfying \eqref{eq:inexact-inexact-uzawa} with $Q_A = (N + \tau I + \partial \theta)^{-1}$ and $Q_S = \cQ_S$.
    Note that $Q_S$ does not depend on $\delta$.
    In view of \eqref{eq:tired-of-naming}, \Cref{prop:inexact-inexact-uzawa-stability} guarantees that for any $\varepsilon > 0$, there exists a $\delta_0 = C \varepsilon$ where $C$ is independent of $\varepsilon$, and $L_0 = O(1/\log(\varepsilon))$, such that for any $\delta < \delta_0$ and $L \geq L_0$, the above specified $\cT$ satisfies
    \begin{equation}\label{eq:i-am-done}
        \left\| T(y_d, f) - u^k \right\|_{L^2(\Omega)} \leq \left\| w^k - w^* \right\|_{L^2(\Omega) \times L^2(\Omega)} < \varepsilon, \quad \forall k \geq L_0.
    \end{equation}
\end{proof}

We remark that \Cref{thm:iuzawa-net-univ-approx-rec-reg} holds even for $s > 1$.
Specifically, given any bounded $B \subset H^s(\Omega)$ with $s > 1$, it is bounded (and even compact) in $H^{\lfloor s \rfloor}(\Omega)$ by \cite[Theorem 11.8]{leoni2023first}, and hence bounded (and even compact) in $H^1(\Omega)$ by applying the Rellich-Kondrachov theorem for integer regularity exponents \cite[Theorem 7.4]{troltzsch2010optimal} if necessary.
Using the same argument, one can show that the parameters $\lambda$, $\mu$ in \Cref{assump:ptwise} belong to $H^1(\Omega) \cap L^{\infty}(\Omega)$.
Moreover, $S: L^2(\Omega) \hookrightarrow H^{-s}(\Omega) \to H^s(\Omega) \hookrightarrow L^2(\Omega)$ can be treated as a continuous mapping $L^2(\Omega) \hookrightarrow H^{-1}(\Omega) \to H^1(\Omega) \hookrightarrow L^2(\Omega)$.
Hence, we are completely reduced to the case where \Cref{assump:regularity} holds with $s = 1$, and thus \Cref{thm:iuzawa-net-univ-approx-rec-reg} remains valid.

\section{Numerical Experiments}\label{sec:num-exp}

In this section, we demonstrate the application of the proposed \texttt{iUzawa-Net} to several representative instances of the optimal control problem~\eqref{eq:opt-ctrl}. A series of numerical results is presented to illustrate the effectiveness and efficiency of the \texttt{iUzawa-Net}.
The codes were written in Python, and all the deep learning methods are implemented with PyTorch. The numerical experiments were conducted on a Linux server equipped with an Intel i7-14700K processor and two NVIDIA GeForce RTX 4090 GPUs.
All the codes and generated data are available at \url{https://github.com/tianyouzeng/iUzawa-Net}.

\subsection{Elliptic Optimal Control: Isotropic Case}\label{sec:num-exp-ellip-iso}

We consider the following elliptic optimal control problem defined on the bounded domain $\Omega := (0, 1) \times (0, 1) \subset \bR^2$:
\begin{equation}\label{eq:elliptic-opt-ctrl}
    \begin{aligned}
        & \underset{y\in L^2(\Omega), \, u\in L^2(\Omega)}{\min} \ &&\dfrac{1}{2} \|y - y_d\|_{L^2(\Omega)}^2 + \dfrac{\alpha}{2} \|u\|_{L^2(\Omega)}^2 + I_{U_{ad}}(u), \\
        & \qquad \ \text{s.t.} \ && \left\{
        \begin{aligned}
        & - \Delta y = u + f \quad &&\text{in}~\Omega, \\
        & y= 0 \quad &&\text{on}~\partial\Omega,
        \end{aligned}
        \right.
    \end{aligned}
\end{equation}
where $\alpha > 0$ is a regularization parameter and $I_{U_{ad}}(\cdot)$ is the indicator functional of the admissible set
\begin{equation*}
    U_{ad} = \{ u \in L^2(\Omega) \mid u_a(x) \leq u(x) \leq u_b(x) ~\text{a.e.~in}~\Omega \}
\end{equation*}
for some given $u_a, u_b \in L^\infty(\Omega)$.
Note that $I_{U_{ad}}(\cdot)$ equivalently imposes a control constraint $u \in U_{ad}$.

In this example, we consider the even more challenging case where $u_a$ and $u_b$ are also treated as parameters of \eqref{eq:elliptic-opt-ctrl} and vary across instances. This is allowed in implementation, since the neural network $\cQ_A^k$ in the \texttt{iUzawa-Net} admits $u_a$ and $u_b$ as inputs.
We fix the regularization parameter $\alpha = 0.01$ and apply the \texttt{iUzawa-Net} to learn the solution operator that maps the problem data $(y_d, f, u_a, u_b)$ to the optimal control $u^*$.
To generate the training and testing datasets, we first discretize the domain $\overline{\Omega}$ as a uniform $m \times m$ grid denoted by $\overline{\mathcal{D}}$.
For training, we set $m=64$ and sample $N = 16384$ functions $\{(y_d)_i\}_{i=1}^N$ and $\{f_i\}_{i=1}^N$ independently from the following Gaussian random fields (GRFs) with Mat\'ern covariance kernels \cite{lindgren2011explicit}:
\begin{equation*}
    \begin{aligned}
        & (y_d)_i \sim \mathcal{GR}(0, (-\Delta_D + 9I)^{-1.5}), \\
        & f_i \sim \frac{1}{\sqrt{2}} \left( \mathcal{GR}(0, (-\Delta_D + 9I)^{-1.5}) + \mathcal{GR}(0, (-\Delta_N + 9I)^{-1.5}) \right),
    \end{aligned}
\end{equation*}
where $\Delta_D$ and $\Delta_N$ denote the Laplacian operators on $\Omega$ with homogeneous Dirichlet and Neumann boundary conditions, respectively.
The control constraints $\{(u_a)_i\}_{i=1}^N$ and $\{(u_b)_i\}_{i=1}^N$ are generated as $(u_a)_i = \min\{a_i + v_i, 0\}$ and $(u_b)_i = \max\{b_i + w_i, 0\}$ for $i = 1, \ldots, N$, with
\begin{equation*}
    \begin{aligned}
        & a_i \sim \mathrm{Unif}(-10, -1), \quad b_i \sim \mathrm{Unif}(1, 10), \\
        & v_i, \, w_i \sim \frac{1}{\sqrt{2}} \left( \mathcal{GR}(0, (-\Delta_D + 9I)^{-2}) + \mathcal{GR}(0, (-\Delta_N + 9I)^{-2}) \right).
    \end{aligned}
\end{equation*}
For each generated data tuple $\left((y_d)_i, f_i, (u_a)_i, (u_b)_i\right)$, we compute the corresponding optimal control $u^*_i$ of \eqref{eq:opt-ctrl} using the SSN method detailed in \cite{ulbrich2002semismooth}.
The training set is thus formulated as $\{((y_d)_i, f_i, (u_a)_i, (u_b)_i; u^*_i)\}_{i=1}^N$.
A testing set of size $N=2048$ is generated following the same procedure.
To evaluate the generalization ability of the trained \texttt{iUzawa-Net} across different resolutions, we further generate two additional testing sets from the same distributions, each of size $N=2048$, on finer grids of resolutions $m=128$ and $256$, respectively.
For each generated dataset, the number of active optimal controls $u_i^*$ and the measure ratio $|A(u_i^*)| / |\Omega|$ for each active set $A(u_i^*)$ of each active\footnote{An optimal control $u^*$ is considered active if $u^*(x) = u_a(x)$ or $u^*(x) = u_b(x)$ for some $x \in \overline{\mathcal{D}}$. For each active $u^*$, its active set is defined by $A(u^*) := \{ x \in \Omega : u^*(x) = u_a(x) \text{~or~} u^*(x) = u_b(x) \}$.} $u_i^*$ are summarized in \Cref{tab:elliptic-active-set-stats}.
The presence of both active and inactive instances in the dataset ensures a comprehensive evaluation of \texttt{iUzawa-Net} across problems with active or redundant control constraint $u \in U_{ad}$.

\begin{table}[!ht]
    \centering
	\small
    \begin{tabular}{c c c c c}
		\toprule
		\multirow{2}[3]{*}{$m$} & \multicolumn{2}{c}{$\#$ of active $u_i^*$} & \multicolumn{2}{c}{$|A(u_i^*)| / |\Omega|$ for active $u_i^*$} \\
        \cmidrule(lr){2-3} \cmidrule(lr){4-5}
        ~ & Training & Testing & Training Mean & Testing Mean \\
		\midrule
		$64$ & $11450$ & $1422$ & $0.2116$ & $0.2128$ \\
		$128$ & --- & $1419$ & --- & $0.2130$ \\
		$256$ & --- & $1433$ & --- & $0.2256$ \\
		\bottomrule
	\end{tabular}
    \caption{Statistics of active optimal control and active sets in the training and testing datasets of different resolutions $m$. A dash (---) indicates that the corresponding data is not applicable.}
    \label{tab:elliptic-active-set-stats}
    \normalsize
\end{table}

We construct two \texttt{iUzawa-Net}s, called \texttt{iUzawa-Net-F} and \texttt{iUzawa-Net-S}, respectively.
The \texttt{iUzawa-Net-F} uses different neural networks $\cS^k$, $\cA^k$, $\cQ_A^k$, and $\cQ_S^k$ for each different layer $k$ (hence the suffix \texttt{-F} for free), while the \texttt{iUzawa-Net-S} employs weight tying across all layers (hence the suffix \texttt{-S} for shared).
The rationality of the weight tying construction is theoretically guaranteed by \Cref{thm:iuzawa-net-univ-approx-rec-reg}: Note that \Cref{assump:regularity}~\ref{assump:sol-oper-reg} and \ref{assump:prox-reg} follow from the discussions in \Cref{rem:reg-assump-validity}, and \Cref{assump:regularity}~\ref{assump:n-theta-data-reg} follows from the regularity of $(y_d)_i$, $f_i$, $(u_a)_i$, and $(u_b)_i$ induced by the Laplacian operators in the Mat\'ern convariance kernels.
We denote the two \texttt{iUzawa-Net}s uniformly as $\mathcal{T}(y_d, f, u_a, u_b; \theta_\cT)$, which shall not cause confusion by context.
We take $L=6$ layers for the \texttt{iUzawa-Net-F} and $L=10$ layers for the \texttt{iUzawa-Net-S}.
The configurations for the neural sub-networks are identical for the \texttt{iUzawa-Net-F} and \texttt{iUzawa-Net-S}, which are detailed as follows.
Each of $\cS^k$, $\cA^k$, and $\cQ_S^k$ admits a lifting dimension of $m_p = 8$ and a maximum of $k_{\text{max}} = 8$ truncated Fourier modes in each coordinate.
We apply $4$ Fourier layers in $\cS^k$ and $\cA^k$ with the GeLU activation function.
We set $\gamma = 10^{-6}$ in $\cQ_S^k$.
For $\cQ_A^k$, all hidden layers admit a width of $64$, and we take $\tau = 10^{-4}$.
All inputs are padded to a resolution of $72$ for handling non-periodic cases.
These configurations lead to a total of $272,094$ trainable parameters for the \texttt{iUzawa-Net-F} and $45,349$ parameters for the \texttt{iUzawa-Net-S}.

We train the \texttt{iUzawa-Net-F} and \texttt{iUzawa-Net-S} by minimizing the following empirical loss function of \eqref{eq:relative-l2-err-def}:
\begin{equation}\label{eq:relative-l2-err-emp-def}
    \mathcal{L}(\theta_\cT) = \frac{1}{N} \sum_{i=1}^N \frac{ \sum_{x \in \overline{\mathcal{D}}} \left| \mathcal{T}\left((y_d)_i, f_i, (u_a)_i, (u_b)_i; \theta_\cT \right) (x) - u^*_i(x) \right| }{ \max\left\{ \sum_{x \in \overline{\mathcal{D}}} \left| u^*_i(x) \right|, \, \varepsilon_L \right\}},
\end{equation}
where we set $\varepsilon_L = 10^{-8}$.
We employ the \texttt{AdamW} optimizer \cite{loshchilov2018decoupled} with a batch size of $64$ and train the networks for $300$ epochs.
The learning rate is initialized to $0.001$ and is decayed by a factor of $0.6$ every $30$ epochs.
The training process takes approximately $8 \times 10^{3}$ seconds for both the \texttt{iUzawa-Net-F} and \texttt{iUzawa-Net-S}.

To evaluate the performance of the trained models, we compute the following absolute and relative $L^2$-errors of the predicted optimal control $\hat{u}$:
\begin{equation}\label{eq:acc-abs-rel-def}
    \varepsilon_{\text{abs}}(\hat{u}) = \left\|\hat{u} - u^* \right\|_{L^2(\Omega)}, \quad \varepsilon_{\text{rel}}(\hat{u}) = \frac{\varepsilon_{\text{abs}}(\hat{u})}{\left\| u^* \right\|_{L^2(\Omega)}}.
\end{equation}
Furthermore, for comparison with benchmark operator learning methods for approximating the solution operator $T$, we test the numerical accuracy of two standard FNOs in \cite{li2021fourier} for approximating the solution operator of \eqref{eq:elliptic-opt-ctrl}.
One FNO is constructed with $3$ Fourier layers, $10$ Fourier modes, and a lifting dimension of $8$; and the other employs $4$ Fourier layers, $12$ Fourier modes, and a lifting dimension of $15$.
The two FNOs contain $45,881$ and $295,682$ neural network parameters, which are slightly higher than the \texttt{iUzawa-Net-S} and the \texttt{iUzawa-Net-F}, respectively.
The numerical results are reported in \Cref{tab:elliptic-numerical-results-all}.
Additionally, \Cref{fig:elliptic-results} displays the computed control and the pointwise error relative to the reference solution for a randomly selected instance from the testing set.

\begin{table}[!ht]
    \centering
    \small
    \begin{tabular}{c c c c c c}
        \toprule
        \multirow{2}{*}{Method} & \multirow{2}{*}{$m$}
        & \multicolumn{2}{c}{$\varepsilon_{\mathrm{rel}}(\hat{u})$}
        & \multicolumn{2}{c}{$\varepsilon_{\mathrm{abs}}(\hat{u})$} \\
        \cmidrule(lr){3-4} \cmidrule(lr){5-6}
        & & Mean & SD & Mean & SD \\
        \midrule
        \multirow{3}{*}{\texttt{iUzawa-Net-F}}
        & $64$  & $2.06\times10^{-3}$ & $1.03\times10^{-3}$ & $4.15\times10^{-3}$ & $1.91\times10^{-3}$ \\
        & $128$ & $2.31\times10^{-3}$ & $0.85\times10^{-3}$ & $4.87\times10^{-3}$ & $1.76\times10^{-3}$ \\
        & $256$ & $2.31\times10^{-3}$ & $0.79\times10^{-3}$ & $4.83\times10^{-3}$ & $1.61\times10^{-3}$ \\
        \midrule
        \multirow{3}{*}{\texttt{iUzawa-Net-S}}
        & $64$  & $3.58\times10^{-3}$ & $2.98\times10^{-3}$ & $6.85\times10^{-3}$ & $3.72\times10^{-3}$ \\
        & $128$ & $3.66\times10^{-3}$ & $2.76\times10^{-3}$ & $7.03\times10^{-3}$ & $3.69\times10^{-3}$ \\
        & $256$ & $3.69\times10^{-3}$ & $2.57\times10^{-3}$ & $7.33\times10^{-3}$ & $3.90\times10^{-3}$ \\
        \midrule
        \multirow{3}{*}{$3$-layer FNO}
        & $64$  & $1.44\times10^{-2}$ & $1.11\times10^{-2}$ & $2.97\times10^{-2}$ & $2.10\times10^{-2}$ \\
        & $128$ & $1.43\times10^{-2}$ & $1.08\times10^{-2}$ & $2.95\times10^{-2}$ & $2.15\times10^{-2}$ \\
        & $256$ & $1.44\times10^{-2}$ & $1.06\times10^{-2}$ & $3.05\times10^{-2}$ & $2.18\times10^{-2}$ \\
        \midrule
        \multirow{3}{*}{$4$-layer FNO}
        & $64$  & $1.07\times10^{-2}$ & $0.88\times10^{-2}$ & $1.58\times10^{-2}$ & $2.10\times10^{-2}$ \\
        & $128$ & $1.12\times10^{-2}$ & $0.82\times10^{-2}$ & $2.29\times10^{-2}$ & $1.56\times10^{-2}$ \\
        & $256$ & $1.12\times10^{-2}$ & $0.79\times10^{-2}$ & $2.23\times10^{-2}$ & $1.58\times10^{-2}$ \\
        \bottomrule
    \end{tabular}
    \caption{Comparison of numerical results for solving \eqref{eq:elliptic-opt-ctrl}.}
    \label{tab:elliptic-numerical-results-all}
\end{table}

\begin{figure}
    \centering
    \hfill
    \begin{subfigure}{0.32\textwidth}
        \includegraphics[width=\textwidth]{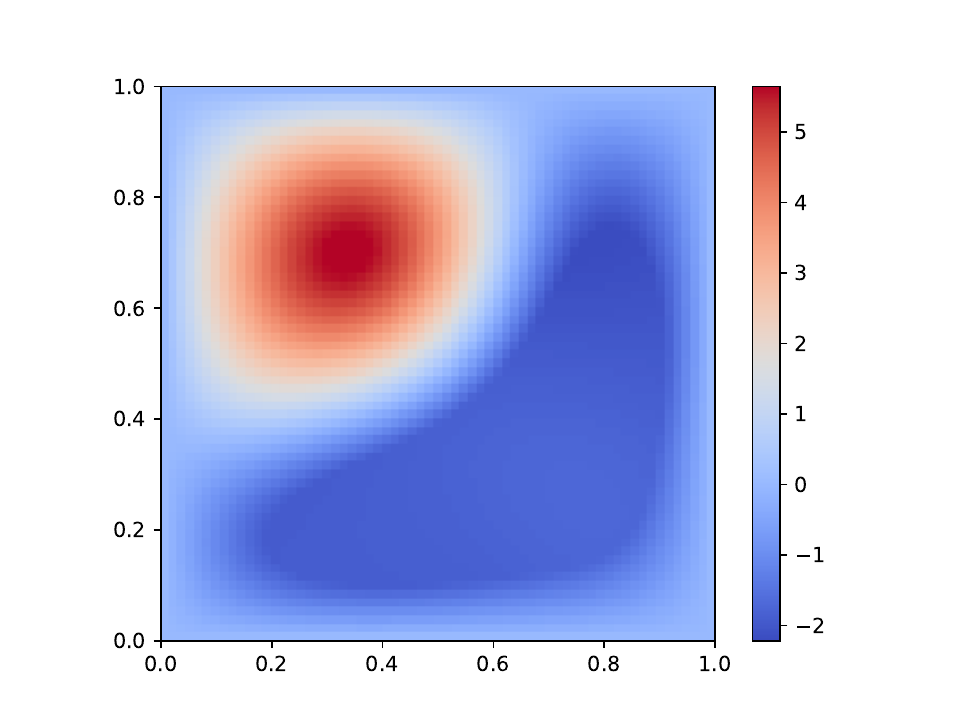}
        \subcaption{Exact solution $u^*$.}
    \end{subfigure}
    \hfill
    \begin{subfigure}{0.32\textwidth}
        \includegraphics[width=\textwidth]{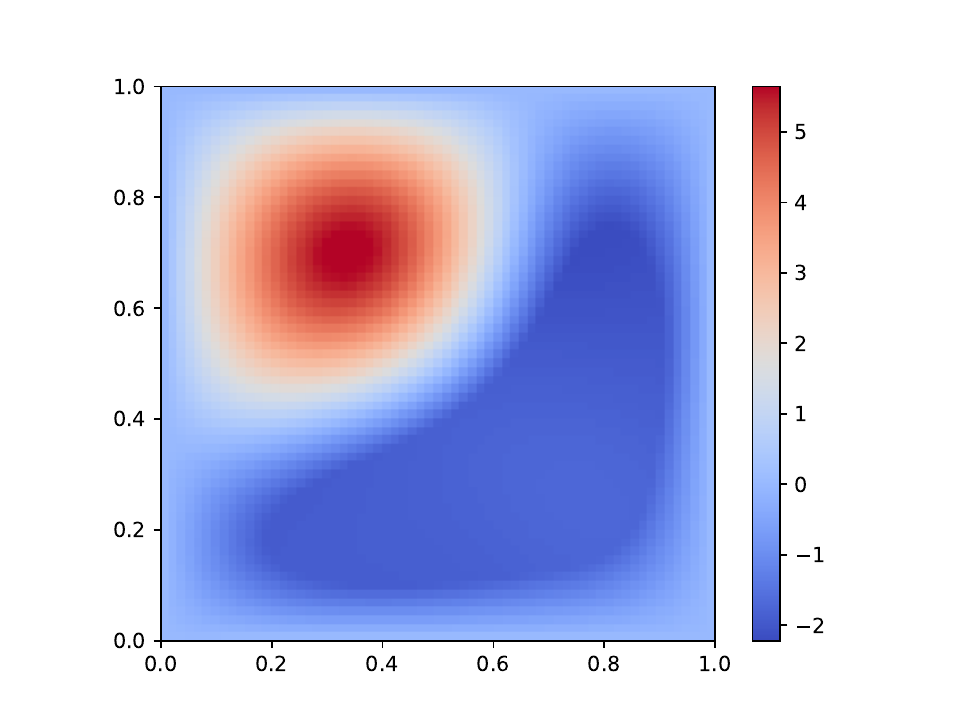}
        \subcaption{Computed solution $\hat{u}$.}
    \end{subfigure}
    \hfill
    \begin{subfigure}{0.32\textwidth}
        \includegraphics[width=\textwidth]{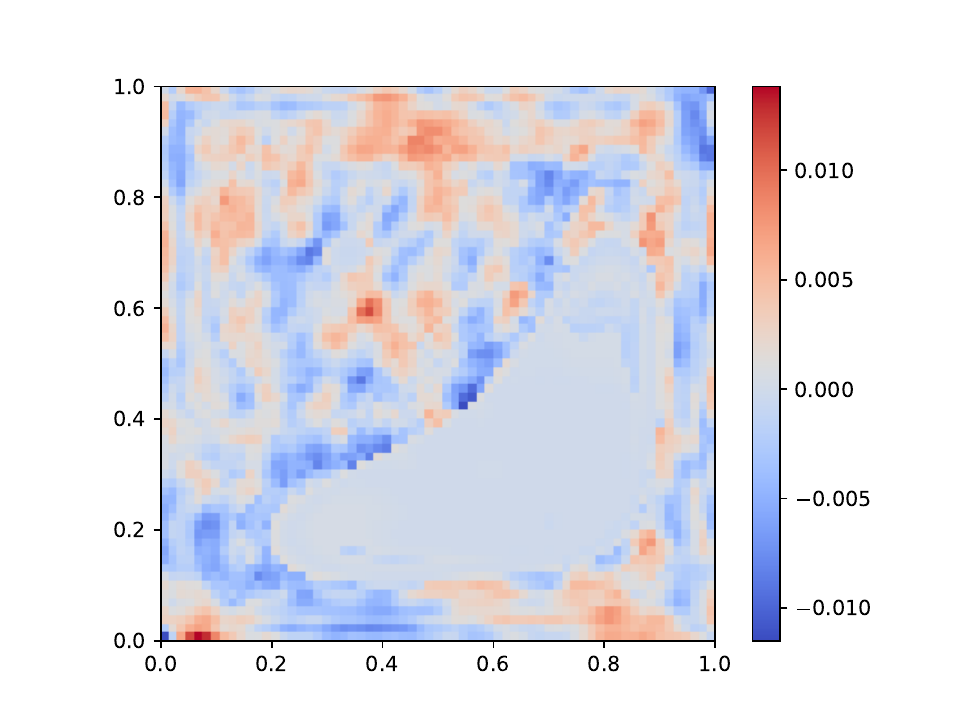}
        \subcaption{Difference $\hat{u} - u^*$.}
    \end{subfigure}
    \hfill
    \caption{Computed optimal control for a single instance of \eqref{eq:elliptic-opt-ctrl} with $m=64$.}
    \label{fig:elliptic-results}
\end{figure}

The results in \Cref{tab:elliptic-numerical-results-all} demonstrate that the \texttt{iUzawa-Net-F} and \texttt{iUzawa-Net-S} achieve satisfactory accuracy with few layers.
Compared with the FNOs, the proposed \texttt{iUzawa-Net}s yield significantly higher accuracy, highlighting the advantage of the optimization-informed network design.
Furthermore, the results confirm the excellent generalization capability of the \texttt{iUzawa-Net-F} and \texttt{iUzawa-Net-S} in a zero-shot super-resolution setting, i.e., a model trained on low-resolution data can output accurate predictions for high-resolution inputs without any retraining or fine-tuning.

To evaluate the computational efficiency, we compare the inference time of the trained \texttt{iUzawa-Net}s with the runtimes of three established FEM-based methods: the SSN method in \cite{porcelli2015preconditioning}, the primal-dual method in \cite{chambolle2011first,he2012convergence}, and the inexact Uzawa method \eqref{eq:inexact-uzawa}.
For the primal-dual method, the stepsizes of $3.5 \times 10^{2}$ and $1$ are taken in each primal and dual update, respectively.
The SSN method and the inexact Uzawa method do not require specific stepsize tuning.
For the inexact Uzawa method, we adopt the preconditioning strategy from \cite[Section 5]{song2019inexact}.
To ensure a fair comparison, matrices in the linear systems are pre-factorized prior to timing for all compared methods where applicable.
The termination criteria for each method are set as $\varepsilon_{\text{rel}} \leq \varepsilon_\text{rtol}$, where $\varepsilon_{\text{rel}}$ is defined in \eqref{eq:acc-abs-rel-def}. We take $\varepsilon_\text{rtol} = 2 \times 10^{-3}$, which matches the accuracy achieved by the trained \texttt{iUzawa-Net-F} and \texttt{iUzawa-Net-S}.
The average computation time per instance for various resolutions $m$ is reported in \Cref{tab:elliptic-computation-time}.
In addition, we present the number of iterations or layer required for each numerical method in \Cref{tab:elliptic-iter}.

For $m=64$, the performance of the \texttt{iUzawa-Net-F} and \texttt{iUzawa-Net-S} is comparable to that of the traditional iterative methods.
However, as the resolution increases, the inference time of both the \texttt{iUzawa-Net-F} and \texttt{iUzawa-Net-S} remains nearly constant, whereas the computational cost of the iterative solvers grows significantly. Consequently, the two \texttt{iUzawa-Net}s demonstrate superior efficiency on finer resolutions for achieving a comparable level of accuracy.

\begin{table}[!ht]
    \centering
	\small
    \begin{tabular}{c c c c c c}
		\toprule
		$m$ & SSN & Primal-Dual & Inexact Uzawa & \texttt{iUzawa-Net-F} & \texttt{iUzawa-Net-S} \\
		\midrule
		$64$ & $0.0792$ & $0.0251$ & $0.0427$ & $0.0327$ & $0.0546$ \\
		$128$ & $0.1767$ & $0.2187$ & $0.1771$ & $0.0348$ & $0.0547$ \\
		$256$ & $0.7739$ & $0.9660$ & $0.3550$ & $0.0439$ & $0.0547$  \\
		\bottomrule
	\end{tabular}
    \caption{Computation time (seconds) of the SSN method, the primal-dual method, the inexact Uzawa method, the \texttt{iUzawa-Net-F}, and the \texttt{iUzawa-Net-S} for solving \eqref{eq:elliptic-opt-ctrl} under different resolutions $m$ (averaged over instances in the testing sets).}
    \label{tab:elliptic-computation-time}
    \normalsize
\end{table}

\begin{table}[!ht]
    \centering
	\small
    \begin{tabular}{c c c c c c}
		\toprule
		$m$ & SSN & Primal-Dual & Inexact Uzawa & \texttt{iUzawa-Net-F} & \texttt{iUzawa-Net-S} \\
		\midrule
		$64$ & $2.59$ & $10.77$ & $17.94$ & $6$ & $10$ \\
		$128$ & $1.91$ & $10.78$ & $7.71$ & $6$ & $10$ \\
		$256$ & $1.94$ & $10.78$ & $6.57$ & $6$ & $10$ \\
		\bottomrule
	\end{tabular}
    \caption{Number of iterations or layers of the SSN method, the primal-dual method, the inexact Uzawa method, the \texttt{iUzawa-Net-F}, and the \texttt{iUzawa-Net-S} for solving \eqref{eq:elliptic-opt-ctrl} under different resolutions $m$ (averaged over instances in the testing sets).}
    \label{tab:elliptic-iter}
    \normalsize
\end{table}

\subsection{Elliptic Optimal Control: Anisotropic Case}\label{sec:num-exp-ellip-ansio}
To further demonstrate the advantages of data-driven preconditioner training in the \texttt{iUzawa-Net}, we consider an ill-conditioned elliptic optimal control problem governed by an anisotropic elliptic operator.
Specifically, let $\Omega = (0, 1) \times (0, 1) \subset \bR^2$ and consider the following problem with a Neumann boundary condition:
\begin{equation}\label{eq:elliptic-opt-ctrl-illcond}
    \begin{aligned}
        & \underset{y\in L^2(\Omega), \, u\in L^2(\Omega)}{\min} \ &&\dfrac{1}{2} \|y - y_d\|_{L^2(\Omega)}^2 + \dfrac{\alpha}{2} \|u\|_{L^2(\Omega)}^2 + I_{U_{ad}}(u), \\
        & \qquad \ \text{s.t.} \ && \left\{
        \begin{aligned}
        & - \nabla \cdot (a \nabla y) + cy = u + f \quad &&\text{in}~\Omega, \\
        & \frac{\partial y}{\partial \mathbf{n}} = 0 \quad &&\text{on}~\partial\Omega,
        \end{aligned}
        \right.
    \end{aligned}
\end{equation}
where $a \in \bR^{2 \times 2}$ is a symmetric and positive definite matrix, $c>0$ is a constant, and $\mathbf{n}$ denotes the outward unit normal vector to $\partial \Omega$.
Other symbols are defined as in \Cref{sec:num-exp-ellip-iso}.
In this example, we set
\begin{equation*}
    a = \begin{pmatrix}
    1 & 0 \\
    0 & 100
\end{pmatrix}, \quad c = 1, \quad \text{~and~} \alpha = 0.01.
\end{equation*}
The large contrast in the eigenvalues of the coefficient matrix $a$ introduces strong anisotropy, making the problem ill-conditioned and numerically challenging.

As in \Cref{sec:num-exp-ellip-iso}, we treat $u_a$ and $u_b$ as problem parameters alongside $y_d$ and $f$.
The training and testing data are generated using the strategy described in \Cref{sec:num-exp-ellip-iso}, with the exception that $(y_d)_i$ is sampled from a Gaussian random field constructed to have non-vanishing boundary values:
\begin{equation*}
    (y_d)_i \sim \frac{1}{\sqrt{2}} \left( \mathcal{GR}(0, (-\Delta_D + 9I)^{-1.5}) + \mathcal{GR}(0, (-\Delta_N + 9I)^{-1.5}) \right),
\end{equation*}
We generate a training set of size $N = 16384$ with resolution $m=64$.
The testing set consists of three groups with resolutions $m = 64, \, 128, \, 256$ and sizes $N = 2048, \, 64, \, 64$, respectively.

Two neural networks, \texttt{iUzawa-Net-F} and \texttt{iUzawa-Net-S}, are constructed using the same configuration as described in \Cref{sec:num-exp-ellip-iso}.
We train the \texttt{iUzawa-Net-F} and \texttt{iUzawa-Net-S} by minimizing the empirical loss function defined in \eqref{eq:relative-l2-err-emp-def} using the \texttt{AdamW} optimizer with a batch size of $64$ for $300$ epochs.
The learning rate is initialized as $0.002$ and decays by a factor of $0.6$ every $30$ epochs.
The total training time is approximately $8 \times 10^{3}$ seconds for each model.

We include in \Cref{fig:elliptic-illcond-results} the computed control and the pointwise difference relative to the reference solution for a randomly selected instance from the testing set.
The numerical errors of the computed solutions defined in \eqref{eq:acc-abs-rel-def} are reported in \Cref{tab:elliptic-illcond-numerical-results-all}, which demonstrate that both the \texttt{iUzawa-Net-F} and \texttt{iUzawa-Net-S} achieve satisfactory accuracy despite the anisotropy of the problem.
We compare the inference time of the \texttt{iUzawa-Net}s with the runtimes of the SSN method in \cite{porcelli2015preconditioning}, the primal-dual method in \cite{chambolle2011first,he2012convergence}, and the inexact Uzawa method in \cite{song2019inexact}.
The termination criteria are set to be the same as those in \Cref{sec:num-exp-ellip-iso} with $\varepsilon_{\text{rtol}} = 4 \times 10^{-3}$, matching the relative accuracy of the trained neural networks.
For the primal-dual method, we observe that small stepsizes are necessary to ensure convergence, and we take $2.0$ for each primal update and $0.4$ for each dual update.
The results are reported in \Cref{tab:elliptic-illcond-computation-time}.
The performance of both the SSN method and the primal–dual method deteriorates significantly under the ill-conditioned setting.
The inexact Uzawa method, despite using a multigrid-based preconditioner, also suffers from the anisotropy.
However, the inference time of the two \texttt{iUzawa-Net}s is nearly identical to that reported in \Cref{sec:num-exp-ellip-iso}.
This confirms that the superioty in efficiency of the \texttt{iUzawa-Net} is even more substantial for the ill-conditioned case.

\begin{figure}
    \centering
    \hfill
    \begin{subfigure}{0.32\textwidth}
        \includegraphics[width=\textwidth]{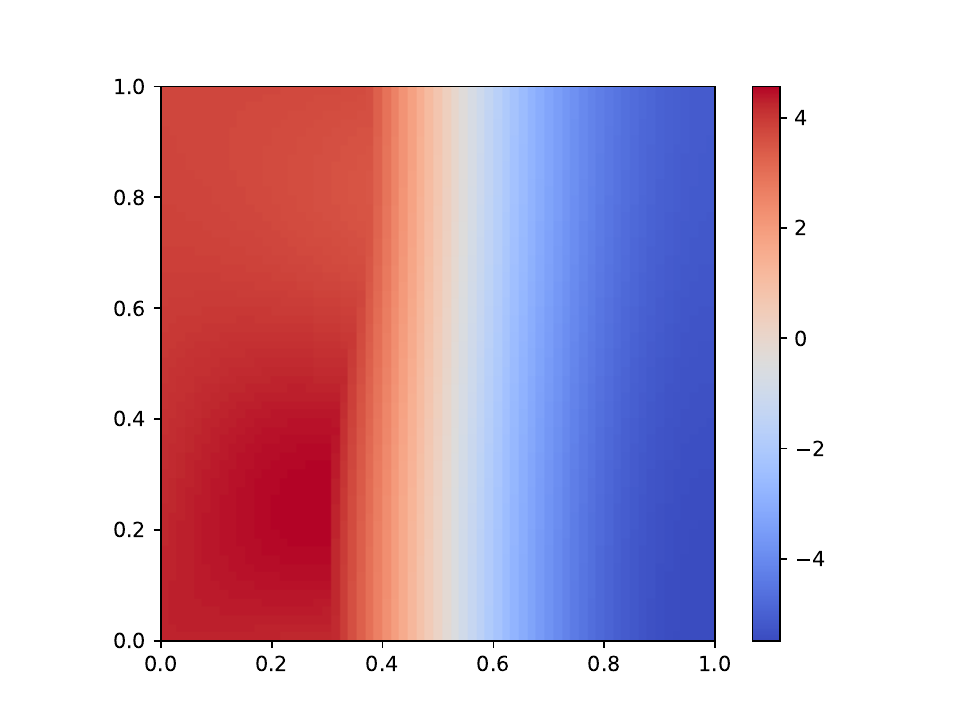}
        \subcaption{Exact solution $u^*$.}
    \end{subfigure}
    \hfill
    \begin{subfigure}{0.32\textwidth}
        \includegraphics[width=\textwidth]{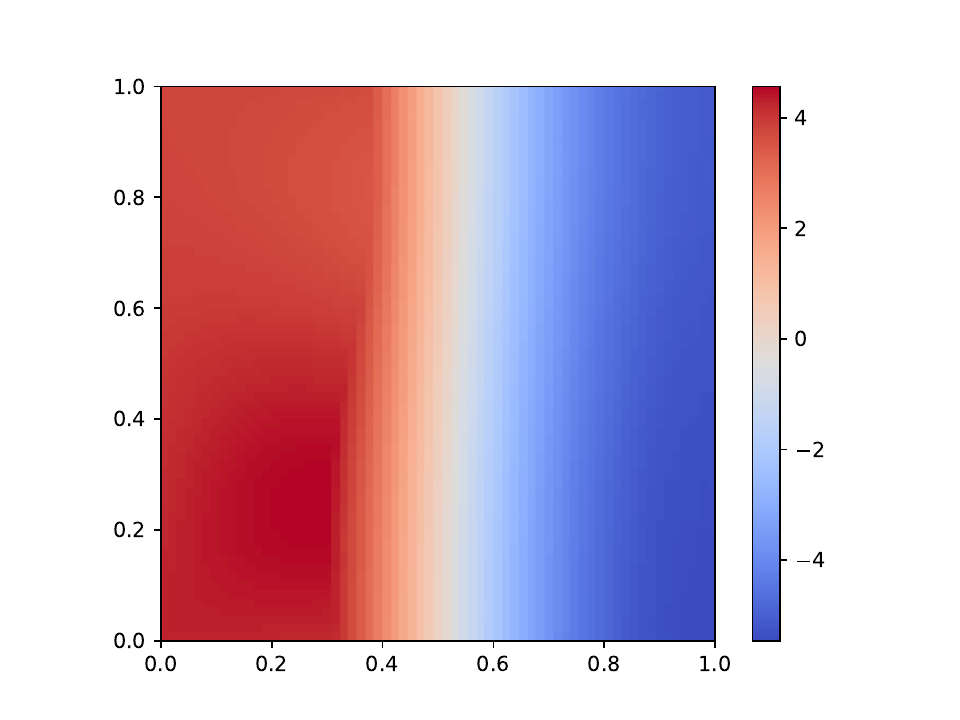}
        \subcaption{Computed solution $\hat{u}$.}
    \end{subfigure}
    \hfill
    \begin{subfigure}{0.32\textwidth}
        \includegraphics[width=\textwidth]{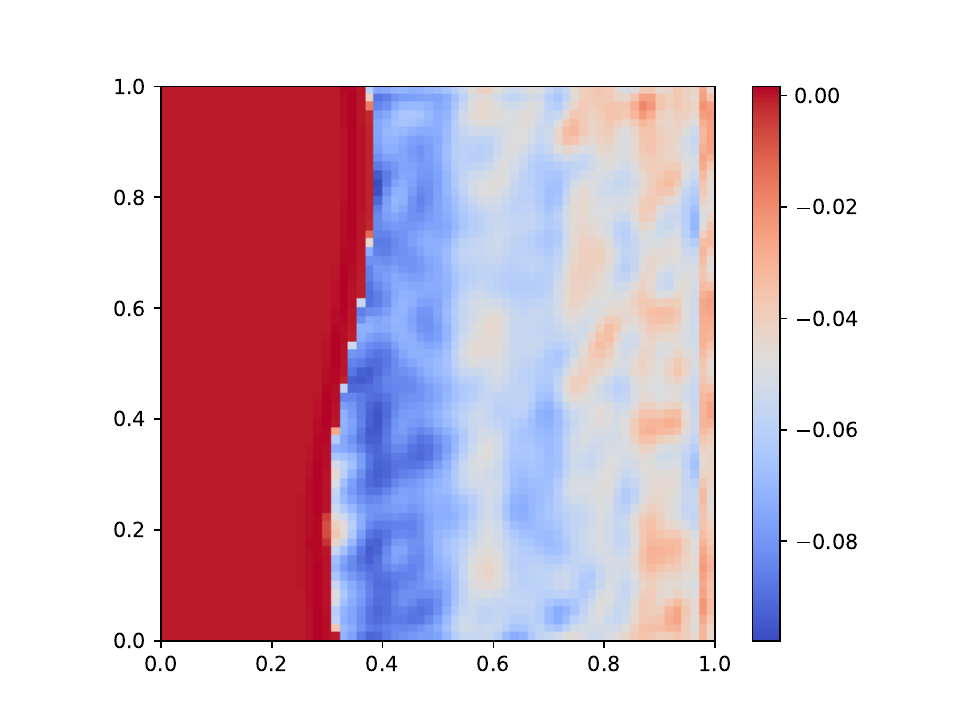}
        \subcaption{Difference $\hat{u} - u^*$.}
    \end{subfigure}
    \hfill
    \caption{Computed optimal control for a single instance of \eqref{eq:elliptic-opt-ctrl-illcond} with $m=64$.}
    \label{fig:elliptic-illcond-results}
\end{figure}

\begin{table}[!ht]
    \centering
    \small
    \begin{tabular}{c c c c c c}
        \toprule
        \multirow{2}{*}{Method} & \multirow{2}{*}{$m$}
        & \multicolumn{2}{c}{$\varepsilon_{\mathrm{rel}}(\hat{u})$}
        & \multicolumn{2}{c}{$\varepsilon_{\mathrm{abs}}(\hat{u})$} \\
        \cmidrule(lr){3-4} \cmidrule(lr){5-6}
        & & Mean & SD & Mean & SD \\
        \midrule
        \multirow{3}{*}{\texttt{iUzawa-Net-F}}
        & $64$  & $7.05 \times 10^{-3}$ & $9.29 \times 10^{-3}$ & $1.86 \times 10^{-2}$ & $1.91 \times 10^{-2}$ \\
        & $128$ & $6.41 \times 10^{-3}$ & $6.11 \times 10^{-3}$ & $1.72 \times 10^{-2}$ & $1.64 \times 10^{-2}$ \\
        & $256$ & $5.48 \times 10^{-3}$ & $3.84 \times 10^{-3}$ & $1.45 \times 10^{-2}$ & $1.02 \times 10^{-2}$ \\
        \midrule
        \multirow{3}{*}{\texttt{iUzawa-Net-S}}
        & $64$  & $4.60 \times 10^{-3}$ & $5.47 \times 10^{-3}$ & $1.12 \times 10^{-2}$ & $8.82 \times 10^{-3}$ \\
        & $128$ & $4.21 \times 10^{-3}$ & $3.00 \times 10^{-3}$ & $1.06 \times 10^{-2}$ & $7.60 \times 10^{-3}$ \\
        & $256$ & $3.95 \times 10^{-3}$ & $1.83 \times 10^{-3}$ & $9.93 \times 10^{-3}$ & $4.88 \times 10^{-3}$ \\
        \bottomrule
    \end{tabular}
    \caption{Numerical results for solving \eqref{eq:elliptic-opt-ctrl-illcond}.}
    \label{tab:elliptic-illcond-numerical-results-all}
\end{table}

\begin{table}[!ht]
    \centering
	\small
    \begin{tabular}{c c c c c c}
		\toprule
		$m$ & SSN & Primal-Dual & Inexact Uzawa & \texttt{iUzawa-Net-F} & \texttt{iUzawa-Net-S} \\
		\midrule
		$64$ & $4.2924$ & $1.1408$ & $0.1711$ & $0.0314$ & $0.0521$ \\
		$128$ & $21.3961$ & $8.7064$ & $1.1660$ & $0.0356$ & $0.0561$ \\
		$256$ & $60.6969$  & $23.0420$ & $4.9387$ & $0.0421$ & $0.0564$ \\
		\bottomrule
	\end{tabular}
    \caption{Computation time (seconds) of the SSN method, the primal-dual method, the inexact Uzawa method, the \texttt{iUzawa-Net-F}, and the \texttt{iUzawa-Net-S} for solving \eqref{eq:elliptic-opt-ctrl-illcond} with different grid resolutions $m$ (averaged over instances in the testing sets).}
    \label{tab:elliptic-illcond-computation-time}
    \normalsize
\end{table}

Finally, we demonstrate the number of iterations (averaged over testing sets) required for the compared methods in \Cref{tab:elliptic-illcond-iter}.
For achieving the same level of accuracy, the \texttt{iUzawa-Net}s requires significantly fewer layers than the number of outer iterations required by the primal-dual method (without preconditioner) and the inexact Uzawa method (with manually-designed multigrid-based preconditioner).
This verifies the numerical advantages of the data-driven and automatic preconditioner-training machanism of the \texttt{iUzawa-Net}.
It is noteworthy that the SSN method requires fewer outer iterations due to its superlinear convergence property; however, this is at the cost of more computationally demanding inner iterations for solving the resulting algebraic systems, as reflected by the computation time in \Cref{tab:elliptic-illcond-computation-time}.

\begin{table}[!ht]
    \centering
	\small
    \begin{tabular}{c c c c c c}
		\toprule
		$m$ & SSN & Primal-Dual & Inexact Uzawa & \texttt{iUzawa-Net-F} & \texttt{iUzawa-Net-S} \\
		\midrule
		$64$ & $3.64$ & $209.23$ & $29.06$ & $6$ & $10$ \\
		$128$ & $2.86$ & $214.19$ & $21.61$ & $6$ & $10$ \\
		$256$ & $3.08$ & $215.75$ & $22.02$ & $6$ & $10$ \\
		\bottomrule
	\end{tabular}
    \caption{Number of iterations or layers of the SSN method, the primal-dual method, the inexact Uzawa method, the \texttt{iUzawa-Net-F}, and the \texttt{iUzawa-Net-S} for solving \eqref{eq:elliptic-opt-ctrl-illcond} under different resolutions $m$ (averaged over instances in the testing sets).}
    \label{tab:elliptic-illcond-iter}
    \normalsize
\end{table}

\subsection{Parabolic Optimal Control}

We now extend our investigation to the following parabolic optimal control problem defined on the spatial-temporal domain $\Omega_T := \Omega \times [0, T_f]$ with $\Omega := (0, 1) \times (0, 1) \subset \mathbb{R}^2$ and $T_f = 1$:
\begin{equation}\label{eq:parabolic-opt-ctrl}
    \begin{aligned}
        & \underset{y\in L^2(\Omega_T), \, u\in L^2(\Omega_T)}{\min} \ && \dfrac{1}{2} \|y - y_d\|_{L^2(\Omega_T)}^2 + \dfrac{\alpha}{2} \|u\|_{L^2(\Omega_T)}^2 + \beta \|u\|_{L^1(\Omega_T)} + I_{U_{ad}}(u), \\
        & \qquad \qquad \ \text{s.t.} \ && \left\{
        \begin{aligned}
        & \frac{\partial y}{\partial t} - \Delta y = u + f\quad &&\text{in}~\Omega_T, \\
        & y= 0 \quad&& \text{on}~\partial\Omega_T,  \\
        & y(0) = 0\quad &&\text{in}~\Omega,
        \end{aligned}
        \right.
    \end{aligned}
\end{equation}
where $\alpha > 0$, $\beta \geq 0$ are regularization parameters. The term $\beta \|u\|_{L^1(\Omega_T)}$ introduces sparsity in the optimal control, and the indicator functional $I_{U_{ad}}$ imposes a control constraint $u \in U_{ad}$, where the admissible set $U_{ad}$ is defined by
\begin{equation}
    U_{ad} = \{ u \in L^2(\Omega_T) \mid u_a(x, t) \leq u(x, t) \leq u_b(x, t) ~\text{a.e.~in~} \Omega \times [0, T_f] \}
\end{equation}
for some $u_a, u_b \in L^\infty(\Omega_T)$.

In this numerical example, we set the parameters as $\alpha = 0.01$, $\beta = 0.01$, $u_a = -6$, and $u_b = 6$. We then apply the \texttt{iUzawa-Net} to learn the solution operator that maps the problem data $(y_d, f)$ to the sparse optimal control $u^*$.
For data generation, the spacial domain $\Omega$ is discretized as a uniform grid with a resolution of $m \times m$, and the time interval $[0, T_f]$ is discretized as $m_T$ uniform time steps.
We generate a training set of $N=16384$ samples with $m = m_T = 32$ by drawing functions $\{(y_d)_i\}_{i=1}^N$ and $\{f_i\}_{i=1}^N$ independently from the GRF
\begin{equation*}
    (y_d)_i, \ f_i \sim \frac{1}{\sqrt{2}} \left( \mathcal{GR}(0, (-\Delta_D + 9I)^{-1.5}) + \mathcal{GR}(0, (-\Delta_N + 9I)^{-1.5}) \right),
\end{equation*}
where $\Delta_D$ and $\Delta_N$ now denote the Laplacian operators on $\Omega_T$ with homogeneous Dirichlet and Neumann boundary conditions, respectively. For each data pair $((y_d)_i, f_i)$, the corresponding optimal control $u^*_i$ is computed using a high-fidelity SSN algorithm \cite{ulbrich2002semismooth}, providing reference solutions for training.
A testing set of $N=2048$ samples with $m = m_T = 32$ is generated following the same procedure.
To assess the model's zero-shot super-resolution generalization capability, we construct three additional testing sets of sizes $N=64, \, 64, \, 16$ with finer resolutions $m = m_T = 64, \, 128, \, 256$, respectively.

We construct two \texttt{iUzawa-Net}s, named \texttt{iUzawa-Net-F} and \texttt{iUzawa-Net-S}, and denote them uniformly by $\mathcal{T}(y_d, f; \theta_\cT)$.
The \texttt{iUzawa-Net-F} employs distinct weights for the neural networks $\cS^k$, $\cA^k$, $\cQ_A^k$, and $\cQ_S^k$ in each layer $k$, while the \texttt{iUzawa-Net-S} uses shared weights across all layers.
We take $L = 5$ layers for both the \texttt{iUzawa-Net-F} and the \texttt{iUzawa-Net-S}.
The architecture of the modules is specified as follows.
We adopt $3$ layers in the FNOs $\cS^k$ and $\cA^k$ with a lifting dimension $m_p = 8$.
For each Fourier layer, the truncated Fourier modes are set to $k_{\text{max}} = 8$ in each coordinate, and the GeLU activation function is applied.
For $\cQ_S^k$, the lifting dimension is set to $m_p = 8$, the truncated Fourier modes are set to $k_{\text{max}} = 8$ in each coordinate, and $\gamma = 10^{-6}$.
We set the width of all hidden layers of $\cQ_A$ to $m_p = 64$.
We take $\tau = 10^{-4}$ for both the \texttt{iUzawa-Net-F} and the \texttt{iUzawa-Net-S}.
All inputs are padded to a resolution of $36$ for accommodating non-periodic boundary conditions.
The total number of trainable parameters are $3,360,225$ and $672,045$ for the \texttt{iUzawa-Net-F} and the \texttt{iUzawa-Net-S}, respectively.
All the learnable parameters are initialized using the default settings of \texttt{PyTorch}.

The networks are trained by minimizing the following empirical loss function of \eqref{eq:relative-l2-err-def}:
\begin{equation*}
    \mathcal{L}(\theta_\cT) = \frac{1}{N} \sum_{i=1}^N \frac{ \sum_{x \in \overline{\mathcal{D}}} \left| \mathcal{T}\left((y_d)_i, f_i; \theta_\cT \right) (x) - u^*_i(x) \right| }{ \max \left\{ \sum_{x \in \overline{\mathcal{D}}} \left| u^*_i(x) \right|, \, \varepsilon_L \right\}}
\end{equation*}
with $\varepsilon_L = 10^{-8}$.
We use the \texttt{AdamW} optimizer for $300$ epochs with a batch size of $32$. The learning rate is initialized to $5 \times 10^{-4}$ and is decayed by a factor of $0.6$ every $30$ epochs.
The training process takes approximately $2.8 \times 10^{4}$ seconds for both the \texttt{iUzawa-Net-F} and \texttt{iUzawa-Net-S}.

After training, the performance of the trained models are evaluated on the testing sets using the following absolute and relative $L^2$-errors:
\begin{equation*}
    \varepsilon_{\text{abs}}(\hat{u}) = \left\|\hat{u} - u^* \right\|_{L^2(\Omega_T)}, \quad \varepsilon_{\text{rel}}(\hat{u}) = \frac{\varepsilon_{\text{abs}}(\hat{u})}{\left\| u^* \right\|_{L^2(\Omega_T)}}.
\end{equation*}
The numerical results are summarized in \Cref{tab:parabolic-numerical-results-uzawa}, and the solution of a randomly chosen instance from the testing set is visualized in \Cref{fig:parabolic-results}.
These results affirm that both the \texttt{iUzawa-Net-F} and the \texttt{iUzawa-Net-S} achieve high accuracy for solving the parabolic optimal control problem \eqref{eq:parabolic-opt-ctrl}. Notably, the data presented in \Cref{tab:parabolic-numerical-results-uzawa} highlights the models' zero-shot generalization capability, as they perform well on spatial and temporal resolutions unseen during training.

\begin{table}[!ht]
    \centering
    \small
    \begin{tabular}{c c c c c c}
        \toprule
        \multirow{2}{*}{Method} & \multirow{2}{*}{$m=m_T$}
        & \multicolumn{2}{c}{$\varepsilon_{\mathrm{rel}}(\hat{u})$}
        & \multicolumn{2}{c}{$\varepsilon_{\mathrm{abs}}(\hat{u})$} \\
        \cmidrule(lr){3-4} \cmidrule(lr){5-6}
        & & Mean & SD & Mean & SD \\
        \midrule
        \multirow{4}{*}{\texttt{iUzawa-Net-F}}
        & $32$  & $3.39\times10^{-3}$ & $1.64\times10^{-3}$ & $7.14\times10^{-3}$ & $2.08\times10^{-3}$ \\
        & $64$  & $1.41\times10^{-2}$ & $3.39\times10^{-3}$ & $3.00\times10^{-2}$ & $8.97\times10^{-3}$ \\
        & $128$ & $1.99\times10^{-2}$ & $3.88\times10^{-3}$ & $4.72\times10^{-2}$ & $1.44\times10^{-2}$ \\
        & $256$ & $2.19\times10^{-2}$ & $3.37\times10^{-3}$ & $5.96\times10^{-2}$ & $2.19\times10^{-2}$ \\
        \midrule
        \multirow{4}{*}{\texttt{iUzawa-Net-S}}
        & $32$  & $4.97\times10^{-3}$ & $2.84\times10^{-3}$ & $1.02\times10^{-2}$ & $3.19\times10^{-3}$ \\
        & $64$  & $1.52\times10^{-2}$ & $4.12\times10^{-3}$ & $3.31\times10^{-2}$ & $9.84\times10^{-3}$ \\
        & $128$ & $2.11\times10^{-2}$ & $3.86\times10^{-3}$ & $4.77\times10^{-2}$ & $1.43\times10^{-2}$ \\
        & $256$ & $2.25\times10^{-2}$ & $3.46\times10^{-3}$ & $6.07\times10^{-2}$ & $2.11\times10^{-2}$ \\
        \bottomrule
    \end{tabular}
    \caption{Numerical results of \texttt{iUzawa-Net-F} and \texttt{iUzawa-Net-S} for solving \eqref{eq:parabolic-opt-ctrl}.}
    \label{tab:parabolic-numerical-results-uzawa}
\end{table}

\begin{figure}[!ht]
    \centering
    \hfill
    \begin{subfigure}{0.32\textwidth}
        \includegraphics[width=\textwidth]{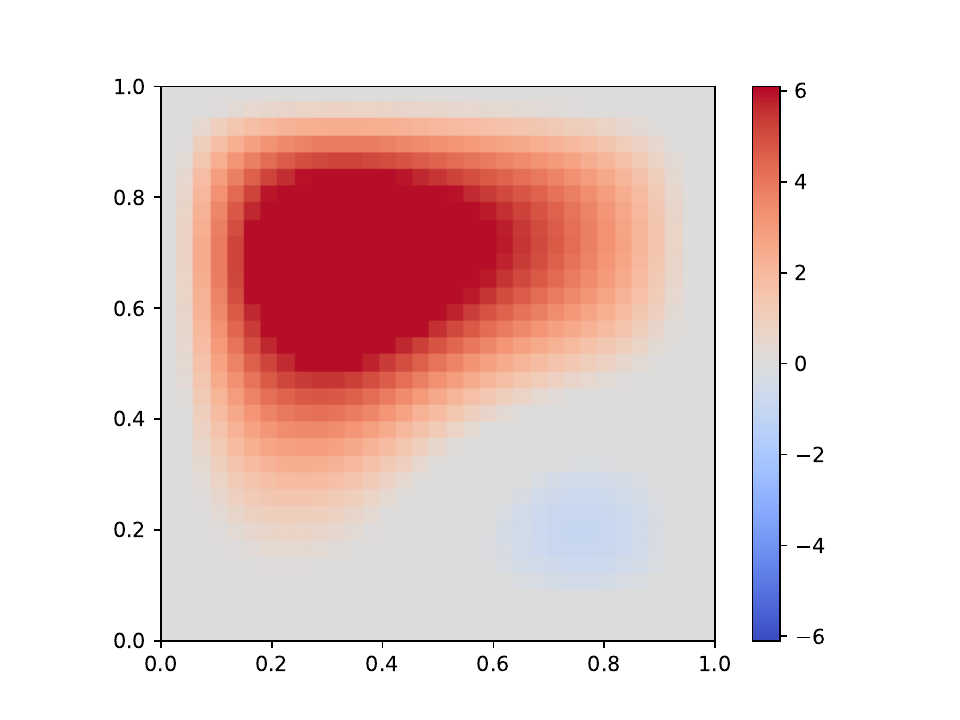}
        \subcaption{Exact $u^*$ at $t=0.25$.}
    \end{subfigure}
    \hfill
    \begin{subfigure}{0.32\textwidth}
        \includegraphics[width=\textwidth]{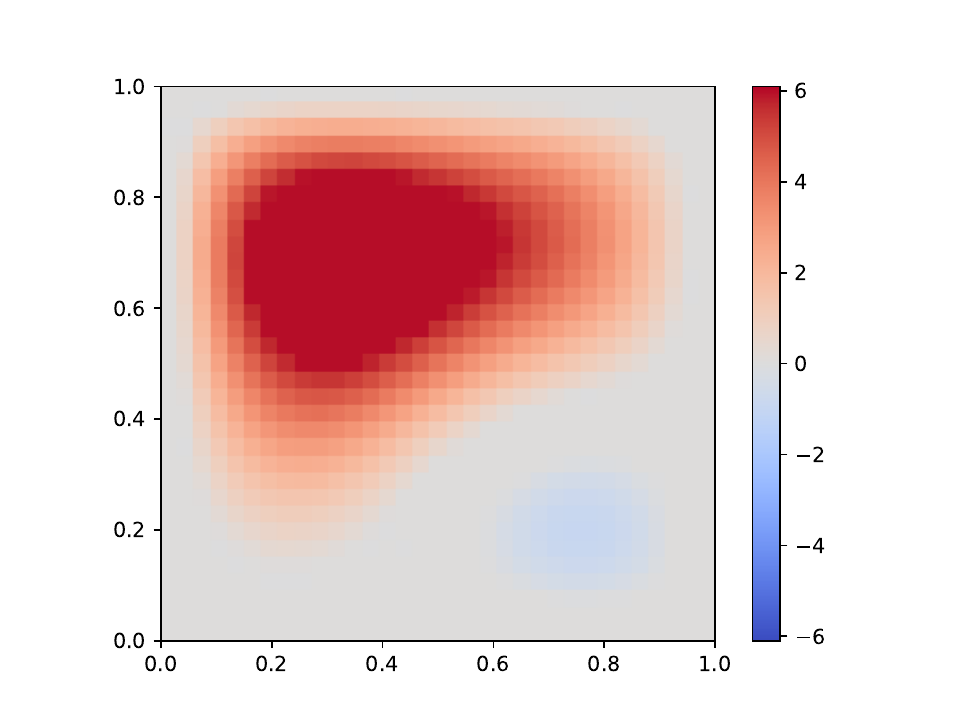}
        \subcaption{Computed $\hat{u}$ at $t=0.25$.}
    \end{subfigure}
    \hfill
    \begin{subfigure}{0.32\textwidth}
        \includegraphics[width=\textwidth]{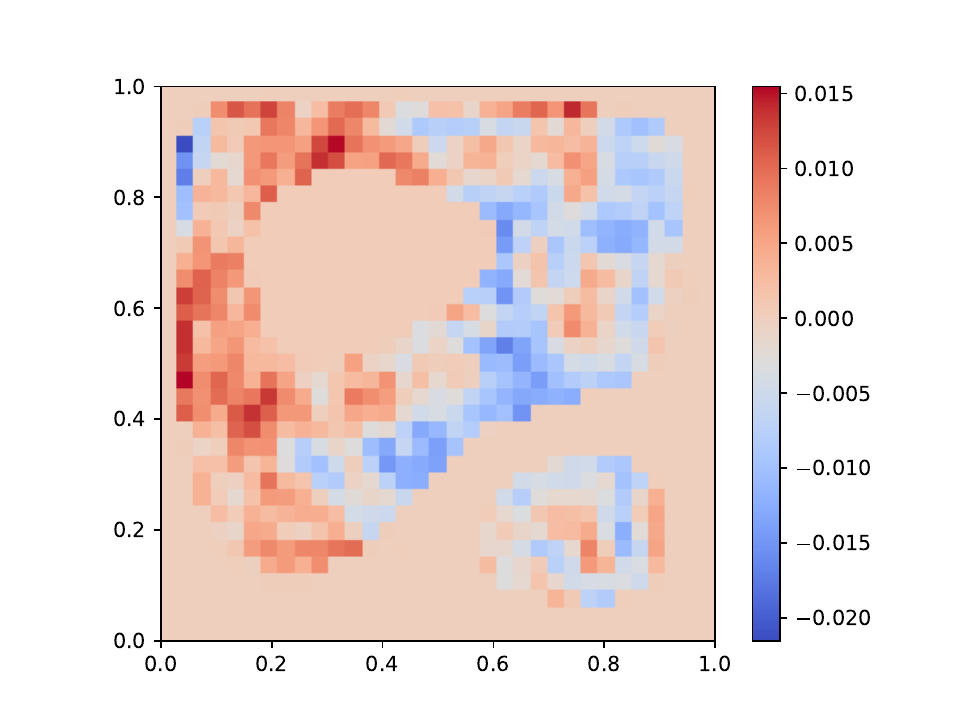}
        \subcaption{$\hat{u} - u^*$ at $t=0.25$.}
    \end{subfigure}
    \hfill

    \hfill
    \begin{subfigure}{0.32\textwidth}
        \includegraphics[width=\textwidth]{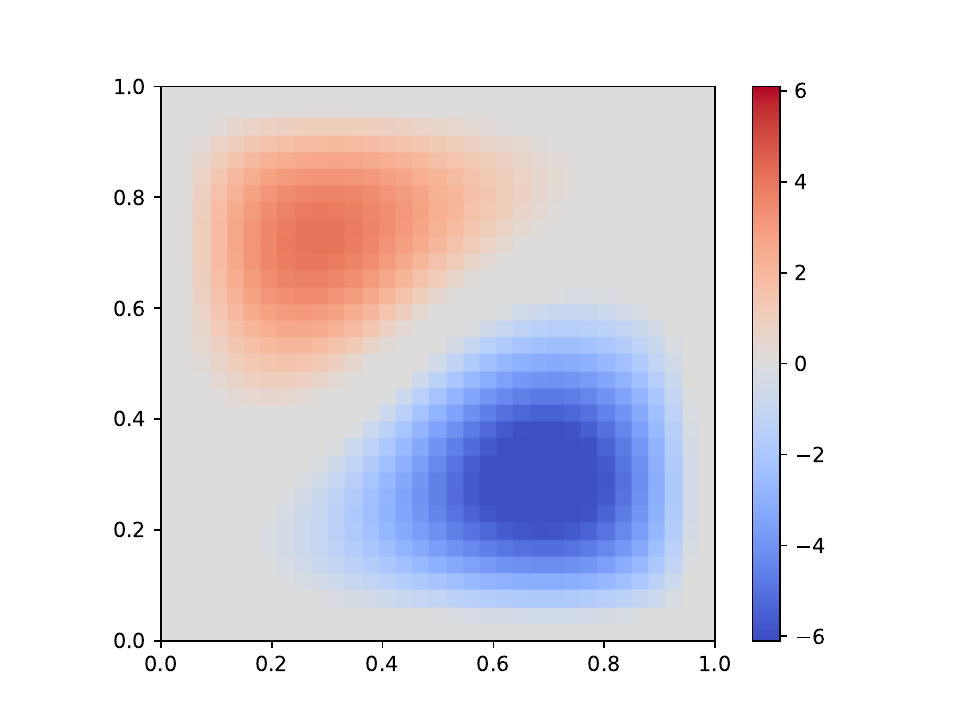}
        \subcaption{Exact $u^*$ at $t=0.5$.}
    \end{subfigure}
    \hfill
    \begin{subfigure}{0.32\textwidth}
        \includegraphics[width=\textwidth]{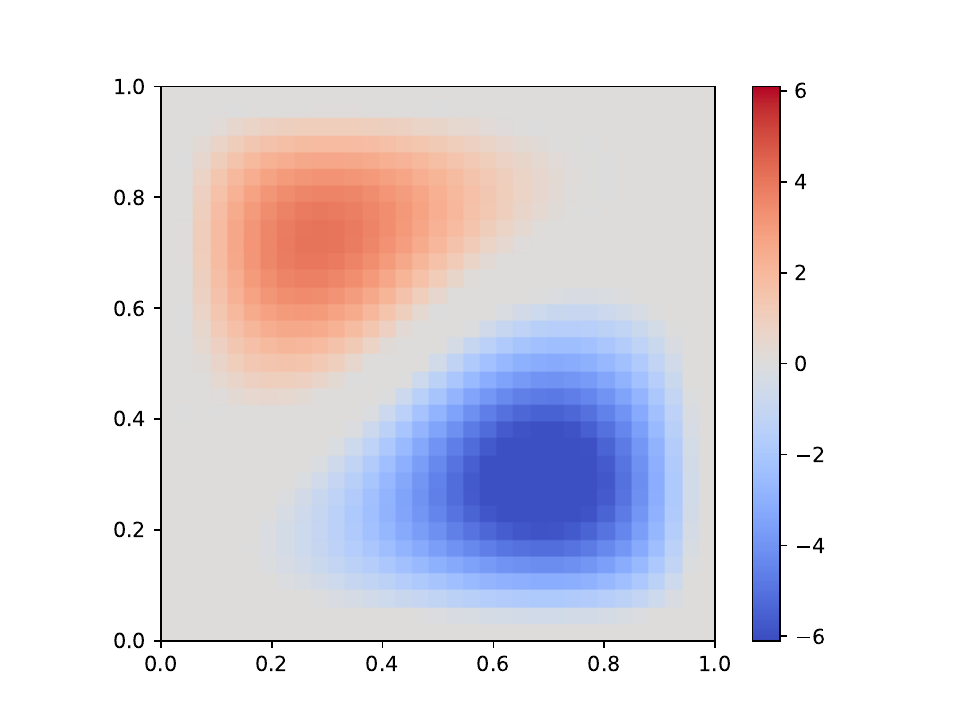}
        \subcaption{Computed $\hat{u}$ at $t=0.5$.}
    \end{subfigure}
    \hfill
    \begin{subfigure}{0.32\textwidth}
        \includegraphics[width=\textwidth]{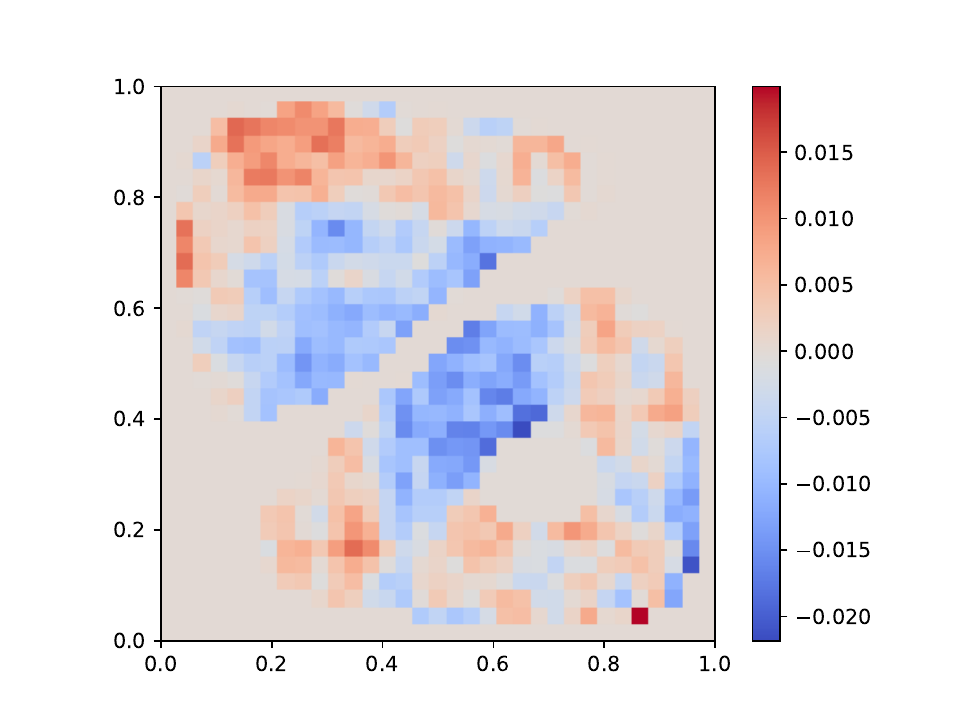}
        \subcaption{$\hat{u} - u^*$ at $t=0.5$.}
    \end{subfigure}
    \hfill

    \hfill
    \begin{subfigure}{0.32\textwidth}
        \includegraphics[width=\textwidth]{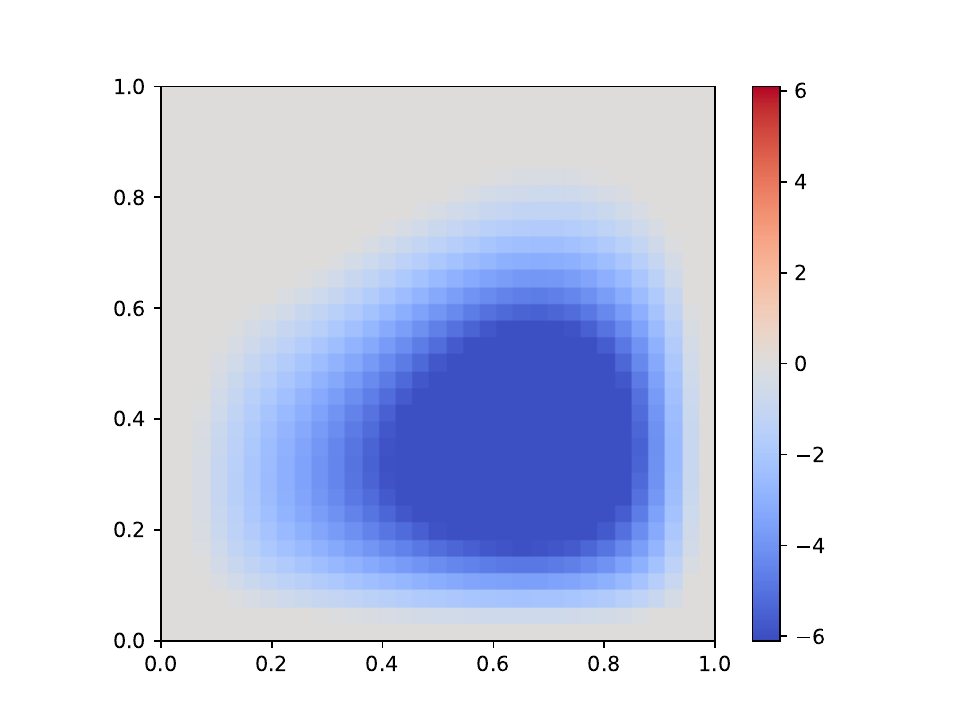}
        \subcaption{Exact $u^*$ at $t=0.75$.}
    \end{subfigure}
    \hfill
    \begin{subfigure}{0.32\textwidth}
        \includegraphics[width=\textwidth]{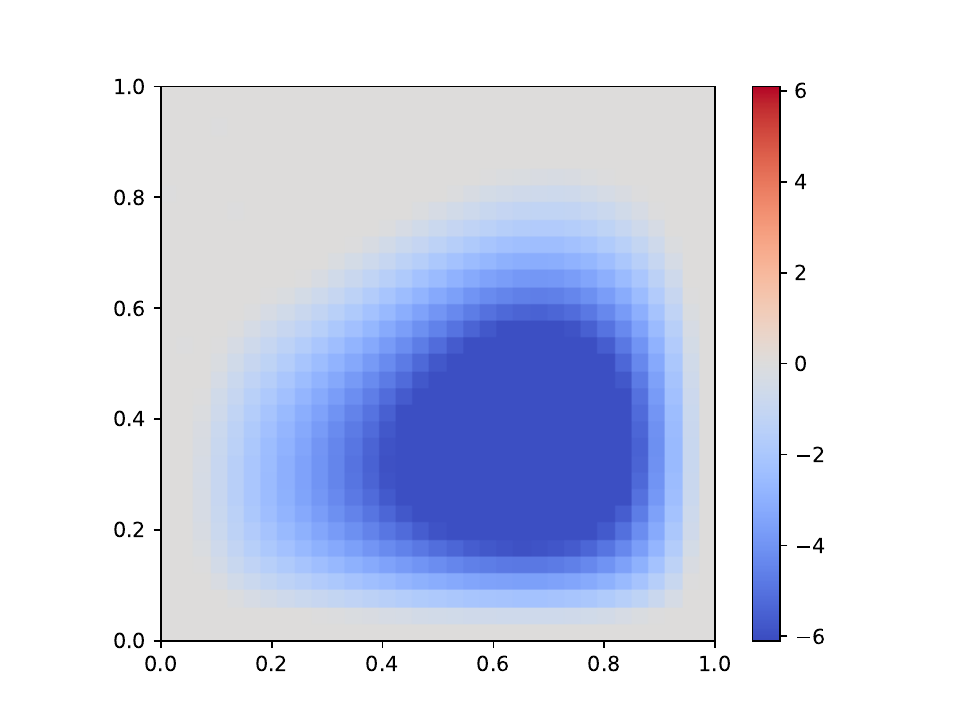}
        \subcaption{Computed $\hat{u}$ at $t=0.75$.}
    \end{subfigure}
    \hfill
    \begin{subfigure}{0.32\textwidth}
        \includegraphics[width=\textwidth]{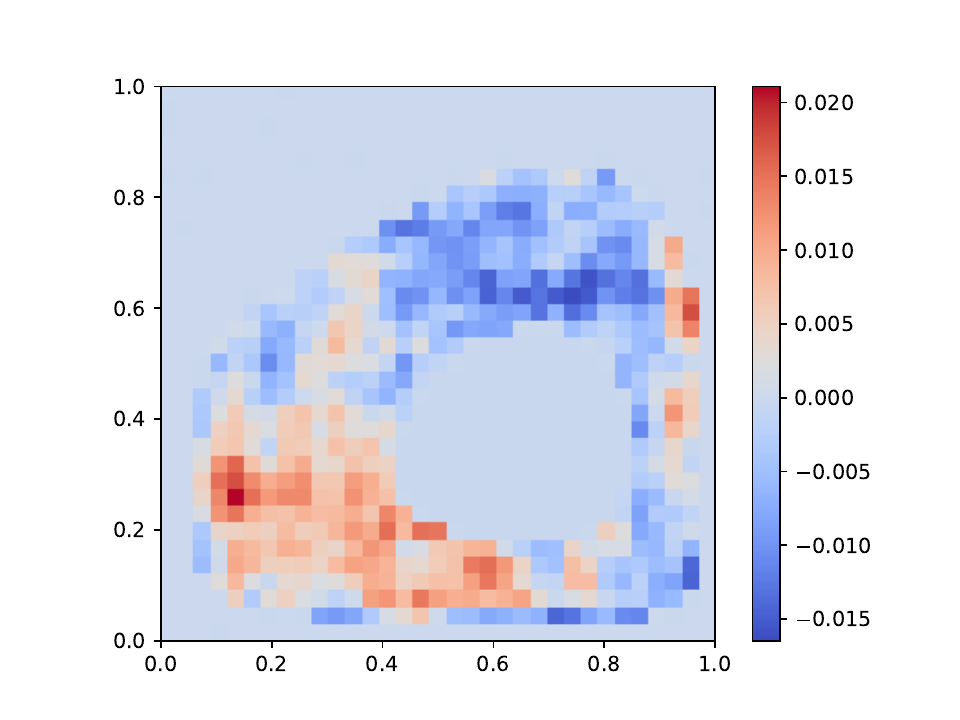}
        \subcaption{$\hat{u} - u^*$ at $t=0.75$.}
    \end{subfigure}
    \hfill
    \caption{Computed optimal control for a single instance of \eqref{eq:parabolic-opt-ctrl} with $m=m_T=32$.}
    \label{fig:parabolic-results}
\end{figure}

Finally, we compare the computational efficiency between the trained \texttt{iUzawa-Net}s and an SSN method, the primal-dual method in \cite{chambolle2011first,he2012convergence}, and an inexact Uzawa method.
For the SSN method, we employ a modified version of the implementation in \cite[Section 4]{wang2024augmented}, solving the resulting linear systems with a preconditioned conjugate gradient method and multigrid cycles.
The inexact Uzawa method is a straightforward extension of the approach in \cite{song2019inexact} to the parabolic setting. To ensure a fair comparison, matrices in the linear systems are pre-factorized prior to timing where applicable.
The algorithms are terminated once any iteration $\hat{u}$ satisfies $\varepsilon_{\text{rel}}(\hat{u}) \leq \varepsilon_\text{rtol}$, where we set $\varepsilon_\text{rtol} = 3 \times 10^{-3}$ to match the accuracy achived by the trained \texttt{iUzawa-Net}s.

The computation time of different numerical methods is reported in \Cref{tab:parabolic-computation-time}. The results clearly indicate that the \texttt{iUzawa-Net-F} and \texttt{iUzawa-Net-S} significantly outperform the traditional numerical methods across all tested resolutions. This further validates the \texttt{iUzawa-Net} as a highly efficient solver for nonsmooth optimal control problems, underscoring the advantage of our approach where a forward-pass neural network architecture replaces a computationally costly iterative procedure.
We observe that the computation time of the \texttt{iUzawa-Net-F} and \texttt{iUzawa-Net-S} increases less noticeably when the grid resolution is raised from $32$ to $64$ than when it is increased from $64$ to $128$. 
This behavior is mainly attributed to the fact that, for small $m$, the inference time is dominated by non-computational overheads, such as data transfer between the CPU and the GPU. 
This observation suggests that, when $m$ is small, the efficiency of the \texttt{iUzawa-Net} could be further improved by optimizing hardware utilization, which we leave for future investigation.

\begin{table}[t]
    \centering
	\small
    \begin{tabular}{c c c c c c}
		\toprule
		$m=m_T$ & SSN & Primal-Dual & Inexact Uzawa & \texttt{iUzawa-Net-F} & \texttt{iUzawa-Net-S} \\
		\midrule
		$32$ & $3.2701$ & $0.3554$ & $0.1800$ & $0.0434$ & $0.0411$ \\
		$64$ & $7.8134$ & $2.0814$ & $0.4576$ & $0.0564$ & $0.0474$ \\
		$128$ & $36.3343$ & $8.7503$ & $2.6888$ & $0.3204$ & $0.2584$ \\
        $256$ & $278.5647$ & $77.9398$ & $28.5403$ & $2.0640$ & $2.0609$ \\
		\bottomrule
	\end{tabular}
    \caption{Computation time (seconds) of the SSN method, the primal-dual method, the inexact Uzawa method, the \texttt{iUzawa-Net-F}, and the \texttt{iUzawa-Net-S} for solving \eqref{eq:parabolic-opt-ctrl} with different resolutions $m$ and $m_T$ (averaged over instances in the testing sets).}
    \label{tab:parabolic-computation-time}
    \normalsize
\end{table}

\section{Conclusions}\label{sec:conclusion}

We propose the \texttt{iUzawa-Net}, an optimization-informed deep neural network architecture that unrolls an inexact Uzawa method, for a general class of parameterized nonsmooth optimal control problems of linear PDEs. The proposed \texttt{iUzawa-Net} is capable of learning the solution operator that maps parameters defining the problem to the corresponding solution, effectively replacing iterations of a traditional solver with a forward pass of the neural network and hence making real-time solutions possible. Theoretically, we prove both universal approximation properties and the asymptotic $\varepsilon$-optimality with respect to network depth, notably extending these guarantees to the weight-tying setting under mild regularity assumptions. Numerical experiments for solving nonsmooth elliptic and parabolic optimal control problems demonstrate that the \texttt{iUzawa-Net} both achieves high accuracy with few layers and exhibits excellent generalization capability in zero-shot super-resolution scenarios. With real-time solvability, ease of implementation, structural interpretability, and theoretical guarantees, the \texttt{iUzawa-Net} is a new and powerful tool for solving optimal control problems of PDEs.

\bigskip

\newpage

\bibliographystyle{siamplainmc}
{\small
\bibliography{references}
}

\end{document}